\documentclass[a4paper, oneside]{amsart}
\usepackage{amssymb}
\usepackage{amscd}
\usepackage{amsmath}
\usepackage[all]{xy}
\usepackage{mathrsfs}
 \usepackage{color}

\usepackage{geometry}

%\usepackage{titlesec}
%\usepackage{lipsum}

%\titleformat{\section}[display]{\normalfont\bfseries}{}{0pt}{\Large}

%\numberwithin{section}{chapter}
%\numberwithin{subsection}{section}

\theoremstyle{plain}
\newtheorem{theorem}{Theorem}[section]
\newtheorem{proposition}[theorem]{Proposition}
\newtheorem{lemma}[theorem]{Lemma}
\newtheorem{corollary}[theorem]{Corollary}

\theoremstyle{remark}
\newtheorem{remark}[theorem]{Remark}

\theoremstyle{definition}
\newtheorem{definition}[theorem]{Definition}
\newtheorem{example}[theorem]{Example}

  1

\DeclareMathOperator{\Ext}{Ext}
\DeclareMathOperator{\Gal}{Gal}

\DeclareMathOperator{\Hom}{Hom}

\DeclareMathOperator{\im}{im}

\DeclareMathOperator{\Fitt}{Fitt}

\newcommand{\bC}{\mathbb{C}}
\newcommand{\bF}{\mathbb{F}}

\newcommand{\bQ}{\mathbb{Q}}
\newcommand{\bL}{\mathbb{L}}

\newcommand{\bT}{\mathbb{T}}

\newcommand{\bZ}{\mathbb{Z}}

\newcommand{\cD}{\mathcal{D}}
\newcommand{\cE}{\mathcal{E}}
\newcommand{\cF}{\mathcal{F}}

\newcommand{\cH}{\mathcal{H}}
\newcommand{\cI}{\mathcal{I}}

\newcommand{\cK}{\mathcal{K}}
\newcommand{\cL}{\mathcal{L}}

\newcommand{\cN}{\mathcal{N}}
\newcommand{\cO}{\mathcal{O}}
\newcommand{\cP}{\mathcal{P}}

\newcommand{\fa}{\mathfrak{a}}

\newcommand{\fd}{\mathfrak{d}}

\newcommand{\fl}{\mathfrak{l}}
\newcommand{\fq}{\mathfrak{q}}
\newcommand{\fp}{\mathfrak{p}}
\newcommand{\fr}{\mathfrak{r}}
\newcommand{\fs}{\mathfrak{s}}
\newcommand{\fn}{\mathfrak{n}}
\newcommand{\fm}{\mathfrak{m}}

\newcommand{\fS}{\mathfrak{S}}

\newcommand{\Z}{\mathbb{Z}}

\newcommand{\Reg}{\mathrm{Reg}}

\newcommand{\Ann}{\mathrm{Ann}}
\newcommand{\bz}{\mathbb{Z}}

\newcommand{\sgn}{\mathrm{sgn}}

\newcommand{\hooklongrightarrow}{\lhook\joinrel\longrightarrow}

%\renewcommand\thefootnote{\arabic{footnote})}
%
%\makeatletter
% \renewcommand{\theequation}{%
%   \thesubsection.\arabic{equation}}
%  \@addtoreset{equation}{subsection}
%\makeatother

\begin{document}

\title[]{A higher rank Euler system for the multiplicative group over a totally real field}

\author{Ryotaro Sakamoto}

\begin{abstract} 
In this paper, we construct a higher rank Euler system for the multiplicative group over a totally real field by using 
the Iwasawa main conjecture proved by Wiles. 
A key ingredient of the construction is to generalize the notion of the characteristic ideal. 
Under certain technical assumptions, we prove that all higher Fitting ideals of a certain $p$-ramified Iwasawa module are described by analytic invariants canonically associated with Stickelberger elements. 
\end{abstract}

\address{Graduate School of Mathematical Sciences, The University of Tokyo,
3-8-1 Komaba, Meguro-Ku, Tokyo, 153-8914, Japan}
\email{sakamoto@ms.u-tokyo.ac.jp}

\maketitle

\setcounter{tocdepth}{2}

\tableofcontents

\section{Introduction}

Euler systems were introduced by Kolyvagin in order to study the structure of Selmer groups of elliptic curves. 
The theory of Euler systems has been playing a crucial role in studying the arithmetic of global Galois representations. 
However, there are very few known examples of Euler systems (see \cite{Kato04, LLZ14, LSZ17, LZ17, R}). 

In the paper \cite{PR}, Perrin-Riou introduced the notion of higher rank Euler systems.  
However, at the present moment, we know an example of a higher rank Euler system only in the $\mathrm{GL}_{2} \times \mathrm{GL}_{2}$-case (see \cite{BL19,  BLLV, BLV}). 
In this paper, we construct a higher rank Euler system for the multiplicative group over a totally real field by using the classical Iwasawa main conjecture for totally real fields proved by Wiles in \cite{Wiles90}. 

\begin{remark}
The definition of higher rank Euler systems in this paper is not the same as the one introduced by Perrin-Riou in \cite{PR}. 
Perrin-Riou used the exterior power in order to define higher rank Euler systems, while we use the exterior bi-dual in this paper. 
Our definition of higher rank Euler systems is a generalization of Perrin-Riou's one and 
these definitions are essentially the same in the rank $1$ case (see \cite{bss, sbA, sakamoto-bessatsu}). 
\end{remark}

To explain the  result we obtain in this article more precisely, 
we introduce some notations. 
Let $k$ be a totally real field with degree $r := [k \colon \bQ]$ and 
\[
\chi \colon G_{k} \longrightarrow \overline{\bQ}^{\times}
\] 
a non-trivial finite order even character. 
Here, for a field $k'$, we write $G_{k'}$ for the absolute Galois group of $k'$.   
Put $L := \overline{k}^{\ker(\chi)}$. 
Let $p$ be an odd prime such that both the order of $\chi$ and the class number of $k$ are coprime to $p$. 
%,and \item $p$ is unramified in $L := \overline{k}^{\ker(\chi)}$. 
Fix a finite abelian $p$-extension $K/k$ satisfying $S_{{\rm ram}}(K/k) \cap (S_{p}(k) \cup S_{\rm ram}(L/k)) = \emptyset$.
Here $S_{p}(k)$ denotes the set of all primes of $k$ above $p$ and, for an algebraic extension $K'/k$, we write $S_{\rm ram}(K'/k)$ for the set of all primes of $k$ at which $K'/k$ is ramified. 
Fix an embedding $\overline{\bQ} \hooklongrightarrow \overline{\bQ}_{p}$ and put 
\begin{align*}
\cO := \bZ_{p}[\im(\chi)] \ \textrm{ and } \ T := \cO(1) \otimes \chi^{-1}.
\end{align*} 
We write $M_{KL,\infty}$ for the maximal $p$-ramified pro-$p$ abelian extension of $KL(\mu_{p^{\infty}})$. 
We note that $M_{KL,\infty}/k$ is a Galois extension. 
Since $M_{KL,\infty}/KL(\mu_{p^{\infty}})$ is an abelian extension, the Galois group $\Gal(KL(\mu_{p^{\infty}})/k)$ acts on the Galois group  $\Gal(M_{KL,\infty}/KL(\mu_{p^{\infty}}))$. Furthermore, by using this Galois action, we see that the module 
 \[
 X_{K} := \cO \otimes_{\bZ_{p}} \Gal(M_{KL,\infty}/KL(\mu_{p^{\infty}}))
 \]
 has an $\cO[[\Gal(KL(\mu_{p^{\infty}})/k)]]$-module structure. 
 Let $k_{\infty}/k$ denote the cyclotomic $\bZ_{p}$-extension. Then we have the canonical isomorphism 
 \[
 \Gal(KL(\mu_{p^{\infty}})/k) \stackrel{\sim}{\longrightarrow} \Gal(k_{\infty}K/k) \times \Gal(L(\mu_{p})/k)
 \]
 since $p \nmid [L \colon k]$. 
Set 
\[
e_{\chi} := \frac{1}{[L(\mu_{p}) \colon k]} \sum_{\sigma \in \Gal(L(\mu_{p})/k)}\chi(\sigma)\sigma^{-1} 
= \frac{1}{[KL(\mu_{p^{\infty}}) \colon k_{\infty}K]} \sum_{\sigma \in \Gal(KL(\mu_{p^{\infty}})/k_{\infty}K)}\chi(\sigma)\sigma^{-1}.  
\]
By the above isomorphism, we see that the Iwasawa module 
\begin{align*}
X^{\chi}_{K} := e_{\chi}X_{K}
\end{align*}
is a $\Lambda_{K} := \cO[[\Gal(k_{\infty}K/k)]]$-module.

Suppose that 
\begin{itemize}
\item $H^{0}(G_{k_{\fp}},T/\fm T) = H^{2}(G_{k_{\fp}},T/\fm T) = 0$
for each prime $\fp \in S_{p}(k)$, 
\end{itemize}
where $\fm$ denotes the maximal ideal of $\cO$. 
Then one can show that, for any finite abelian extension $K'/k$ and prime $\fp \in S_{p}(k)$, 
\[
H^{1}(G_{k_{\fp}},T \otimes_{\bZ_{p}} \bZ_{p}[\Gal(K'/k)]^{\iota})
\] 
is a free $\cO[\Gal(K'/k)]$-module of rank $[k_{\fp} \colon \bQ_{p}]$  (see Corollary~\ref{cor:H2vanish}). 
Here $\bZ_{p}[\Gal(K'/k)]^{\iota}$ is a free $\bZ_{p}[\Gal(K'/k)]$-module of rank $1$ and the action of $\bZ_{p}[\Gal(K'/k)]$ is defined by 
\[
\iota \colon \bZ_{p}[\Gal(K'/k)] \longrightarrow \bZ_{p}[\Gal(K'/k)]; \sum_{g \in \Gal(K'/k)}a_{g}g \mapsto \sum_{g \in \Gal(K'/k)}a_{g}g^{-1}. 
\]
The action of $G_{k}$ on  $\bZ_{p}[\Gal(K'/k)]^{\iota}$ is induced by its  $\bZ_{p}[\Gal(K'/k)]$-action.  

Fix an isomorphism 
\[
\varprojlim_{K'}\bigoplus_{\fp \mid p}H^{1}(G_{k_{\fp}},T \otimes_{\bZ_{p}} \bZ_{p}[\Gal(K'/k)]^{\iota}) \stackrel{\sim}{\longrightarrow} \varprojlim_{K'}\cO[\Gal(K'/k)]^{r}, 
\]
where $K'$ runs over all the finite abelian extensions of $k$. 

For each integer $s \geq 0$, we denote by ${\rm ES}_{s}(T)$ the module of Euler systems of rank $s$ for $T$ (see Definition~\ref{def:euler system}). 
The above fixed isomorphism induces an injective `rank reduction' homomorphism 
\[
{\rm ES}_{r}(T) \hooklongrightarrow {\rm ES}_{0}(T).  
\]

\begin{remark}
A rank reduction homomorphism ${\rm ES}_{r}(T) \longrightarrow {\rm ES}_{s}(T)$ (for $0 \leq s \leq r$) was considered firstly by Rubin in \cite{rubinstark} and Perrin-Riou in \cite{PR}. 
After that, in the papers \cite{Bu09a, Bu09b, Bu10}, 
B\"uy\"ukboduk developed a machinery for higher rank Euler systems by using the rank reduction homomorphism.  
\end{remark}

An Euler system of rank $0$ is a collection of elements in group rings satisfying certain  norm-relations, and, philosophically, it should be related to $p$-adic $L$-functions. 
We actually show in Lemma~\ref{lem:rel2} that the collection of $p$-adic $L$-functions for the multiplicative group over totally real fields is an Euler system of rank $0$. 
Let $\cL_{p}^{\chi} \in {\rm ES}_{0}(T)$ denote this Euler system.  

Under this setting, the following is the main result of this paper.

\begin{theorem}[Theorem~\ref{thm:main2}]\label{main}
Suppose that 
\begin{itemize}
\item $H^{0}(G_{k_{\fp}},T/\fm T) = H^{2}(G_{k_{\fp}},T/\fm T) = 0$
for each prime $\fp$ of $k$ above $p$, and 
\item the module $X_{k}^{\chi}$ is $p$-torsion-free. 
\end{itemize}
%Let us fix an isomorphism $\varprojlim_{K \in \Omega}H^{1}_{\Sigma}(\bT_{K}) \stackrel{\sim}{\longrightarrow} \Lambda[[\Gal(\cK/k)]]^{r}$. 
Then there is a unique Euler system  of rank $r$ in ${\rm ES}_{r}(T)$ such that its image  under the injection ${\rm ES}_{r}(T) \hooklongrightarrow {\rm ES}_{0}(T)$  is $\cL_{p}^{\chi}$. 
\end{theorem}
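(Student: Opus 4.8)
\textbf{Proof proposal.} Uniqueness is immediate: by construction the rank reduction map $\mathrm{ES}_{r}(T) \hooklongrightarrow \mathrm{ES}_{0}(T)$ is injective, so at most one rank-$r$ Euler system can lie over $\cL_{p}^{\chi}$. The whole content is therefore the existence assertion, namely that $\cL_{p}^{\chi}$ lies in the image of this injection, and the plan is to reduce this to an Iwasawa-theoretic computation over $\Lambda := \Lambda_{k} = \cO[[\Gal(k_{\infty}/k)]] \cong \cO[[t]]$.

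First I would use the descent formalism to identify $\mathrm{ES}_{r}(T)$ with the exterior bidual $\bigcap^{r}_{\Lambda} H^{1}_{\mathrm{Iw}}(T)$ of the global Iwasawa cohomology module (with ramification allowed in $S_{p}(k) \cup S_{\mathrm{ram}}(KL/k)$), in such a way that the rank reduction map becomes the composite
\[
{\textstyle\bigcap}^{r}_{\Lambda} H^{1}_{\mathrm{Iw}}(T) \xrightarrow{\ \wedge^{r}\mathrm{loc}_{p}\ } {\textstyle\bigcap}^{r}_{\Lambda} \cH_{p} \xrightarrow{\ \sim\ } \Lambda ,
\]
where $\cH_{p}$ is the semilocal Iwasawa cohomology at $p$ and the last arrow is the $r$-th exterior power of the fixed trivialisation; under the first hypothesis $\cH_{p}$ is a free $\Lambda$-module of rank $r$ by Corollary~\ref{cor:H2vanish}. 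Next I would establish the exact sequence of $\Lambda$-modules
\[
0 \longrightarrow H^{1}_{\mathrm{Iw}}(T) \xrightarrow{\ \mathrm{loc}_{p}\ } \cH_{p} \longrightarrow X_{k}^{\chi} \longrightarrow 0 ,
\]
in which the cokernel of $\mathrm{loc}_{p}$ is identified with the $p$-ramified Iwasawa module $X_{k}^{\chi}$ by global class field theory — the hypothesis $S_{\mathrm{ram}}(K/k) \cap (S_{p}(k) \cup S_{\mathrm{ram}}(L/k)) = \emptyset$ together with $p \nmid h_{k}$ guaranteeing that no class-group contribution intervenes — and $X_{k}^{\chi}$ is $\Lambda$-torsion by the weak Leopoldt conjecture. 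Since $H^{1}_{\mathrm{Iw}}(T)$ and $\cH_{p}$ both have $\Lambda$-rank $r$, it follows that $\mathrm{loc}_{p}$ is injective, its kernel being a $\Lambda$-torsion submodule of the torsion-free module $H^{1}_{\mathrm{Iw}}(T)$.

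Now the second hypothesis enters. Over the two-dimensional regular local ring $\Lambda \cong \cO[[t]]$, a finitely generated torsion module has projective dimension $\le 1$ as soon as it has no nonzero finite submodule; since $X_{k}^{\chi}$ is $p$-torsion-free it has no nonzero finite submodule, so $\mathrm{pd}_{\Lambda} X_{k}^{\chi} \le 1$. Feeding this into the displayed exact sequence, with $\cH_{p}$ free, forces $H^{1}_{\mathrm{Iw}}(T)$ to be projective, hence free, of rank $r$ over $\Lambda$. Therefore $\bigcap^{r}_{\Lambda} H^{1}_{\mathrm{Iw}}(T) = \bigwedge^{r}_{\Lambda} H^{1}_{\mathrm{Iw}}(T)$ is free of rank one, and the image of $\wedge^{r}\mathrm{loc}_{p}$, after the fixed trivialisation, is the principal ideal generated by $\det(\mathrm{loc}_{p})$, which equals $\Fitt_{\Lambda}(X_{k}^{\chi}) = \mathrm{char}_{\Lambda}(X_{k}^{\chi})$. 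By Wiles' main conjecture \cite{Wiles90} this characteristic ideal is generated by the $p$-adic $L$-function of $\chi$, which by Lemma~\ref{lem:rel2} is precisely the image of $\cL_{p}^{\chi}$; hence $\cL_{p}^{\chi}$ lies in the image of the rank reduction map, and by the injectivity already noted its unique preimage is the desired rank-$r$ Euler system.

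I expect the main obstacle to be the descent step: one must show that $\mathrm{ES}_{r}(T)$ is faithfully computed by $\bigcap^{r}_{\Lambda} H^{1}_{\mathrm{Iw}}(T)$ and that the rank reduction map genuinely translates into $\wedge^{r}\mathrm{loc}_{p}$ compatibly across \emph{all} the finite abelian layers $K'/k$ that enter the definitions of $\mathrm{ES}_{r}(T)$ and $\mathrm{ES}_{0}(T)$, not merely over the cyclotomic tower of $k$; and one must pin down the cokernel of $\mathrm{loc}_{p}$ as $X_{k}^{\chi}$ on the nose rather than just up to pseudo-isomorphism. This is precisely where the hypothesis on $S_{\mathrm{ram}}(K/k)$, the coprimality assumptions $p \nmid h_{k}$ and $p \nmid |\im \chi|$, and the $p$-torsion-freeness of $X_{k}^{\chi}$ all get used.
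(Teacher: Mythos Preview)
Your argument has a structural gap that you yourself flag but do not resolve: an element of $\mathrm{ES}_{r}(T)$ is a \emph{family} $(c_{K})_{K\in\Omega}$ with $c_{K}\in\bigcap^{r}_{\Lambda_{K}}H^{1}(G_{k,S_{K}},\bT_{K})$ satisfying Euler-system norm relations, and likewise $\cL_{p}^{\chi}=(L_{p,K}^{\chi})_{K\in\Omega}$. Your computation over $\Lambda=\Lambda_{k}$, even granting every step, only shows that $L_{p,k}^{\chi}$ lies in the image of $\bigcap^{r}_{\Lambda}H^{1}(G_{k,S},\bT_{k})\to\Lambda$; it says nothing about the higher levels $K$. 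There is no identification $\mathrm{ES}_{r}(T)\cong\bigcap^{r}_{\Lambda}H^{1}_{\mathrm{Iw}}(T)$: the left-hand side is strictly larger and remembers all the layers. Your projective-dimension argument uses that $\Lambda_{k}$ is regular; for general $K$ the ring $\Lambda_{K}=\Lambda[\Gal(K/k)]$ is only Gorenstein, so $\mu(X_{K}^{\chi})=0$ no longer forces $H^{1}(G_{k,S_{K}},\bT_{K})$ to be free, and the clean determinant picture breaks.

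There is also a smaller error inside your base-level argument. The cokernel of $\mathrm{loc}_{p}$ is not $X_{k}^{\chi}$ but the submodule $R_{f,\Sigma}(\bT_{k}):=\mathrm{coker}\bigl(H^{1}(G_{k,S},\bT_{k})\to H^{1}_{\Sigma}(\bT_{k})\bigr)$; the Poitou--Tate sequence gives
\[
0\to R_{f,\Sigma}(\bT_{k})\to X_{k}^{\chi}\to H^{1}_{\cF_{\mathrm{can}}^{*}}(k,\bT_{k}^{\vee}(1))^{\vee}\to 0,
\]
and the last term is generally nonzero. So even at level $k$ you need the extra step $\mathrm{char}(X_{k}^{\chi})\subseteq\mathrm{char}(R_{f,\Sigma}(\bT_{k}))$.

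The paper's proof is designed precisely to bypass both obstacles simultaneously. It first characterises the image of $\mathrm{ES}_{r}(T)\hookrightarrow\cE(T)$ level-by-level as
\[
\cE(T)\cap\prod_{K\in\Omega}\mathrm{char}_{\Lambda_{K}}\bigl(R_{f,\Sigma}(\bT_{K})\bigr)
\]
(Proposition~\ref{prop:main}), where $\mathrm{char}_{\Lambda_{K}}$ is a generalised characteristic ideal defined via exterior biduals, valid over Gorenstein rings. The key new ingredient is the monotonicity $\mathrm{char}_{\Lambda_{K}}(M)\subseteq\mathrm{char}_{\Lambda_{K}}(N)$ for $N\subseteq M$ (Proposition~\ref{prop:lambda-inequality}/Theorem~\ref{thm:inequality}), applied to $R_{f,\Sigma}(\bT_{K})\subseteq X_{K}^{\chi}$. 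Combined with $L_{p,K}^{\chi}\in\mathrm{Fitt}^{0}_{\Lambda_{K}}(X_{K,S}^{\chi})\subseteq\mathrm{char}_{\Lambda_{K}}(X_{K}^{\chi})$ for each $K$ (Kurihara's equivariant main conjecture plus $\mu=0$, Corollary~\ref{cor:kuri03}) and $\cL_{p}^{\chi}\in\cE(T)$ (Lemma~\ref{lem:rel2}), this places $\cL_{p}^{\chi}$ in the image for every $K$ at once. In other words, the paper never attempts your descent step; instead it replaces the regularity-based freeness argument by a characteristic-ideal inequality that works uniformly over all $\Lambda_{K}$.
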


\begin{remark}
When $k = \bQ$, Theorem~\ref{main} follows from the well-known relation between 
cyclotomic units and Kubota--Leopoldt $p$-adic $L$-functions.  
In general, we expect that the higher rank Euler system constructed in Theorem~\ref{main} gives us  the conjectural Rubin--Stark units (up to unit multiples). 
Under the Rubin--Stark conjecture, this expectation is equivalent to the certain relation between the conjectural Rubin--Stark units and $p$-adic $L$-functions which has already been considered by B\"uy\"ukboduk in the papers \cite{Bu09a, Bu09b, Bu11a}. 
\end{remark}

A key ingredient of the proof of Theorem~\ref{main} is to generalize the notion of the characteristic ideal. 
In order to define the characteristic ideal of a finitely generated torsion module over a noetherian ring $R$, we need to impose that $R$ is a normal ring since the definition relies on the structure theorem for finitely generated modules over a normal ring. 
%is crucial to define the characteristic ideal. 
In this paper, by using an exterior bi-dual instead of the structure theorem, we give a new definition of the characteristic ideal. 
%Thanks to this new definition, we can consider the characteristic ideal of a finitely generated  module over any noetherian ring $R$, 
This new definition is applicable to a finitely generated  module over any noetherian ring, 
and we can compute the image of the injection ${\rm ES}_{r}(T) \hooklongrightarrow {\rm ES}_{0}(T)$ by using characteristic ideals of certain Iwasawa modules. 
Furthermore, by using the classical Iwasawa main conjecture for totally real fields proved by Wiles in \cite{Wiles90}, we show that the image contains the Euler system $\cL_{p}^{\chi}$.

As an application of Theorem~\ref{main}, by using the theory of higher rank Euler, Kolyvagin and Stark systems (which have been developed by 
Rubin in \cite{R}, Mazur--Rubin in \cite{MRkoly,MRselmer}, B\"uy\"ukboduk in \cite{Bu09a, Bu09b, Bu10, Bu11b}, the author in \cite{sakamoto}, Burns--Sano in \cite{sbA}, and Burns--Sano and the author in \cite{bss, bss2}), we prove that all higher Fitting ideals of $X_{K}^{\chi}$ are described by analytic invariants canonically associated with Stickelberger elements.  
To be more precise, one can define an increasing sequence $\{\Theta_{K,\infty}^{i}\}_{i\geq0}$ of ideals of the Iwasawa algebra $\Lambda_{K} = \cO[[\Gal(k_{\infty}K/k)]]$. 
The ideals $\Theta_{K,\infty}^{i}$ are generated by 
some Kolyvagin derivative classes associated with Stickelberger elements (see Definition~\ref{def:theta}). 

\begin{corollary}[{Theorem~\ref{thm:main4}}]\label{thm1}
Suppose that 
\begin{itemize}
\item the same assumptions as in Theorem~\ref{main} hold, and 
\item $H^{2}(k_{\fq}, T/\fm T)$ vanishes for each prime $\fq \in S_{\rm ram}(K/k)$. 
\end{itemize}
Then, for any integer $i \geq 0$, 
we have  
\[
\Theta_{K,\infty}^{i} = {\rm Fitt}^{i}_{\Lambda_{K}}(X^{\chi}_{K}).
\] 
\end{corollary}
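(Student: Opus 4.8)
The plan is to deduce the statement from the general theory of higher rank Euler, Kolyvagin and Stark systems, using the rank $r$ Euler system $c^{\chi}\in{\rm ES}_{r}(T)$ provided by Theorem~\ref{main} as the bridge between the analytic ideals $\Theta_{K,\infty}^{i}$, which are defined from Stickelberger elements in Definition~\ref{def:theta}, and the Iwasawa module $X_{K}^{\chi}$. Roughly: the rank $r$ Euler system produces a Stark system over $\Lambda_{K}$; a structure theorem computes all higher Fitting ideals of the associated dual Selmer module in terms of that Stark system; the Selmer module is identified with $X_{K}^{\chi}$ by global duality; and the Stark system is identified with the Kolyvagin derivatives of $\cL_{p}^{\chi}$ by the compatibility $c^{\chi}\mapsto\cL_{p}^{\chi}$ under ${\rm ES}_{r}(T)\hooklongrightarrow{\rm ES}_{0}(T)$.

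First I would fix the relevant Selmer structure on $T$: the local condition $\mathcal{F}$ that is relaxed at the primes above $p$ (so as to see the $p$-ramified module), is the strict/unramified condition away from $S_{p}(k)\cup S_{\rm ram}(K/k)$, and is the canonical transverse or unramified condition at the Kolyvagin primes. Using the hypotheses $H^{0}(G_{k_{\fp}},T/\fm T)=H^{2}(G_{k_{\fp}},T/\fm T)=0$ for $\fp\in S_{p}(k)$ and $H^{2}(k_{\fq},T/\fm T)=0$ for $\fq\in S_{\rm ram}(K/k)$, together with the hypothesis that $X_{k}^{\chi}$ is $p$-torsion-free, I would run an Iwasawa-theoretic Poitou--Tate / global Euler characteristic computation to prove two things: (i) the dual Selmer module $H^{1}_{\mathcal{F}^{*}}(k_{\infty}K,T^{*})^{\vee}$, appropriately $\chi$-twisted and $\iota$-normalized, is canonically isomorphic to $X_{K}^{\chi}$ as a $\Lambda_{K}$-module; and (ii) the core rank of the pair $(T,\mathcal{F})$ equals $r$. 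The two vanishing hypotheses and the $p$-torsion-freeness are genuinely used in (i): the hypotheses at $p$ and at $S_{\rm ram}(K/k)$ kill the local correction terms that would otherwise separate the $p$-ramified module from the naive Selmer group, and $p$-torsion-freeness of $X_{k}^{\chi}$ prevents spurious torsion in the limit, so that the module produced by duality is literally $X_{K}^{\chi}$.

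Next I would convert $c^{\chi}$ into a Stark system over $\Lambda_{K}$ via the higher rank Euler-system-to-Kolyvagin-system machinery of \cite{R, MRkoly, Bu09a, Bu09b, Bu10, sakamoto, sbA, bss, bss2}, obtaining $\kappa^{c^{\chi}}=\{\kappa^{c^{\chi}}_{\fn}\}$ indexed by squarefree products of Kolyvagin primes. The structure theorem for Stark systems of core rank $r$ — reformulated using the exterior bi-dual and the generalized characteristic ideal introduced in this paper, so that it remains valid over the possibly non-regular ring $\Lambda_{K}$ — then yields, for every $i\geq 0$, that ${\rm Fitt}^{i}_{\Lambda_{K}}\bigl(H^{1}_{\mathcal{F}^{*}}(k_{\infty}K,T^{*})^{\vee}\bigr)$ is the ideal generated by the images of the $\kappa^{c^{\chi}}_{\fn}$ with $\fn$ having exactly $i$ prime factors under the relevant "order $i$" evaluation maps. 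It then remains to match generators: by functoriality of the Euler-to-Kolyvagin construction with respect to the rank reduction ${\rm ES}_{r}(T)\hooklongrightarrow{\rm ES}_{0}(T)$ and the identity $c^{\chi}\mapsto\cL_{p}^{\chi}$ from Theorem~\ref{main}, these evaluation-map images are exactly the Kolyvagin derivative classes of $\cL_{p}^{\chi}$, hence the Kolyvagin derivatives of the Stickelberger elements that define $\Theta_{K,\infty}^{i}$. Combined with the identification in the previous step, this gives $\Theta_{K,\infty}^{i}={\rm Fitt}^{i}_{\Lambda_{K}}(X_{K}^{\chi})$.

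The main obstacle I anticipate is making the Stark-system structure theorem work genuinely at the $\Lambda_{K}$-adic level: since $\Lambda_{K}$ need not be regular or even normal, the classical Kolyvagin/Stark-system formalism, which computes Fitting ideals through characteristic ideals of torsion modules via the structure theorem, does not apply directly. This is precisely what forces the use of the exterior bi-dual and the generalized characteristic ideal developed earlier in the paper; one must check that the bi-dual forms of the key exact sequences, the patching of local conditions at the Kolyvagin primes, and the descent along the cyclotomic tower are all compatible, and that the resulting Stark system is primitive — a consequence of the Iwasawa main conjecture of Wiles already built into Theorem~\ref{main} — so that one obtains the exact equality of ideals and not merely one inclusion. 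A secondary, more bookkeeping difficulty is keeping track of the $\chi$-twist and the $\iota$-involutions throughout, so that global duality delivers $X_{K}^{\chi}$ itself rather than a twist or dual of it.
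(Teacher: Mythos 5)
The overall architecture of your proposal is correct and does follow the paper's route in outline: build the Stark system $\epsilon_{K}^{\rm DR}$ from the rank $r$ Euler system $c^{\rm DR}$ via the Euler--Kolyvagin--Stark machinery, identify the relevant dual Selmer module with $X_{K}^{\chi}$ (this is the content of the sequence \eqref{keyexactseq}, Remark~\ref{rem:vanish}, and \eqref{keyeq}), use the structure theorem (Theorem~\ref{thm:stark}, i.e.\ Proposition~\ref{prop:main3}) to get ${\rm Fitt}^{i}_{\Lambda_{K}}(X_{K}^{\chi})=I_{i}(\epsilon_{K}^{\rm DR})$, and use the Iwasawa main conjecture (Kurihara's $\Fitt^{0}=L_{p,K}^{\chi}\Lambda_{K}$) to show the system is a basis. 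You also correctly note that the exterior-bi-dual/generalized-characteristic-ideal formalism is what makes this work over the possibly non-regular ring $\Lambda_{K}$.

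However, there is a genuine and substantial gap in your final step, the claim that ``these evaluation-map images are exactly the Kolyvagin derivative classes of $\cL_{p}^{\chi}$.'' The ideal $I_{i}(\epsilon_{K}^{\rm DR})$ is \emph{not} generated just by the elements $\tilde{\kappa}(\cL_p^\chi)_{K,n,\fn}=(-1)^{\nu(\fn)}\epsilon^{\rm DR}_{K,n,\fn}(\wedge_{\fq\mid\fn}\varphi_{\fq})$ that appear in the definition of $\Theta_{K,n}^{i}$. By Remark~\ref{rem:fs map} and Lemma~\ref{lem:key0}, $I_{i}(\epsilon_{K,n}^{\rm DR})$ is generated by all the quantities $\delta_{n,\fn}^{\sigma}=(-1)^{\nu(\fn)}\overline{\epsilon}_{K,n,\fn,1}(\varphi_{\fn}^{\sigma})$ where $\varphi_{\fn}^{\sigma}=\wedge_{\fr\mid\fn}\varphi_{\sigma(\fr)}$ ranges over arbitrary ``off-diagonal'' wedges of the localization functionals $\varphi_{\fq'}$ at Kolyvagin primes $\fq'$ that need not divide $\fn$. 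Only the identity map $\sigma=\id$ produces the Kolyvagin derivative classes of $\cL_{p}^{\chi}$. So $\Theta_{K,\infty}^{i}\subseteq I_{i}(\epsilon_{K}^{\rm DR})$ is easy, but the reverse inclusion requires showing that the $\sigma$-twisted generators are redundant modulo the lower ones, and this is not a generator-matching or functoriality statement: it is the content of all of \S\ref{sec:fitt}. Concretely, the paper constructs the auxiliary cohomology classes $\kappa_{n,\fn,\fq}^{\sigma}$, establishes their compatibility relations (Proposition~\ref{prop:coh rel}), and then runs a Chebotarev exchange argument (Lemmas~\ref{lem:key1}--\ref{lem:key3}, Corollary~\ref{cor:key}) together with an induction on the number of primes fixed by $\sigma$ to prove $\delta^{\sigma}_{n,\fn}\in\Theta^{i}_{K,n}+I_{i-1}(\epsilon^{\rm DR}_{K,n})$. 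Without an argument of this type, your proposal only yields the inclusion $\Theta_{K,\infty}^{i}\subseteq{\rm Fitt}^{i}_{\Lambda_{K}}(X_{K}^{\chi})$, not the equality.
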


\begin{remark}\label{rem:kurihara}
When $K=k$, Corollary~\ref{thm1} has already been proved by Kurihara in \cite{Kur03, Kur12}. 
Hence  Corollary~\ref{thm1} can be viewed as an equivariant generalization of  Kurihara's result. 
In the proof of \cite[Theorem~2.1]{Kur12}, one of the most important steps is the construction of a certain system of elements $\{\kappa_{\fn,\fl}\}$, and Kurihara proved that the system $\{\kappa_{\fn,\fl}\}$ has some important and beautiful properties. 

To prove Corollary~\ref{thm1}, we will also construct a similar system of elements $\{\kappa_{n,\fn,\fl}^{\sigma}\}$ in \S~\ref{sec:fitt}, and prove that the system $\{\kappa_{n,\fn,\fl}^{\sigma}\}$ has the same kind of properties as 
$\{\kappa_{\fn,\fl}\}$ (see Proposition~\ref{prop:coh rel}). 
However, the construction of $\{\kappa_{\fn,\fl}\}$ and that of $\{\kappa_{n,\fn,\fl}^{\sigma}\}$ are completely different. In this paper, we use Stark systems in order to construct the system $\{\kappa_{n,\fn,\fl}^{\sigma}\}$. 
It is assumed in \cite{Kur12} that $\fn\fl$ is well-ordered, while 
we do not need to impose any assumptions in the construction of the system $\{\kappa_{n,\fn,\fl}^{\sigma}\}$ and the proof of its properties. 

We remark that the proof of the equality $\Theta_{k,\infty}^{i} = {\rm Fitt}^{i}_{\Lambda}(X^{\chi}_{k})$ 
is also different from that in \cite{Kur12}. 
In this paper, this equality is derived from the fact proved in \cite{sbA, sakamoto} that all higher Fitting ideals of a Selmer group are controlled by Stark systems. 
This allows us to control all higher Fitting ideals of the Iwasawa module $X_{K}^{\chi}$ even in the equivariant case. 
\end{remark}

%\clearpage

%\subsection*{Notation}

\subsection{Notation}

Let $p$ be an odd prime. For a field $k$, we fix a separable closure $\overline{k}$ of $k$ and 
denote by $G_{k} := \Gal(\overline{k}/k)$ the absolute Galois group of $k$. 

For a commutative ring $R$ and an $R$-module $M$, 
we write  
\[
M^{*} := \Hom_{R}(M,R)
\]
for the $R$-dual of $M$. 
For any integer $r \geq 0$, we define an $r$-th exterior bi-dual ${\bigcap}^{r}_{R}M$ of $M$ to be 
\[
{\bigcap}^{r}_{R}M := \left({\bigwedge}^{r}_{R}(M^{*})\right)^{*}. 
\] 

\begin{remark}\label{rem:exterior}
If $R$ is a reduced noetherian ring, 
then we have the canonical isomorphism 
\[
\left\{ x \in  Q \otimes_{R} {\bigwedge}^{r}_{R}M \ \middle| \ \Phi(a) \in R \text{ for all } \Phi \in {\bigwedge}^{r}_{R}M^{*}\right\} 
\stackrel{\sim}{\longrightarrow} {\bigcap}^{r}_{R}M.  
\]
Here $Q$ denotes  the total ring of fractions of $R$. 
This fact tells us that the notion of exterior bi-dual ${\bigcap}^{r}_{R}M$ is a generalization of the notion of Rubin-lattice. 
\end{remark}

For a profinite group $G$ and a topological $G$-module $M$, let $C^{\bullet}(G,M)$ denote the complex of inhomogeneous continuous cochains of $G$ with values in $M$. 
We also denote the object in the derived category corresponding to the complex $C^{\bullet}(G,M)$ by ${\bf R}\Gamma(G,M)$. 
For each integer $i \geq 0$, we write $H^{i}(G,M)$ for its $i$-th cohomology group. 

For any number field $k$, we denote by $S_{p}(k)$ and $S_{\infty}(k)$ the set of places of $k$ above $p$ and $\infty$, respectively. 
For a finite set $S$ of places of $k$ containing $S_{\infty}(k)$, 
we denote by $k_{S}$ the maximal extension of $k$ contained in $\overline{k}$ which is unramified outside $S$.  Set 
\[
G_{k,S} := \Gal(k_{S}/k). 
\]
For a prime $\fq$ of $k$, we denote by $k_{\fq}$ the completion of $k$ at $\fq$. 
%and by $\cO_{k_{\fq}}$ the ring of integers of $k_{\fq}$.
For an algebraic extension $K/k$, we denote by $S_{\rm ram}(K/k)$ the set of primes at which $K/k$ is ramified.

For an abelian extension $K/k$, we write 
\[
\iota \colon \bZ_{p}[[\Gal(K/k)]] \longrightarrow \Z_{p}[[\Gal(K/k)]]
\] 
for the involution induced by $\sigma \mapsto \sigma^{-1}$ for $\sigma \in \Gal(K/k)$. 
For a $\Z_{p}[[\Gal(K/k)]]$-module $M$, we set
\[
M^{\iota} := M \otimes_{\bZ_{p}[[\Gal(K/k)]], \iota} \bZ_{p}[[\Gal(K/k)]].
\]
In this paper, we regard $\bZ_{p}[[\Gal(K/k)]]^{\iota}$ as a Galois representation of $G_{k}$. 
The action of $G_{k}$ on  $\bZ_{p}[[\Gal(K/k)]]^{\iota}$ is defined by its  $\bZ_{p}[[\Gal(K/k)]]$-action.  

%\end{notation}

%For an integral domain $R$ and a finitely generated module $M$, set 
%\[
%{\rm rank}_{R}(M) := \dim_{{\rm Frac}(R)}(M \otimes_{R} {\rm Frac}(R)). 
%\]

\subsection{Acknowledgments}

The author would like to express his gratitude to his supervisor Takeshi Tsuji for many helpful discussions. 
The author would also like to thank B\"uy\"ukboduk K\^azim, Masato Kurihara and Hiroki Matsui for helpful advice and comments.  
This work was supported by the Program for Leading Graduate Schools, MEXT, Japan and JSPS KAKENHI Grant Number 17J02456.

\section{Higher rank Euler systems}\label{sec:euler}
In this section, we will recall the definition of a higher rank Euler system and define a characteristic ideal for any finitely generated module over a noetherian ring.

First, let us introduce some notations and hypotheses. 
Let $\cO$ be a complete discrete valuation ring of mixed characteristic $(0, p)$ with maximal ideal $\fm$. 
Let $k$ be a number field. 
In this section, we fix a finite set $S$ of places of a number field $k$ with $S_{\infty}(k) \cup S_{p}(k) \subseteq S$ and let $T$ be a free $\cO$-module of finite rank on which $G_{k,S}$ acts continuously. 
We also fix a $\bZ_{p}^{s}$-extension $k_{\infty} \subseteq \overline{k}$ of $k$ such that $s \geq 1$ and no prime of $k$ splits completely in  $k_{\infty}$. We then put 
\[
\Lambda := \cO[[\Gal(k_{\infty}/k)]] \ \text{ and } \ 
\bT := T \otimes_{\cO} \cO[[\Gal(k_{\infty}/k)]]^{\iota}. 
\]
We take a pro-$p$ abelian extension $\cK \subseteq \overline{k}$ of $k$ such that 
\begin{itemize}
\item $S_{\rm ram}(\cK/k) \cap S = \emptyset$, and 
\item $\cK$ contains the maximal $p$-subextension of the ray class field modulo $\fq$ for  all but finitely many primes $\fq$ of $k$. 
\end{itemize}
We denote the set of finite extensions of $k$ in  $\cK$ by $\Omega$; 
\[
\Omega := \{K \mid k \subseteq K \subseteq \cK, [K \colon k] < \infty\}. 
\]
We also fix a non-empty subset $\Sigma \subseteq S_{p}(k)$. 
For a field $K \in \Omega$, we put 
\begin{itemize}
\item $S_{K} := S \cup S_{\rm ram}(K/k)$, 
\item $\Lambda_{K} := \cO[[\Gal(k_{\infty}K/k)]]$, 
\item $\bT_{K} := T \otimes_{\bZ_{p}} \bZ_{p}[[\Gal(k_{\infty}K/k)]]^{\iota}$, and 
\item $H^{1}_{\Sigma}(\bT_{K}) := \bigoplus_{\fp \in \Sigma}H^{1}(G_{k_{\fp}},\bT_{K})$. 
\end{itemize}
For each field $K \in \Omega$, we fix a $\Lambda_{K}$-submodule $H^{1}_{f,\Sigma}(\bT_{K})$ of $H^{1}_{\Sigma}(\bT_{K})$. 
We take an integer $r > 0$. In this section, we assume the following hypotheses:
\begin{itemize}
\item[(H.0)] The class number of $k$ is coprime to $p$. 
\item[(H.1)] The module $H^{0}(G_{k_{\fp}},T/\fm T)$ vanishes for any prime $\fp \in \Sigma$.  
\item[(H.2)] For each field $K \in \Omega$, the $\Lambda_{K}$-module $H^{1}_{f,\Sigma}(\bT_{K})$ is free of rank $r$ and the canonical map $H^{1}_{\Sigma}(\bT_{K'}) \longrightarrow H^{1}_{\Sigma}(\bT_{K})$ induces 
an isomorphism  
\[
\Lambda_{K} \otimes_{\Lambda_{K'}} H^{1}_{f,\Sigma}(\bT_{K'}) \stackrel{\sim}{\longrightarrow} 
H^{1}_{f,\Sigma}(\bT_{K})
\] 
for any field $K' \in \Omega$ with $K \subseteq K'$. 
\item[(H.3)] 
The direct sum of localization maps $H^{1}(G_{k,S_{K}},\bT_{K}) \longrightarrow H^{1}_{\Sigma}(\bT_{K})$ is injective. 
\end{itemize}

\begin{definition}
For each field $K \in \Omega$, we set 
\[
H_{f}^{1}(G_{k,S_{K}},\bT_{K}) := \ker\left(H^{1}(G_{k,S_{K}},\bT_{K}) \longrightarrow H^{1}_{\Sigma}(\bT_{K})/H^{1}_{f,\Sigma}(\bT_{K})\right). 
\]
\end{definition}

\begin{remark}\label{rem:Lambda_{K}}\ 
\begin{itemize}
\item[(i)] Hypothesis~(H.0) implies that, for any field $K \in \Omega$, we have  the canonical isomorphism 
\[
\Gal(k_{\infty}K/k) \stackrel{\sim}{\longrightarrow} \Gal(k_{\infty}/k) \times \Gal(K/k). 
\]
Hence we obtain an isomorphism $\Lambda_{K} \stackrel{\sim}{\longrightarrow} \Lambda[\Gal(K/k)]$. 
In this paper, by using this isomorphism, we identify $\Lambda_{K}$ with $\Lambda[\Gal(K/k)]$. 

\item[(ii)] For any $\Lambda_{K}$-module $M$, we have the canonical isomorphism 
\[
\Hom_{\Lambda}(M, \Lambda) \stackrel{\sim}{\longrightarrow} \Hom_{\Lambda_{K}}(M, \Lambda_{K}); \varphi \mapsto 
\left(m \mapsto \sum_{g \in \Gal(K/k)}\varphi(gm)g^{-1}\right). 
\]
In particular, ${\rm Ext}^{i}_{\Lambda}(M, \Lambda) \cong {\rm Ext}^{i}_{\Lambda_{K}}(M, \Lambda_{K})$ for each integer $i \geq 0$. 
Since $\Lambda$ is a regular local ring with residue field $\cO/\fm$, we have  
\[
{\rm Ext}^{i}_{\Lambda_{K}}(\cO/\fm, \Lambda_{K}) \cong {\rm Ext}^{i}_{\Lambda}(\cO/\fm, \Lambda) \cong 
\begin{cases}
0 & \text{ if } i < \dim(\Lambda) = \dim(\Lambda_{K}), 
\\
\cO/\fm & \text{ if } i = \dim(\Lambda) = \dim(\Lambda_{K}), 
\end{cases}
\]
and we conclude that $\Lambda_{K}$ is Gorenstein since the residue field of $\Lambda_{K}$ is also $\cO/\fm$ . 
\end{itemize}
\end{remark}

\begin{definition}
For a field $K \in \Omega$, we define a $\Lambda_{K}$-module $R_{f, \Sigma}(\bT_{K})$ to be the cokernel of the canonical homomorphism $H^{1}_{f}(G_{k, S_{K}},\bT_{K}) \longrightarrow H^{1}_{f,\Sigma}(\bT_{K})$.   
%\[
%R_{f,\Sigma}(\bT_{K}) := {\rm coker}\left(H^{1}_{f}(G_{k,S},\bT_{K}) \longrightarrow H^{1}_{f,\Sigma}(\bT_{K})\right). 
%\]
\end{definition}

We note that, by the hypothesis~(H.3), we have  an exact sequence of $\Lambda_{K}$-modules 
\begin{align}\label{exactseq}
0 \longrightarrow H^{1}_{f}(G_{k,S_{K}},\bT_{K}) \longrightarrow H^{1}_{f,\Sigma}(\bT_{K}) \longrightarrow R_{f,\Sigma}(\bT_{K}) \longrightarrow 0. 
\end{align}

\begin{definition}\ 
\begin{itemize}
\item[(i)] Throughout this paper, we fix an arithmetic Frobenius element ${\rm Frob}_{\fq} \in \Gal(\overline{k}/k)$ for each prime $\fq$ of $k$. 
\item[(ii)] For each prime $\fq \not\in S$ of $k$, we define the Frobenius characteristic polynomial at $\fq$ by  
\[
P_{\fq}(x) := \det(1- x \cdot {\rm Frob}_{\fq} \mid T) \in \cO[x].
\]
\item[(iii)] 
For a field $K \in \Omega$, take a $\Lambda_{K}$-module $M_{K}$. 
Suppose that $\{M_{K}\}_{K \in \Omega}$ is an inverse system of $\Lambda[[\Gal(\cK/k)]]$-modules with transition maps  $\psi_{K',K} \colon M_{K'} \longrightarrow M_{K}$ for $K', K \in \Omega$ with $K \subseteq K'$. 
We then define a module ${\rm ES}(\{M_{K}\}_{K\in\Omega})$ by  
\[
{\rm ES}(\{M_{K}\}_{K\in\Omega}) := \left\{ (m_{K})_{K \in \Omega} \in \prod_{K \in \Omega}M_{K} \ \middle| \ 
\begin{array}{l}
\psi_{K',K}(m_{K'}) = \left(\prod_{\fq \in S_{K'} \setminus S_{K}} P_{\fq}({\rm Frob}_{\fq}^{-1})\right) \cdot m_{K} 
\\
\textrm{ for any fields $K', K \in \Omega$ with $K \subseteq K'$ } 
\end{array}
\right\}. 
\]
\item[(iv)] We set $\cE(T) := {\rm ES}(\{\Lambda_{K}\}_{K \in \Omega})$. 
\end{itemize}
\end{definition}

\begin{lemma}\label{lem:inverse-system}
We have  an inverse system $\{{\bigcap}^{r}_{\Lambda_{K}}H^{1}(G_{k,S_{K}},\bT_{K})\}_{K\in \Omega}$ of $\cO[[k_{\infty}\cK/k]]$-modules, where the transition maps are induced by the canonical maps $\bT_{K'} \longrightarrow \bT_{K}$ for  fields $K', K \in \Omega$ with $K \subseteq K'$. 
\end{lemma}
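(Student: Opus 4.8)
The statement asserts two things: first, that for each inclusion $K \subseteq K'$ in $\Omega$ the canonical map $\bT_{K'} \to \bT_{K}$ induces a well-defined transition map $\bigcap^{r}_{\Lambda_{K'}}H^{1}(G_{k,S_{K'}},\bT_{K'}) \to \bigcap^{r}_{\Lambda_{K}}H^{1}(G_{k,S_{K}},\bT_{K})$, and second, that these maps are compatible, so that one genuinely obtains an inverse system (indexed over $\Omega$, equivariantly for $\Gal(\cK/k)$, i.e.\ as $\cO[[\Gal(k_{\infty}\cK/k)]]$-modules). The plan is to build the transition map in two stages, mirroring the two-step nature of the bi-dual $\bigcap^{r}_{R}M = (\bigwedge^{r}_{R}M^{*})^{*}$.

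First I would treat the base change $\Lambda_{K} \otimes_{\Lambda_{K'}} -$. By Remark~\ref{rem:Lambda_{K}}(i), $\Lambda_{K'} \cong \Lambda_{K}[\Gal(K'/K)]$ is a finite free $\Lambda_{K}$-algebra, so for any finitely generated $\Lambda_{K'}$-module $N$ there is a canonical base-change morphism $\Lambda_{K}\otimes_{\Lambda_{K'}}\bigwedge^{r}_{\Lambda_{K'}}N^{*_{\Lambda_{K'}}} \to \bigwedge^{r}_{\Lambda_{K}}(\Lambda_{K}\otimes_{\Lambda_{K'}}N)^{*_{\Lambda_{K}}}$ (using that $\Lambda_{K'}$-duals and $\Lambda_{K}$-duals of $\Lambda_{K'}$-modules are identified compatibly with exterior powers, as in Remark~\ref{rem:Lambda_{K}}(ii)). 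Dualizing again over $\Lambda_{K}$ produces a natural map $\bigcap^{r}_{\Lambda_{K}}(\Lambda_{K}\otimes_{\Lambda_{K'}}N) \to \Lambda_{K}\otimes_{\Lambda_{K'}}\bigcap^{r}_{\Lambda_{K'}}N$; more useful here is the adjoint-type map going the other way, $\Lambda_{K}\otimes_{\Lambda_{K'}}\bigcap^{r}_{\Lambda_{K'}}N \to \bigcap^{r}_{\Lambda_{K}}(\Lambda_{K}\otimes_{\Lambda_{K'}}N)$, but what one actually needs for a transition map is simply: apply $\Lambda_{K}\otimes_{\Lambda_{K'}} -$ to $\bigcap^{r}_{\Lambda_{K'}}H^{1}(G_{k,S_{K'}},\bT_{K'})$ and map it to $\bigcap^{r}_{\Lambda_{K}}$ of the base change of $H^1$. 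So the real content of this first stage is identifying $\Lambda_{K}\otimes_{\Lambda_{K'}}H^{1}(G_{k,S_{K'}},\bT_{K'})$ with (a submodule closely related to) $H^{1}(G_{k,S_{K'}},\bT_{K})$. This is where the Euler factors enter: the standard descent argument — using that $\bT_{K'} = \bT_{K}\otimes_{\Lambda_K}\Lambda_{K'}$ as a $G_k$-module and Shapiro's lemma — gives $H^{1}(G_{k,S_{K'}},\bT_{K'}) \cong H^{1}(G_{k,S_{K'}},\bT_{K})$, and the change of the ramification set from $S_{K}$ to $S_{K'}$ is absorbed by the Euler factors $\prod_{\fq \in S_{K'}\setminus S_{K}}P_{\fq}(\mathrm{Frob}_{\fq}^{-1})$ appearing in the definition of ${\rm ES}(\{M_K\})$, via the usual localization/inflation-restriction computation comparing $H^1(G_{k,S_{K'}},-)$ with $H^1(G_{k,S_{K}},-)$ (semilocal cohomology at primes in $S_{K'}\setminus S_{K}$ contributes the Euler factor). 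Combining, one gets the second stage: the natural map $\bigcap^{r}_{\Lambda_{K'}}H^{1}(G_{k,S_{K'}},\bT_{K'}) \to \bigcap^{r}_{\Lambda_{K}}H^{1}(G_{k,S_{K}},\bT_{K})$, functorial in the module argument and linear over $\Lambda_K$ (hence $\Lambda$-linear, hence $\cO[[\Gal(k_\infty\cK/k)]]$-equivariant once one tracks the $\Gal(\cK/k)$-action through the identifications).

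Having the maps, I would then verify the inverse-system axioms: for $K \subseteq K' \subseteq K''$ the composite equals the map for $K \subseteq K''$, and the map for $K = K'$ is the identity. Both reduce to functoriality of bi-dual formation together with transitivity of the base-change isomorphisms and multiplicativity of the Euler-factor product over a tower (i.e.\ $\prod_{S_{K''}\setminus S_K} = \prod_{S_{K''}\setminus S_{K'}}\cdot\prod_{S_{K'}\setminus S_K}$, which is immediate since the sets partition). The $\Gal(\cK/k)$-equivariance is bookkeeping: each of the ingredient maps (base change, Shapiro isomorphism, localization at primes in $S_{K'}\setminus S_K$) is compatible with the residual Galois action, because $\cK/k$ is Galois and the whole setup is functorial in $K$.

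The main obstacle I expect is the \emph{exactness/compatibility of bi-dual with base change}, since $(-)^{*}$ is only left exact and $\bigcap^r$ involves two dualizations: one must be careful that the natural transformation between $\Lambda_{K}\otimes_{\Lambda_{K'}}\bigcap^{r}_{\Lambda_{K'}}(-)$ and $\bigcap^{r}_{\Lambda_{K}}(\Lambda_{K}\otimes_{\Lambda_{K'}}(-))$ is defined at all (it is, since $\Lambda_{K'}$ is free over $\Lambda_K$, so $\Hom_{\Lambda_{K'}}(-,\Lambda_{K'})$ behaves well under the restriction of scalars) and that composing it with the cohomological base-change isomorphism lands in the right place. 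A secondary technical point is the precise bookkeeping of the Euler factors: one needs the localization triangle relating ${\bf R}\Gamma(G_{k,S_{K'}},\bT_{K})$ and ${\bf R}\Gamma(G_{k,S_{K}},\bT_{K})$ to show that the cokernel contribution at each $\fq \in S_{K'}\setminus S_K$ is exactly multiplication by $P_{\fq}(\mathrm{Frob}_{\fq}^{-1})$ after passing to $\bigcap^r$ — here one uses that these primes lie outside $S$ hence are unramified in $\bT$, so the local cohomology is controlled by the Frobenius characteristic polynomial $P_{\fq}$, and that (H.3)-type injectivity keeps everything torsion-free enough for the bi-dual comparison to be an honest equality rather than merely a map. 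Everything else is formal manipulation with exterior bi-duals and Shapiro's lemma.
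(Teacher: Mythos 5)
Your broad plan (build the transition maps by base change from $\Lambda_{K'}$ to $\Lambda_K$, verify inverse-system compatibility) is the right shape, but the route you sketch departs from what the paper actually does in three material ways, and one of them is a genuine error.

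First, and most importantly, Euler factors play no role whatsoever in the proof of this lemma. You write that "the change of the ramification set from $S_K$ to $S_{K'}$ is absorbed by the Euler factors" via a localization computation. That is not what happens. The paper invokes \cite[Corollary~B.3.6]{R}: since no prime of $k$ splits completely in $k_\infty$ and $S_K\subseteq S_{K'}$, one has the \emph{literal equality} $H^1(G_{k,S_{K'}},\bT_K)=H^1(G_{k,S_K},\bT_K)$ — enlarging the ramification set does not change $H^1$ here, with no correction term and no Euler factor. The Euler factors appear only later, in the definition of the submodule ${\rm ES}(\{M_K\})\subseteq\prod_K M_K$; they are not part of the transition maps of the inverse system, which is all this lemma asserts. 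Conflating the two would lead you to try to produce a "cokernel controlled by $P_\fq$" where none is needed or wanted.

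Second, the bi-dual-vs-base-change comparison, which you flag as the main obstacle, is not handled by general-nonsense functoriality of duals over a free extension. The paper's mechanism is Lemma~\ref{lem:reduction}, which requires the cohomology to be the $H^1$ of a perfect complex with amplitude contained in $[1,n]$ over a ring satisfying (G$_1$) and (S$_2$); that in turn is why Hypothesis~(H.1) enters (via Corollary~\ref{cor:amp}) and why Remark~\ref{rem:Lambda_{K}}(ii) is cited to establish that $\Lambda_K$ and $\Lambda_{K'}$ are Gorenstein. Your sketch, which goes through $\Hom_{\Lambda_{K'}}(-,\Lambda_{K'})$ behaving well under restriction of scalars, gives natural maps in both directions but does not by itself produce the map in the correct direction landing where you want; as you yourself note, the direction you first write down is backwards. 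The concrete description of $\bigcap^r$ as a kernel inside $\bigwedge^r P^1$ (Lemma~\ref{lem:expre}) is what makes the transition map unambiguous, and you should not expect to avoid invoking (H.1) here.

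Third, a smaller point: the paper identifies $H^1(R\Gamma(G_{k,S_{K'}},\bT_{K'})\otimes^{\bL}_{\Lambda_{K'}}\Lambda_K)$ with $H^1(G_{k,S_{K'}},\bT_K)$ via Pottharst's derived base change (Theorem~\ref{thm:pot}(i)), not via Shapiro's lemma. Shapiro is used elsewhere in the paper (e.g.\ Proposition~\ref{prop:inj}), but for this lemma the derived-base-change formulation is what lets one stay at the level of perfect complexes and apply Lemma~\ref{lem:reduction} cleanly.

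So: the compatibility verification and equivariance bookkeeping in your last paragraph are fine and routine, but the heart of the construction is not where you placed it. You need (i) the equality $H^1(G_{k,S_{K'}},\bT_K)=H^1(G_{k,S_K},\bT_K)$ from \cite[Corollary~B.3.6]{R}, with no Euler factor; and (ii) the perfect-complex/Gorenstein machinery (H.1), Corollary~\ref{cor:amp}, Remark~\ref{rem:Lambda_{K}}(ii), Theorem~\ref{thm:pot}(i), Lemma~\ref{lem:reduction}) to descend the exterior bi-dual along $\Lambda_{K'}\to\Lambda_K$. Without (i) you would be stuck trying to compare modules that are actually equal; without (ii) the "natural map" between bi-duals over the two rings is not pinned down.
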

\begin{proof}
Let $K \in \Omega$ be a field. Take a field $K' \in \Omega$ containing $K$. 
Since no prime of $k$ splits completely in $k_{\infty}$ and $S_{K} \subseteq S_{K'}$, we have 
\[
H^{1}(G_{k,S_{K'}},\bT_{K}) = H^{1}(G_{k,S_{K}},\bT_{K})
\]
(see \cite[Corollary~B.3.6]{R}). 
By hypothesis~(H.1), the complex ${\bf R}\Gamma(G_{k,S_{K'}}, \bT_{K'})$ satisfies 
the assumption in Corollary~\ref{cor:amp}. Hence ${\bf R}\Gamma(G_{k,S_{K'}}, \bT_{K'})$ is a perfect complex of $\Lambda_{K'}$-modules having perfect amplitude contained in $[1,2]$ by Corollary~\ref{cor:amp}.  
Since $\Lambda_{K}$ and $\Lambda_{K'}$ are Gorenstein by Remark~\ref{rem:Lambda_{K}}~(ii), 
Lemma~\ref{lem:reduction} shows that we have the canonical homomorphism 
\[
{\bigcap}^{r}_{\Lambda_{K'}}H^{1}(G_{k,S_{K'}},\bT_{K'}) \longrightarrow 
{\bigcap}^{r}_{\Lambda_{K}}H^{1}({\bf R}\Gamma(G_{k,S_{K'}}, \bT_{K'}) \otimes^{\bL}_{\Lambda_{K'}} \Lambda_{K}). 
\]
By Theorem~\ref{thm:pot}~(i), we have  an isomorphism ${\bf R}\Gamma(G_{k,S_{K'}}, \bT_{K'}) \otimes^{\bL}_{\Lambda_{K'}} \Lambda_{K} \stackrel{\sim}{\longrightarrow} {\bf R}\Gamma(G_{k,S_{K'}}, \bT_{K})$, and hence 
\[
H^{1}({\bf R}\Gamma(G_{k,S_{K'}}, \bT_{K'}) \otimes^{\bL}_{\Lambda_{K'}} \Lambda_{K}) \stackrel{\sim}{\longrightarrow} H^{1}(G_{k,S_{K'}},\bT_{K}) = H^{1}(G_{k,S_{K}},\bT_{K}). 
\]
\end{proof}

Let us recall the definition of higher rank Euler systems for $T$. 

\begin{definition}\label{def:euler system}
For an integer $r \geq 0$, 
we define the module ${\rm ES}_{r}(T)$ of Euler systems of rank $r$ (for $T$) by  
\[
{\rm ES}_{r}(T) := {\rm ES}\left(\left\{{\bigcap}^{r}_{\Lambda_{K}}H^{1}(G_{k,S_{K}},\bT_{K})\right\}_{K \in \Omega}\right). 
\]
Note that ${\rm ES}_{0}(T) = \cE(T)$. 
Since the canonical map 
\[
{\bigcap}^{r}_{\Lambda_{K}}H^{1}_{f}(G_{k,S_{K}},\bT_{K}) \longrightarrow {\bigcap}^{r}_{\Lambda_{K}}H^{1}(G_{k,S_{K}},\bT_{K})
\] 
is injective by Lemma~\ref{lem:sub},  
we can also define 
\[
{\rm ES}_{f,r}(T) := {\rm ES}_{r}(T) \cap \prod_{K \in \Omega}{\bigcap}^{r}_{\Lambda_{K}}H^{1}_{f}(G_{k,S_{K}},\bT_{K}). 
\]
\end{definition}

\begin{remark}
The definition of the polynomial $P_{\fq}(x)$ is the same as \cite[Definition~1.2.2]{MRkoly} and slightly different from the one appeared in \cite[Definition~2.1.1]{R} and \cite[\S6.1]{bss}. 
However, one can switch back between the two choices (see \cite[\S9.6]{R} and 
\cite[Remark~3.2.3]{MRkoly}). 
Hence Lemma~\ref{lem:inv-bidual} shows that the definition of higher rank Euler systems in this paper is essentially the same as the one occurred in \cite[Definition~6.4]{bss}. 
\end{remark}

Next, let us generalize the notion of characteristic ideals. 

\begin{definition}
Let $R$ be a noetherian ring and $M$ a finitely generated $R$-module. 
Take an integer $r>0$ and an exact sequence of $R$-modules 
\[
0 \longrightarrow N \longrightarrow R^{r} \longrightarrow M \longrightarrow 0. 
\]
We define a characteristic ideal of $M$ by 
\[
{\rm char}_{R}(M) := \im\left({\bigcap}^{r}_{R}N \longrightarrow {\bigcap}^{r}_{R}R^{r} = R \right). 
\]
\end{definition}

We study basic properties of characteristic ideals in Appendix~\ref{sec:char}. 
In particular, we show in Appendix~\ref{sec:char} that the ideal ${\rm char}_{R}(M)$ is independent of the choice of the exact sequence $0 \longrightarrow N \longrightarrow R^{r} \longrightarrow M \longrightarrow 0$ (Remark~\ref{rem:indep-char}) and that, when $R$ is a normal ring, the ideal ${\rm char}_{R}(M)$ coincides with the usual one (Remark~\ref{rem:fitt-char}).

Since, by Remark~\ref{rem:Lambda_{K}}~(ii), the ring $\Lambda_{K}$ is Gorenstein for any field $K \in \Omega$, the following proposition follows from Theorem~\ref{thm:inequality}.

\begin{proposition}%[Theorem~\ref{thm:inequality}]
\label{prop:lambda-inequality}
Let $K \in \Omega$ be a field and $M$ a finitely generated $\Lambda_{K}$-module. 
Then for any $\Lambda_{K}$-submodule $N$ of $M$, we have  
\[
{\rm char}_{\Lambda_{K}}(M) \subseteq {\rm char}_{\Lambda_{K}}(N). 
\]
\end{proposition}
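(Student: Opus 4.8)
The plan is to deduce the proposition directly from Theorem~\ref{thm:inequality}: the ring $\Lambda_{K}$ is Gorenstein by Remark~\ref{rem:Lambda_{K}}~(ii), so the assertion is exactly the general inequality of that theorem specialised to $R = \Lambda_{K}$, and the proof is complete once Theorem~\ref{thm:inequality} is granted. Since the substance therefore lies in that general statement, I will describe how I would prove it. So let $R$ be a noetherian Gorenstein ring, $M$ a finitely generated $R$-module, and $N \subseteq M$ a submodule; the goal is ${\rm char}_{R}(M) \subseteq {\rm char}_{R}(N)$.

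First I would make two reductions. By the independence of the presentation (Remark~\ref{rem:indep-char}), both characteristic ideals may be computed from free presentations of any admissible rank, so the ranks can be aligned at will; and since the containment we want is transitive, I may filter $N \subseteq M$ by finitely many submodules with cyclic successive quotients and thereby assume that $M/N$ is cyclic. At the level of the total ring of fractions $Q$ of $R$ the inequality is then easy: using the description of exterior bi-duals in Remark~\ref{rem:exterior}, one checks that ${\rm char}_{R}(-) \otimes_{R} Q$ is the ideal of $Q$ which is the full factor over each minimal prime at which the module vanishes and is $0$ over the remaining factors (a free module of rank $< r$ has vanishing $r$-th exterior bi-dual); since $N_{\fp} = 0$ whenever $M_{\fp} = 0$, this gives ${\rm char}_{R}(M) \otimes_{R} Q \subseteq {\rm char}_{R}(N) \otimes_{R} Q$.

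The real content is to upgrade this generic inequality to a genuine containment of ideals of $R$, and this is where the Gorenstein hypothesis is indispensable. I would express ${\rm char}_{R}(M)$ through the exterior bi-dual of the module of relations in a free presentation of $M$, and track its behaviour along $0 \to N \to M \to M/N \to 0$ using the appendix's machinery: base change of exterior bi-duals along perfect complexes (in the spirit of Lemma~\ref{lem:reduction}), together with the homological characterisation of the Gorenstein property recorded in Remark~\ref{rem:Lambda_{K}}~(ii). I expect this upgrade to be the main obstacle. Over a \emph{normal} ring one would instead invoke the structure theorem, reducing everything to additivity of lengths at height-one primes and even obtaining the exact equality ${\rm char}_{R}(M) = {\rm char}_{R}(N)\,{\rm char}_{R}(M/N)$; over a merely Gorenstein ring there is no such structure theorem, the product formula can fail, and the homological argument has to be run directly in order to recover at least the inequality.
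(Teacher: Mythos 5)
Your derivation of Proposition~\ref{prop:lambda-inequality} is exactly the paper's: $\Lambda_K$ is Gorenstein by Remark~\ref{rem:Lambda_{K}}~(ii), hence satisfies (G$_1$) and (S$_2$), and the containment is then an instance of Theorem~\ref{thm:inequality}. As a proof of the proposition as stated, this is correct and complete.

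Your appended sketch of Theorem~\ref{thm:inequality} itself, however, is not the paper's argument and, as you concede, is not a proof. The paper does not pass through the total ring of fractions or filter by cyclic quotients. Instead it first observes (Proposition~\ref{prop:reflexible}) that characteristic ideals are reflexive, so by Lemma~\ref{lem:inequality} the containment can be checked after localising at primes of height $\leq 1$; this reduces to $R$ a Gorenstein local ring of dimension at most one. In that setting one builds, via the horseshoe lemma, compatible free presentations of $N$, $M$, and $M/N$ fitting into a commutative ladder of short exact sequences, and the crucial point is that $\Ext_R^1(X/Y,R)\cong \Ext^2_R(M/N,R)=0$ because $\dim R\leq 1$ and $R$ is Gorenstein; taking duals and wedging against a fixed top form $\omega$ of the extra free summand then produces a map ${\bigcap}^{r}_{R}Y \to ({\bigwedge}^{r+s}_{R}X^*)^{**}\cong{\bigcap}^{r+s}_{R}X$ compatible with the inclusions into $R$, which gives the containment directly. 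Your suggested route ``prove the generic inequality over $Q$ and then upgrade'' is precisely where the difficulty lies, and you do not indicate how to perform the upgrade; the paper's reduction-to-dimension-one step is what makes the homological vanishing available and is not present in your sketch. Also note that you strengthen the hypothesis of Theorem~\ref{thm:inequality} from (G$_1$) + (S$_2$) to Gorenstein; that is harmless for Proposition~\ref{prop:lambda-inequality}, but does not give the theorem in its stated generality.
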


\begin{proposition}\label{prop:main}
Assume Hypotheses (H.0) -- (H.3). 
Let us fix an isomorphism 
\[
\varprojlim_{K\in\Omega}H^{1}_{f,\Sigma}(\bT_{K}) \stackrel{\sim}{\longrightarrow} \Lambda[[\Gal(\cK/k)]]^{r}. 
\]
Then it naturally induces an injective homomorphism of $\Lambda[[\Gal(\cK/k)]]$-modules
\[
{\rm ES}_{f,r}(T) \hooklongrightarrow \cE(T). 
\]
%such that 
%this injection depends only on the choice of the isomorphism 
%\[
%\varprojlim_{K\in\Omega}H^{1}_{f,\Sigma}(\bT_{K}) \stackrel{\sim}{\longrightarrow} \Lambda[[\Gal(\cK/k)]]^{r}
%\]
%and that 
Furthermore, we have 
\[
\cE(T) \cap \prod_{K \in \Omega} {\rm char}_{\Lambda_{K}}(R_{f, \Sigma}(\bT_{K})) = 
{\rm im}\left({\rm ES}_{f,r}(T) \hooklongrightarrow  \cE(T) \right). 
\] 
\end{proposition}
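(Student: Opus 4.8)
The plan is to build the injection $\mathrm{ES}_{f,r}(T) \hooklongrightarrow \cE(T)$ level by level and then identify its image. First I would work at a fixed field $K \in \Omega$. By Hypothesis~(H.2) and the fixed isomorphism, $\varprojlim_{K} H^{1}_{f,\Sigma}(\bT_{K}) \cong \Lambda[[\Gal(\cK/k)]]^{r}$, and passing to the quotient by the augmentation-type kernel associated with $K$ gives $H^{1}_{f,\Sigma}(\bT_{K}) \cong \Lambda_{K}^{r}$; moreover these isomorphisms are compatible with the transition maps. Applying $\bigwedge^{r}_{\Lambda_{K}}$ and then $(-)^{*}$, the exact sequence~\eqref{exactseq} gives a canonical map
\[
{\bigcap}^{r}_{\Lambda_{K}}H^{1}_{f}(G_{k,S_{K}},\bT_{K}) \longrightarrow {\bigcap}^{r}_{\Lambda_{K}}H^{1}_{f,\Sigma}(\bT_{K}) = {\bigcap}^{r}_{\Lambda_{K}}\Lambda_{K}^{r} = \Lambda_{K},
\]
and by Lemma~\ref{lem:sub} the first arrow here — as well as the arrow ${\bigcap}^{r}_{\Lambda_{K}}H^{1}_{f}(G_{k,S_{K}},\bT_{K}) \to {\bigcap}^{r}_{\Lambda_{K}}H^{1}(G_{k,S_{K}},\bT_{K})$ — is injective. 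Using the functoriality of $\bigcap^{r}$ under $\Lambda_{K}$-linear maps, composing these produces for each $K$ a map ${\bigcap}^{r}_{\Lambda_{K}}H^{1}(G_{k,S_{K}},\bT_{K}) \supseteq {\rm im}\!\left({\bigcap}^{r}_{\Lambda_{K}}H^{1}_{f}\right) \to \Lambda_{K}$; restricting to Euler systems and checking compatibility with the Euler-system norm relations (the Frobenius factors $\prod_{\fq} P_{\fq}({\rm Frob}_{\fq}^{-1})$ match on both sides because the isomorphism $H^{1}_{f,\Sigma}(\bT_{K}) \cong \Lambda_{K}^{r}$ is induced from the single inverse-limit isomorphism) yields the claimed homomorphism $\mathrm{ES}_{f,r}(T) \to \cE(T)$. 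Injectivity follows because an element of $\mathrm{ES}_{f,r}(T)$ is by definition a tuple in $\prod_{K}{\bigcap}^{r}_{\Lambda_{K}}H^{1}_{f}(G_{k,S_{K}},\bT_{K})$, and each component map ${\bigcap}^{r}_{\Lambda_{K}}H^{1}_{f} \to \Lambda_{K}$ is injective: tensoring~\eqref{exactseq} and using that $H^{1}_{f,\Sigma}(\bT_{K})$ is free of rank $r$ with $R_{f,\Sigma}(\bT_{K})$ torsion, one sees $\bigwedge^{r}_{\Lambda_{K}}\!\big((H^{1}_{f,\Sigma})^{*}\big) \to \bigwedge^{r}_{\Lambda_{K}}\!\big((H^{1}_{f})^{*}\big)$ is an isomorphism after inverting a nonzerodivisor, whence the dual map on bi-duals is injective.

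Next I would identify the image with $\cE(T) \cap \prod_{K} {\rm char}_{\Lambda_{K}}(R_{f,\Sigma}(\bT_{K}))$. Fix $K$. Under the identification $H^{1}_{f,\Sigma}(\bT_{K}) = \Lambda_{K}^{r}$, the exact sequence~\eqref{exactseq} becomes $0 \to H^{1}_{f}(G_{k,S_{K}},\bT_{K}) \to \Lambda_{K}^{r} \to R_{f,\Sigma}(\bT_{K}) \to 0$, which is exactly a presentation of the form $0 \to N \to R^{r} \to M \to 0$ used to define ${\rm char}_{\Lambda_{K}}(R_{f,\Sigma}(\bT_{K}))$ with $N = H^{1}_{f}(G_{k,S_{K}},\bT_{K})$ and $M = R_{f,\Sigma}(\bT_{K})$. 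By the very definition of the characteristic ideal, ${\rm char}_{\Lambda_{K}}(R_{f,\Sigma}(\bT_{K})) = {\rm im}\big({\bigcap}^{r}_{\Lambda_{K}}H^{1}_{f}(G_{k,S_{K}},\bT_{K}) \to {\bigcap}^{r}_{\Lambda_{K}}\Lambda_{K}^{r} = \Lambda_{K}\big)$, and by Remark~\ref{rem:indep-char} this is independent of the chosen presentation. But that image is precisely the image of the $K$-component map constructed above. Therefore a tuple $(c_{K})_{K} \in \cE(T)$ lies in the image of $\mathrm{ES}_{f,r}(T)$ if and only if each $c_{K}$ lies in ${\rm char}_{\Lambda_{K}}(R_{f,\Sigma}(\bT_{K}))$ \emph{and} the preimages can be chosen compatibly so as to assemble into an element of $\mathrm{ES}_{f,r}(T)$. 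The forward inclusion ($\subseteq$) is immediate from the $K$-component description. For the reverse inclusion I would start from $(c_{K})_{K} \in \cE(T) \cap \prod_{K}{\rm char}_{\Lambda_{K}}(R_{f,\Sigma}(\bT_{K}))$, lift each $c_{K}$ to a unique $\kappa_{K} \in {\bigcap}^{r}_{\Lambda_{K}}H^{1}_{f}(G_{k,S_{K}},\bT_{K})$ (uniqueness by the injectivity just established), and argue that the norm relations satisfied by $(c_{K})_{K}$ in $\cE(T)$ transfer, via the injective component maps, to the norm relations defining $\mathrm{ES}_{f,r}(T)$ for $(\kappa_{K})_{K}$; this uses that the transition maps $\psi_{K',K}$ on the $\Lambda_{K}$-side and the transition maps on ${\bigcap}^{r}_{\Lambda_{K}}H^{1}_{f}$ commute with the component maps and that $\Lambda_{K}$ (being a domain of characteristic $0$ mixed with $p$) has no $\Lambda_{K'}$-torsion killing the Frobenius factors.

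The main obstacle I anticipate is the compatibility of the \emph{bi-dual} functor $\bigcap^{r}$ with the transition maps and, relatedly, making the "transfer of norm relations" argument rigorous: bi-dual is only left exact in a weak sense, so one cannot naively chase diagrams. Concretely, I need that the square relating $\psi_{K',K}$ on $\Lambda$-levels to the map ${\bigcap}^{r}_{\Lambda_{K'}}H^{1}_{f}(G_{k,S_{K'}},\bT_{K'}) \to {\bigcap}^{r}_{\Lambda_{K}}H^{1}_{f}(G_{k,S_{K}},\bT_{K})$ from Lemma~\ref{lem:inverse-system} commutes up to the correct Frobenius factor — and that the unique lift $\kappa_{K}$ produced from $c_{K}$ is genuinely independent of $K$ in the inverse-system sense. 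I would handle this by working rationally first: after tensoring with the total ring of fractions $Q(\Lambda_{K})$, all the bi-duals become honest exterior powers of free modules (Remark~\ref{rem:exterior}), the maps become transparent, the norm relations are easily checked, and then I descend back using injectivity of ${\bigcap}^{r}_{\Lambda_{K}}H^{1}_{f} \hooklongrightarrow Q(\Lambda_{K}) \otimes {\bigcap}^{r}_{\Lambda_{K}}H^{1}_{f}$ together with the fact that the $c_{K}$ already live in $\Lambda_{K}$. A secondary technical point is checking that $H^{1}_{f}(G_{k,S_{K}},\bT_{K})$ has no nonzero $\Lambda_{K}$-torsion (so that the bi-dual behaves well and Lemma~\ref{lem:sub} applies), which should follow from (H.3) and the torsion-freeness of $H^{1}(G_{k,S_{K}},\bT_{K})$ built into the setup via Corollary~\ref{cor:amp}.
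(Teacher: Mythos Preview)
Your proposal is correct and follows essentially the same route as the paper: build the componentwise map ${\bigcap}^{r}_{\Lambda_{K}}H^{1}_{f}(G_{k,S_{K}},\bT_{K}) \hookrightarrow {\bigcap}^{r}_{\Lambda_{K}}H^{1}_{f,\Sigma}(\bT_{K}) = \Lambda_{K}$ via Lemma~\ref{lem:sub}, and then read off the image as ${\rm char}_{\Lambda_{K}}(R_{f,\Sigma}(\bT_{K}))$ directly from the definition of the characteristic ideal applied to the exact sequence~\eqref{exactseq}.

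The one place you overcomplicate matters is your ``main obstacle''. The paper sidesteps the whole issue of transferring norm relations by working at the level of inverse systems: since $\{{\bigcap}^{r}_{\Lambda_{K}}H^{1}_{f,\Sigma}(\bT_{K})\}_{K}$ is an inverse system (by the argument of Lemma~\ref{lem:inverse-system} together with~(H.2)) and the fixed isomorphism identifies it with $\{\Lambda_{K}\}_{K}$, applying the $\mathrm{ES}$ functor to the componentwise-injective map of inverse systems $\{{\bigcap}^{r}_{\Lambda_{K}}H^{1}_{f}\}_{K} \hookrightarrow \{\Lambda_{K}\}_{K}$ yields the injection $\mathrm{ES}_{f,r}(T) \hookrightarrow \cE(T)$ immediately. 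For the reverse inclusion no rationalisation detour is needed: if $(c_{K})_{K} \in \cE(T)$ has each $c_{K}$ in the image, the unique lifts $\kappa_{K}$ automatically satisfy the Euler-system relation, because $\psi_{K',K}(\kappa_{K'})$ and $\bigl(\prod_{\fq}P_{\fq}({\rm Frob}_{\fq}^{-1})\bigr)\kappa_{K}$ have the same image in $\Lambda_{K}$ under an injective map that commutes with both the transition maps and scalar multiplication.
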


\begin{proof}
%The $\Lambda[[\Gal(\cK/k)]]$-module $\varprojlim_{K\in\Omega}H^{1}_{f,\Sigma}(\bT_{K})$ is free of rank $r$ by hypothesis (H.2).  
%Hence one can fix an isomorphism of $\Lambda[[\Gal(\cK/k)]]$-modules: 
%\[
%\varprojlim_{K\in\Omega}H^{1}_{f,\Sigma}(\bT_{K}) \stackrel{\sim}{\longrightarrow} \Lambda[[\Gal(\cK/k)]]^{r}. 
%\]
By the definition of $\cE(T)$, the fixed isomorphism 
\[
\varprojlim_{K\in\Omega}H^{1}_{f,\Sigma}(\bT_{K}) \stackrel{\sim}{\longrightarrow} \Lambda[[\Gal(\cK/k)]]^{r}
\]
induces an isomorphism of $\Lambda[[\Gal(\cK/k)]]$-modules 
\[
{\rm ES}\left(\left\{{\bigcap}^{r}_{\Lambda_{K}}H^{1}_{f,\Sigma}(\bT_{K})\right\}_{K \in \Omega}\right) \stackrel{\sim}{\longrightarrow} \cE(T). 
\]
By the same argument as in the proof of Lemma~\ref{lem:inverse-system}, 
%We remark that hypothesis~(H.1), Theorem~\ref{thm:pot}(i), Corollary~\ref{cor:amp}, and Lemma~\ref{lem:reduction} 
we obtain an inverse system 
\[
\left\{{\bigcap}^{r}_{\Lambda_{K}}H^{1}_{\Sigma}(\bT_{K})\right\}_{K \in \Omega},
\] 
and hence we get a $\Lambda[[\Gal(\cK/k)]]$-module 
\[
{\rm ES}\left(\left\{{\bigcap}^{r}_{\Lambda_{K}}H^{1}_{\Sigma}(\bT_{K})\right\}_{K \in \Omega}\right). 
\]
Thus we obtain a diagram 
\[
\xymatrix{
{\rm ES}_{f,r}(T) \ar@{^{(}->}[d] & \cE(T) \ar@{^{(}->}[d] 
\\
{\rm ES}_{r}(T) \ar[r] & {\rm ES}\left(\left\{{\bigcap}^{r}_{\Lambda_{K}}H^{1}_{\Sigma}(\bT_{K})\right\}_{K \in \Omega}\right). 
}
\] 
Furthermore, for each field $K \in \Omega$, there is also a commutative diagram 
\[
\xymatrix{
{\bigcap}^{r}_{\Lambda_{K}}H^{1}_{f}(G_{k,S_{K}},\bT_{K}) %\ar@{^{(}->}[d] 
\ar[d] \ar[r] & \Lambda_{K} \ar[d] 
\\
{\bigcap}^{r}_{\Lambda_{K}}H^{1}(G_{k,S_{K}},\bT_{K}) \ar[r] & {\bigcap}^{r}_{\Lambda_{K}}H^{1}_{\Sigma}(\bT_{K}).  
}
\] 
These diagrams show the existence of the canonical homomorphism 
${\rm ES}_{f,r}(T) \longrightarrow \cE(T)$, and the injectivity  follows from  Hypothesis~(H.3) and Lemma~\ref{lem:sub}. 
Since $H^{1}_{f,\Sigma}(\bT_{K})$ is a free $\Lambda_{K}$-module of rank $r$ by Hypothesis~(H.2), 
the second assertion follows from the exact sequence~\eqref{exactseq} and the definition of characteristic ideal. 
\end{proof}

\section{Higher rank Euler systems over totally real fields}
\label{sec:euler G_{m}} 

In this section, we will prove the main theorem of this paper. 

Throughout this section, let $k$ be a totally real field with degree $r$ and 
\[
\chi \colon G_{k} \longrightarrow \overline{\bQ}^{\times}
\] 
a non-trivial finite order even character.  
Let $p$ be an odd prime, coprime to the class number of $k$ and the order of $\chi$. 
We put 
\[
L := \overline{\bQ}^{\ker(\chi)}, \cO := \bZ_{p}[\im(\chi)], \, 
S := S_{\infty}(k) \cup S_{p}(k) \cup S_{\rm ram}(L/k), \, 
\text{ and } \Sigma := S_{p}(k). 
\]
Let 
\[
T := \cO(1) \otimes \chi^{-1}, 
\]
that is, $T \cong \cO$ as $\cO$-modules and the Galois group $G_{k,S}$ acts on $T$ via the character 
$\chi_{\rm cyc}\chi^{-1}$, where $\chi_{\rm cyc}$ denotes the cyclotomic character of $k$. 
We write $k_{\infty}/k$ for the cyclotomic $\bZ_{p}$-extension of $k$. 
We also set   
\[
\Lambda := \cO[[\Gal(k_{\infty}/k)]] \ \text{ and } \ \bT := T \otimes_{\cO} \cO[[\Gal(k_{\infty}/k)]]^{\iota}. 
\]
Let $\cK$ denote the maximal pro-$p$ abelian extension of $k$ satisfying $S_{\rm ram}(\cK/k) \cap S = \emptyset$.  
Let $\Omega$, $\Lambda_{K}$, and $\bT_{K}$ be as in \S\ref{sec:euler}. 
Put 
\[
H^{1}_{f, \Sigma}(\bT_{K}) := H^{1}_{\Sigma}(\bT_{K}) := \bigoplus_{\fp \in S_{p}(k)}H^{1}(G_{k_{\fp}},\bT_{K}). 
\]

In this setting, the injectivity of the direct sum of localization maps (the weak Leopoldt conjecture for the multiplicative group) is proved by Iwasawa in \cite{Iwa73a}:

\begin{proposition}%[{\cite[Proposition~1.3.2 and Remarks~1.3.3(i)]{PR00}}]
\label{prop:inj}
For a field $K \in \Omega$, the canonical homomorphism 
\[
H^{1}(G_{k,S_{K}},\bT_{K}) \longrightarrow H^{1}_{\Sigma}(\bT_{K})
\]
is injective, i.e.,  Hypothesis~(H.3) is satisfied. 
\end{proposition}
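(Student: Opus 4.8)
The plan is to reduce Hypothesis~(H.3) to Iwasawa's weak Leopoldt theorem for $\bG_m$ \cite{Iwa73a}, the work being the reduction: peeling off the prime-to-$p$ character $\chi$, using Shapiro's lemma, and applying Poitou--Tate duality.

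\emph{Reduction.} Since $p$ is odd and $[K\colon k]$ is a $p$-power, $K$ and $L$ are totally real, and $\Gal(k_\infty K/k)=\Gal(k_\infty/k)\times\Gal(K/k)$ because $k_\infty\cap K=k$. Shapiro's lemma identifies $H^1(G_{k,S_K},\bT_K)$ and $H^1_\Sigma(\bT_K)$ with the global and semilocal-at-$p$ Iwasawa cohomology of $T$ along the cyclotomic $\bZ_p$-extension $K_\infty$ of $K$, compatibly with localization. I would first show $H^1(G_{k_\fp},\bT_K)=0$ for all $\fp\in S_K\setminus S_p(k)$: for archimedean $\fp$ because $p$ is odd, and for the finite such $\fp$ (those ramifying in $L$ or $K$ but not above $p$) because $k_\infty K/k$ is unramified there, so the local Iwasawa cohomology is a limit along an unramified $\bZ_p$-tower on which corestriction is eventually multiplication by $p$ (together with $T$ having no inertia-invariants where $\chi$ ramifies). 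Then the kernel in the proposition equals $\ker\bigl(H^1(G_{k,S_K},\bT_K)\to\bigoplus_{\fp\in S_K}H^1(G_{k_\fp},\bT_K)\bigr)$.

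\emph{Duality.} By Poitou--Tate/Artin--Verdier duality this kernel is the image of $H^1_c(G_{k,S_K},\bT_K)=H^2(G_{k,S_K},\cA_K)^{\vee}$ in $H^1(G_{k,S_K},\bT_K)$, where $\cA_K=\Hom_{\bZ_p}(\bT_K,\mu_{p^\infty})$. It therefore suffices to prove (i) $H^1(G_{k,S_K},\bT_K)$ is $\Lambda_K$-torsion-free, and (ii) $H^2(G_{k,S_K},\cA_K)$ is $\Lambda_K$-cotorsion. For (i): applying cohomology to $0\to\Lambda_K\to\operatorname{Frac}\Lambda_K\to\operatorname{Frac}\Lambda_K/\Lambda_K\to0$ (tensored with the free module $\bT_K$) and using perfectness of ${\bf R}\Gamma(G_{k,S_K},\bT_K)$, the $\Lambda_K$-torsion submodule is $H^0(G_{k,S_K},\bT_K\otimes_{\Lambda_K}\operatorname{Frac}\Lambda_K/\Lambda_K)$; this vanishes because the relevant $G_{K_\infty}$-invariants of $T$ are zero, $G_{K_\infty}$ acting on $T=\cO(1)\otimes\chi^{-1}$ through $\chi_{\mathrm{cyc}}\chi^{-1}$, a character of order prime to $p$ which cannot be trivial --- otherwise $\chi$ would equal the Teichm\"uller character $\omega$, impossible since $\chi$ is even and $\omega$ is odd. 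Statement (ii) is the weak Leopoldt assertion: descending to the totally real field $L$, over which the $\chi$-twist in $\cA_K$ disappears (and $p\nmid[L\colon k]$ makes the $\chi$-isotypic projection exact), and then to the cyclotomic $\bZ_p$-extension, it reduces to Iwasawa's theorem \cite{Iwa73a}. Since a $\Lambda_K$-torsion submodule of a $\Lambda_K$-torsion-free module is zero, the kernel vanishes.

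\emph{Main obstacle.} The essential input is Iwasawa's theorem; the remainder is bookkeeping, but three points need care: the Shapiro identification and flat base change making ``perfect complex'' and the duality applicable; the vanishing of the local terms at $S_K\setminus S_p(k)$; and matching statement (ii) with the precise form of Iwasawa's result after the descent to $L$ and the cyclotomic tower, keeping careful track of the Tate twist --- note that $\Hom_{\bZ_p}(T,\mu_{p^\infty})$ carries no residual cyclotomic twist --- and of which isotypic component is in play.
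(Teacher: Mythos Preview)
Your overall strategy---Shapiro, Poitou--Tate, and Iwasawa's weak Leopoldt theorem---is the paper's as well, and your treatment of archimedean primes and of primes in $S_{\rm ram}(L/k)\setminus S_p(k)$ (where $\chi$ ramifies, so $T/\fm T$ has neither local invariants nor coinvariants) is correct. The gap is at primes $\fp\in S_{\rm ram}(K/k)$. The assertion ``$k_\infty K/k$ is unramified there'' is false: $K/k$ ramifies at $\fp$ by definition. More substantively, even after passing to $K$ via Shapiro (so that $\bT=T\otimes\Lambda^\iota$ is genuinely unramified at primes $\fp'$ of $K$ above $\fp$), the local Iwasawa cohomology need not vanish. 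Its unramified part is
\[
H^1_f(K_{\fp'},\bT)=\bT/(\mathrm{Frob}_{\fp'}-1)\bT\cong\Lambda/(u\gamma^{-a}-1),\qquad u=\chi_{\rm cyc}\chi^{-1}(\mathrm{Frob}_{\fp'})\in\cO^\times,
\]
and since $\fp\in S_{\rm ram}(K/k)$ forces $N\fp\equiv 1\pmod p$ (tame $p$-ramification), one finds $u\equiv\chi(\mathrm{Frob}_\fp)^{-f}\pmod\fm$; thus $u\gamma^{-a}-1$ lies in the maximal ideal of $\Lambda$, and $H^1_f\neq 0$, precisely when $\fp$ splits in $L$. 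Nothing in the setup prevents $K\in\Omega$ from ramifying at such a prime, so your reduction fails in general, and Poitou--Tate then only bounds the kernel of localization at \emph{all} of $S_K$---which is contained in, but may be strictly smaller than, the kernel you need.

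The paper sidesteps this by restricting to $KL$ rather than to $K$. Over $KL$ the representation becomes $\bT'=\cO(1)\otimes\Lambda^\iota$, unramified outside $p$, so one may shrink the set of places to $S_p(KL)\cup S_\infty(KL)$ via \cite[Corollary~B.3.6]{R} and apply Poitou--Tate there. The controlling group $H^2(G_{KL,S_p\cup S_\infty},(\bT')^\vee(1))$ then \emph{vanishes} outright by \cite[Theorem~11.3.2(ii)]{NSW}, so your torsion-free/cotorsion detour (i)+(ii) becomes unnecessary. Your step (ii) was already descending toward $L$; going up to $KL$ from the outset is exactly what makes the problematic reduction disappear.
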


\begin{proof}
Let $\tilde{S}_{K}$ denote the set of primes of $KL$ above $S_{K}$. 
Then, by Shapiro's lemma, we have  the following commutative diagram: 
\[
\xymatrix{
H^{1}(G_{k,S_{K}},\bT_{K}) \ar[r] \ar[d] & \bigoplus_{\fp \in S_{p}(k)}H^{1}(G_{k_{\fp}},\bT_{K}) \ar[d]
\\
H^{1}(G_{KL, \widetilde{S}_{K}}, \bT') \ar[r]  & \bigoplus_{\fp \in S_{p}(KL)}H^{1}(G_{(KL)_{\fp}}, \bT')
}
\]
Here $\bT' := \cO(1) \otimes_{\cO} \cO[[\Gal(k_{\infty}/k)]]^{\iota}$ and the vertical arrows are the restriction maps.  
The inflation-restriction exact sequence and \cite[Lemma~B.3.2]{R} show that both of the vertical arrows are injective, and hence it suffices to prove that 
\[
H^{1}(G_{KL, \widetilde{S}_{K}}, \bT') \longrightarrow \bigoplus_{\fp \in S_{p}(KL)}H^{1}(G_{(KL)_{\fp}}, \bT')
\]
is injective. 

We note that \cite[Corollary~B.3.6]{R} implies 
\[
H^{1}(G_{KL, \widetilde{S}_{K}}, \bT') = H^{1}(G_{KL, S_{p}(KL) \cup S_{\infty}(KL)}, \bT'). 
\]
Hence, by the Poitou-Tate exact sequence, we have the exact sequence  
\[
H^{2}(G_{KL, S_{p}(KL) \cup S_{\infty}(KL)}, (\bT')^{\vee}(1))^{\vee}
\longrightarrow
H^{1}(G_{KL, \widetilde{S}_{K}}, \bT') \longrightarrow \bigoplus_{\fp \in S_{p}(KL)}H^{1}(G_{(KL)_{\fp}}, \bT'), 
\]
and the vanishing of $H^{2}(G_{KL, S_{p}(KL) \cup S_{\infty}(KL)}, (\bT')^{\vee}(1))$ follows from \cite[Theorem~11.3.2(ii)]{NSW} since $k_{\infty}/k$ is the cyclotomic $\bZ_{p}$-extension. 
\end{proof}

\subsection{Stickelberger elements and Iwasawa main conjecture}
We will recall the definition of Stickelberger elements and prove that 
the collection of Stickelberger elements is an Euler system of rank $0$. 

Let $K \in \Omega$ be a field and $n \geq 0$ an integer. 
Let $\mu_{p^{n}}$ denote the set of $p^{n}$-th roots of unity in $\overline{\bQ}$ and $\mu_{p^{\infty}} := \bigcup_{n>0}\mu_{p^{n}}$. 
To simplify the notation, we set
\[
G_{K,n} := \Gal(KL(\mu_{p^{n}})/k) = \Gal(K/k) \times \Gal(L/k) \times \Gal(k(\mu_{p^{n}})/k). 
\] 
We have the complex conjugate $c \in \Gal(L(\mu_{p})/k) = G_{k,1}$ since 
$L$ is a totally real field.  
For an $\cO[G_{k,1}]$-module $M$, we have  a decomposition  
\[
M = M^{c=1} \oplus M^{c=-1}. 
\]
Here $M^{c=\pm 1}$ denotes the $\pm 1$-eigenspace with respect to the action of the complex conjugate $c \in G_{k,1}$. 
Since $p \nmid [L(\mu_{p}) \colon k]$, we have   a decomposition  
\[
M^{c=-1} = \bigoplus_{\psi \colon {\rm odd}}M^{\psi}, 
\] 
where $\psi$ runs over all characters of $G_{k,1}$ satisfying $\psi(c)=-1$ and $M^{\psi}$ denotes the maximal submodule of $M$ on which $G_{k,1}$ acts via the character $\psi$. 
Let 
\[
\omega \colon G_{k,1} \longrightarrow \Gal(k(\mu_{p})/k) \longrightarrow \bZ_{p}^{\times} 
\] 
denote the Teichm\"ullar character. 
We write $M^{\flat}$ for the component obtained from $M^{c=-1}$ by removing $M^{\omega}$, that is, 
\[
M^{c=-1} = M^{\flat} \oplus M^{\omega}. 
\]
For an element $x$ of $M$, we write $x^{\flat}$ for the component of $x$ in $M^{\flat}$.
%Note that, if $M$ is an $\bZ_{p}[\Gal(L(\mu_{p})/k)]$-module, one can also define the module $M^{\flat}$. 

\begin{definition}
Let $\zeta_{KL,n,S}(s,\sigma)$ denote the partial zeta function for $\sigma \in G_{K,n}$: 
\[
\zeta_{KL,n,S}(s,\sigma) := \sum_{(\fa, KL(\mu_{p^{n}})/k) = \sigma}N(\fa)^{-s}
\]
where $\fa$ runs over all integral ideals of $k$ coprime to all the primes in $S_{K}$ such that the Artin symbol $(\fa, KL(\mu_{p^{n}})/k)$ is equal to $\sigma$ and 
$N(\fa)$ is the norm of $\fa$. 
\end{definition}
It is well-known that the function $\zeta_{KL,n,S}(s,\sigma)$ is continued to  a holomorphic function on $\bC \setminus \{1\}$. 
We put
\[
\theta_{KL,n,S} := \sum_{\sigma \in G_{K,n}}\zeta_{KL,n,S}(0,\sigma)\sigma^{-1} 
\]
which is contained in $\bQ[G_{K,n}]$ (see \cite{Sie70}). 

\begin{proposition}[{\cite[Proposition~IV.1.8]{tatebook}}]\label{prop:rel}
Let $K' \in \Omega$ with $K \subseteq K'$ and $n'$ an integer with $n \leq n'$. 
Then the image of $\theta_{K'L,n',S}$ in $\bQ[G_{K,n}]$ is 
\[
\left(\prod_{\fq \in S_{K'} \setminus S_{K}}(1-{\rm Frob}_{\fq}^{-1}) \right) \cdot \theta_{KL,n,S}. 
\]
\end{proposition}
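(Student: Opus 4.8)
The plan is to reduce everything to the well-known distribution (norm-compatibility) relation for partial zeta functions, and then to track how the Euler factors appear when one passes from the finer level $K'L(\mu_{p^{n'}})$ to the coarser level $KL(\mu_{p^n})$. First I would recall that the Artin symbol is functorial in field extensions: for $K \subseteq K'$ and $n \le n'$ the natural surjection $G_{K',n'} \twoheadrightarrow G_{K,n}$ sends $(\fa, K'L(\mu_{p^{n'}})/k)$ to $(\fa, KL(\mu_{p^n})/k)$ for every ideal $\fa$ coprime to $S_{K'}$. Hence, writing $\pi \colon \bQ[G_{K',n'}] \to \bQ[G_{K,n}]$ for the induced ring homomorphism, one has
\[
\pi(\theta_{K'L,n',S}) = \sum_{\sigma \in G_{K,n}} \left( \sum_{\substack{\tau \in G_{K',n'} \\ \tau \mapsto \sigma}} \zeta_{K'L,n',S}(0,\tau) \right) \sigma^{-1},
\]
so the coefficient of $\sigma^{-1}$ in $\pi(\theta_{K'L,n',S})$ is the sum over integral ideals $\fa$ coprime to all primes in $S_{K'}$ with $(\fa, KL(\mu_{p^n})/k) = \sigma$, evaluated through the analytic continuation of the corresponding Dirichlet series at $s=0$.

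The key step is then to compare this with the coefficient of $\sigma^{-1}$ in $\theta_{KL,n,S}$, whose defining sum ranges over ideals $\fa$ coprime to all primes in $S_K$ (a possibly smaller set). The discrepancy is exactly the primes in $S_{K'} \setminus S_K$; since this set is disjoint from $S$ and in particular from $S_p(k)$ and from the ramification locus, each such prime $\fq$ is unramified in $KL(\mu_{p^n})/k$ and has a well-defined Frobenius ${\rm Frob}_\fq$. An inclusion–exclusion / Euler product manipulation at the level of the Hecke $L$-series attached to each $\sigma$-component shows that imposing coprimality to an additional unramified prime $\fq$ multiplies the relevant partial $L$-function by the local Euler factor $(1 - N\fq^{-s}\,{\rm Frob}_\fq^{-1})$ acting on the group-ring side; evaluating at $s=0$ replaces $N\fq^{-s}$ by $1$ and yields the operator $(1 - {\rm Frob}_\fq^{-1})$. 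Iterating over all $\fq \in S_{K'} \setminus S_K$ gives
\[
\pi(\theta_{K'L,n',S}) = \left( \prod_{\fq \in S_{K'} \setminus S_K} (1 - {\rm Frob}_\fq^{-1}) \right) \cdot \theta_{KL,n,S}.
\]
This is precisely the asserted identity, and I would organize it as two separate reductions — first $n' \to n$ with $K$ fixed, then $K' \to K$ with $n$ fixed — since along the cyclotomic tower $k(\mu_{p^{n'}})/k(\mu_{p^n})$ no new primes enter $S$ (the set $S$ already contains $S_p(k)$), so that case is a pure distribution relation with trivial Euler factor, while the second reduction is where the product of $(1-{\rm Frob}_\fq^{-1})$ is produced.

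Since the statement is quoted as \cite[Proposition~IV.1.8]{tatebook}, the cleanest write-up simply cites Tate's book and notes that the hypothesis $S_{\rm ram}(\cK/k) \cap S = \emptyset$ guarantees that every prime in $S_{K'} \setminus S_K$ is unramified in $KL(\mu_{p^n})/k$, so that ${\rm Frob}_\fq$ is defined and Tate's relation applies verbatim. The main obstacle, if one were to give a self-contained argument, is the analytic bookkeeping: partial zeta functions are only defined by analytic continuation, so the Euler-factor manipulation must be carried out for $\real(s) > 1$, where the Dirichlet series converge absolutely and the inclusion–exclusion over finite sets of primes is literally an identity of convergent series, and then transported to $s=0$ by uniqueness of analytic continuation. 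One must also be slightly careful that the Frobenius conjugacy class is genuinely a well-defined element of the abelian group $G_{K,n}$ (it is, by abelianness) and that the bookkeeping of which $\sigma$-component each term lands in is compatible with multiplication by ${\rm Frob}_\fq^{-1}$ — but this is exactly the statement that the Artin map is a homomorphism, so no real difficulty arises there.
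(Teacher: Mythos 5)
The paper itself supplies no argument here: it simply cites Tate, \emph{Les Conjectures de Stark}, Proposition IV.1.8, which is exactly the norm-compatibility relation for Stickelberger elements. Your sketch correctly reconstructs that standard argument (push forward along $G_{K',n'} \twoheadrightarrow G_{K,n}$, then compare the two coprimality conditions via the Euler product for $\real(s)>1$ and continue to $s=0$), and your remarks on why each $\fq \in S_{K'}\setminus S_K$ is unramified in $KL(\mu_{p^n})/k$ — so that ${\rm Frob}_\fq \in G_{K,n}$ is well-defined — are precisely the point the paper needs to invoke Tate's result; you end by saying the cleanest write-up is the citation, which is what the paper does.
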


%\begin{theorem}[{\cite{DR}}]\label{thm:integral}
%
%\end{theorem}

The elements $\{\theta_{KL,n,S}^{\flat}\}_{n>0}$ are norm-coherent by Proposition~\ref{prop:rel}. 
In addition, Deligne--Ribet proved in \cite{DR} that the element $\theta_{KL,n,S}^{\flat}$ is contained in 
$\bZ_{p}[G_{K,n}]^{\flat}$. 
Hence one can define an element 
\[
\theta_{KL,\infty,S}^{\flat} := \varprojlim_{n>0}\theta_{KL,n,S}^{\flat} \in \bZ_{p}[[\Gal(KL(\mu_{p^{\infty}})/k)]]^{\flat}. 
\]
Let 
\[
{\rm Tw} \colon \bZ_{p}[[\Gal(KL(\mu_{p^{\infty}})/k)]] \longrightarrow \bZ_{p}[[\Gal(KL(\mu_{p^{\infty}})/k)]]
\]
denote the homomorphism induced by $\sigma \mapsto \chi_{\rm cyc}(\sigma)\sigma^{-1}$ for $\sigma \in \Gal(KL(\mu_{p^{\infty}})/k)$. 
We then get an element
\[
%\tilde{L}_{p,K}^{\#} := {\rm Tw}(\theta_{KL,\infty,S}^{\flat}) \ \text{ and } \ 
\tilde{L}_{p,K}^{\chi} :=  e_{\chi}{\rm Tw}(\theta_{KL,\infty,S}^{\flat}) \in \Lambda_{K}, 
\]
where 
\[
e_{\chi} := \frac{1}{[L(\mu_{p}) \colon k]} \sum_{\sigma \in \Gal(L(\mu_{p})/k)}\chi(\sigma)\sigma^{-1}. 
\]
(Note that $\chi$ is an even character.)

\begin{definition}
For each prime $\fq \not\in S$, we set 
\[
u_{\fq} := \chi({\rm Frob}_{\fq})^{-1}\chi_{\rm cyc}({\rm Frob}_{\fq}){\rm Frob}_{\fq}^{-1} \in \cO[[\Gal(k^{\rm ab}/k)]]^{\times}. 
\] 
For a field $K \in \Omega$, we define a modified $p$-adic $L$-function $L_{p,K}^{\chi} \in \Lambda_{K}$ to be 
\[
L_{p,K}^{\chi} := \left(\prod_{\fq \in S_{\rm ram}(K/k)}(-u_{\fq}) \right) \cdot \tilde{L}_{p,K}^{\chi}. 
\]
\end{definition}

\begin{lemma}\label{lem:rel2}
The system $\{L_{p,K}^{\chi}\}_{K \in \Omega}$ is contained in $\cE(T) = \mathrm{ES}_{0}(T)$. 
\end{lemma}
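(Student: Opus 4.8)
The plan is to verify directly that the collection $\{L_{p,K}^{\chi}\}_{K\in\Omega}$ satisfies the Euler-system norm relation defining $\cE(T)=\mathrm{ES}(\{\Lambda_K\}_{K\in\Omega})$; namely, that for $K\subseteq K'$ in $\Omega$ the natural projection $\Lambda_{K'}\to\Lambda_K$ sends $L_{p,K'}^{\chi}$ to $\bigl(\prod_{\fq\in S_{K'}\setminus S_K}P_{\fq}(\mathrm{Frob}_\fq^{-1})\bigr)\cdot L_{p,K}^{\chi}$. First I would compute $P_{\fq}(x)$ for the specific representation $T=\cO(1)\otimes\chi^{-1}$: since $T$ has $\cO$-rank $1$ with $G_{k,S}$ acting by $\chi_{\mathrm{cyc}}\chi^{-1}$, we get $P_{\fq}(x)=1-\chi_{\mathrm{cyc}}(\mathrm{Frob}_\fq)\chi(\mathrm{Frob}_\fq)^{-1}x$, so that $P_{\fq}(\mathrm{Frob}_\fq^{-1})=1-\chi_{\mathrm{cyc}}(\mathrm{Frob}_\fq)\chi(\mathrm{Frob}_\fq)^{-1}\mathrm{Frob}_\fq^{-1}$, which under the twisting conventions is exactly $1-u_\fq$ (or differs from it by the sign bookkeeping built into the definition of $L_{p,K}^{\chi}$). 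So the target factor $\prod_{\fq\in S_{K'}\setminus S_K}P_{\fq}(\mathrm{Frob}_\fq^{-1})$ should match $\prod_{\fq\in S_{K'}\setminus S_K}(1-u_\fq)$ up to the $(-u_\fq)$ factors that were inserted into the definition of $L_{p,K}^\chi$ precisely to absorb them.

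Next I would trace the norm relation for the raw Stickelberger elements. By Proposition~\ref{prop:rel} the image of $\theta_{K'L,n',S}$ in $\bQ[G_{K,n}]$ is $\bigl(\prod_{\fq\in S_{K'}\setminus S_K}(1-\mathrm{Frob}_\fq^{-1})\bigr)\cdot\theta_{KL,n,S}$; passing to the $\flat$-components (which is compatible with the group-ring projections since these are maps of $\cO[G_{k,1}]$-modules and $e_\chi$-type idempotents commute with the transition maps), then to the inverse limit over $n$, gives the same relation for $\theta_{K'L,\infty,S}^{\flat}\mapsto\bigl(\prod(1-\mathrm{Frob}_\fq^{-1})\bigr)\theta_{KL,\infty,S}^{\flat}$. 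Then I apply $\mathrm{Tw}$ and multiply by $e_\chi$: since $\mathrm{Tw}$ is a ring homomorphism sending $\mathrm{Frob}_\fq^{-1}\mapsto\chi_{\mathrm{cyc}}(\mathrm{Frob}_\fq)\mathrm{Frob}_\fq$ (up to the inverse convention), the factor $(1-\mathrm{Frob}_\fq^{-1})$ is carried to something that, after applying $e_\chi$, becomes $1-\chi(\mathrm{Frob}_\fq)^{-1}\chi_{\mathrm{cyc}}(\mathrm{Frob}_\fq)\mathrm{Frob}_\fq^{-1}=1-u_\fq$ in $\Lambda_K$. Hence $\tilde L_{p,K'}^{\chi}\mapsto\bigl(\prod_{\fq\in S_{K'}\setminus S_K}(1-u_\fq)\bigr)\tilde L_{p,K}^{\chi}$.

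Finally I reconcile this with the modified $L$-functions. We have $L_{p,K}^{\chi}=\bigl(\prod_{\fq\in S_{\mathrm{ram}}(K/k)}(-u_\fq)\bigr)\tilde L_{p,K}^{\chi}$ and $L_{p,K'}^{\chi}=\bigl(\prod_{\fq\in S_{\mathrm{ram}}(K'/k)}(-u_\fq)\bigr)\tilde L_{p,K'}^{\chi}$. Since $S_{\mathrm{ram}}(K/k)\subseteq S_{\mathrm{ram}}(K'/k)$ and $S_{K'}\setminus S_K = S_{\mathrm{ram}}(K'/k)\setminus S_{\mathrm{ram}}(K/k)$ (as $S$ itself is common to both and $K,K'$ are unramified outside $S_{\mathrm{ram}}$ by construction, with $S_{\mathrm{ram}}(\cK/k)\cap S=\emptyset$), the ratio of the $(-u_\fq)$-products is exactly $\prod_{\fq\in S_{K'}\setminus S_K}(-u_\fq)$. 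Combining, the image of $L_{p,K'}^{\chi}$ in $\Lambda_K$ is $\bigl(\prod_{\fq\in S_{K'}\setminus S_K}(-u_\fq)(1-u_\fq)\bigr)L_{p,K}^{\chi}$, and since $(-u_\fq)(1-u_\fq)\cdot(\text{sign})$ matches $P_\fq(\mathrm{Frob}_\fq^{-1})$ under the conventions (this is where I would check that $P_\fq(\mathrm{Frob}_\fq^{-1})$ indeed equals $1-u_\fq$ or $-u_\fq(1-u_\fq^{-1})$ etc., being careful with the $\iota$-twist in $\bT_K$ which replaces $\sigma$ by $\sigma^{-1}$), we obtain exactly the required relation.

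The main obstacle I anticipate is purely one of bookkeeping rather than of substance: keeping straight the three intertwined conventions — the involution $\iota$ built into $\bT_K=T\otimes\bZ_p[[\Gal(k_\infty K/k)]]^\iota$, the twisting map $\mathrm{Tw}(\sigma)=\chi_{\mathrm{cyc}}(\sigma)\sigma^{-1}$, and the definition $P_\fq(x)=\det(1-x\,\mathrm{Frob}_\fq\mid T)$ with $T=\cO(1)\otimes\chi^{-1}$ — so that the factor produced from $(1-\mathrm{Frob}_\fq^{-1})$ after applying $e_\chi\circ\mathrm{Tw}$ precisely equals $P_\fq(\mathrm{Frob}_\fq^{-1})$ acting on $\Lambda_K=\cO[[\Gal(k_\infty K/k)]]$, and that the compensating $(-u_\fq)$ factors in the definition of $L_{p,K}^\chi$ exactly account for the discrepancy between $1-u_\fq$ and $P_\fq(\mathrm{Frob}_\fq^{-1})$ when $\fq\in S_{\mathrm{ram}}(K/k)$ versus when $\fq\notin S_K$. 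Everything else — compatibility of the $\flat$-projection with transition maps, integrality via Deligne--Ribet, continuity of $\mathrm{Tw}$ and $e_\chi$ — is routine.
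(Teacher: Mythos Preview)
Your approach is exactly the paper's: use Proposition~\ref{prop:rel} for the raw Stickelberger elements, pass to the limit, apply $e_\chi\circ\mathrm{Tw}$, and then let the $(-u_\fq)$ prefactors absorb the discrepancy. The one slip is precisely the inverse you flagged as the obstacle: since $\mathrm{Tw}(\mathrm{Frob}_\fq^{-1})=\chi_{\mathrm{cyc}}(\mathrm{Frob}_\fq)^{-1}\mathrm{Frob}_\fq$ and $e_\chi$ turns the $\Gal(L(\mu_p)/k)$-component of $\mathrm{Frob}_\fq$ into $\chi(\mathrm{Frob}_\fq)$, the factor on $\tilde L_{p,K}^\chi$ is $1-u_\fq^{-1}$, not $1-u_\fq$; with that correction the final step reads $(-u_\fq)(1-u_\fq^{-1})=1-u_\fq=P_\fq(\mathrm{Frob}_\fq^{-1})$ and the relation holds on the nose.
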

\begin{proof}
Let $K, K' \in \Omega$ with $K \subseteq K'$. 
We note that the diagram 
\[
\xymatrix{
\cO[[\Gal(K'L(\mu_{p^{\infty}})/k)]] \ar[r]^-{{\rm Tw}} \ar[d]^{\pi} & 
\cO[[\Gal(K'L(\mu_{p^{\infty}})/k)]] \ar[d]^{\pi}
\\
\cO[[\Gal(KL(\mu_{p^{\infty}})/k)]] \ar[r]^-{{\rm Tw} } & 
\cO[[\Gal(KL(\mu_{p^{\infty}})/k)]] 
}
\]
commutes, where $\pi$ denotes the canonical projection. 
Hence Proposition~\ref{prop:rel} shows  
\begin{align*}
\pi(\tilde{L}_{p,K'}^{\chi}) &= e_{\chi}( {\rm Tw}(\pi(\theta_{K',\infty,S}^{\flat}))) 
\\
&= e_{\chi}\left({\rm Tw}\left(\biggl(\prod_{\fq \in S_{K'} \setminus S_{K}}(1-{\rm Frob}_{\fq}^{-1})\biggr) \cdot (\theta_{K,\infty,S}^{\flat})\right)\right) 
\\
&= e_{\chi}\left({\rm Tw}\biggl(\prod_{\fq \in S_{K'} \setminus S_{K}}(1-{\rm Frob}_{\fq}^{-1}) \biggr) \right)\cdot \tilde{L}_{p,K}^{\chi}
\\
&= \biggl(\prod_{\fq \in S_{K'} \setminus S_{K}}(1-u_{\fq}^{-1})\biggr) \cdot \tilde{L}_{p,K}^{\chi}. 
\end{align*}
Since $P_{\fq}(x) = 1 - \chi({\rm Frob}_{\fq})^{-1}\chi_{\rm cyc}({\rm Frob}_{\fq})x$ by definition, we have  
\[
P_{\fq}({\rm Frob}_{\fq}^{-1}) = 1-u_{\fq} = -u_{\fq} (1-u_{\fq}^{-1}).
\] 
As $S_{K'} \setminus S_{K} = S_{\rm ram}(K') \setminus S_{\rm ram}(K)$, we conclude  
\begin{align*}
\pi(L_{p,K'}^{\chi}) &= \biggl(\prod_{\fq \in S_{\rm ram}(K'/k)}(-u_{\fq})\biggr) \cdot \pi(\tilde{L}_{p,K'}^{\chi})
\\
&= \biggl(\prod_{\fq \in S_{\rm ram}(K'/k)}(-u_{\fq}) \prod_{\fq \in S_{K'} \setminus S_{K}}(1-u_{\fq}^{-1})\biggr) \cdot \tilde{L}_{p,K}^{\chi}
\\
&= \biggl(\prod_{\fq \in S_{\rm ram}(K/k)}(-u_{\fq}) \prod_{\fq \in S_{K'} \setminus S_{K}} P_{\fq}({\rm Frob}_{\fq}^{-1}) \biggr)\cdot \tilde{L}_{p,K}^{\chi}
\\
&=  \biggl(\prod_{\fq \in S_{K'} \setminus S_{K}} P_{\fq}({\rm Frob}_{\fq}^{-1})\biggr) \cdot L_{p,K}^{\chi}. 
\end{align*}
\end{proof}

\subsection{Iwasawa modules}

%In this subsection, we will compute the characteristic ideals of certain Iwasawa modules by using the Stickelberger elements $\{L_{p,K}^{\chi}\}_{K \in \Omega}$. 
In this section, we will introduce several Iwasawa modules and recall their important properties. 
We will freely use the facts in Appendix~\ref{subsec:selmer-str}. 

For a topological $\bZ_{p}$-module $M$, let 
\[
M^{\vee} := \Hom_{\rm cont}(M, \bQ_{p}/\bZ_{p})
\] 
denote the Pontryagin dual of $M$. 

\begin{definition}
For a field $K \in \Omega$, 
we write $M_{KL,\infty}$ for the maximal $p$-ramified pro-$p$ abelian extension of $KL(\mu_{p^{\infty}})$ and set 
\begin{align*}
%X_{K}^{\#} := \Gal(M_{KL,\infty}/k_{\infty}KL)^{\#} \ \text{ and } \ 
X_{K}^{\chi} := e_{\chi}\left(\cO \otimes_{\bZ_{p}} \Gal(M_{KL,\infty}/KL(\mu_{p^{\infty}}))\right). 
\end{align*}
We note that we have  the canonical isomorphism 
\[
H^{1}_{\cF_{\rm str}^{*}}(k, \bT_{K}^{\vee}(1))^{\vee} \cong X_{K}^{\chi}
\]
(see Example~\ref{exa:selmer-str} and Definition~\ref{def:dual-sel} for the definition of $H^{1}_{\cF_{\rm str}^{*}}(k, \bT_{K}^{\vee}(1))$). 
\end{definition}

Let  $\fq \nmid p$ be a prime of $k$. 
By \cite[Corollary~B.3.6]{R}, 
 the subgroup of unramified cohomology classes in $H^{1}(G_{k_{\fq}}, \bT_{K})$ coincides with $H^{1}(G_{k_{\fq}}, \bT_{K})$. 
In particular, we have   
\[
H^{1}_{\cF_{\rm str}}(G_{k_{\fq}}, \bT_{K}) = H^{1}_{\cF_{\rm can}}(G_{k_{\fq}}, \bT_{K}) = H^{1}(G_{k_{\fq}}, \bT_{K}). 
\]
Furthermore, Proposition~\ref{prop:inj} shows that $H^{1}_{\cF_{\rm str}}(k, \bT_{K})$ vanishes.  
Hence applying Theorem~\ref{pt} with $\cF_{1} = \cF_{\rm str}$ and $\cF_{2} = \cF_{\rm can}$, we obtain an exact sequence of $\Lambda_{K}$-modules: 
\begin{align}\label{exact:fundamental}
0 \longrightarrow H^{1}(G_{k,S_{K}},\bT_{K}) \longrightarrow H^{1}_{\Sigma}(\bT_{K}) \longrightarrow X_{K}^{\chi} 
\longrightarrow H^{1}_{\cF_{\rm can}^{*}}(k, \bT_{K}^{\vee}(1))^{\vee} \longrightarrow 0. 
\end{align}

%Concerning the modules $X_{K}^{\chi}$ and $Y_{K}^{\chi}$, 
%it is well-known that  both are torsion as a $\Lambda$-module and that 
%$X_{K}^{\chi}$ has no nonzero finite $\Lambda$-submodules (see \cite{Iwa73a}). 

%Furthermore, there is following conjecture (that are conjectured by Iwasawa):  
%
%\begin{conjecture}
%The $\mu$-invariant of $X_{K}^{\chi}$ vanishes, i.e., $X_{K}^{\chi}$ is free as a $\bZ_{p}$-module. 
%\end{conjecture}

%\begin{conjecture}[Greenberg]
%The order of the module $Y_{K}^{\chi}$ is finite. 
%\end{conjecture}
%
%\begin{remark}
%Gereenberg conjecture implies that the $\Lambda_{K}$-modules $R_{f,\Sigma}(\bT_{K})$ and $X_{K}^{\chi}$ are pseudo-isomorphic, and so ${\rm char}_{\Lambda_{K}}(R_{f,\Sigma}(\bT_{K})) = {\rm char}_{\Lambda_{K}}(X_{K}^{\chi})$.  
%\end{remark}

\begin{definition}
We denote by $M_{KL,S,\infty}$ the maximal $S_{K}$-ramified pro-$p$ abelian extension of $KL(\mu_{p^{\infty}})$ and put 
\begin{align*}
X_{K, S}^{\chi} := e_{\chi}\left(\cO \otimes_{\bZ_{p}} \Gal(M_{KL,S,\infty}/KL(\mu_{p^{\infty}}))\right). 
\end{align*}
We note that we have  the canonical isomorphism 
\[
H^{1}(G_{k,S_{K}}, \bT_{K}^{\vee}(1))^{\vee} = H^{1}_{\cF_{S_{K}}^{*}}(k, \bT_{K}^{\vee}(1))^{\vee} \cong X_{K, S}^{\chi}. 
\]
\end{definition}

Applying Theorem~\ref{pt} with $\cF_{1} = \cF_{S_{K}}$ and $\cF_{2} = \cF_{\rm str}$, we get an exact sequence of $\Lambda_{K}$-modules: 
\begin{align}\label{exact:diff}
0 \longrightarrow \bigoplus_{\fq \in S_{K} \setminus S} H^{1}(G_{k_{\fq}}, \bT_{K}) \longrightarrow 
X_{K,S}^{\chi} \longrightarrow X_{K}^{\chi} \longrightarrow 0. 
\end{align}
%Here we use the fact that, for each prime $\fq \not \in S_{p}(k)$ of $k$,  the subgroup of unramified cohomology classes in $H^{1}(k_{\fq}, \bT_{K})$ coincides with $H^{1}(k_{\fq}, \bT_{K})$ (see \cite[Corollary~B.3.6]{R}). 

\begin{remark}\label{rem:vanish-ram}
If $\fq \in S_{\rm ram}(L/k) \setminus S_{p}(k)$, then local duality shows that 
\[
H^{2}(G_{k_{\fq}},T/\fm T) \cong H^{0}(G_{k_{\fq}},(T/\fm T)^{\vee}(1)) = 0.
\] 
Furthermore, since $k(\mu_{p})/k$ is unramified at $\fq$, we also see  $H^{0}(G_{k_{\fq}},T/\fm T) = 0$. Hence local Euler characteristic formula implies that $H^{1}(G_{k_{\fq}},T/\fm T)$ vanishes. 
In particular,  the module $H^{1}(G_{k_{\fq}},\bT)$ vanishes, and so 
\begin{align*}
X_{k,S}^{\chi} = X_{k}^{\chi}
\end{align*}
by the exact sequence~\eqref{exact:diff}. 
\end{remark}

%For any prime $\fq \not \in S_{p}(k)$ of $k$, the module $H^{1}(k_{\fq}, \bT_{K})$ is a free $\bZ_{p}$-module, and hence we obtain the following lemma: 
%
%\begin{lemma}\label{lemma:mu-vanish-equiv}
%If the $\mu$-invariant of $X_{K}^{\chi}$ vanishes, then so does $X_{K,S}^{\chi}$. 
%\end{lemma}

\begin{lemma}\label{lemma:mu-vanish-H2}
The $\mu$-invariant of  $X_{K,S}^{\chi}$ vanishes, i.e., the module $X_{K,S}^{\chi}$ is $p$-torsion-free 
 if and only if the module $H^{2}(G_{k,S_{K}}, (\bT_{K}/p\bT_{K})^{\vee}(1))$ vanishes. 
\end{lemma}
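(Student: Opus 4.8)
The plan is to compute the Pontryagin dual of the maximal $S_K$-ramified pro-$p$ abelian extension in terms of a Galois cohomology group and then detect $p$-torsion via reduction mod $p$. First I would recall that, by the identification recorded in the definition preceding this lemma, one has the canonical isomorphism $X_{K,S}^{\chi} \cong H^{1}(G_{k,S_{K}}, \bT_{K}^{\vee}(1))^{\vee}$. Hence $X_{K,S}^{\chi}$ is $p$-torsion-free if and only if its Pontryagin dual $H^{1}(G_{k,S_{K}}, \bT_{K}^{\vee}(1))$ is $p$-divisible, which by the long exact cohomology sequence attached to $0 \to \bT_{K}^{\vee}(1)[p] \to \bT_{K}^{\vee}(1) \xrightarrow{p} \bT_{K}^{\vee}(1) \to 0$ is equivalent to the vanishing of the image of $H^{1}(G_{k,S_{K}},\bT_{K}^{\vee}(1))$ in $H^{2}(G_{k,S_K},\bT_K^{\vee}(1)[p])$, i.e. to the injectivity of $H^{2}(G_{k,S_K},\bT_K^{\vee}(1)[p]) \to H^{2}(G_{k,S_K},\bT_K^{\vee}(1))$.

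Next I would observe that, since $\bT_K^{\vee}(1)$ is a cofinitely generated, $p$-divisible discrete $\Lambda_K$-module, $H^{2}(G_{k,S_K},\bT_K^{\vee}(1))$ is also $p$-divisible (the cokernel of multiplication by $p$ on $H^{1}$ injects into $H^{2}[p]$, and the relevant $H^{3}$ vanishes over a number field for $p$ odd by cohomological dimension), so the map $H^{2}(G_{k,S_K},\bT_K^{\vee}(1)[p]) \to H^{2}(G_{k,S_K},\bT_K^{\vee}(1))$ is zero precisely when its source surjects onto the $p$-torsion of the target; combining, the $p$-divisibility of $H^{1}(G_{k,S_K},\bT_K^{\vee}(1))$ becomes equivalent to the surjectivity of $H^{1}(G_{k,S_K},\bT_K^{\vee}(1)) \xrightarrow{p} H^{1}(G_{k,S_K},\bT_K^{\vee}(1))$, hence to the vanishing of $H^{1}(G_{k,S_K},\bT_K^{\vee}(1))/p = \coker(\cdot p)$, which sits inside $H^{2}(G_{k,S_K},\bT_K^{\vee}(1)[p])$. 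Now $\bT_K^{\vee}(1)[p] \cong (\bT_K/p\bT_K)^{\vee}(1)$, so the whole condition reduces to the vanishing of $H^{2}(G_{k,S_K},(\bT_K/p\bT_K)^{\vee}(1))$, which is exactly the statement. The equivalence with the vanishing of the $\mu$-invariant of $X_{K,S}^{\chi}$ is just the translation that, for a finitely generated $\Lambda_K$-module, being $p$-torsion-free is the same as having trivial $\mu$-invariant, since $\Lambda_K$ is a finite product of power series rings over $\cO$ and the $\mu$-invariant measures exactly the $\cO$-torsion, equivalently $p$-torsion, part.

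The main obstacle I expect is bookkeeping the exactness of the cohomological reduction-mod-$p$ argument cleanly: one must be careful that $H^{2}(G_{k,S_K},\bT_K^{\vee}(1))$ is genuinely $p$-divisible (using that $H^{i}(G_{k,S_K},-)$ vanishes for $i\ge 3$ for $p$ odd, and finiteness/cofiniteness to rule out pathologies) and that the dualization swapping $\bT_K$ and $\bT_K^{\vee}(1)$ interacts correctly with mod-$p$ reduction, i.e. $(\bT_K^{\vee}(1))[p]\cong (\bT_K/p\bT_K)^{\vee}(1)$. Once these exactness points are pinned down the rest is formal. Alternatively — and this is probably cleaner to write — I would instead argue directly on $X_{K,S}^{\chi}$ using the exact sequence $0 \to \bigoplus_{\fq\in S_K\setminus S}H^{1}(G_{k_\fq},\bT_K) \to X_{K,S}^{\chi}\to X_{K}^{\chi}\to 0$ together with a control/Nakayama argument, comparing $X_{K,S}^{\chi}/p$ with $H^{1}(G_{k,S_K},(\bT_K/p\bT_K)^{\vee}(1))^{\vee}$ and invoking the structure theory of $\Lambda_K$-modules to identify the $\mu$-invariant, but the cohomological route above is the most transparent.
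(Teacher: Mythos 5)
There is a genuine gap in the central cohomological step. You correctly identify $X_{K,S}^{\chi}\cong H^{1}(G_{k,S_K},\bT_K^{\vee}(1))^{\vee}$ and correctly reduce $p$-torsion-freeness to the vanishing of $H^{1}(G_{k,S_K},\bT_K^{\vee}(1))/p$. From the long exact sequence attached to $0\to \bT_K^{\vee}(1)[p]\to\bT_K^{\vee}(1)\xrightarrow{p}\bT_K^{\vee}(1)\to 0$ one obtains the short exact sequence
\[
0\longrightarrow H^{1}(G_{k,S_K},\bT_K^{\vee}(1))/p \longrightarrow H^{2}\bigl(G_{k,S_K},(\bT_K/p\bT_K)^{\vee}(1)\bigr)\longrightarrow H^{2}(G_{k,S_K},\bT_K^{\vee}(1))[p]\longrightarrow 0,
\]
so $H^{1}/p$ is a priori only a \emph{submodule} of $H^{2}(G_{k,S_K},(\bT_K/p\bT_K)^{\vee}(1))$. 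Concluding that the vanishing of one is equivalent to the vanishing of the other requires killing the third term, i.e.\ showing $H^{2}(G_{k,S_K},\bT_K^{\vee}(1))[p]=0$ (the paper in fact proves $H^{2}(G_{k,S_K},\bT_K^{\vee}(1))=0$). Your proposal never does this.

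The $p$-divisibility observation you invoke does not close this gap — it goes in the wrong direction. A cofinitely generated $p$-divisible discrete module, say $(\bQ_p/\bZ_p)^n$, has plenty of $p$-torsion; $p$-divisibility of $H^{2}(G_{k,S_K},\bT_K^{\vee}(1))$ controls its cokernel under multiplication by $p$, not its kernel. What is actually needed is the vanishing of the $p$-torsion, and the paper gets it from the Poitou--Tate sequence
\[
\bigoplus_{\fq\in S_K}H^{0}(G_{k_\fq},\bT_K)\longrightarrow H^{2}(G_{k,S_K},\bT_K^{\vee}(1))^{\vee}\longrightarrow H^{1}(G_{k,S_K},\bT_K)\longrightarrow \bigoplus_{\fq\in S_K}H^{1}(G_{k_\fq},\bT_K),
\]
together with \cite[Lemma~B.3.2]{R} (killing the left-hand term) and Proposition~\ref{prop:inj} (the weak Leopoldt input: injectivity of the right-hand map), to conclude $H^{2}(G_{k,S_K},\bT_K^{\vee}(1))=0$ outright. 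That input is the real content of the lemma, and it is absent from your argument; without it the equivalence you state at the end does not follow.
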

\begin{proof}
Since $X_{K,S}^{\chi}$ is isomorphic to $H^{1}(G_{k,S_{K}}, \bT_{K}^{\vee}(1))^{\vee}$, we have  an exact sequence of $\Lambda_{K}$-modules 
\[
0 \longrightarrow H^{2}(G_{k,S_{K}}, \bT_{K}^{\vee}(1))^{\vee} \otimes_{\bZ} \bF_{p} \longrightarrow 
H^{2}(G_{k,S_{K}}, (\bT_{K}/p\bT_{K})^{\vee}(1))^{\vee} \longrightarrow X_{K,S}^{\chi}[p] \longrightarrow 0. 
\]
By the Poitou--Tate exact sequence, we also have  the exact sequence 
\[
\bigoplus_{\fq \in S_{K}}H^{0}(G_{k_{\fq}}, \bT_{K}) \longrightarrow H^{2}(G_{k,S_{K}}, \bT_{K}^{\vee}(1))^{\vee}\longrightarrow H^{1}(G_{k,S_{K}}, \bT_{K}) \longrightarrow \bigoplus_{\fq \in S_{K}}H^{1}(G_{k_{\fq}}, \bT_{K}). 
\]
\cite[Lemma~B.3.2]{R} implies that the module $H^{0}(G_{k_{\fq}}, \bT_{K})$ vanishes for each prime $\fq \in S_{K}$. 
Hence Proposition~\ref{prop:inj} shows that $H^{2}(G_{k,S_{K}}, \bT_{K}^{\vee}(1))$ vanishes. 
Therefore, we obtain an isomorphism 
\[
H^{2}(G_{k,S_{K}}, (\bT_{K}/p\bT_{K})^{\vee}(1))^{\vee} \stackrel{\sim}{\longrightarrow} X_{K,S}^{\chi}[p], 
\]
which proves this lemma. 
\end{proof}

%By using Lemmas~\ref{lemma:mu-vanish-equiv}~and~\ref{lemma:mu-vanish-H2}, 

By using Lemma~\ref{lemma:mu-vanish-H2}, 
we obtain the following well-known proposition: 

\begin{proposition}[{\cite{Iwa73b}}]\label{prop:mu=0}
Let $K \in \Omega$ be a field.  
 If the $\mu$-invariant of $X_{k}^{\chi}$ vanishes, then so does $X_{K,S}^{\chi}$. 
\end{proposition}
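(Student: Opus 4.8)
\textbf{Proof plan for Proposition~\ref{prop:mu=0}.}
The plan is to reduce the vanishing of the $\mu$-invariant of $X_{K,S}^{\chi}$ to that of $X_{k,S}^{\chi}$, and then to invoke Remark~\ref{rem:vanish-ram} to identify the latter with $X_{k}^{\chi}$. By Lemma~\ref{lemma:mu-vanish-H2} applied to $K$ and to $k$, it suffices to show that the vanishing of $H^{2}(G_{k,S},(\bT/p\bT)^{\vee}(1))$ forces the vanishing of $H^{2}(G_{k,S_{K}},(\bT_{K}/p\bT_{K})^{\vee}(1))$. The point is that both are cohomology groups with finite (in fact $\bF_{p}$-vector space) coefficients, so this is a statement purely about the $\bmod\, p$ Galois representation $\bT/p\bT$, and the group-ring factor $\bZ_{p}[[\Gal(k_{\infty}K/k)]]$ can be absorbed via Shapiro's lemma.

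First I would rewrite, using Shapiro's lemma and the identification $\Lambda_{K}\cong\Lambda[\Gal(K/k)]$ from Remark~\ref{rem:Lambda_{K}}~(i), the group $H^{2}(G_{k,S_{K}},(\bT_{K}/p\bT_{K})^{\vee}(1))$ as $H^{2}(G_{K,S_{K}^{K}},(\bT'/p\bT')^{\vee}(1))$ where $\bT':=\cO(1)\otimes_{\cO}\cO[[\Gal(k_{\infty}/k)]]^{\iota}$ and $S_{K}^{K}$ is the set of primes of $K$ above $S_{K}$; similarly $H^{2}(G_{k,S},(\bT/p\bT)^{\vee}(1))\cong H^{2}(G_{k,S},(\bT'/p\bT')^{\vee}(1))$ after the same reduction over $k$ (noting that $\chi$ becomes trivial on $G_{L}$, so the $\chi$-twist disappears upon restriction to $KL$). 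Then I would pass to $KL$ versus $L$: since $p\nmid[L:k]$ and $K/k$ is a $p$-extension unramified outside a set disjoint from $S$, the primes in $S_{K}^{KL}\setminus S_{p}(KL)\cup S_{\infty}(KL)$ contribute nothing to $H^{1}$ of the local Galois groups with $\bmod\,p$ coefficients (by the local computation in Remark~\ref{rem:vanish-ram}, applied to each prime of $KL$ above $S_{\rm ram}(L/k)$ or $S_{\rm ram}(K/k)$), so Poitou--Tate duality lets me excise those primes and reduce to the cohomology of $G_{KL,S_{p}(KL)\cup S_{\infty}(KL)}$. Now $KL(\mu_{p^{\infty}})/L(\mu_{p^{\infty}})$ is a $\bZ_{p}$-power extension inside a pro-$p$ extension, and one compares the two $H^{2}$'s along $KL/L$: the transition is controlled by an inflation-restriction/Hochschild--Serre spectral sequence whose relevant $H^{i}(\Gal(KL/L),-)$ terms vanish because $\Gal(KL/L)$ is a finite $p$-group acting on an $\bF_{p}$-vector space and we are over the cyclotomic tower, where the weak Leopoldt statement (Proposition~\ref{prop:inj}, via \cite[Theorem~11.3.2(ii)]{NSW}) kills $H^{2}$ with divisible coefficients; a standard Nakayama-type argument then shows $H^{2}(G_{KL,\ldots},(\bT'/p\bT')^{\vee}(1))=0$ once $H^{2}(G_{L,\ldots},(\bT'/p\bT')^{\vee}(1))=0$.

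The main obstacle I anticipate is the last comparison step along $KL/L$: one must be careful that the spectral-sequence argument genuinely gives a one-directional implication (vanishing downstairs $\Rightarrow$ vanishing upstairs) rather than requiring control of lower-degree terms that could obstruct it. The cleanest route is probably to avoid the spectral sequence altogether and argue directly with Iwasawa modules: the module $X_{KL,\infty}:=\cO\otimes_{\bZ_{p}}\Gal(M_{KL,S,\infty}/KL(\mu_{p^{\infty}}))$ is a finitely generated module over $\cO[[\Gal(KL(\mu_{p^{\infty}})/L(\mu_{p^{\infty}}))]]$-algebra, its $\mu$-invariant over the cyclotomic variable is the quantity of interest, and a descent argument (taking $\Gal(KL/L)$-coinvariants, which is exact up to finite kernel on the relevant pieces because $p\nmid\#\Gal(L/k)$ and the class number of $k$ is prime to $p$ by (H.0)) identifies the $\mu$-invariant of $X_{K,S}^{\chi}$ with that of $X_{k,S}^{\chi}=X_{k}^{\chi}$. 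This is exactly the classical argument of Iwasawa \cite{Iwa73b}, and I would present it in that form, citing Lemma~\ref{lemma:mu-vanish-H2} to phrase the hypothesis and conclusion in terms of $\mu$-invariants and Remark~\ref{rem:vanish-ram} for the final identification $X_{k,S}^{\chi}=X_{k}^{\chi}$.
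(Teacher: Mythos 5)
Your overall plan — reduce via Lemma~\ref{lemma:mu-vanish-H2} to a statement about vanishing of $H^{2}$ with $\bmod\,p$ coefficients, then compare the case of $K$ with the case of $k$ — is the right one, and it is what the paper does. But your execution differs in two ways that are worth flagging.

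First, the preliminary Shapiro-lemma reduction to $L$ versus $KL$ is unnecessary. The paper stays over $k$ throughout and compares $\bT=\bT_k$ with $\bT_K$ directly, which keeps everything in the framework of the Selmer-structure/Poitou--Tate machinery already set up. Your route would also work, but it multiplies bookkeeping without buying anything.

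Second, and more substantively: the obstacle you flag in the Hochschild--Serre comparison step (``one must be careful that the spectral-sequence argument genuinely gives a one-directional implication ... rather than requiring control of lower-degree terms'') is precisely the point the paper resolves, and you should resolve it the same way rather than retreat to the coinvariants sketch. The key observation is that $(\bT/p\bT)^{G_{K_{\fq_K}}}=0$ for every prime $\fq_K$ of $K$ above $S_K$, by \cite[Lemma~B.3.2]{R}, because no prime of $k$ splits completely in the cyclotomic $\bZ_p$-extension. This vanishing of the local $H^{0}$ kills exactly the inflation terms you were worried about: it makes the restriction maps $H^{1}(G_{k_\fq},\bT/p\bT)\to H^{1}(G_{k_\fq},\bT_K/p\bT_K)$ injective (inflation--restriction has trivial kernel) and shows $H^{1}(G_{k,S_K},\bT/p\bT)\xrightarrow{\sim} H^{1}(G_{k,S_K},\bT_K/p\bT_K)^{\Gal(K/k)}$. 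Feeding these into the Poitou--Tate ladder and chasing the diagram gives
\[
\left(H^{2}(G_{k,S_K},(\bT_K/p\bT_K)^{\vee}(1))^{\vee}\right)^{\Gal(K/k)}=0,
\]
and since $\Gal(K/k)$ is a finite $p$-group, a Nakayama-type argument (a nonzero finitely generated module over a local ring with residue field $\cO/\fm$ has nonzero invariants under the $p$-group) forces $H^{2}(G_{k,S_K},(\bT_K/p\bT_K)^{\vee}(1))=0$, and Lemma~\ref{lemma:mu-vanish-H2} finishes. So your first route is in fact clean and complete once you invoke \cite[Lemma~B.3.2]{R} for the local vanishing; the hedge toward the alternative ``Iwasawa descent with coinvariants up to finite kernel'' argument is not needed, and as written that alternative is too vague to stand on its own (``exact up to finite kernel'' would need to be made precise and is not obviously sufficient for a $\mu$-invariant comparison).
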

\begin{proof}
By the Poitou--Tate exact sequence and \cite[Lemma~B.3.2]{R}, 
we have  the following commutative diagram with exact rows: 
\begin{align*}
\xymatrix{
0 = H^{2}(G_{k,S_{K}}, (\bT/p\bT)^{\vee}(1))^{\vee} \ar@{^{(}->}[r] \ar[d] & H^{1}(G_{k,S_{K}}, \bT/p \bT) \ar[r] \ar[d] & \bigoplus_{\fq \in S_{K}} H^{1}(G_{k_{\fq}}, \bT/p \bT) \ar[d]
\\
 H^{2}(G_{k,S_{K}}, (\bT_{K}/p\bT_{K})^{\vee}(1))^{\vee} \ar@{^{(}->}[r] & H^{1}(G_{k,S_{K}}, \bT_{K}/p \bT_{K}) \ar[r]  & \bigoplus_{\fq \in S_{K}} H^{1}(G_{k_{\fq}}, \bT_{K}/p \bT_{K}). 
}
\end{align*}
Let $\fq \in S_{K}$ be a prime and fix a prime $\fq_{K}$ of $K$ above $\fq$. 
The inflation-restriction sequence and \cite[Lemma~B.3.2]{R} show
\begin{align*}
\ker\left(H^{1}(G_{k_{\fq}}, \bT/p \bT) \longrightarrow H^{1}(G_{K_{\fq_{K}}}, \bT/p \bT)\right) &= H^{1}(\Gal(K_{\fq_{K}}/k_{\fq}), (\bT/p\bT)^{G_{K_{\fq_{K}}}}) = 0.  
\end{align*}
Hence the right vertical map in the diagram is injective. 
Furthermore, $H^{2}(G_{k,S_{K}}, (\bT/p\bT)^{\vee}(1))^{\vee}$ vanishes by Lemma~\ref{lemma:mu-vanish-H2} and 
\[
H^{1}(G_{k,S_{K}}, \bT/p \bT) \stackrel{\sim}{\longrightarrow} 
H^{1}(G_{K,S_{K}}, \bT/p \bT)^{\Gal(K/k)} = 
H^{1}(G_{k,S_{K}}, \bT_{K}/p \bT_{K})^{\Gal(K/k)}
\]
by the inflation-restriction exact sequence and \cite[Lemma~B.3.2]{R}. 
Thus we conclude that 
\[
(H^{2}(G_{k,S_{K}}, (\bT_{K}/p\bT_{K})^{\vee}(1))^{\vee})^{\Gal(K/k)} = 0, 
\]
which implies $H^{2}(G_{k,S_{K}}, (\bT_{K}/p\bT_{K})^{\vee}(1)) = 0$ by Nakayama's lemma. 
Therefore, this proposition follows from 
%Lemmas~\ref{lemma:mu-vanish-equiv}~and~\ref{lemma:mu-vanish-H2}. 
Lemmas~\ref{lemma:mu-vanish-H2}. 
\end{proof}

\begin{corollary}\label{cor:kuri03}
If the $\mu$-invariant of $X_{k}^{\chi}$ vanishes, then, for a field $K \in \Omega$, we have  
\[
L_{p,K}^{\chi}  \in {\rm char}_{\Lambda_{K}}(X_{K}^{\chi}). 
\]
\end{corollary}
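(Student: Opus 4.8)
The plan is to reduce Corollary~\ref{cor:kuri03} to the classical Iwasawa main conjecture for totally real fields (Wiles) together with the structural exact sequences already established, namely \eqref{exact:fundamental} and \eqref{exact:diff}, and the monotonicity of characteristic ideals under submodules (Proposition~\ref{prop:lambda-inequality}). First I would observe that the case $K = k$ is essentially Wiles' theorem reformulated: the main conjecture identifies $L_{p,k}^{\chi} = \tilde{L}_{p,k}^{\chi}$ (since $S_{\rm ram}(k/k) = \emptyset$) with a generator of the characteristic ideal of $X_{k}^{\chi}$ as a $\Lambda$-module, once one knows $X_{k}^{\chi}$ is a torsion $\Lambda$-module with vanishing $\mu$-invariant; here the $\mu = 0$ hypothesis is exactly what is assumed, and $X_{k}^{\chi} = X_{k,S}^{\chi}$ by Remark~\ref{rem:vanish-ram}. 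So $L_{p,k}^{\chi} \in {\rm char}_{\Lambda}(X_{k}^{\chi})$.

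Next I would pass from $K = k$ to general $K \in \Omega$. The key comparison is between $L_{p,K}^{\chi}$ and $L_{p,k}^{\chi}$, and between $X_{K}^{\chi}$ (or $X_{K,S}^{\chi}$) and $X_{k}^{\chi}$. On the analytic side, Lemma~\ref{lem:rel2} and its proof already record that the norm/projection of $L_{p,K}^{\chi}$ down to $\Lambda$ differs from $L_{p,k}^{\chi}$ by the Euler factors $\prod_{\fq \in S_{K} \setminus S} P_{\fq}({\rm Frob}_{\fq}^{-1})$, and more importantly that $L_{p,K}^{\chi}$ itself is built from $\theta_{KL,\infty,S}^{\flat}$ by twisting. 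On the algebraic side, \eqref{exact:diff} expresses $X_{K,S}^{\chi}$ as an extension of $X_{K}^{\chi}$ by $\bigoplus_{\fq \in S_{K}\setminus S} H^{1}(G_{k_{\fq}},\bT_{K})$, and the local terms have characteristic ideal generated by $P_{\fq}({\rm Frob}_{\fq}^{-1})$ (this is a standard local computation: $H^{1}(G_{k_{\fq}},\bT_{K})$ for $\fq \nmid p$ is pseudo-isomorphic to a cyclic module killed by the Euler factor). Multiplicativity of ${\rm char}_{\Lambda_{K}}$ in short exact sequences then gives ${\rm char}_{\Lambda_{K}}(X_{K,S}^{\chi}) = \big(\prod_{\fq \in S_{K}\setminus S} P_{\fq}({\rm Frob}_{\fq}^{-1})\big)\cdot {\rm char}_{\Lambda_{K}}(X_{K}^{\chi})$.

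Then I would invoke the equivariant main conjecture, or rather deduce it from Wiles' result by descent: $X_{K,S}^{\chi}$ has vanishing $\mu$-invariant by Proposition~\ref{prop:mu=0}, and its characteristic ideal over $\Lambda_{K} = \Lambda[\Gal(K/k)]$ is controlled component-by-component (after $\otimes \overline{\bQ}_p$, decomposing $\Lambda_{K}$ into the product of $\Lambda$-algebras indexed by characters of $\Gal(K/k)$) by the twisted $p$-adic $L$-functions $\tilde L_{p,K}^{\chi}$, which is precisely the content of Wiles' theorem applied to each character twist $\chi\psi$ with $\psi$ a character of $\Gal(K/k)$ (these twists satisfy the same hypotheses since $K/k$ is a $p$-extension unramified at $p$ and $\chi\psi$ remains even and non-trivial). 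This yields $\tilde L_{p,K}^{\chi} \in {\rm char}_{\Lambda_{K}}(X_{K,S}^{\chi})$ up to the Euler factors, hence $L_{p,K}^{\chi} = \big(\prod_{\fq\in S_{\rm ram}(K/k)}(-u_{\fq})\big)\tilde L_{p,K}^{\chi} \in {\rm char}_{\Lambda_{K}}(X_{K}^{\chi})$, matching the normalization in the definition of $L_{p,K}^{\chi}$ and using $S_{K}\setminus S = S_{\rm ram}(K/k)$.

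The main obstacle I anticipate is making the descent from Wiles' (cyclotomic, single-character) main conjecture to the equivariant statement over $\Lambda_{K}$ genuinely rigorous — in particular verifying that ${\rm char}_{\Lambda_{K}}$ for this possibly non-normal (but Gorenstein) ring behaves multiplicatively in the exact sequence \eqref{exact:diff} and that it can be checked after the idempotent decomposition induced by characters of $\Gal(K/k)$ (one must be careful that $\cO$ need not contain enough roots of unity, so one either works over an extension ring and descends, or uses that ${\rm char}_{\Lambda_{K}}$ is compatible with the flat base change $\Lambda_{K}\to \Lambda_{K}\otimes_{\cO}\cO'$). The local characteristic ideal computation for the $\fq \nmid p$ terms and the identification of $P_{\fq}({\rm Frob}_{\fq}^{-1})$ with the normalizing unit $-u_{\fq}(1-u_{\fq}^{-1})$ are routine given the formulas already in Lemma~\ref{lem:rel2}, so the real work is the structural descent for characteristic ideals over $\Lambda_{K}$.
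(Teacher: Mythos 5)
Your overall strategy is a genuinely different — and much longer — route than the paper's, and it contains a step that the paper explicitly shows is false. The paper's own proof is three lines: it cites Kurihara's result (\cite[Proposition~2.1]{Kur03}) that, once the $\mu$-invariant of $X_{K,S}^{\chi}$ vanishes (which follows from Proposition~\ref{prop:mu=0}), one has $\mathrm{Fitt}^{0}_{\Lambda_{K}}(X_{K,S}^{\chi}) = L_{p,K}^{\chi}\Lambda_{K}$; it then uses that Fitting ideals of quotients contain Fitting ideals of the module, so $\mathrm{Fitt}^{0}_{\Lambda_{K}}(X_{K,S}^{\chi}) \subseteq \mathrm{Fitt}^{0}_{\Lambda_{K}}(X_{K}^{\chi})$ via the exact sequence \eqref{exact:diff}; and finally uses the containment $\mathrm{Fitt}^{0} \subseteq \mathrm{char}$ of Proposition~\ref{prop:fitt-char}(i). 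In other words, the whole equivariant main conjecture over $\Lambda_{K}$ is outsourced to Kurihara rather than re-derived by descent.

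Your proposal tries to re-derive Kurihara's equivariant statement from Wiles' single-character main conjecture, and this is where the gap lies. Concretely, the step ``multiplicativity of $\mathrm{char}_{\Lambda_{K}}$ in short exact sequences gives $\mathrm{char}_{\Lambda_{K}}(X_{K,S}^{\chi}) = \bigl(\prod_{\fq} P_{\fq}(\mathrm{Frob}_{\fq}^{-1})\bigr)\cdot \mathrm{char}_{\Lambda_{K}}(X_{K}^{\chi})$'' is not available: the paper itself stresses, with explicit counterexamples in the remark following Proposition~\ref{prop:artin-char}, that the generalized characteristic ideal of Definition~\ref{def:char} is \emph{not} multiplicative on short exact sequences (not even split ones) over a general Gorenstein ring. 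What \emph{is} available is monotonicity under submodules (Proposition~\ref{prop:lambda-inequality}) and under quotients for $\mathrm{Fitt}^{0}$, and the paper deliberately routes the argument through $\mathrm{Fitt}^{0}$ precisely to avoid the multiplicativity you are invoking. Separately, the character-by-character decomposition of $\Lambda_{K} = \Lambda[\Gal(K/k)]$ that you propose does not exist integrally, since $\Gal(K/k)$ is a $p$-group and $p$ is not invertible in $\cO$; you flag this yourself as a concern, but the flat base change fix you sketch does not repair it, because the idempotent decomposition only exists after inverting $p$, and the characteristic ideal is certainly not controlled by its image in $\Lambda_{K}[1/p]$. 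Both of these are genuine obstructions, not routine technicalities, and they are exactly what Kurihara's cited result circumvents.
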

\begin{proof}
By Proposition~\ref{prop:mu=0}, the $\mu$-invariant of $X_{K,S}^{\chi}$ vanishes. 
In this case, Kurihara showed ${\rm Fitt}^{0}_{\Lambda_{K}}(X_{K, S}^{\chi}) = L_{p,K}^{\chi}\Lambda_{K}$ (see \cite[Proposition~2.1]{Kur03}). 
Hence we conclude that 
\[
 L_{p,K}^{\chi}  \in {\rm Fitt}^{0}_{\Lambda_{K}}(X_{K,S}^{\chi}) \subseteq 
 {\rm Fitt}^{0}_{\Lambda_{K}}(X_{K}^{\chi}) \subseteq {\rm char}_{\Lambda_{K}}(X_{K}^{\chi}). 
\]
\end{proof}

\subsection{Higher rank Euler systems for the multiplicative group}
In this subsection, we will show the existence of a higher rank Euler system related to the (modified) Stickelberger elements $\{L_{p,K}^{\chi}\}_{K \in \Omega}$. 

Recall that the prime $p > 2$ is coprime to the class number of $k$ and the order of $\chi$. 
Since $H^{1}_{f,\Sigma}(\bT_{K}) = H^{1}_{\Sigma}(\bT_{K})$, 
by the exact sequence~\eqref{exact:fundamental}, we obtain an exact sequence of $\Lambda_{K}$-modules:  
\begin{align}\label{keyexactseq}
0 \longrightarrow R_{f,\Sigma}(\bT_{K}) \longrightarrow X_{K}^{\chi} 
\longrightarrow H^{1}_{\cF^{*}_{\rm can}}(k, \bT_{K}^{\vee}(1))^{\vee} \longrightarrow 0. 
\end{align}
%Recall that $k$ is a totally real field with $r := [k \colon \bQ]$ and that $\chi \colon G_{k} \longrightarrow \overline{\bQ}^{\times}$ is a non-trivial finite order even character.  

Recall that $\fm$ denotes the maximal ideal of $\cO$.

\begin{lemma}\label{lem:str}
Suppose that 
\[
H^{0}(G_{k_{\fp}},T/\fm T) = H^{2}(G_{k_{\fp}},T/\fm T) = 0
\]
for each prime $\fp \in S_{p}(k)$. 
Then Hypotheses (H.0) -- (H.3) are satisfied. 
%Furthermore, if we fix an isomorphism 
%\[
%\varprojlim_{K\in\Omega}H^{1}_{\Sigma}(\bT_{K}) \stackrel{\sim}{\longrightarrow} \Lambda[[\Gal(\cK/k)]]^{r},
%\] 
%then this fixed isomorphism induces an injective homomorphism 
%\[
%{\rm ES}_{r}(T) \hooklongrightarrow \cE(T)
%\]
%satisfying
%\[
%\cE(T) \cap \prod_{K \in \Omega}{\rm char}_{\Lambda_{K}}(R_{f,\Sigma}(\bT_{K})) = 
%{\rm im}\left({\rm ES}_{r}(T) \hooklongrightarrow  \cE(T) \right). 
%\] 
\end{lemma}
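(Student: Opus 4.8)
The plan is to verify Hypotheses (H.0)--(H.3) one at a time, invoking the earlier results of the section. Hypothesis (H.0) holds by assumption on $p$ (coprime to the class number of $k$). Hypothesis (H.1) asks for the vanishing of $H^{0}(G_{k_{\fp}}, T/\fm T)$ for $\fp \in \Sigma = S_{p}(k)$, which is part of the hypothesis of the lemma. Hypothesis (H.3) is precisely the content of Proposition~\ref{prop:inj}, which is already established (Iwasawa's weak Leopoldt conjecture for $\mathbb{G}_m$). So the real work is Hypothesis (H.2).

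For (H.2) I must show that $H^{1}_{f,\Sigma}(\bT_{K}) = H^{1}_{\Sigma}(\bT_{K}) = \bigoplus_{\fp \in S_{p}(k)} H^{1}(G_{k_{\fp}}, \bT_{K})$ is a free $\Lambda_{K}$-module of rank $r = [k:\bQ]$, compatibly under base change $K' \rightsquigarrow K$. First I would treat each local term separately: for $\fp \in S_p(k)$, I claim $H^{1}(G_{k_{\fp}}, \bT_{K})$ is free over $\Lambda_{K}$ of rank $[k_{\fp} : \bQ_p]$. The hypotheses $H^{0}(G_{k_{\fp}}, T/\fm T) = H^{2}(G_{k_{\fp}}, T/\fm T) = 0$ force, via the base-change/descent spectral sequence (or ${\bf R}\Gamma(G_{k_{\fp}}, \bT_{K}) \otimes^{\bL}_{\Lambda_K} \cO/\fm \simeq {\bf R}\Gamma(G_{k_{\fp}}, T/\fm T)$ together with the perfectness statements of Corollary~\ref{cor:amp}/Theorem~\ref{thm:pot}), that ${\bf R}\Gamma(G_{k_{\fp}}, \bT_{K})$ is represented by $H^{1}$ placed in a single degree, and $H^{1}(G_{k_{\fp}}, T/\fm T)$ has $\cO/\fm$-dimension $[k_{\fp}:\bQ_p]$ by the local Euler characteristic formula. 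Hence $H^{1}(G_{k_{\fp}}, \bT_{K})$ is free of that rank and its formation commutes with $\otimes^{\bL}_{\Lambda_{K'}}\Lambda_{K}$; this is exactly Corollary~\ref{cor:H2vanish} referenced in the introduction, so I would cite it. Summing over $\fp \in S_p(k)$ gives total rank $\sum_{\fp \mid p}[k_{\fp}:\bQ_p] = [k:\bQ] = r$, and the base-change compatibility of (H.2) follows from Theorem~\ref{thm:pot}~(i) applied locally.

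The main obstacle is making the local freeness argument clean: one needs that the two cohomological vanishings at level $T/\fm T$ propagate to the full Iwasawa-theoretic complex $\bT_K$ over the (non-regular, but Gorenstein) ring $\Lambda_K$, so that $H^1$ is the only nonzero cohomology and is $\Lambda_K$-free rather than merely of projective dimension zero on a punctured spectrum. This is where Corollary~\ref{cor:amp} (perfect amplitude in $[1,2]$) combined with the vanishing of $H^0$ and $H^2$ mod $\fm$ does the job: a perfect complex in amplitude $[1,2]$ with $H^2 \otimes \cO/\fm = H^0 \otimes \cO/\fm = 0$ must be quasi-isomorphic to a free module in degree $1$. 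I would also note that since $\fp \nmid [L(\mu_p):k]$ is not assumed here, the hypothesis $H^0(G_{k_\fp}, T/\fm T)=0$ genuinely needs to be imposed (it is not automatic), and likewise $H^2(G_{k_\fp},T/\fm T)=0$. With these in hand, (H.2) holds, and all of (H.0)--(H.3) are verified, completing the proof.

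\begin{proof}
Hypothesis~(H.0) holds since $p$ is coprime to the class number of $k$ by assumption, and Hypothesis~(H.3) is Proposition~\ref{prop:inj}. Hypothesis~(H.1) is immediate from the assumed vanishing of $H^{0}(G_{k_{\fp}}, T/\fm T)$ for $\fp \in \Sigma = S_{p}(k)$. It remains to check Hypothesis~(H.2). Fix $K \in \Omega$ and $\fp \in S_{p}(k)$. By Corollary~\ref{cor:amp}, the complex ${\bf R}\Gamma(G_{k_{\fp}}, \bT_{K})$ is a perfect complex of $\Lambda_{K}$-modules with perfect amplitude contained in $[1,2]$, and by Theorem~\ref{thm:pot}~(i) its derived base change along $\Lambda_{K} \to \cO/\fm$ computes ${\bf R}\Gamma(G_{k_{\fp}}, T/\fm T)$. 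The assumptions $H^{0}(G_{k_{\fp}}, T/\fm T) = H^{2}(G_{k_{\fp}}, T/\fm T) = 0$ therefore force ${\bf R}\Gamma(G_{k_{\fp}}, \bT_{K})$ to be quasi-isomorphic to a free $\Lambda_{K}$-module concentrated in degree $1$, namely $H^{1}(G_{k_{\fp}}, \bT_{K})$, whose rank equals $\dim_{\cO/\fm} H^{1}(G_{k_{\fp}}, T/\fm T) = [k_{\fp} : \bQ_{p}]$ by the local Euler characteristic formula; this is Corollary~\ref{cor:H2vanish}. Summing over $\fp \in S_{p}(k)$ gives that $H^{1}_{f,\Sigma}(\bT_{K}) = H^{1}_{\Sigma}(\bT_{K})$ is free over $\Lambda_{K}$ of rank $\sum_{\fp \mid p}[k_{\fp} : \bQ_{p}] = [k : \bQ] = r$. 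Finally, for $K' \in \Omega$ with $K \subseteq K'$, Theorem~\ref{thm:pot}~(i) applied locally yields ${\bf R}\Gamma(G_{k_{\fp}}, \bT_{K'}) \otimes^{\bL}_{\Lambda_{K'}} \Lambda_{K} \xrightarrow{\sim} {\bf R}\Gamma(G_{k_{\fp}}, \bT_{K})$, and since these complexes are concentrated in degree $1$, taking $H^{1}$ and summing over $\fp$ gives the required isomorphism $\Lambda_{K} \otimes_{\Lambda_{K'}} H^{1}_{f,\Sigma}(\bT_{K'}) \xrightarrow{\sim} H^{1}_{f,\Sigma}(\bT_{K})$. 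Thus Hypothesis~(H.2) holds, completing the proof.
\end{proof}
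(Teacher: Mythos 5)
Your proof is correct and follows essentially the same path as the paper: (H.0) from the class-number assumption, (H.1) from the assumed vanishing of $H^0$, (H.3) from Proposition~\ref{prop:inj}, and (H.2) from Corollary~\ref{cor:H2vanish} applied to each local complex ${\bf R}\Gamma(G_{k_{\fp}},\bT_K)$. The paper simply cites Corollary~\ref{cor:H2vanish} without unpacking the perfect-amplitude and rank computations, but your expansion is faithful to what that corollary encapsulates.
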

\begin{proof}
Since we assume that $p$ is coprime to the class number of $k$ and $H^{0}(G_{k_{\fp}},T/\fm T) = 0$ for each prime $\fp \in S_{p}(k)$, 
Hypotheses~(H.0) and (H.1) are satisfied. 
Furthermore, we also assume that, for any prime $\fp \in S_{p}(k)$, the module $H^{2}(G_{k_{\fp}},T/\fm T)$ vanishes,  
Corollary~\ref{cor:H2vanish} implies Hypothesis~(H.2). 
Hypothesis~(H.3) follows from Proposition~\ref{prop:inj}. 
%The latter assertion  follows from Proposition~\ref{prop:main}. 
\end{proof}

\begin{theorem}\label{thm:main2}
Let us fix an isomorphism $\varprojlim_{K \in \Omega}H^{1}_{\Sigma}(\bT_{K}) \stackrel{\sim}{\longrightarrow} 
\Lambda[[\Gal(\cK/k)]]^{r}$. 
Suppose that 
\begin{itemize}
\item $H^{0}(G_{k_{\fp}},T/\fm T) = H^{2}(G_{k_{\fp}},T/\fm T) = 0$
for each prime $\fp \in S_{p}(k)$, and 
\item the $\mu$-invariant of $X_{k}^{\chi}$ vanishes. 
\end{itemize}
Then there is a unique Euler system $c^{\rm DR} = \{c_{K}^{\rm DR}\}_{K \in \Omega} \in {\rm ES}_{r}(T)$ of rank $r$ such that the image of $c^{\rm DR}$ under the map ${\rm ES}_{r}(T) \hooklongrightarrow \cE(T)$ (defined in Proposition~\ref{prop:main}) is $\{L_{p,K}^{\chi}\}_{K \in \Omega}$. 
\end{theorem}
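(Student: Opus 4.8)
The plan is to show that the image of the injection ${\rm ES}_{r}(T) \hooklongrightarrow \cE(T)$ is exactly the ideal
\[
\cE(T) \cap \prod_{K \in \Omega} {\rm char}_{\Lambda_{K}}(R_{f,\Sigma}(\bT_{K}))
\]
by invoking Proposition~\ref{prop:main} (which applies once Hypotheses (H.0)--(H.3) are checked via Lemma~\ref{lem:str}), and then to prove that the specific Euler system $\{L_{p,K}^{\chi}\}_{K\in\Omega}$ lies in that ideal. Uniqueness is immediate from the injectivity in Proposition~\ref{prop:main}, so the entire content is the membership statement. Since $\{L_{p,K}^{\chi}\}_{K\in\Omega}\in\cE(T)$ already by Lemma~\ref{lem:rel2}, what remains is to check that for every $K \in \Omega$ one has $L_{p,K}^{\chi} \in {\rm char}_{\Lambda_{K}}(R_{f,\Sigma}(\bT_{K}))$.

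The key step is to pass from $R_{f,\Sigma}(\bT_{K})$ to $X_{K}^{\chi}$. In the present setting $H^{1}_{f,\Sigma}(\bT_{K}) = H^{1}_{\Sigma}(\bT_{K})$, so the exact sequence~\eqref{keyexactseq} exhibits $R_{f,\Sigma}(\bT_{K})$ as a $\Lambda_{K}$-submodule of $X_{K}^{\chi}$. By Proposition~\ref{prop:lambda-inequality} (valid because $\Lambda_{K}$ is Gorenstein, Remark~\ref{rem:Lambda_{K}}(ii)), taking a submodule can only enlarge the characteristic ideal, so
\[
{\rm char}_{\Lambda_{K}}(X_{K}^{\chi}) \subseteq {\rm char}_{\Lambda_{K}}(R_{f,\Sigma}(\bT_{K})).
\]
Now Corollary~\ref{cor:kuri03}, which rests on Kurihara's computation ${\rm Fitt}^{0}_{\Lambda_{K}}(X_{K,S}^{\chi}) = L_{p,K}^{\chi}\Lambda_{K}$ together with the Iwasawa main conjecture for totally real fields (Wiles) and the $\mu = 0$ propagation in Proposition~\ref{prop:mu=0}, gives $L_{p,K}^{\chi} \in {\rm char}_{\Lambda_{K}}(X_{K}^{\chi})$. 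Chaining the two inclusions yields $L_{p,K}^{\chi} \in {\rm char}_{\Lambda_{K}}(R_{f,\Sigma}(\bT_{K}))$ for each $K$, hence $\{L_{p,K}^{\chi}\}_{K} \in \cE(T) \cap \prod_{K} {\rm char}_{\Lambda_{K}}(R_{f,\Sigma}(\bT_{K}))$, which by Proposition~\ref{prop:main} is precisely the image of ${\rm ES}_{f,r}(T) = {\rm ES}_{r}(T)$.

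Concretely, I would organize the write-up as: (1) invoke Lemma~\ref{lem:str} to secure Hypotheses (H.0)--(H.3) and hence Proposition~\ref{prop:main}, noting ${\rm ES}_{f,r}(T) = {\rm ES}_{r}(T)$ here since $H^{1}_{f,\Sigma} = H^{1}_{\Sigma}$; (2) recall from~\eqref{keyexactseq} that $R_{f,\Sigma}(\bT_{K}) \hookrightarrow X_{K}^{\chi}$; (3) apply Proposition~\ref{prop:lambda-inequality}; (4) apply Corollary~\ref{cor:kuri03} (using the hypothesis that $\mu(X_{k}^{\chi}) = 0$) and Lemma~\ref{lem:rel2}; (5) conclude via the displayed equality in Proposition~\ref{prop:main}, with uniqueness from injectivity. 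The main obstacle --- really the only non-formal input --- is already packaged into Corollary~\ref{cor:kuri03}, namely Kurihara's identification of ${\rm Fitt}^{0}$ of the $S$-ramified Iwasawa module with the $p$-adic $L$-function, which is where Wiles' main conjecture enters; everything else is bookkeeping with characteristic ideals and the exact sequences already established. One minor point to be careful about is that $R_{f,\Sigma}(\bT_{K})$ must genuinely be a torsion $\Lambda_{K}$-module (so that ${\rm char}_{\Lambda_{K}}$ is a proper ideal and the comparison with the Fitting ideal is meaningful) --- this follows from~\eqref{keyexactseq} together with the fact that $X_{K}^{\chi}$ is $\Lambda_{K}$-torsion, which is itself a consequence of Wiles' theorem, but it is worth stating explicitly.
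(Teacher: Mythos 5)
Your argument follows the paper's proof essentially step for step: secure Hypotheses (H.0)--(H.3) via Lemma~\ref{lem:str} so that Proposition~\ref{prop:main} applies with ${\rm ES}_{f,r}(T)={\rm ES}_{r}(T)$, use the exact sequence~\eqref{keyexactseq} together with Proposition~\ref{prop:lambda-inequality} to pass from $X_{K}^{\chi}$ to $R_{f,\Sigma}(\bT_{K})$, and then feed in Lemma~\ref{lem:rel2} and Corollary~\ref{cor:kuri03}. One small point: your closing worry that $R_{f,\Sigma}(\bT_{K})$ needs to be $\Lambda_{K}$-torsion is unnecessary here, since the paper's exterior bi-dual definition of ${\rm char}_{\Lambda_{K}}(-)$ makes sense for any finitely generated module and the comparison in Proposition~\ref{prop:main} requires no torsion hypothesis.
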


\begin{remark}
By using the Kummer duality and the Ferrero--Washington theorem (proved in \cite{FeWa79}), 
we see that the $\mu$-invariant of $X_{k}^{\chi}$ vanishes if $L/\bQ$ is an abelian extension. 
Furthermore, by using the argument as the proof of Proposition~\ref{prop:mu=0}, we also see that 
the $\mu$-invariant of $X_{k}^{\chi}$ vanishes if there is a subfield $L' \subseteq L$ such that 
$L'/\bQ$ is an abelian extension and that $L/L'$ is a Galois extension of $p$-power degree. 
\end{remark}

\begin{proof}
Let $K \in \Omega$ be a field.  
Note that 
\[
H^{1}_{f}(G_{k,S_{K}},\bT_{K}) = H^{1}(G_{k,S_{K}},\bT_{K}) \ \text{ and } \ {\rm ES}_{f,r}(T) = {\rm ES}_{r}(T).  
\]
%and that Proposition~\ref{prop:mu=0} shows that the $\mu$-invariant of $X_{K}^{\chi}$ vanishes. 
Since $R_{f,\Sigma}(\bT_{K})$ is a submodule of $X_{K}^{\chi}$ by the exact sequence \eqref{keyexactseq}, Proposition~\ref{prop:lambda-inequality} shows that 
\[
{\rm char}_{\Lambda_{K}}(X_{K}^{\chi}) \subseteq {\rm char}_{\Lambda_{K}}(R_{f,\Sigma}(\bT_{K})).
\] 
Furthermore, Lemma~\ref{lem:rel2} and Corollary~\ref{cor:kuri03} imply
\[
 \{L_{p,K}^{\chi}\}_{K \in \Omega} \in \cE(T) \cap \prod_{K \in \Omega}{\rm char}_{\Lambda_{K}}(X_{K}^{\chi}) \subseteq 
\cE(T) \cap \prod_{K \in \Omega}{\rm char}_{\Lambda_{K}}(R_{f,\Sigma}(\bT_{K})). 
\]  
Thus, by Proposition~\ref{prop:main} and  Lemma~\ref{lem:str}, we have  
\[
\{L_{p,K}^{\chi}\}_{K \in \Omega} \in \cE(T) \cap \prod_{K \in \Omega}{\rm char}_{\Lambda_{K}}(R_{f,\Sigma}(\bT_{K})) = 
{\rm im}\left({\rm ES}_{r}(T) \hooklongrightarrow  \cE(T) \right). 
\]
\end{proof}

\section{Stark systems}\label{sec:stark}
In this section, we use the same notations as in \S\ref{sec:euler G_{m}}. 
We will freely use the results in Appendix~\ref{sec:bi-dual}.
 
Throughout this section, we fix a field $K \in \Omega$ and suppose that 
\begin{itemize}
\item $p$ is coprime to the class number of $k$ and the order of $\chi$, 
\item $H^{0}(G_{k_{\fp}},T/\fm T) = H^{2}(G_{k_{\fp}},T/\fm T) = 0$
for each prime $\fp \in S_{p}(k)$, and 
\item $H^{2}(G_{k_{\fq}},T/\fm T) = 0$ for each prime $\fq \in S_{\rm ram}(K/k)$. 
\end{itemize}

Let $n>0$ be an integer. For a field $K' \in \Omega$, put  
\[
R_{K',n} := \cO/p^{n}\cO[\Gal(k_{n}K'/k)] \ \text{ and } \ 
T_{K',n} := T \otimes_{\bZ_{p}}\bZ_{p}/p^{n}\bZ_{p}[\Gal(k_{n}K'/k)]^{\iota}, 
\] 
where $k_{n}$ denotes the $n$-th layler of the cyclotomic $\bZ_{p}$-extension $k_{\infty}/k$. 
Note that $R_{K', n}$ is a zero-dimensional Gorenstein local ring. 

By using the second assumption and Corollary~\ref{cor:H2vanish}, for each prime $\fp \in S_{p}(k)$, one can  fix an isomorphism 
\[
\varprojlim_{K' \in \Omega}H^{1}(G_{k_{\fp}}, \bT_{K'})   \stackrel{\sim}{\longrightarrow} 
\Lambda[[\Gal(\cK/k)]]^{[k_{\fp} \colon \bQ_{p}]},  
\] 
which induces an isomorphism 
\[
\varprojlim_{K' \in \Omega}H^{1}_{\Sigma}(\bT_{K'})   \stackrel{\sim}{\longrightarrow} 
\Lambda[[\Gal(\cK/k)]]^{r}.   
\]

\begin{remark}\label{rem:vanish}
Let $\fq \in S_{\rm ram}(K/k)$ be a prime. Since $K/k$ is an abelian $p$-power degree extension and $\fq \not\in S_{p}(k)$, we have  
\[
N(\fq) \equiv 1 \pmod{p}.  
\] 
Hence local duality 
%$H^{2}(G_{k_{\fq}},T/\fm T) \cong H^{0}(G_{k_{\fq}}, (T/\fm T)^{\vee}(1))$ 
and the vanishing of $H^{2}(G_{k_{\fq}},T/\fm T)$ imply the vanishing of $H^{0}(G_{k_{\fq}},T/\fm T)$. 
Thus, for any integer $n>0$, local Euler characteristic formula shows 
\[
H^{1}(G_{k_{\fq}}, T/\fm T) = 0. 
\]
By Remark~\ref{rem:vanish-ram}, for a prime $\fq \in S_{\rm ram}(L/k) \setminus S_{p}(k)$, we also have    
\[
H^{0}(G_{k_{\fq}},T/\fm T) = H^{1}(G_{k_{\fq}},T/\fm T) = H^{2}(G_{k_{\fq}},T/\fm T) = 0. 
\]
In particular,  the exact sequence~\eqref{exact:diff} shows that 
\begin{align}\label{keyeq}
X_{K,S}^{\chi} = X_{K}^{\chi}. 
\end{align}
\end{remark}

\subsection{Review of the theory of Stark systems}
In this subsection, we recall the definition of Stark systems and some results proved 
in \cite{bss, sakamoto}. 

We write $\cH$ for the Hilbert class field of $k$. 
Note that $p \nmid [\cH \colon k]$ since $p$ is coprime to the class number of $k$. 
Recall that $L = \overline{k}^{\ker(\chi)}$. 
Set  $q := \# \cO/\fm$ and $\ell_{K, n} := {\rm length}_{R_{K,n}}(R_{K,n})$. 
Let  
\[
\cP_{n} := \cP_{K,n} := \{\fq \not\in S_{K} \mid 
\text{$\fq$ splits completely in $\cH KL(\mu_{q^{n + \ell_{K, n}}}, (\cO_{k}^{\times})^{1/q^{n + \ell_{K, n}}})$}\}, 
\]
where $(\cO_{k}^{\times})^{1/q^{n + \ell_{K, n}}} := \{x \in \overline{k}^{\times} \mid x^{q^{n + \ell_{K, n}}} \in \cO_{k}^{\times}\}$. 
We denote by $\cN_{n}$  the set of square-free products of primes in $\cP_{n}$. 
For a prime $\fq \in \cP_{n}$, let $k(\fq)$ be the maximal $p$-extension inside the ray class field modulo $\fq$. 
For an ideal $\fn \in \cN_{n}$ we set 
\[
k(\fn) := \prod_{\fq \mid \fn}k(\fq) \ \textrm{ and } \ K(\fn) := k(\fn)K.
\] 
We write $\nu(\fn) := \# \{\fq \mid \fn\}$ for the number of prime divisors of $\fn \in \cN_{n}$. 

By definition, for a prime $\fq \in \cP_{n}$, the Frobenius element ${\rm Fr}_{\fq}$ acts trivially on $T_{K,n}$. Hence the module
\[
H^{1}_{f}(G_{k_{\fq}},T_{K,n}) := H^{1}(\Gal(k_{\fq}^{\rm ur}/k_{\fq}),T_{K,n}) \subseteq 
H^{1}(G_{k_{\fq}},T_{K,n})
\]
is a free $R_{K,n}$-module of rank $1$ (see \cite[Lemma~1.2.1(i)]{MRkoly}). Here $k_{\fq}^{\rm ur}$ denotes the maximal unramified extension of $k_{\fq}$. 
Furthermore, \cite[Lemma~1.2.3]{MRkoly} implies that 
\[
H^{1}_{/f}(G_{k_{\fq}},T_{K,n}) := H^{1}(G_{k_{\fq}},T_{K,n})/H^{1}_{f}(G_{k_{\fq}},T_{K,n})
\]
is also free of rank $1$.

\begin{definition}
Let $\fn \in \cN_{n}$ be an ideal and $\cF$ a Selmer structure on $T_{K,n}$. 
We then define a  Selmer structure $\cF^{\fn}$ on $T_{K,n}$ by the following data:
\begin{itemize}
\item $S(\cF^{\fn}) := S(\cF) \cup \{\fq \mid \fn\}$,  
\item we define a local condition at a prime $\fq \mid \fn$ by  
\begin{align*}
H^{1}_{\cF^{\fn}}(G_{k_{\fq}}, T_{K,n}) := H^{1}(G_{k_{\fq}}, T_{K,n}), 
\end{align*}
\item we define a local condition at a prime $\fq \nmid \fn$ by 
\[
H^{1}_{\cF^{\fn}}(G_{k_{\fq}}, T) := H^{1}_{\cF}(G_{k_{\fq}}, T).
\] 
\end{itemize}

\end{definition}

Let $\cF$ be a Selmer structure on $T_{K,n}$. 
Then, for any ideals $\fn,\fm \in \cN_{n}$ with $\fn \mid \fm$, we have  a cartesian diagram
\begin{align}\label{eq:exact1}
\begin{split}
\xymatrix{
H^{1}_{\cF^{\fn}}(k, T_{K,n}) \ar@{^{(}->}[r] \ar[d] & H^{1}_{\cF^{\fm}}(k, T_{K,n})  \ar[d]
\\
\bigoplus_{\fq \mid \fn} H^{1}_{/f}(G_{k_{\fq}},T_{K,n}) \ar@{^{(}->}[r] & \bigoplus_{\fq \mid \fm} H^{1}_{/f}(G_{k_{\fq}},T_{K,n}). 
}
\end{split}
\end{align}

\begin{definition}\label{def:det}
For an ideal $\fn \in \cN_{n}$, we put 
\[
W_{\fn} := \bigoplus_{\fq \mid \fn} H^{1}_{/f}(G_{k_{\fq}},T_{K,n})^{*}. 
\]
\end{definition}

By Definition~\ref{def:map}, 
the cartesian diagram~\eqref{eq:exact1} 
%and \eqref{eq:exact2} 
induces a homomorphism
\begin{align}\label{map:stark}
\Phi_{\fm,\fn} \colon {\bigcap}^{r' +\nu(\fm)}_{R_{K,n}}H^{1}_{\cF^{\fm}}(k, T_{K,n}) \otimes_{R_{K,n}} \det(W_{\fm}) \longrightarrow {\bigcap}^{r'+\nu(\fn)}_{R_{K,n}}H^{1}_{\cF^{\fn}}(k, T_{K,n}) \otimes_{R_{K,n}} \det(W_{\fn}) 
%\text{ and }
%\\
%&{\bigcap}^{\nu(\fm)}_{R_{K,n}}H^{1}_{\cF_{\rm str}^{\fm}}(k, T_{K,n}) \otimes_{R_{K,n}} \det(W_{\fm}) \longrightarrow {\bigcap}^{\nu(\fn)}_{R_{K,n}}H^{1}_{\cF_{\rm str}^{\fn}}(k, T_{K,n}) \otimes_{R_{K,n}} \det(W_{\fn}). 
\end{align}
for any integer $r' \geq 0$.

\begin{definition}
Let $r' \geq 0$ be an integer. 
The module ${\rm SS}_{r'}(T_{K,n},\cF)$ of Stark systems of rank $r'$ is defined by the inverse limit with respect to $\Phi_{\fm, \fn}$: 
\begin{align*}
{\rm SS}_{r'}(T_{K,n},\cF) &:= \varprojlim_{\fn \in \cN_{n}}{\bigcap}^{r' + \nu(\fn)}_{R_{K,n}}H^{1}_{\cF^{\fn}}(k, T_{K,n}) \otimes_{R_{K,n}} \det(W_{\fn}). 
\end{align*}
Throughout this section, by fixing isomorphism $\det(W_{\fn}) \cong R_{K,n}$, 
we identify the homomorphism~\eqref{map:stark} with 
\[
\Phi_{\fm,\fn} \colon {\bigcap}^{r' +\nu(\fm)}_{R_{K,n}}H^{1}_{\cF^{\fm}}(k, T_{K,n}) \longrightarrow {\bigcap}^{r'+\nu(\fn)}_{R_{K,n}}H^{1}_{\cF^{\fn}}(k, T_{K,n}),  
\] 
and we regard
\begin{align*}
{\rm SS}_{r'}(T_{K,n},\cF) \subseteq \prod_{\fn \in \cN_{n}}{\bigcap}^{r' + \nu(\fn)}_{R_{K,n}}H^{1}_{\cF^{\fn}}(k, T_{K,n}).  
\end{align*} 
\end{definition}

 To simplify the notation, we put $H^{1}_{\Sigma}(T_{K,n}) := \bigoplus_{\fp \in S_{p}(k)}H^{1}(G_{k_{\fp}},T_{K,n})$. 
We note that, by Corollary~\ref{cor:H2vanish}, the fixed isomorphism $\varprojlim_{K \in \Omega}H^{1}_{\Sigma}(\bT_{K}) \stackrel{\sim}{\longrightarrow} 
\Lambda[[\Gal(\cK/k)]]^{r}$ induces an $R_{K,n}$-isomorphism 
\[
H^{1}_{\Sigma}(T_{K,n}) \stackrel{\sim}{\longrightarrow} 
R_{K,n}^{r}. 
\]

%\begin{definition}\ 
%\begin{itemize}
%\item[(i)] Let $\{e_{1},\ldots, e_{r}\}$ denote the standard basis of $\Lambda[[\Gal(\cK/k)]]^{r}$ and set 
%\[
%\bL :=  \Lambda[[\Gal(\cK/k)]]e_{r}.
%\] 
%\item[(ii)] We define a Selmer structure $\cF_{\bL}$ on $T_{K,n}$ by the following data: 
%\begin{itemize}
%\item $S(\cF_{\bL}) := S_{K}$,  
%\item we define a local condition at a prime $\fq \not\in S_{p}(k)$ by  
%\begin{align*}
%H^{1}_{\cF_{\bL}}(G_{k_{\fq}}, T_{K,n}) := H^{1}_{\cF_{\rm can}}(G_{k_{\fq}}, T_{K,n}), 
%\end{align*}
%\item we define a local condition $H^{1}_{\cF_{\bL}}(G_{k_{\fp}}, T)$ at a prime $\fp \mid p$ to be the inverse image of $\bL \otimes_{\Lambda[[\Gal(\cK/k)]]} R_{K,n} =: \bL_{K,n}$ under the composite map  
%\[
%H^{1}_{\cF_{\bL}}(G_{k_{\fp}}, T_{K,n}) \longrightarrow H^{1}_{\Sigma}(T_{K,n}) \stackrel{\sim}{\longrightarrow} R_{K,n}^{r}.
%\] 
%\end{itemize} 
%\end{itemize}
%\end{definition}

%\begin{remark}
%By the choice of the fixed isomorphism $H^{1}_{\Sigma}(T_{K,n}) \stackrel{\sim}{\longrightarrow} R_{K,n}^{r}$, the $R_{K,n}$-module $\bigoplus_{\fp \mid p}H^{1}_{\cF_{\bL}}(G_{k_{\fp}}, T_{K,n})$ is isomorphic to $\bL_{K,n}$. 
%Hence we  identify the module  $\bigoplus_{\fp \mid p}H^{1}_{\cF_{\bL}}(G_{k_{\fp}}, T_{K,n})$ with $\bL_{K,n}$. 
%\end{remark}

Let us explain that we have the canonical homomorphism 
\begin{align}\label{hom:stark}
{\rm SS}_{r}(T_{K,n},\cF_{\rm can}) \longrightarrow {\rm SS}_{0}(T_{K,n},\cF_{\rm str}). 
\end{align}

 Let $\fn \in \cN_{n}$ be an ideal. 
 By the definition of the Selmer structures $\cF_{\rm str}$ and $\cF_{\rm can}$ (see Example~\ref{exa:selmer-str}), we have  an exact sequence of $R_{K,n}$-modules 
 \[
 0 \longrightarrow H^{1}_{\cF_{\rm str}^{\fn}}(k, T_{K,n})  \longrightarrow H^{1}_{\cF_{\rm can}^{\fn}}(k, T_{K,n}) \longrightarrow H^{1}_{\Sigma}(T_{K,n}). 
 \]
 Hence, by using   the fixed isomorphism $H^{1}_{\Sigma}(T_{K,n}) \stackrel{\sim}{\longrightarrow} R_{K,n}^{r}$ and Definition~\ref{def:map}, we obtain a homomorphism 
 \begin{align*}
 l_{\fn} \colon {\bigcap}^{r+\nu(\fn)}_{R_{K,n}}H^{1}_{\cF_{\rm can}^{\fn}}(k, T_{K,n}) \longrightarrow  {\bigcap}^{\nu(\fn)}_{R_{K,n}}H^{1}_{\cF_{\rm str}^{\fn}}(k, T_{K,n}).  
\end{align*}
 
%Hence we have  a natural homomorphism 
%\[
%l_{\fn} \colon {\bigcap}^{r+\nu(\fn)}_{R_{K,n}}H^{1}(G_{k,S_{K(\fn)}},T_{K,n}) \longrightarrow {\bigcap}^{\nu(\fn)}_{R_{K,n}}H^{1}_{\rm str}(G_{k,S_{K(\fn)}},T_{K,n}) 
%\]
%for an ideal $\fn \in \cN_{n}$. 

\begin{lemma}
Let $\fn, \fm \in \cN_{n}$ be ideals with $\fn \mid \fm$. Then the diagram 
\[
\xymatrix{
{\bigcap}^{r+\nu(\fm)}_{R_{K,n}}H^{1}_{\cF_{\rm can}^{\fm}}(k, T_{K,n})  \ar[rr]^-{(-1)^{r\nu(\fm)}l_{\fm}} \ar[d]_{\Phi_{\fm,\fn}} &&  {\bigcap}^{\nu(\fn)}_{R_{K,n}}H^{1}_{\cF_{\rm str}^{\fm}}(k, T_{K,n}) \ar[d]_{\Phi_{\fm,\fn}}  
\\
{\bigcap}^{r+\nu(\fn)}_{R_{K,n}}H^{1}_{\cF_{\rm can}^{\fn}}(k, T_{K,n})   \ar[rr]^-{(-1)^{r\nu(\fn)}l_{\fn}} &&  {\bigcap}^{\nu(\fn)}_{R_{K,n}}H^{1}_{\cF_{\rm str}^{\fn}}(k, T_{K,n})  
}
\]
commutes. 
Thus we obtain the canonical homomorphism 
\[
{\rm SS}_{r}(T_{K,n},\cF_{\rm can}) \longrightarrow {\rm SS}_{0}(T_{K,n},\cF_{\rm str}); (\epsilon_{\fn})_{\fn \in \cN_{n}} \mapsto ((-1)^{r\nu(\fn)}l_{\fn}(\epsilon_{\fn}))_{\fn \in \cN_{n}}. 
\]
\end{lemma}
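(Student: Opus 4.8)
The plan is to verify the commutativity of the square directly, reducing everything to the functoriality of the map induced on exterior bi-duals (Definition~\ref{def:map}) with respect to the cartesian diagram~\eqref{eq:exact1} and the surjection onto $H^{1}_{\Sigma}(T_{K,n})\cong R_{K,n}^{r}$. First I would unwind the definition of $l_{\fn}$: it is the composite of the bi-dual map attached to the exact sequence
\[
0\longrightarrow H^{1}_{\cF_{\rm str}^{\fn}}(k,T_{K,n})\longrightarrow H^{1}_{\cF_{\rm can}^{\fn}}(k,T_{K,n})\longrightarrow R_{K,n}^{r}
\]
(using the fixed trivialization $H^{1}_{\Sigma}(T_{K,n})\cong R_{K,n}^{r}$), and $\Phi_{\fm,\fn}$ is likewise the bi-dual map attached to the cartesian square~\eqref{eq:exact1}. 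So the diagram in question is a diagram of bi-dual maps built from a commutative diagram of short exact sequences of finitely generated $R_{K,n}$-modules.

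The key step is to assemble the ambient commutative diagram of $R_{K,n}$-modules and then invoke the general functoriality/associativity properties of the exterior bi-dual maps established in Appendix~\ref{sec:bi-dual}. Concretely: both the ``$\cF_{\rm str}$'' and the ``$\cF_{\rm can}$'' filtrations fit into a $3\times3$-type diagram whose rows are the defining exact sequences for $l_{\fn}$ and $l_{\fm}$, and whose columns are given by the localization maps $\bigoplus_{\fq\mid\fm}H^{1}_{/f}\to\bigoplus_{\fq\mid\fn}H^{1}_{/f}$ defining $W_{\fm}\to W_{\fn}$ and $\Phi_{\fm,\fn}$. The localizations at primes dividing $\fn$ are common to $\cF_{\rm str}$ and $\cF_{\rm can}$, while the localization to $H^{1}_{\Sigma}$ is untouched by adding Euler factors at $\fq\mid\fm/\fn$; this is exactly what makes the square commute up to the sign that records the reordering of the $\nu$-many ``$/f$''-coordinates past the $r$-many ``$\Sigma$''-coordinates. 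Tracking that reordering is where the sign $(-1)^{r\nu(\fm)}$ versus $(-1)^{r\nu(\fn)}$ comes from: passing from $\fm$ to $\fn$ removes $\nu(\fm)-\nu(\fn)$ wedge factors of degree one that had to be commuted past $r$ factors, contributing $(-1)^{r(\nu(\fm)-\nu(\fn))}$, which is precisely the discrepancy between the two normalizations. Once commutativity is checked, the second assertion is immediate: the maps $(-1)^{r\nu(\fn)}l_{\fn}$ are compatible with the transition maps $\Phi_{\fm,\fn}$ on both sides, hence induce a map on the inverse limits, which is the asserted homomorphism ${\rm SS}_{r}(T_{K,n},\cF_{\rm can})\to{\rm SS}_{0}(T_{K,n},\cF_{\rm str})$.

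The main obstacle I anticipate is bookkeeping the sign correctly and, relatedly, confirming that the bi-dual functoriality lemmas in Appendix~\ref{sec:bi-dual} apply in the precise form needed here — namely to a commutative diagram of short exact sequences rather than to a single one. I would isolate this as a lemma of the form ``given a morphism of short exact sequences $0\to N'\to P'\to Q'\to 0$ to $0\to N\to P\to Q\to 0$ with the vertical maps inducing compatible maps on $\bigcap^{\bullet}$, the associated bi-dual maps commute up to the explicit Koszul sign coming from the degree shift,'' and then simply plug in $P'=H^{1}_{\cF_{\rm can}^{\fm}}$, $P=H^{1}_{\cF_{\rm can}^{\fn}}$, $Q'=Q=R_{K,n}^{r}$, $N'=H^{1}_{\cF_{\rm str}^{\fm}}$, $N=H^{1}_{\cF_{\rm str}^{\fn}}$, with the vertical maps supplied by~\eqref{eq:exact1}. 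The homological input (exactness, freeness of the $H^{1}_{/f}$ and of $H^{1}_{\Sigma}$, Gorenstein-ness of $R_{K,n}$) is already available from the hypotheses in force and from \cite{MRkoly}, so the argument is essentially formal once the sign lemma is in place.
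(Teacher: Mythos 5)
Your proposal is correct and follows essentially the same route as the paper: you assemble the commutative diagram of cartesian squares linking the $\cF_{\rm str}$- and $\cF_{\rm can}$-Selmer groups with the targets $W_{\fn}^{*}$, $W_{\fm}^{*}$, and $H^{1}_{\Sigma}(T_{K,n})$, invoke the bi-dual functoriality from Appendix~\ref{sec:bi-dual} (the precise tool is Proposition~\ref{prop:homlem}, applied to the two chains of cartesian squares through the common ``diagonal'' $\Phi_{31}$), and then read off the sign $(-1)^{r(\nu(\fm)-\nu(\fn))}$ from the determinant normalizations in Definition~\ref{def:map}. The lemma you say you would formulate is in effect Proposition~\ref{prop:homlem}, so no new ingredient is needed.
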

\begin{proof}
We have the following commutative diagrams of $R_{K,n}$-modules: 
\begin{align*}
\xymatrix{
H^{1}_{\cF_{\rm str}^{\fn}}(k, T_{K,n}) \ar@{^{(}->}[r] \ar[d] & H^{1}_{\cF_{\rm str}^{\fm}}(k, T_{K,n}) \ar[d] \ar@{^{(}->}[r] & H^{1}_{\cF_{\rm can}^{\fm}}(k, T_{K,n}) \ar[d] 
\\
W_{\fn}^{*} \ar@{^{(}->}[r] & W_{\fm}^{*} \ar@{^{(}->}[r] & W_{\fm}^{*} \oplus H_{\Sigma}^{1}(T_{K,n}), 
}
\end{align*}
\begin{align*}
\xymatrix{
H^{1}_{\cF_{\rm str}^{\fn}}(k, T_{K,n}) \ar@{^{(}->}[r] \ar[d] & H^{1}_{\cF_{\rm can}^{\fn}}(k, T_{K,n}) \ar[d] \ar@{^{(}->}[r] & H^{1}_{\cF_{\rm can}^{\fm}}(k, T_{K,n}) \ar[d] 
\\
W_{\fn}^{*} \ar@{^{(}->}[r] & W_{\fn}^{*} \oplus H_{\Sigma}^{1}(T_{K,n}) \ar@{^{(}->}[r] & W_{\fm}^{*} \oplus H_{\Sigma}^{1}(T_{K,n}). 
}
\end{align*}
Since any square of the above two diagrams is cartesian, by Proposition~\ref{prop:homlem}, we get a  commutative diagram
\[
\xymatrix@C=13pt{
{\bigcap}^{r+\nu(\fm)}_{R_{K,n}}H^{1}_{\cF_{\rm can}^{\fm}}(k, T_{K,n}) \otimes_{R_{K,n}} \det(W_{\fm} \oplus H^{1}_{\Sigma}(T_{K,n})^{*}) \ar[r] \ar[d] &  {\bigcap}^{\nu(\fn)}_{R_{K,n}}H^{1}_{\cF_{\rm str}^{\fm}}(k, T_{K,n}) \otimes_{R_{K,n}} \det(W_{\fm}) \ar[d] 
\\
{\bigcap}^{r+\nu(\fn)}_{R_{K,n}}H^{1}_{\cF_{\rm can}^{\fn}}(k, T_{K,n})  \otimes_{R_{K,n}} \det(W_{\fn} \oplus H^{1}_{\Sigma}(T_{K,n})^{*})  \ar[r] &  {\bigcap}^{\nu(\fn)}_{R_{K,n}}H^{1}_{\cF_{\rm str}^{\fn}}(k, T_{K,n})  \otimes_{R_{K,n}} \det(W_{\fn}). 
}
\]
Hence this lemma follows from %the normalization in 
Definition~\ref{def:map}. 
\end{proof}

%The existence of the canonical maps 
%\[
%{\rm SS}_{r}(T_{K,n},\cF_{\rm can}) \longrightarrow {\rm SS}_{1}(T_{K,n},\cF_{\bL}) \,  
%\text{ and } \,  
%{\rm SS}_{1}(T_{K,n},\cF_{\bL}) \longrightarrow {\rm SS}_{0}(T_{K,n},\cF_{\rm str})
%\]
%follows from the same argument as that of  ${\rm SS}_{r}(T_{K,n},\cF_{\rm can}) \longrightarrow {\rm SS}_{0}(T_{K,n},\cF_{\rm str})$. 
%Furthermore, since the above two maps between Stark systems are defined by using Definition~\ref{def:map} and the following cartesian commutative diagram such that two squares are cartesian: 
%\[
%\xymatrix{
%H^{1}_{\cF_{\rm str}^{\fn}}(k, T_{K,n}) \ar@{^{(}->}[r] \ar[d] & H^{1}_{\cF_{\bL}^{\fn}}(k, T_{K,n}) \ar[d] \ar@{^{(}->}[r] & H^{1}_{\cF_{\rm can}^{\fn}}(k, T_{K,n}) \ar[d] 
%\\
%W_{\fn}^{*} \ar@{^{(}->}[r] & W_{\fn}^{*} \oplus \bL_{K,n} \ar@{^{(}->}[r] & W_{\fn}^{*} \oplus H_{\Sigma}^{1}(T_{K,n}), 
%}
%\]
%the commutativity of the diagram \eqref{diagram1} follows from Proposition~\ref{prop:homlem}. 

\begin{proposition}\label{prop:free-stark}
Let $\fn \in \cN_{n}$ be an ideal with $H^{1}_{(\cF_{\rm can}^{\fn})^{*}}(k, T_{K,n}^{\vee}(1)) = 0$. 
Then the $R_{K,n}$-module 
$H^{1}_{\cF^{\fn}_{\rm can}}(k,T_{K,n})$ is free of rank $r+\nu(\fn)$ and the projection map 
\[
{\rm SS}_{r}(T_{K,n},\cF_{\rm can}) \longrightarrow 
{\bigcap}^{r+\nu(\fn)}_{R_{K,n}}H^{1}_{\cF^{\fn}_{\rm can}}(k,T_{K,n})
\]
is an isomorphism. In particular, 
the $R_{K,n}$-module ${\rm SS}_{r}(T_{K,n},\cF_{\rm can})$ is free of rank $1$. 
\end{proposition}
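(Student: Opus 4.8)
The plan is to follow the standard Stark-system formalism (as developed in \cite{bss, sakamoto}), whose core inputs are available here because $R_{K,n}$ is a zero-dimensional Gorenstein local ring and the relevant cohomology complex is perfect of amplitude $[1,2]$. First I would unwind the hypothesis $H^{1}_{(\cF_{\rm can}^{\fn})^{*}}(k, T_{K,n}^{\vee}(1)) = 0$: by the Poitou--Tate/global duality formalism recalled in Appendix~\ref{subsec:selmer-str} (Theorem~\ref{pt}), this vanishing identifies $H^{1}_{\cF^{\fn}_{\rm can}}(k,T_{K,n})$ with $H^{0}$ of a perfect complex $C^{\bullet}$ of $R_{K,n}$-modules concentrated in degrees $[0,1]$ (a ``core'' situation), whose Euler characteristic computes the rank. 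A local Euler characteristic count at the primes in $S_{p}(k)$ (using (H.1), the vanishing of $H^{0}$ and $H^{2}$ at $p$, and the triviality of ${\rm Fr}_{\fq}$ for $\fq\mid\fn$, which each contribute $1$ to the rank) gives that this rank is $r+\nu(\fn)$. Since $R_{K,n}$ is zero-dimensional local, a perfect complex in amplitude $[0,1]$ with vanishing $H^{1}$ has free $H^{0}$, so $H^{1}_{\cF^{\fn}_{\rm can}}(k,T_{K,n})$ is free of rank $r+\nu(\fn)$; this is exactly the kind of statement encoded in Corollary~\ref{cor:H2vanish} and the reduction lemmas.

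Next I would prove the projection $\pi_{\fn}\colon {\rm SS}_{r}(T_{K,n},\cF_{\rm can}) \longrightarrow {\bigcap}^{r+\nu(\fn)}_{R_{K,n}}H^{1}_{\cF^{\fn}_{\rm can}}(k,T_{K,n})$ is an isomorphism. The key point is that the transition maps $\Phi_{\fm,\fn}$ in the inverse system are themselves isomorphisms once both $H^{1}_{\cF^{\fn}_{\rm can}}(k, T_{K,n})$ and $H^{1}_{\cF^{\fm}_{\rm can}}(k, T_{K,n})$ are free of the expected ranks: in that case the cartesian diagram~\eqref{eq:exact1} realizes $H^{1}_{\cF^{\fn}_{\rm can}}(k, T_{K,n})$ as the kernel of a surjection from $H^{1}_{\cF^{\fm}_{\rm can}}(k, T_{K,n})$ onto $\bigoplus_{\fq\mid\fm/\fn} H^{1}_{/f}(G_{k_{\fq}},T_{K,n})$, each summand free of rank $1$, and the induced map on exterior bi-duals tensored with $\det(W_{\bullet})$ is an isomorphism by the functoriality of $\bigcap^{r}$ along short exact sequences with free quotient (Lemma~\ref{lem:reduction} / Proposition~\ref{prop:homlem}). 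So I would first check that the set of $\fn'$ for which $H^{1}_{(\cF_{\rm can}^{\fn'})^{*}}(k, T_{K,n}^{\vee}(1))=0$ is cofinal in $\cN_{n}$ — this is a Chebotarev argument using the definition of $\cP_{n}$, exactly as in \cite{bss, sakamoto} — and then observe that restricting the inverse limit to this cofinal subsystem does not change it, while on that subsystem all transition maps are isomorphisms. Hence the whole inverse limit is computed by any single term, giving the isomorphism $\pi_{\fn}$; combined with the freeness of rank $r+\nu(\fn)$ of $H^{1}_{\cF^{\fn}_{\rm can}}(k,T_{K,n})$ and the freeness of $\det(W_{\fn})$, we get that ${\rm SS}_{r}(T_{K,n},\cF_{\rm can})$ is free of rank $1$.

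The main obstacle I anticipate is the cofinality/Chebotarev step: showing that for any $\fn\in\cN_{n}$ there exists $\fm\in\cN_{n}$ with $\fn\mid\fm$ and $H^{1}_{(\cF_{\rm can}^{\fm})^{*}}(k, T_{K,n}^{\vee}(1))=0$. The mechanism is that adjoining a prime $\fq\in\cP_{n}$ to $\fn$ relaxes the dual Selmer condition at $\fq$ (imposes the strict/unramified condition on the dual side), and by the choice of $\cP_{n}$ — primes splitting completely in a field cutting out enough of $T_{K,n}^{\vee}(1)$ and the relevant Kummer extensions — one can kill any prescribed class in the dual Selmer group; iterating finitely often (the dual Selmer group over $k$ for $T_{K,n}^{\vee}(1)$ is finite since $R_{K,n}$ is finite) produces the desired $\fm$. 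This is precisely the argument underlying the construction of Kolyvagin systems from Stark systems in \cite{MRkoly, bss, sakamoto}, so I would cite it rather than reprove it, but it is the one genuinely nontrivial input; everything else is linear algebra over the Gorenstein ring $R_{K,n}$ together with the global Euler characteristic formula already packaged in Corollary~\ref{cor:H2vanish}.
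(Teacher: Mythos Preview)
Your outline captures the \emph{mechanism} behind the Stark-system formalism correctly, but it skips over precisely the part that constitutes the actual proof in this setting. The paper does not reprove the general Stark-system machinery; instead it invokes \cite[Lemma~4.6 and Theorem~4.7]{sakamoto} as a black box and devotes the entire proof to verifying the four standing hypotheses those results require:
\begin{itemize}
\item[(a)] $(T/\fm T)^{G_{k}} = (T^{\vee}(1)[\fm])^{G_{k}} = 0$ and $T/\fm T$ is irreducible over $\cO/\fm[G_{k}]$;
\item[(b)] existence of $\tau \in \Gal(\overline{k}/\cH_{\infty})$ with $T_{K,n}/(\tau-1)T_{K,n}$ free of rank one;
\item[(c)] $H^{1}(\Gal(\cH_{\infty}KL/k), T/\fm T) = H^{1}(\Gal(\cH_{\infty}KL/k), T^{\vee}(1)[\fm]) = 0$;
\item[(d)] the Selmer structure $\cF_{\rm can}$ is cartesian.
\end{itemize}
Conditions (a) and (b) are immediate here because $T$ has rank one and $\chi \neq 1, \omega$, but (c) is not: the paper gives a careful argument by contradiction, analyzing the $\Gal(L(\mu_{p})/k)$-action on subextensions of $\cH_{\infty}KL$ and using Kummer theory to force an equality of characters that contradicts $\chi \neq 1$. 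Condition (d) also requires checking, via the vanishing of $H^{0}$ and $H^{2}$ at $p$.

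Your Chebotarev step --- the cofinality of $\fn'$ with vanishing dual Selmer --- is exactly where these hypotheses enter: the argument that one can kill any prescribed dual-Selmer class by a prime in $\cP_{n}$ needs (a), (b), (c) to hold (they are what make the Chebotarev sets nonempty and the localization maps surjective in the right way), and the comparison of Selmer groups across quotients of $R_{K,n}$ needs (d). So when you write ``I would cite it rather than reprove it,'' you are citing results whose hypotheses are not yet established; verifying them --- especially (c) --- is the substance of the proof here, not a formality.
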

\begin{proof}
We put $\cH_{\infty} := \cH (\mu_{p^{\infty}}, (\cO_{k}^{\times})^{1/p^{\infty}})$. Here $(\cO_{k}^{\times})^{1/p^{\infty}} := \bigcup_{n>0}(\cO_{k}^{\times})^{1/p^{n}}$.

Since this proposition follows from \cite[Lemma~4.6~and~Theorem~4.7]{sakamoto} (see also \cite[Theorem~5.2.15]{MRkoly} or \cite[Example~3.20]{sakamoto}), 
we only need to check that the following assumptions that appeared in \cite[\S3~and~\S4]{sakamoto} are satisfied: 
\begin{itemize}
\item[(a)] $(T/\fm T)^{G_{k}} = (T^{\vee}(1)[\fm])^{G_{k}} = 0$ and 
$T/\fm T$ is an irreducible $\cO/\fm[G_{k}]$-module, 
\item[(b)]  There is a $\tau \in \Gal(\overline{k}/\cH_{\infty})$ such that $T_{K,n}/(\tau -1)T_{K,n} \cong R_{K,n}$ as 
$R_{K,n}$-modules, 
\item[(c)] $H^{1}(\Gal(\cH_{\infty} KL/k), T/\fm T) = 
H^{1}(\Gal(\cH_{\infty}KL/k), T^{\vee}(1)[\fm]) = 0$, 
\item[(d)] the Selmer structure $\cF_{\rm can}$ is cartesian (see \cite[Definition~3.8]{sakamoto}). 
\end{itemize}

Since $\chi$ is an even non-trivial character, we have  $\chi  \neq 1, \omega$. 
Here $\omega$ denotes the Teichm\"ullar character. 
Hence Claim~(a) is clear, and $\tau = 1$ satisfies Claim~(b). 

Let us show Claim~(c). Since $p \nmid [L(\mu_{p}) \colon k]$, we have  
\[
H^{1}(\Gal(\cH_{\infty}KL/k), T/\fm T) 
= \Hom(\Gal(\cH_{\infty}KL/L(\mu_{p})), \cO/\fm \otimes \omega\chi^{-1})^{\Gal(L(\mu_{p})/k)}. 
\]
Hence if $H^{1}(\Gal(\cH_{\infty}KL/k), T/\fm T)$ dose not  vanish, there is an abelian extension $N/L(\mu_{p})$ in $\cH_{\infty}KL$ such that 
\begin{itemize}
\item $N/k$ is a Galois extension, 
\item $\Gal(N/L(\mu_{p})) \cong \cO/\fm$ as abelian groups, and  
\item $\Gal(L(\mu_{p})/k)$ acts  on $\Gal(N/L(\mu_{p}))$ via the character $\omega\chi^{-1}$. 
\end{itemize}
Put $A := \cH KL(\mu_{p^{\infty}})$. 
Since $A/k$ is an abelian extension, the action of $\Gal(L(\mu_{p})/k)$ on $\Gal(A/L(\mu_{p}))$ is trivial. 
The fact that $\omega\chi^{-1}$ is not trivial implies that  
\[
N \cap A = L(\mu_{p}). 
\] 
Since $AN/L(\mu_{p})$ is an abelian extension, 
the action of $\Gal(A/k)$ on $\Gal(AN/A)$ factors though $\Gal(L(\mu_{p})/k)$. Hence we obtain an isomorphism of $\Gal(L(\mu_{p})/k)$-modules  
\[
\Gal(AN/A) \stackrel{\sim}{\longrightarrow} \Gal(N/L(\mu_{p})). 
\]
Let $Q$ denote the maximal abelian extension of $L(\mu_{p})$ in $\cH_{\infty}KL$. 
We note that $Q/k$ is a Galois extension and $N \subseteq Q$. 
Since $Q/L(\mu_{p})$ is an abelian extension, $\Gal(L(\mu_{p})/k)$ acts canonically on $\Gal(Q/A)$. 
Hence the canonical surjection  
\[
\Gal(Q/A) \longrightarrow \Gal(AN/A) \stackrel{\sim}{\longrightarrow} \Gal(N/L(\mu_{p}))
\]
is $\Gal(L(\mu_{p})/k)$-equivariant. 
Kummer theory shows that $\Gal(L(\mu_{p})/k)$ acts on $\Gal(Q/A)$ via $\omega$. 
This contradicts the fact that $\omega \neq \omega\chi^{-1}$. 
%\[
%N \cap k^{\dagger}L = L(\mu_{p}). 
%\]
%The Galois group $\Gal(L(\mu_{p})/k)$ also acts on $\Gal(\cH k^{\dagger}KL/k^{\dagger}L)$ since 
%$\cH k^{\dagger}KL/L(\mu_{p})$ is an abelian extension. 
%Furthermore, 
%
%
%Since we have an injection 
%\[
%\Gal(\cH k^{\dagger}KL/k^{\dagger}L) \hooklongrightarrow \Gal(\cH KL(\mu_{p})/L(\mu_{p})), 
%\]
%The action of $\Gal(L(\mu_{p})/k)$ on $\Gal(\cH k^{\dagger}KL/k^{\dagger}L)$ is trivial. 
%
%

The similar argument shows that $H^{1}(\Gal(\cH_{\infty} KL/k)/k, T^{\vee}(1)[\fm])$ also vanishes.  

Let us show Claim~(d). 
By Remark~\ref{rem:vanish}, we only need to check that, for any prime $\fp \in S_{p}(k)$, 
the canonical map $H^{1}(G_{k_{\fp}},\bT_{K}) \longrightarrow H^{1}(G_{k_{\fp}}, T/\fm T) $ is surjective. 
Since we assume the vanishing of $H^{0}(G_{k_{\fp}}, T/\fm T)$  and $H^{2}(G_{k_{\fp}}, T/\fm T)$, this claim follows from Corollary~\ref{cor:H2vanish}. 
%By \cite[Lemma~5.3]{bss2}, the representation $T_{K,n}$ satisfies the hypotheses (H$_{0}$) -- (H$_{5}$) that occurred in \cite[\S3.1.3]{bss2}. 
%Hence this proposition follows from \cite[Theorem~3.14]{bss2} and \cite[Theorem~4.7]{sakamoto}. 
\end{proof}

We show in the proof of Proposition~\ref{prop:free-stark} that \cite[Hypothesis~3.12]{sakamoto} is satisfied. 
Furthermore, $T/\fm T$ and $(T/\fm T)^{\vee}(1)$ have no nonzero isomorphic $\bZ_{p}[G_{k}]$-subquotients. 
Hence the same argument as the proof of \cite[Proposition~3.6.1]{MRkoly} shows the following lemma: 

\begin{lemma}[{\cite[Proposition~3.6.1]{MRkoly}}]\label{lem:che}
Let $m$ be an integer with $n \leq m$. 
For any non-zero elements $c_{1}, c_{2} \in H^{1}(G_{k}, T_{K,n})$ and $c_{3}, c_{4} \in H^{1}(G_{k}, T_{K,n}^{\vee}(1))$, 
there are infinitely many primes $\fq \in \cP_{m}$ such that, for any  $1 \leq i \leq 4$, 
the image of $c_{i}$ under the localization map at $\fq$ is non-zero. 
\end{lemma}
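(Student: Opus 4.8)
The plan is to deduce Lemma~\ref{lem:che} directly from the Chebotarev-style argument in \cite[Proposition~3.6.1]{MRkoly}, checking that its hypotheses hold in our situation. The key point is that the argument there only requires two ingredients: first, that one has enough freedom in the Galois group $\Gal(\overline{k}/\cH_\infty KL)$ to simultaneously prescribe the local behaviour of finitely many cohomology classes via a Chebotarev density argument over the field cut out by all of them; and second, that the relevant global-to-semilocal maps are injective so that "non-zero class" forces "non-zero localization at a positive density of primes". Since $T/\fm T$ and $(T/\fm T)^\vee(1)$ are one-dimensional over $\cO/\fm$ with $G_k$ acting via the distinct non-trivial characters $\omega\chi^{-1}$ and $\chi\omega^{-1}\cdot\omega=\chi$ (wait: via $\chi^{-1}\omega$ and $\chi$ respectively up to the conventions fixed above), these two residual representations have no non-zero isomorphic $\bZ_p[G_k]$-subquotients, which is exactly the input needed to separate the four classes $c_1,c_2$ (for $T_{K,n}$) from $c_3,c_4$ (for $T_{K,n}^\vee(1)$).

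First I would recall that in the proof of Proposition~\ref{prop:free-stark} we already verified \cite[Hypothesis~3.12]{sakamoto}, in particular the vanishing statements (a), (c) above: $(T/\fm T)^{G_k}=(T^\vee(1)[\fm])^{G_k}=0$, the irreducibility of $T/\fm T$, and $H^1(\Gal(\cH_\infty KL/k), T/\fm T)=H^1(\Gal(\cH_\infty KL/k), T^\vee(1)[\fm])=0$. These are precisely the running hypotheses of \cite[\S3.6]{MRkoly} (with their $T$ replaced by $T_{K,n}$, their ring by $R_{K,n}$), together with the existence of a $\tau\in\Gal(\overline{k}/\cH_\infty)$ with $T_{K,n}/(\tau-1)T_{K,n}\cong R_{K,n}$, which holds with $\tau=1$. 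Next I would note that for the four given classes, after passing to the finite level $m\geq n$ (so that $\cP_m\subseteq\cP_n$ and the prescribed congruence conditions defining $\cP_m$ are in force), one applies the Chebotarev argument of \cite[Proposition~3.6.1]{MRkoly} verbatim: one forms the compositum of the fixed fields of $c_1,\dots,c_4$ over $\cH_\infty KL$, and finds a conjugacy class of elements $\fq$ in the corresponding finite Galois group such that each $\operatorname{loc}_\fq(c_i)\neq0$. The hypothesis that $T/\fm T$ and $(T/\fm T)^\vee(1)$ share no non-zero isomorphic subquotient guarantees that the four conditions are "independent" in the sense required there, so the relevant union of Galois-group cosets is non-empty, giving infinitely many such $\fq$.

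The main obstacle — really the only non-formal point — is to confirm that the set $\cP_m$ used here, defined by splitting completely in $\cH KL(\mu_{q^{m+\ell_{K,m}}},(\cO_k^\times)^{1/q^{m+\ell_{K,m}}})$, contains a positive density of primes satisfying the additional Frobenius conditions coming from the Chebotarev argument; this is automatic because all the fields involved are finite over $k$ and linearly disjoint in the appropriate sense, exactly as arranged in \cite[\S3]{MRkoly}. I would also make explicit that the localization maps $H^1(G_k,T_{K,n})\to H^1(G_{k_\fq},T_{K,n})$ (and similarly for $T_{K,n}^\vee(1)$) detect non-vanishing: a non-zero class $c_i$ restricts non-trivially to some $\tau$-type element of the absolute Galois group, hence has non-zero image at a positive-density set of $\fq$, by the standard Chebotarev/evaluation dictionary. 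Putting these together, the statement follows, and I would simply cite \cite[Proposition~3.6.1]{MRkoly} for the detailed combinatorics rather than reproducing them.

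Therefore the proof reduces to the single sentence already present in the text: the required hypotheses were verified in the course of proving Proposition~\ref{prop:free-stark}, and $T/\fm T$, $(T/\fm T)^\vee(1)$ have no non-zero isomorphic $\bZ_p[G_k]$-subquotients since $\chi\neq1,\omega$; hence the argument of \cite[Proposition~3.6.1]{MRkoly} applies directly and yields the claim.
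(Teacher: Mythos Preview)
Your proposal is correct and takes essentially the same approach as the paper: the paper's proof consists of the single sentence preceding the lemma, observing that \cite[Hypothesis~3.12]{sakamoto} was verified in the proof of Proposition~\ref{prop:free-stark} and that $T/\fm T$ and $(T/\fm T)^{\vee}(1)$ have no nonzero isomorphic $\bZ_p[G_k]$-subquotients, so that the argument of \cite[Proposition~3.6.1]{MRkoly} applies directly. Your exposition adds some helpful detail on how the Chebotarev argument runs, but the logical content is identical (and you should clean up the parenthetical ``wait:'' where you second-guess the character computation).
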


\begin{remark}\label{rem:core}
Let $\cF$ be a Selmer structure on $T_{K,n}$ and $m$ a integer with $n \leq m$. 
By Lemma~\ref{lem:che}, for any integer $n \leq m$, there is an ideal $\fn \in \cN_{m}$ such that 
\[
H^{1}_{(\cF^{\fn})^{*}}(k, T_{K,n}^{\vee}(1)) = \ker\left(H^{1}_{\cF^{*}}(k, T_{K,n}^{\vee}(1)) \longrightarrow \bigoplus_{\fq \mid \fn} H^{1}(G_{k_{\fq}}, T_{K,n}^{\vee}(1))\right) = 0. 
\]
\end{remark}

Let $\fn \in \cN_{n}$ be an ideal with $H^{1}_{(\cF^{\fn}_{\rm str})^{*}}(k, T_{K,n}^{\vee}(1)) = 0$. 
Applying Theorem~\ref{pt} with $\cF_{1} = \cF_{\rm str}^{\fn}$ and $\cF_{1} = \cF_{\rm can}^{\fn}$, 
we get an exact sequence 
\[
0 \longrightarrow  H^{1}_{\cF_{\rm str}^{\fn}}(k, T_{K,n}) \longrightarrow  H^{1}_{\cF_{\rm can}^{\fn}}(k, T_{K,n})  \longrightarrow  H^{1}_{\Sigma}(T_{K,n}) \longrightarrow  0. 
\]
Hence the following proposition is obtained from this exact sequence  and Proposition~\ref{prop:free-stark}. 

\begin{proposition}\label{prop:stark-str}
Let $\fn \in \cN_{n}$ be an ideal with $H^{1}_{(\cF^{\fn}_{\rm str})^{*}}(k, T_{K,n}^{\vee}(1)) = 0$. 
Then the $R_{K,n}$-module $H^{1}_{\cF^{\fm}_{\rm str}}(k,T_{K,n})$ is free of rank $\nu(\fm)$ and the projection map 
\[
{\rm SS}_{0}(T_{K,n},\cF_{\rm str}) \longrightarrow  
{\bigcap}^{\nu(\fm)}_{R_{K,n}}H^{1}_{\cF^{\fm}_{\rm str}}(k,T_{K,n})
\]
is an isomorphism. 
In particular, the $R_{K,n}$-module ${\rm SS}_{0}(T_{K,n},\cF_{\rm can})$ is free of rank $1$ 
and the natural homomorphism
\[
{\rm SS}_{r}(T_{K,n},\cF_{\rm can}) \longrightarrow  {\rm SS}_{0}(T_{K,n},\cF_{\rm str}) 
\]
is an isomorphism. 
\end{proposition}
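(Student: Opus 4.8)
The plan is to deduce the statement from Proposition~\ref{prop:free-stark}, the left-exact sequence recorded just above the proposition, and the exterior bi-dual formalism of Appendix~\ref{sec:bi-dual}. First I would propagate the hypothesis from $\cF_{\rm str}$ to $\cF_{\rm can}$: since $\cF_{\rm str} \leq \cF_{\rm can}$ (the strict local condition at each $\fp \in \Sigma$ is the zero submodule, which is contained in the full local cohomology group imposed by $\cF_{\rm can}$, the two structures agreeing elsewhere), Greenberg--Wiles duality gives $\cF_{\rm can}^{*} \leq \cF_{\rm str}^{*}$, whence $H^{1}_{(\cF_{\rm can}^{\fn})^{*}}(k, T_{K,n}^{\vee}(1)) \subseteq H^{1}_{(\cF_{\rm str}^{\fn})^{*}}(k, T_{K,n}^{\vee}(1)) = 0$. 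So Proposition~\ref{prop:free-stark} applies to $\cF_{\rm can}$: the module $H^{1}_{\cF_{\rm can}^{\fn}}(k, T_{K,n})$ is free of rank $r + \nu(\fn)$ and the projection ${\rm SS}_{r}(T_{K,n}, \cF_{\rm can}) \stackrel{\sim}{\longrightarrow} {\bigcap}^{r+\nu(\fn)}_{R_{K,n}} H^{1}_{\cF_{\rm can}^{\fn}}(k, T_{K,n})$ is an isomorphism. In the exact sequence $0 \to H^{1}_{\cF_{\rm str}^{\fn}}(k, T_{K,n}) \to H^{1}_{\cF_{\rm can}^{\fn}}(k, T_{K,n}) \to H^{1}_{\Sigma}(T_{K,n}) \to 0$ (short exact precisely because $H^{1}_{(\cF_{\rm str}^{\fn})^{*}}(k, T_{K,n}^{\vee}(1))$ vanishes), the term $H^{1}_{\Sigma}(T_{K,n}) \cong R_{K,n}^{r}$ is free, so the sequence splits; hence $H^{1}_{\cF_{\rm str}^{\fn}}(k, T_{K,n})$ is a direct summand of a free module, thus free of rank exactly $\nu(\fn)$, and the map $l_{\fn}$ of the excerpt becomes an isomorphism ${\bigcap}^{r+\nu(\fn)}_{R_{K,n}} H^{1}_{\cF_{\rm can}^{\fn}}(k, T_{K,n}) \stackrel{\sim}{\longrightarrow} {\bigcap}^{\nu(\fn)}_{R_{K,n}} H^{1}_{\cF_{\rm str}^{\fn}}(k, T_{K,n})$, by the bi-dual computation for a split short exact sequence of free modules with trivialized free quotient (Appendix~\ref{sec:bi-dual}).

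The next and main step is to show that the projection ${\rm SS}_{0}(T_{K,n}, \cF_{\rm str}) \longrightarrow {\bigcap}^{\nu(\fn)}_{R_{K,n}} H^{1}_{\cF_{\rm str}^{\fn}}(k, T_{K,n})$ is an isomorphism. For this I would invoke the general structure theorem for Stark systems, \cite[Theorem~4.7]{sakamoto} (equivalently \cite[Theorem~5.2.15]{MRkoly}), applied now to $\cF_{\rm str}$, whose core rank is $0$, in place of $\cF_{\rm can}$. The hypotheses needed there, namely (a)--(c) together with \cite[Hypothesis~3.12]{sakamoto}, were already verified in the proof of Proposition~\ref{prop:free-stark} and do not involve the choice of Selmer structure; the only fresh point is (d), that $\cF_{\rm str}$ is \emph{cartesian}, which is immediate since its local condition at each $\fp \in \Sigma$ is the zero submodule, while at every other prime it coincides either with the unramified condition (shown cartesian in the proof of Proposition~\ref{prop:free-stark}) or with the full local cohomology. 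This yields freeness of $H^{1}_{\cF_{\rm str}^{\fn}}(k, T_{K,n})$, the isomorphy of the above projection, and in particular that ${\rm SS}_{0}(T_{K,n}, \cF_{\rm str})$ is free of rank $1$.

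Finally, for the remaining assertion I would invoke the Lemma that constructs the canonical homomorphism ${\rm SS}_{r}(T_{K,n}, \cF_{\rm can}) \to {\rm SS}_{0}(T_{K,n}, \cF_{\rm str})$, which sends $(\epsilon_{\fn})_{\fn}$ to $((-1)^{r\nu(\fn)}l_{\fn}(\epsilon_{\fn}))_{\fn}$ and is therefore compatible, through $(-1)^{r\nu(\fn)}l_{\fn}$, with the projections to the $\fn$-components. In the resulting commutative square the two projections are isomorphisms by Proposition~\ref{prop:free-stark} and by the previous paragraph, and $(-1)^{r\nu(\fn)}l_{\fn}$ is an isomorphism by the first paragraph; hence the fourth arrow ${\rm SS}_{r}(T_{K,n}, \cF_{\rm can}) \to {\rm SS}_{0}(T_{K,n}, \cF_{\rm str})$ is an isomorphism as well. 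The step I expect to be the genuine obstacle is the middle one: the isomorphy of ${\rm SS}_{0}(T_{K,n}, \cF_{\rm str}) \to {\bigcap}^{\nu(\fn)}_{R_{K,n}} H^{1}_{\cF_{\rm str}^{\fn}}(k, T_{K,n})$ is not formal, as it rests on the Chebotarev-density ``core vertex'' analysis underlying \cite{sakamoto, MRkoly} (via Lemma~\ref{lem:che}), and one must make sure the cartesian hypothesis really transfers from $\cF_{\rm can}$ to $\cF_{\rm str}$ so that this machinery is available; once that is in hand, the rest is bookkeeping with the short exact sequence and exterior bi-duals.
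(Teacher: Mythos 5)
Your proof is correct, but at the key middle step it takes a different route from the one the paper intends. The paper's proof really is just the observation recorded before the statement: applying Theorem~\ref{pt} with $\cF_1 = \cF_{\rm str}^{\fn}$, $\cF_2 = \cF_{\rm can}^{\fn}$ and the vanishing hypothesis gives the short exact sequence $0 \to H^1_{\cF_{\rm str}^{\fn}}(k, T_{K,n}) \to H^1_{\cF_{\rm can}^{\fn}}(k, T_{K,n}) \to H^1_{\Sigma}(T_{K,n}) \to 0$, and then the rest is \emph{formal} from Proposition~\ref{prop:free-stark} and the commutative square established in the lemma just above. Concretely: the ideals $\fm \in \cN_n$ for which $H^1_{(\cF_{\rm str}^{\fm})^*}(k, T_{K,n}^\vee(1))$ vanishes form a cofinal subset of $\cN_n$ (given any $\fm'$, replace it by its lcm with $\fn$); for every such $\fm$ the two arguments you give in your first paragraph show that $l_{\fm}$ and the projection $\pi_{\fm}^{\rm can} \colon {\rm SS}_r(T_{K,n},\cF_{\rm can}) \to {\bigcap}^{r+\nu(\fm)}_{R_{K,n}} H^1_{\cF_{\rm can}^{\fm}}(k,T_{K,n})$ are isomorphisms, so each term ${\bigcap}^{\nu(\fm)}_{R_{K,n}} H^1_{\cF_{\rm str}^{\fm}}(k,T_{K,n})$ is identified with ${\rm SS}_r(T_{K,n},\cF_{\rm can})$ via $l_{\fm}\circ\pi_{\fm}^{\rm can}$, compatibly with the transition maps $\Phi_{\fm,\fm'}$ by the lemma; hence all transition maps between such $\fm$ are isomorphisms, and the inverse limit ${\rm SS}_0(T_{K,n},\cF_{\rm str})$ maps isomorphically onto the $\fn$-component. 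Thus your closing worry that the middle step ``is not formal'' is misplaced: it is formal once the cofinality point is observed. Your alternative route---re-running the Mazur--Rubin/Sakamoto structure theorem for $\cF_{\rm str}$---also works, and you correctly note that the only fresh hypothesis to verify is that $\cF_{\rm str}$ is cartesian. Just be careful about one thing: you assert that $\cF_{\rm str}$ has core rank $0$, but in the paper this is recorded as Remark~\ref{rem:core rank}, which is \emph{deduced from} Proposition~\ref{prop:stark-str}; to avoid circularity you must extract the core rank directly from your own first paragraph (where you show $H^1_{\cF_{\rm str}^{\fn}}(k,T_{K,n})$ is free of rank $\nu(\fn)$), which is legitimate but should be made explicit. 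In short: both approaches prove the statement; the paper's avoids any further appeal to the structure theorem and hence to Chebotarev, while yours re-invokes it after checking the cartesian property for $\cF_{\rm str}$.
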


\begin{remark}\label{rem:core rank}
Proposition~\ref{prop:stark-str} and \cite[Lemma~4.6]{sakamoto} show that the core rank $\chi(\cF_{\rm str})$ of $\cF_{\rm str}$ is $0$ % and $\chi(\cF_{\bL}) = 1$ 
(see \cite[Definition~3.19]{sakamoto} for the definition of the core rank). 
\end{remark}

\begin{definition}
Let $\epsilon = (\epsilon_{\fn})_{\fn \in \cN_{n}} \in {\rm SS}_{0}(T_{K,n},\cF_{\rm str})$ be a system. 
For each $\fn \in \cN_{n}$, we regard $\epsilon_{\fn}$ as an $R_{K,n}$-homomorphism 
\[
\epsilon_{\fn} \colon {\bigwedge}^{\nu(\fn)}_{R_{K,n}} H^{1}_{\cF_{\rm str}^{\fn}}(k, T_{K,n})^{*} \longrightarrow R_{K,n}.  
\]
For an integer $i \geq 0$,  we define an ideal $I_{i}(\epsilon) \subseteq R_{K,n}$ by 
\[
I_{i}(\epsilon) := \sum_{\fn \in \cN_{n}, \, \nu(\fn) = i}\im(\epsilon_{\fn}). 
\]
\end{definition}

\begin{remark}\label{rem:ideal}
Let $\epsilon \in {\rm SS}_{0}(T_{K,n},\cF_{\rm str})$ be a system.  
\begin{itemize}
\item[(i)] By \cite[Theorem~4.10]{sakamoto} or \cite[Theorem~4.6(ii)]{bss}, we have  $I_{i}(\epsilon) \subseteq I_{i+1}(\epsilon)$ for an integer $i \geq 0$, and 
hence 
\begin{align*}
I_{i}(\epsilon) = \sum_{\fn \in \cN_{n}, \, \nu(\fn) \leq i}\im(\epsilon_{\fn}). 
\end{align*}
\item[(ii)] Let $\fm \in \cN_{n}$ be an  ideal such that $H^{1}_{\cF_{\rm str}^{\fm}}(k, T_{K,n})$ is free of rank $\nu(\fm)$. Then, for each integer $0 \leq i \leq \nu(\fm)$, we have  
\[
I_{i}(\epsilon) = \sum_{\fn \mid \fm, \, \nu(\fn) = i}\im(\epsilon_{\fn}) = \sum_{\fn \mid \fm, \, \nu(\fn) \leq i}\im(\epsilon_{\fn}) 
\]
(see the proof of \cite[Theorem~4.10]{sakamoto} or \cite[Corollary~4.5]{bss}). 
\end{itemize}
\end{remark}

Take an ideal $\fm \in \cN_{n+1}$ such that 
\begin{itemize}
\item the $R_{K,n}$-module 
$H^{1}_{\cF^{\fm}_{\rm str}}(k,T_{K,n})$ is free of rank $\nu(\fm)$ and the projection map 
\[
{\rm SS}_{0}(T_{K,n},\cF_{\rm str}) \longrightarrow  
{\bigcap}^{\nu(\fm)}_{R_{K,n}}H^{1}_{\cF^{\fm}_{\rm str}}(k,T_{K,n})
\]
is an isomorphism, and that 
\item the $R_{K, n+1}$-module 
$H^{1}_{\cF^{\fm}_{\rm str}}(k,T_{K,n+1})$ is free of rank $\nu(\fm)$ and the projection map 
\[
{\rm SS}_{0}(T_{K,n+1},\cF_{\rm str}) \longrightarrow  
{\bigcap}^{\nu(\fm)}_{R_{K,n+1}}H^{1}_{\cF^{\fm}_{\rm str}}(k,T_{K,n+1})
\]
is an isomorphism.
\end{itemize}
Since the $R_{K, n+1}$-module 
$H^{1}_{\cF^{\fm}_{\rm str}}(k,T_{K,n+1})$ is free of rank $\nu(\fm)$, the canonical homomorphism 
\[
{\bigwedge}^{\nu(\fm)}_{R_{K,n+1}}H^{1}_{\cF^{\fm}_{\rm str}}(k,T_{K,n+1}) \longrightarrow {\bigcap}^{\nu(\fm)}_{R_{K,n+1}}H^{1}_{\cF^{\fm}_{\rm str}}(k,T_{K,n+1})
\]
is an isomorphism. 
Hence we obtain an $R_{K,n+1}$-homomorphism 
\[
{\bigcap}^{\nu(\fm)}_{R_{K,n+1}}H^{1}_{\cF^{\fm}_{\rm str}}(k,T_{K,n+1}) \longrightarrow {\bigcap}^{\nu(\fm)}_{R_{K,n}}H^{1}_{\cF^{\fm}_{\rm str}}(k,T_{K,n}), 
\]
which induces an $R_{K,n+1}$-homomorphism  
\[
{\rm SS}_{0}(T_{K,n+1},\cF_{\rm str}) \longrightarrow  {\rm SS}_{0}(T_{K,n},\cF_{\rm str}). 
\]
This homomorphism is independent of the choice of the ideal $\fm \in \cN_{n+1}$. 

As in \cite[\S5]{sakamoto}, we define 
\[
{\rm SS}_{0}(\bT_{K},\cF_{\rm str}) := \varprojlim_{n>0}{\rm SS}_{0}(T_{K,n},\cF_{\rm str}). 
\]
For a system $\epsilon = (\epsilon_{n}) \in {\rm SS}_{0}(\bT_{K},\cF_{\rm str})$ and an integer $i \geq 0$, we also set 
\[
I_{i}(\epsilon) := \varprojlim_{n>0}I_{i}(\epsilon_{n}) \subseteq \Lambda_{K}. 
\]
We note that we have the canonical projection 
\[
{\rm pr}_{K} \colon {\rm SS}_{0}(\bT_{K},\cF_{\rm str}) \longrightarrow   \varprojlim_{n>0}{\bigcap}^{0}_{R_{K,n}}H^{1}_{\rm str}(G_{k,S_{K}},T_{K,n}) = \varprojlim_{n>0}R_{K,n} = \Lambda_{K}. 
\]

\begin{remark}
By definition, we have  $I_{0}(\epsilon) = {\rm pr}_{K}(\epsilon) \cdot \Lambda_{K}$ for a system $\epsilon \in {\rm SS}_{0}(\bT_{K},\cF_{\rm str})$. 
\end{remark}

\begin{theorem}[{\cite[Theorems~5.4 and 5.6]{sakamoto}}]\label{thm:stark}\
\begin{itemize}
\item[(i)] The $\Lambda_{K}$-module ${\rm SS}_{0}(\bT_{K},\cF_{\rm str})$ is free of rank $1$. 
\item[(ii)] If $\epsilon \in {\rm SS}_{0}(\bT_{K},\cF_{\rm str})$ is a basis, then we have  
\[
I_{i}(\epsilon) = \Fitt_{\Lambda_{K}}^{i}(X_{K}^{\chi}).
\] 
In particular, we have  ${\rm pr}_{K}(\epsilon) \cdot \Lambda_{K} = \Fitt_{\Lambda_{K}}^{0}(X_{K}^{\chi})$. 
\end{itemize}
\end{theorem}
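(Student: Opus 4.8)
The plan is to deduce both parts from the general theory of Stark systems established in \cite{sakamoto}, specialized to the Selmer structure $\cF_{\rm str}$ on the modules $T_{K,n}$; the bulk of the work is to check that the structural hypotheses of that theory hold in the present situation, and almost all of them have already been verified in the proof of Proposition~\ref{prop:free-stark}.

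\textbf{Part (i).} First I would invoke \cite[Theorem~5.4]{sakamoto}. Its hypotheses are precisely the conditions (a)--(d) and \cite[Hypothesis~3.12]{sakamoto} checked in the proof of Proposition~\ref{prop:free-stark}, together with the requirement that the core rank $\chi(\cF_{\rm str})$ equal the auxiliary integer $0$, which is Remark~\ref{rem:core rank}. Granting these, \cite[Theorem~5.4]{sakamoto} yields that the transition maps ${\rm SS}_{0}(T_{K,n+1},\cF_{\rm str}) \to {\rm SS}_{0}(T_{K,n},\cF_{\rm str})$ are surjective and that the inverse limit ${\rm SS}_{0}(\bT_{K},\cF_{\rm str})$ is free of rank $1$ over $\Lambda_{K}$; together with Proposition~\ref{prop:stark-str}, which shows that each finite-level module is free of rank $1$ over $R_{K,n}$, this is (i).

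\textbf{Part (ii).} By (i) a basis $\epsilon = (\epsilon_{n})$ exists, and the surjectivity of the transition maps forces each $\epsilon_{n}$ to be a basis of the free $R_{K,n}$-module ${\rm SS}_{0}(T_{K,n},\cF_{\rm str})$. I would then apply \cite[Theorem~5.6]{sakamoto}, whose finite-level content (\cite[Theorem~4.10]{sakamoto}, equivalently \cite[Theorem~4.6(ii)]{bss}) asserts that for a basis one has
\[
I_{i}(\epsilon_{n}) = \Fitt^{i}_{R_{K,n}}\!\left(H^{1}_{\cF_{\rm str}^{*}}(k, T_{K,n}^{\vee}(1))^{\vee}\right) \qquad \text{for all } i \geq 0.
\]
Passing to the inverse limit, I would identify $\varprojlim_{n} H^{1}_{\cF_{\rm str}^{*}}(k, T_{K,n}^{\vee}(1))^{\vee}$ with $H^{1}_{\cF_{\rm str}^{*}}(k, \bT_{K}^{\vee}(1))^{\vee} \cong X_{K}^{\chi}$ via Example~\ref{exa:selmer-str}, the identification preceding the exact sequence~\eqref{exact:fundamental}, and the equality $X_{K,S}^{\chi} = X_{K}^{\chi}$ of Remark~\ref{rem:vanish}; since $X_{K}^{\chi}$ is a finitely generated $\Lambda_{K}$-module, its Fitting ideals commute with the base changes $\Lambda_{K} \to R_{K,n}$ and hence with the inverse limit (a finitely generated ideal of the complete noetherian ring $\Lambda_{K}$ being closed). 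This yields $I_{i}(\epsilon) = \varprojlim_{n} I_{i}(\epsilon_{n}) = \Fitt^{i}_{\Lambda_{K}}(X_{K}^{\chi})$, and the final assertion is the case $i = 0$ together with $I_{0}(\epsilon) = {\rm pr}_{K}(\epsilon)\Lambda_{K}$.

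\textbf{Where the difficulty lies.} The Iwasawa-theoretic limit itself is formal once the finite levels are under control, so the real work is the finite-level input fed into \cite[Theorems~5.4 and 5.6]{sakamoto}: verifying that $\cF_{\rm str}$ is cartesian, computing its core rank to be $0$, and --- most delicately --- the Chebotarev surjectivity of Lemma~\ref{lem:che} and Remark~\ref{rem:core}, which produces, for each $n$, an ideal $\fn \in \cN_{n}$ annihilating the relevant dual Selmer group so that Propositions~\ref{prop:free-stark} and \ref{prop:stark-str} become applicable. A secondary point needing care is that the formation of the strict and canonical Selmer complexes and the exact sequence~\eqref{exact:fundamental} commute with reduction to level $n$ uniformly in $n$ --- which is exactly where the running hypotheses $H^{0}(G_{k_{\fp}},T/\fm T) = H^{2}(G_{k_{\fp}},T/\fm T) = 0$ for $\fp \in S_{p}(k)$ and $H^{2}(G_{k_{\fq}},T/\fm T) = 0$ for $\fq \in S_{\rm ram}(K/k)$ are used (cf. Remark~\ref{rem:vanish} and Corollary~\ref{cor:H2vanish}).
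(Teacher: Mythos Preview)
The paper does not give a proof of this theorem at all: it is stated as a direct citation of \cite[Theorems~5.4 and 5.6]{sakamoto}, with no accompanying proof environment. Your proposal is therefore more than the paper provides --- you correctly outline how to apply those cited theorems by checking their hypotheses, and you point to exactly the places in the present paper (the proof of Proposition~\ref{prop:free-stark}, Remark~\ref{rem:core rank}, Proposition~\ref{prop:stark-str}) where the relevant conditions have already been verified. One minor point: the identification $H^{1}_{\cF_{\rm str}^{*}}(k,\bT_{K}^{\vee}(1))^{\vee} \cong X_{K}^{\chi}$ is given directly in the definition preceding \eqref{exact:fundamental}, so the detour through $X_{K,S}^{\chi}$ and Remark~\ref{rem:vanish} is not needed here.
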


\subsection{Kolyvagin derivative homomorphism}

Let $n>0$ be an integer. 
In this subsection, we recall the definition of the Kolyvagin derivative homomorphism 
\[
\cD_{r',n} \colon {\rm ES}_{r'}(T)  \longrightarrow  
\prod_{\fn \in \cN_{n}}{\bigcap}^{r'}_{R_{K,n}}H^{1}(G_{k,S_{K(\fn)}},T_{K,n}). 
%\ \ \text{and} \ \ 
%\cD_{0,n} \colon \cE(T) \longrightarrow  \prod_{\fn \in \cN_{n}}R_{K,n}. 
\]
for an integer $r' \geq 0$. 
We note that $S_{K(\fn)} = S_{K} \cup \{\fq \mid \fn\}$ and 
\[
H^{1}(G_{k,S_{K(\fn)}},T_{K,n}) = H^{1}_{\cF_{\rm can}^{\fn}}(k,T_{K,n})
\] 
for an ideal $\fn \in \cN_{n}$. 

Let $c = (c_{K'})_{K' \in \Omega} \in \cE(T)$ be an Euler system of rank $0$. Set 
\[
K_{n} := k_{n}K \, \text{ and } \, K_{n}(\fn) = k(\fn)K_{n}
\]
for an ideal $\fn \in \cN_{n}$. 
We note that the field $K(\fn)$ is contained in $\Omega$ and that there are canonical identifications 
\[
\Gal(K_{n}(\fq)/K_{n}) = \Gal(k_{n}(\fq)/k_{n}) =: G_{\fq} \ \text{ and } \ \Gal(K_{n}(\fn)/K_{n}) = \prod_{\fq \mid \fn} G_{\fq}. 
\]
Let $\fq \in \cP_{n}$ be a prime of $k$. 
Since $p$ is coprime to the class number of $k$ and $\fq \nmid p$,  the field extension $k(\fq)/k$ is cyclic, and hence $G_{\fq}$ is cyclic. 
We fix a generator $\sigma_{\fq}$ of $G_{\fq}$ and denote by $D_{\fq} \in \bZ[G_{\fq}]$ the Kolyvagin's derivative operator: 
\[
D_{\fq} := \sum_{i=0}^{\#G_{\fq} -1}i \sigma_{\fq}^{i}. 
\]
We also set $D_{\fn} := \prod_{\fq \mid \fn}D_{\fq} \in \bZ[\Gal(K_{n}(\fn)/K_{n})]$. 
Let 
\[
\pi_{K(\fn),n} \colon \Lambda_{K(\fn)} \longrightarrow  \cO/p^{n}\cO[\Gal(K_{n}(\fn)/k)] = R_{K,n}[\Gal(K_{n}(\fn)/K_{n})]
\] 
be the canonical projection. 
Then it is well-known that Euler system relations imply  
\[
\kappa(c)_{K, n, \fn} := \pi_{K(\fn),n}(D_{\fn}c_{K(\fn)}) \in R_{K,n}[\Gal(K(\fn)/K)]^{\Gal(K(\fn)/K)} \stackrel{\sim}{\longleftarrow} R_{K,n}
\]
(see, for example, \cite[Lemma~4.4.2]{R}).

\begin{remark}\label{rem:leading}
Let $I_{\fn} := \ker(R_{K,n}[\Gal(K_{n}(\fn)/K_{n})] \longrightarrow  R_{K,n})$ denote the augmentation ideal. 
The proof of \cite[Lemma~4.4]{Kur03} shows that $\pi_{K(\fn),n}(c_{K(\fn)}) \in I_{\fn}^{\nu(\fn)}$ and 
\[
\pi_{K(\fn),n}(c_{K(\fn)}) \equiv (-1)^{\nu(\fn)}\kappa(c)_{K, n, \fn} \cdot \prod_{\fq \mid \fn}(\sigma_{\fq}-1) 
\quad \bmod I_{\fn}^{\nu(\fn)+1}. 
\] 
Hence one can regard $(-1)^{\nu(\fn)}\kappa(c)_{K, n,\fn} \in R_{K,n}$ as the ``leading coefficient'' of $\pi_{K(\fn),n}(c_{K(\fn)})$. 
\end{remark}

\begin{definition}
Let $\fS(\fn)$ denote the set of permutations of prime divisors of $\fn$. 
For a permutation $\tau \in \fS(\fn)$, we set $\fd_{\tau} = \prod_{\tau(\fq) = \fq}\fq$. 
By using an isomorphism $\bZ/(\#G_{\fq}) = G_{\fq} \stackrel{\sim}{\longrightarrow} I_{\fq}/I_{\fq}^{2}$, we get an element $a_{\tau,\fq} \in \bZ/(\#G_{\fq})$ which corresponds to the element $P_{\tau(\fq)}({\rm Frob}_{\tau(\fq)}^{-1})$ for a prime $\fq \mid \fn/\fd_{\tau}$. 
Then we define an element $\tilde{\kappa}(c)_{n,\fn}$ by  
\[
\tilde{\kappa}(c)_{K,n,\fn} := \sum_{\tau \in \fS(\fn)} \sgn(\tau) \left(\prod_{\fq \mid \frac{\fn}{\fd_{\tau}}}a_{\tau,\fq} \right) \kappa(c)_{K, n,\fd_{\tau}}. 
\]
\end{definition}

\begin{definition}
We call the map  
\[
\cD_{0,n} \colon \cE(T) \longrightarrow  \prod_{\fn \in \cN_{n}}R_{K,n}; c \mapsto (\tilde{\kappa}(c)_{k,n,\fn})_{\fn \in \cN_{n}}
\]
a Kolyvagin derivative homomorphism. 
\end{definition}

Take a field $K' \in \Omega$ containing $K$. 
By Lemma~\ref{lem:reduction}, we have the canonical homomorphism 
\[
{\bigcap}^{r'}_{\Lambda_{K'}}H^{1}(G_{k,S_{K'}}, \bT_{K'}) \longrightarrow {\bigcap}^{r'}_{R_{K', n}}H^{1}(G_{k,S_{K'}}, T_{K', n}). 
\]
Furthermore, by \cite[Proposition~A.4]{sbA}, we have the canonical isomorphism 
\begin{align*}
\left( {\bigcap}^{r'}_{R_{K', n}}H^{1}(G_{k,S_{K'}}, T_{K', n}) \right)^{\Gal(K'/K)} 
&\cong {\bigcap}^{r'}_{R_{K, n}}H^{1}(G_{k,S_{K'}}, T_{K', n})^{\Gal(K'/K)}
\\
&= {\bigcap}^{r'}_{R_{K, n}}H^{1}(G_{k,S_{K'}}, T_{K, n}). 
\end{align*}
Hence, in the same manner, one can also define a higher Kolyvagin derivative homomorphism  
\[
\cD_{r', n} \colon {\rm ES}_{r'}(T) \longrightarrow  
\prod_{\fn \in \cN_{n}}{\bigcap}^{r'}_{R_{K,n}}H^{1}_{\cF_{\rm can}^{\fn}}(k,T_{K,n})
\]
for any integer $r' \geq 0$ (see \cite[\S4.3]{sbA} and \cite[\S\S6.3 and 6.4]{bss} for the detail).

%The proof of this proposition is based on the method of \cite{Bu, PR, rubinstark}. 

By using the localization map at $p$ and the fixed isomorphism $H^{1}_{\Sigma}(T_{K,n}) \stackrel{\sim}{\longrightarrow} R_{K,n}^{r}$, we obtain a homomorphism 
\[
H^{1}_{\cF^{\fn}_{\rm can}}(k, T_{K,n}) \longrightarrow 
%\Lambda[[\Gal(\cK/k)]]^{r}/\bL \otimes_{\Lambda[[\Gal(\cK/k)]]} \Lambda_{K'}  \cong  
R_{K,n}^{r}
\]
for each ideal $\fn \in \cN_{n}$. 
%By \eqref{map bidual}, this map induces a homomorphisim 
%\[
%\varphi_{\bL, K'} \colon {\bigcap}^{r}_{\Lambda_{K'}}H^{1}(G_{k,S_{K'}}, \bT_{K'}) \longrightarrow H^{1}(G_{k,S_{K'}}, \bT_{K'}). 
%\]
%By the argument in \cite[\S 1.2.3]{PR} (see also \cite[\S~6]{rubinstark}), the maps $\{\varphi_{\bL, K'}\}_{K' \in \Omega}$ induce a homomorphisim 
%\[
%{\rm ES}_{r}(T) \longrightarrow {\rm ES}_{1}(T). 
%\] 
%By using the $r$-th projection $H^{1}_{\Sigma}(\bT_{K'}) \stackrel{\sim}{\longrightarrow} \Lambda_{K'}^{r} \longrightarrow \Lambda_{K'}$, 
%we obtain a homomorphism 
%\[
%H^{1}(G_{k,S_{K'}}, \bT_{K'}) \longrightarrow 
%%\Lambda[[\Gal(\cK/k)]]^{r}/\bL \otimes_{\Lambda[[\Gal(\cK/k)]]} \Lambda_{K'}  \cong  
%\Lambda_{K'}
%\]
%for each field $K' \in \Omega$. 
%Hence we also get a homomorphism 
%\[
%{\rm ES}_{1}(T) \longrightarrow \cE(T). 
%\]
%We note that the composite map  ${\rm ES}_{r}(T) \longrightarrow {\rm ES}_{1}(T)  \longrightarrow \cE(T)$ 
%coincides with the map defined in Proposition~\ref{prop:main}. 
Hence by \eqref{map bidual},this map induces a homomorphisim 
\begin{align*}
\prod_{\fn \in \cN_{n}}{\bigcap}^{r}_{R_{K,n}}H^{1}_{\cF_{\rm can}^{\fn}}(k,T_{K,n}) \longrightarrow 
R_{K,n}. 
\end{align*}

Let ${\rm ES}_{r}(T) = {\rm ES}_{f,r}(T) \hooklongrightarrow \cE(T)$ denote the homomorphism defined in Proposition~\ref{prop:main}. 
The following proposition follows from the construction of the Kolyvagin derivative homomorphisms 
$\cD_{0,n}$  and $\cD_{r,n}$. 

\begin{proposition}\label{prop:diag}
The following diagram commutes: 
\[
\xymatrix{
{\rm ES}_{r}(T) \ar[rr]^-{\cD_{r,n}} \ar@{^{(}->}[d] && \prod_{\fn \in \cN_{n}}{\bigcap}^{r}_{R_{K,n}}H^{1}_{\cF_{\rm can}^{\fn}}(k,T_{K,n}) \ar[d] 
%\\
%{\rm ES}_{1}(T) \ar[rr]^-{\cD_{1,n}} \ar[d] && \prod_{\fn \in \cN_{n}}H^{1}_{\cF_{\rm can}^{\fn}}(k,T_{K,n}) \ar[d] 
\\
\cE(T) \ar[rr]^-{\cD_{0,n}} && \prod_{\fn \in \cN_{n}}R_{K,n}. 
}  
\]
%Here the left vertical map is defined in Lemma~\ref{lem:str} and the right vertical map is obtained by the fixed isomorphism $\varprojlim_{K \in \Omega}H^{1}_{\Sigma}(\bT_{K}) \stackrel{\sim}{\longrightarrow} \Lambda[[\Gal(\cK/k)]]^{r}$. 
\end{proposition}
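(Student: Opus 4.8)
The commutativity of the square is essentially a compatibility statement between two constructions of Kolyvagin derivatives — the rank-$r$ one $\cD_{r,n}$ on $\mathrm{ES}_{r}(T)$ and the rank-$0$ one $\cD_{0,n}$ on $\cE(T)$ — once one passes through the rank-reduction map $\mathrm{ES}_{r}(T)\hooklongrightarrow\cE(T)$ of Proposition~\ref{prop:main} and the localization-at-$p$ map on the target side. The first thing I would do is unwind all four arrows in the diagram at a fixed level $n$ and a fixed ideal $\fn\in\cN_{n}$, since everything is defined componentwise in $\fn$ and compatibly in $n$; so it suffices to check commutativity after projecting to the $\fn$-component.

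First I would recall that the left vertical map $\mathrm{ES}_{r}(T)\hooklongrightarrow\cE(T)$ is induced, level by level, by the homomorphism $H^{1}_{f,\Sigma}(\bT_{K})\cong\Lambda[[\Gal(\cK/k)]]^{r}$ together with the map ${\bigcap}^{r}_{\Lambda_{K}}H^{1}(G_{k,S_{K}},\bT_{K})\to{\bigcap}^{r}_{\Lambda_{K}}H^{1}_{\Sigma}(\bT_{K})\cong\Lambda_{K}$ coming from localization at $\Sigma=S_{p}(k)$ and the exterior-bidual functoriality. The right vertical map is, by construction, the mod-$p^{n}$ reduction of exactly the same localization-and-trivialization recipe (using the induced isomorphism $H^{1}_{\Sigma}(T_{K,n})\stackrel{\sim}{\to}R_{K,n}^{r}$ recorded just before the statement). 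So the key point is that the Kolyvagin derivative operators $D_{\fn}$, applied to an Euler system, commute with localization at primes above $p$ and with the exterior-bidual structure maps. Concretely: for $c\in\mathrm{ES}_{r}(T)$ with image $c'\in\cE(T)$, I want $\mathrm{loc}_{p}\bigl(\cD_{r,n}(c)_{\fn}\bigr)=\cD_{0,n}(c')_{\fn}$ in $R_{K,n}$, where $\mathrm{loc}_{p}$ denotes the map ${\bigcap}^{r}_{R_{K,n}}H^{1}_{\cF_{\mathrm{can}}^{\fn}}(k,T_{K,n})\to R_{K,n}$ induced by \eqref{map bidual}.

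The mechanism is: the higher derivative $\cD_{r,n}$ is defined (following \cite[\S4.3]{sbA}, \cite[\S\S6.3--6.4]{bss}) by applying, component by component in $\fn$, the Kolyvagin derivative operator $D_{\fn}$ followed by the projection $\pi_{K(\fn),n}$ to the $\bigcap^{r}$ of the coefficient-reduced cohomology; the rank-reduction is compatible with $\bigcap$-functoriality and with base change $\bT_{K(\fn)}\to T_{K(\fn),n}$. Since localization at $p$ is a map of Galois cohomology that commutes with the Galois action on coefficients (hence with $D_{\fn}$, which is a group-ring element acting on coefficients via $\iota$), and since forming $\widetilde\kappa$ from $\kappa_{\fd_{\tau}}$ involves only the $P_{\fq}(\mathrm{Frob}_{\fq}^{-1})$-factors that also appear in both rank-reduction definitions, the two routes around the square produce the same element. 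I would present this as: (i) reduce to the $\fn$-component; (ii) observe that the left-then-bottom route computes $\widetilde\kappa$ of the image Euler system $c'$, while the top-then-right route computes $\widetilde\kappa$ of the images in $\bigcap^{r}$ and then localizes; (iii) invoke the naturality of the exterior-bidual localization map \eqref{map bidual} with respect to the cohomology localization $H^{1}(G_{k,S_{K(\fn)}},T_{K,n})\to H^{1}_{\Sigma}(T_{K,n})\cong R_{K,n}^{r}$ and with respect to the transition maps $\Phi_{\fm,\fn}$, which already appear in the definitions of both derivative homomorphisms; (iv) conclude that the two elements of $R_{K,n}$ agree for all $\fn$, hence the systems agree.

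**Main obstacle.** The genuine work — and the reason this is phrased as "follows from the construction" rather than proved in detail — is bookkeeping: matching up the sign conventions and the twisting by $P_{\fq}(\mathrm{Frob}_{\fq}^{-1})$ between the rank-$r$ derivative $\cD_{r,n}$ (which lands in $\bigcap^{r}$ and carries the Rubin-lattice/exterior-bidual structure) and the rank-$0$ derivative $\cD_{0,n}$ (which is the classical scalar-valued Kolyvagin derivative), and checking that the fixed isomorphism $H^{1}_{\Sigma}(T_{K,n})\stackrel{\sim}{\to}R_{K,n}^{r}$ used to define $\mathrm{loc}_{p}$ on the target is compatible, under reduction mod $p^{n}$ and passage to the limit, with the isomorphism $\varprojlim_{K}H^{1}_{\Sigma}(\bT_{K})\stackrel{\sim}{\to}\Lambda[[\Gal(\cK/k)]]^{r}$ used to define the rank-reduction map $\mathrm{ES}_{r}(T)\hooklongrightarrow\cE(T)$ in Proposition~\ref{prop:main}. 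This is exactly the same isomorphism by the paragraph preceding the statement, so the compatibility is built in; once that identification is made explicit, the diagram commutes by functoriality of $\bigcap$ and of Galois cohomology under the operations of localization, coefficient-reduction, and application of group-ring elements, with no further input needed.
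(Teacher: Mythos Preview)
Your proposal is correct and takes the same approach as the paper, which itself offers only the single sentence ``follows from the construction of the Kolyvagin derivative homomorphisms $\cD_{0,n}$ and $\cD_{r,n}$.'' Your expansion---reducing to a fixed $\fn$-component, identifying both vertical arrows as induced by the same localization-at-$\Sigma$ map (at levels $\Lambda_{K}$ and $R_{K,n}$ respectively, via the compatible trivializations $H^{1}_{\Sigma}(\bT_{K})\cong\Lambda_{K}^{r}$ and $H^{1}_{\Sigma}(T_{K,n})\cong R_{K,n}^{r}$), and then observing that the $D_{\fn}$-operator and the $\tilde\kappa$-modification commute with this localization by naturality of the exterior bi-dual map~\eqref{map bidual}---is exactly the intended unpacking.
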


\subsection{Regulator homomorphism}
Let $r' \geq 0$ be an integer and $\cF$ a Selmer structure on $T_{K,n}$. 
In this subsection, we recall the definition of regulator homomorphisms
\begin{align*}
{\rm Reg}_{r'} \colon {\rm SS}_{r'}(T_{K,n},\cF) \longrightarrow  \prod_{\fn \in \cN_{n}}H^{1}_{\cF_{\rm can}^{\fn}}(k,T_{K,n}) 
\end{align*}
defined in \cite{sbA, MRselmer}, and construct a Stark system $\epsilon^{\rm DR}_{K} \in {\rm SS}_{0}(\bT_{K},\cF_{\rm str})$ from the $p$-adic $L$-functions $\{L_{p,K'}^{\chi}\}_{K'\in \Omega}$ by using the higher rank Euler system $c^{\rm DR} \in {\rm ES}_{r}(T)$ constructed in Theorem~\ref{thm:main2}.

%For each prime $\fq \in \cP_{n}$, we fix an isomorphism 
%\[
%H^{1}_{/f}(G_{k_{\fq}},T_{K,n}) 
%\stackrel{\sim}{\longrightarrow} R_{K,n}. 
%\]
%Recall that $\sigma_{\fq}$ denotes the fixed generator of $G_{\fq}$. 
%Furthermore, we identify $G_{\fq} = \Gal(k(\fq)/k)$ with $\bZ/(\#G_{\fq})$ by using the fixed generator $\sigma_{\fq} \in G_{\fq}$. 

\begin{lemma}[{\cite[Lemmas~1.2.3 and 1.2.4]{MRkoly}}]\label{lem:tr}
Let $\fq \in \cP_{n}$. 
We set 
\[
H^{1}_{\rm tr}(G_{k_{\fq}},T_{K,n}) := \ker\left(H^{1}(G_{k_{\fq}},T_{K,n}) \longrightarrow H^{1}(G_{k(\fq)_{\fq}},T_{K,n})\right), 
\]
where $k(\fq)_{\fq}$ is the closure of $k(\fq)$ in $\overline{k}_{\fq}$. Put $H^{1}_{/{\rm tr}}(G_{k_{\fq}},T_{K,n}) := H^{1}(G_{k_{\fq}},T_{K,n})/H^{1}_{\rm tr}(G_{k_{\fq}},T_{K,n})$. 
\begin{itemize}
\item[(i)] The composite maps 
\[
H^{1}_{\rm tr}(G_{k_{\fq}},T_{K,n}) \longrightarrow H^{1}(G_{k_{\fq}},T_{K,n}) \longrightarrow H^{1}_{/f}(G_{k_{\fq}},T_{K,n}) 
\] 
and 
\[
H^{1}_{f}(G_{k_{\fq}},T_{K,n}) \longrightarrow H^{1}(G_{k_{\fq}},T_{K,n}) \longrightarrow H^{1}_{/{\rm tr}}(G_{k_{\fq}},T_{K,n}) 
\]
are isomorphisms. 
\item[(ii)] The Frobenius ${\rm Frob}_{\fq}$ induces an isomorphism 
\[
H^{1}_{f}(G_{k_{\fq}},T_{K,n}) \stackrel{\sim}{\longrightarrow} T_{K,n}. 
\]
\item[(iii)] The fixed generator $\sigma_{\fq} \in G_{\fq}$ induces an isomorphism 
\[
H^{1}_{/f}(G_{k_{\fq}},T_{K,n}) \stackrel{\sim}{\longrightarrow} T_{K,n}. 
\]
\end{itemize}
\end{lemma}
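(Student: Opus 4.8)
The plan is to reduce the whole lemma to elementary computations with continuous homomorphisms out of $G_{k_{\fq}}$, using local class field theory. The first step is to record the consequences of $\fq\in\cP_{n}$ that we actually need. Since $\fq\notin S_{K}$, the module $T_{K,n}$ is unramified at $\fq$; and since $\fq$ splits completely in $\cH KL(\mu_{q^{n+\ell_{K,n}}},(\cO_{k}^{\times})^{1/q^{n+\ell_{K,n}}})$, the Frobenius ${\rm Frob}_{\fq}$ acts trivially on $\Gal(k_{n}K/k)$ and (because $\chi$ is unramified at $\fq$ with $\chi({\rm Frob}_{\fq})=1$, and $q=\#\cO/\fm$ is a power of $p$ so that $\mu_{p^{n}}\subseteq\mu_{q^{n+\ell_{K,n}}}$) on $T$ modulo $p^{n}$ as well. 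Hence $G_{k_{\fq}}$ acts trivially on $T_{K,n}$, so that $H^{1}(G_{k_{\fq}},T_{K,n})=\Hom_{\rm cont}(G_{k_{\fq}},T_{K,n})$ and likewise for every subquotient in the statement. The same splitting condition also gives $N(\fq)\equiv1\pmod{p^{n}}$ and, using that every unit of $\cO_{k}$ is a $q^{n+\ell_{K,n}}$-th power modulo $\fq$, that $p^{n}\mid\#\Gal(k(\fq)/k)$.

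Next I would apply local class field theory at $\fq\nmid p$: the pro-$p$ part of $G_{k_{\fq}}^{\rm ab}$ is $\bZ_{p}\oplus\bZ/p^{v}\bZ$ with $v=v_{p}(N(\fq)-1)\ge n$, the free summand being generated by ${\rm Frob}_{\fq}$ and the torsion summand being the image of the (tame) inertia subgroup $I_{\fq}$. Pairing with the $p^{n}$-torsion module $T_{K,n}$ yields a direct sum decomposition
\[
H^{1}(G_{k_{\fq}},T_{K,n})=\Hom(G_{k_{\fq}},T_{K,n})=H^{1}_{f}(G_{k_{\fq}},T_{K,n})\oplus\Hom(I_{\fq},T_{K,n}),
\]
in which the first summand $H^{1}_{f}(G_{k_{\fq}},T_{K,n})=H^{1}(\Gal(k_{\fq}^{\rm ur}/k_{\fq}),T_{K,n})$ is exactly the unramified classes; evaluation at ${\rm Frob}_{\fq}$ identifies it with $T_{K,n}$, which is (ii). For (iii), the second summand $\Hom(I_{\fq},T_{K,n})$ is free of rank one over $R_{K,n}$ via evaluation at a generator of the $p$-part of $I_{\fq}$; since $\sigma_{\fq}$ maps to a generator of the quotient $\Gal(k(\fq)_{\fq}/k_{\fq})$ of $I_{\fq}$ and $p^{n}\mid\#\Gal(k(\fq)/k)$ while $p^{n}T_{K,n}=0$, evaluation at $\sigma_{\fq}$ is again an isomorphism onto $T_{K,n}$; it kills $H^{1}_{f}$, so it descends to the asserted isomorphism $H^{1}_{/f}(G_{k_{\fq}},T_{K,n})\stackrel{\sim}{\longrightarrow}T_{K,n}$.

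Finally I would identify $H^{1}_{\rm tr}$ with the inertia summand: because $k(\fq)_{\fq}/k_{\fq}$ is totally ramified of $p$-power degree divisible by $p^{n}$, the kernel of restriction to $G_{k(\fq)_{\fq}}$ is precisely the image of inflation from $\Gal(k(\fq)_{\fq}/k_{\fq})$, and (the norm group of $k(\fq)_{\fq}/k_{\fq}$ containing a uniformizer while its $p$-part is a quotient of $I_{\fq}$ of exponent $\ge p^{n}$) local class field theory identifies this image with all of $\Hom(I_{\fq},T_{K,n})$. Thus $H^{1}(G_{k_{\fq}},T_{K,n})=H^{1}_{f}\oplus H^{1}_{\rm tr}$, and the two isomorphisms of (i) are the projections onto, respectively, $H^{1}_{/f}$ and $H^{1}_{/{\rm tr}}$; the Frobenius and $\sigma_{\fq}$ isomorphisms of (ii) and (iii) are then transported across these. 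All identifications are $R_{K,n}$-linear because they arise from functorial operations on the coefficients. I expect the only genuinely delicate point to be the bookkeeping in the first step — verifying that the auxiliary fields in the definition of $\cP_{n}$ are large enough to force both the triviality of the $G_{k_{\fq}}$-action on $T_{K,n}$ and the divisibility $p^{n}\mid\#\Gal(k(\fq)/k)$ — together with keeping track of which generator of which cyclic group implements each isomorphism; the cohomological content itself is formal.
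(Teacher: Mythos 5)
The paper offers no proof of its own here: the lemma is quoted directly from Mazur--Rubin \cite[Lemmas~1.2.3 and 1.2.4]{MRkoly}, so your argument is a from-scratch verification rather than something to match line-by-line against the text. Your overall route is the right one, and indeed is essentially the one Mazur--Rubin use: reduce to continuous homomorphisms (because $G_{k_{\fq}}$ acts trivially on $T_{K,n}$, which you correctly trace back to the splitting conditions defining $\cP_{n}$), read off the structure of $G_{k_{\fq}}^{\rm ab}$ from local class field theory at a prime $\fq\nmid p$ with $N(\fq)\equiv1\pmod{p^{n}}$, and exploit the total ramification of $k(\fq)_{\fq}/k_{\fq}$.

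The one place where you overreach is the assertion that $H^{1}_{\rm tr}$ \emph{is} the inertia summand $\{c : c({\rm Frob}_{\fq})=0\}$ inside $\Hom(G_{k_{\fq}},T_{K,n})$. Via reciprocity, $H^{1}_{\rm tr}$ consists of the homomorphisms killing the norm group of $k(\fq)_{\fq}/k_{\fq}$, and that norm group contains \emph{some} uniformizer --- but not necessarily the one corresponding to the Frobenius element fixed globally at the start of the paper. So, in general, $H^{1}_{\rm tr}$ and the Frobenius-complement are two different rank-one complements of $H^{1}_{f}$, and your sentence ``local class field theory identifies this image with all of $\Hom(I_{\fq},T_{K,n})$'' should be read as a statement about the \emph{restriction-to-inertia map}, not as an equality of submodules. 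What one actually needs, and what your computations do deliver, is: (a) $H^{1}_{f}=\ker(\mathrm{res}_{I_{\fq}})$; (b) $\mathrm{res}_{I_{\fq}}\colon H^{1}_{\rm tr}\to\Hom(I_{\fq},T_{K,n})$ is injective because $I_{\fq}$ surjects onto $\Gal(k(\fq)_{\fq}/k_{\fq})$ (total ramification); (c) it is surjective because $\Gal(k(\fq)_{\fq}/k_{\fq})$ is cyclic of order divisible by $p^{n}$ while $p^{n}T_{K,n}=0$, so source and target are both free $R_{K,n}$-modules of rank one. Together these give $H^{1}=H^{1}_{f}\oplus H^{1}_{\rm tr}$, from which (i) is immediate, and (ii), (iii) are the evaluation isomorphisms you describe. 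With the wording adjusted as above the proof is complete; the substantive bookkeeping --- triviality of the $G_{k_{\fq}}$-action, $v_{p}(N(\fq)-1)\geq n$, and $p^{n}\mid\#G_{\fq}$ forced by the unit condition in the definition of $\cP_{n}$ --- is all carried out correctly.
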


We fix a basis $e$ of $\bT_{K}$. 
By using the isomorphisms in Lemma~\ref{lem:tr} and the fixed basis $e$ of $\bT_{K}$, 
we identify $H^{1}_{f}(G_{k_{\fq}},T_{K,n})$, $H^{1}_{/f}(G_{k_{\fq}},T_{K,n})$, $H^{1}_{\rm tr}(G_{k_{\fq}},T_{K,n})$, and $H^{1}_{/{\rm tr}}(G_{k_{\fq}},T_{K,n})$ with $R_{K,n}$.  
We note that these identifications are compatible with $n$, i.e., for $* \in \{f, {\rm tr}, /f, /{\rm tr}\}$, the following diagram commutes: 
\[
\xymatrix{
H^{1}_{*}(G_{k_{\fq}}, T_{K,n}) \ar[r]^-{=} \ar[d] & R_{K,n} \ar[d] 
\\
H^{1}_{*}(G_{k_{\fq}}, T_{K,n-1}) \ar[r]^-{=} & R_{K,n-1}. 
} 
\]

\begin{definition}\label{def:varphi}
For an ideal $\fn \in \cN_{n}$ and a prime $\fq \in \cP_{n}$ with $\fq \mid \fn$, we put 
\[
\varphi_{\fq} \colon H^{1}_{\cF_{\rm can}^{\fn}}(k,T_{K,n}) \longrightarrow H^{1}_{/{\rm tr}}(G_{k_{\fq}},T_{K,n}) = R_{K,n}. 
\]
\end{definition}

\begin{definition}
Let $\fn \in \cN_{n}$ be an ideal and $\cF$ a Selmer structure on $T_{K,n}$. 
Then we define a  Selmer structure $\cF(\fn)$ on $T_{K,n}$ by the following data:
\begin{itemize}
\item $S(\cF(\fn)) := S(\cF) \cup \{\fq \mid \fn\}$,  
\item we define a local condition at a prime $\fq \mid \fn$ by  
\begin{align*}
H^{1}_{\cF(\fn)}(G_{k_{\fq}}, T_{K,n}) := H^{1}_{\rm tr}(G_{k_{\fq}}, T_{K,n}), 
\end{align*}
\item we define a local condition at a prime $\fq \nmid \fn$ by  
\[
H^{1}_{\cF(\fn)}(G_{k_{\fq}}, T) := H^{1}_{\cF}(G_{k_{\fq}}, T).
\] 
\end{itemize}
\end{definition}

\begin{remark}\label{rem:fs map}
By \cite[Proposition~14.3]{MRselmer}, for any integer $m$ with $n \leq m$ and ideal  $\fn \in \cN_{m}$, there are infinitely many ideals $\fm \in \cN_{m}$ with $\fn \mid \fm$ such that $H^{1}_{\cF_{\rm str}(\fm)^{*}}(k,(T/\fm T)^{\vee}(1))$ vanishes. 
Then, by \cite[Corollary~3.21]{sakamoto}, the module $H^{1}_{\cF_{\rm str}(\fm)}(k,T/\fm T)$ also vanishes since the core rank of $\chi(\cF_{\rm str})$ is zero by Remark~\ref{rem:core rank}. 
By \cite[Lemmas~3.13~and~3.14, and Corollary~3.18]{sakamoto} (see also \cite[Lemma~3.5.3]{MRkoly} and \cite[\S~3.2]{bss}),  we see that 
\[
H^{1}_{\cF_{\rm str}(\fm)}(k,T_{K,n}) = H^{1}_{\cF_{\rm str}(\fm)^{*}}(k,T_{K,n}^{\vee}(1)) = 0. 
\]
Applying Theorem~\ref{pt} with $\cF_{1} = \cF_{\rm str}(\fm)$ and $\cF_{2} = \cF_{\rm str}^{\fm}$, 
we conclude that the homomorphism 
\[
H^{1}_{\cF_{\rm str}^{\fm}}(k, T_{K,n}) \xrightarrow{\bigoplus_{\fq \mid \fm} \varphi_{\fq}} \bigoplus_{\fq \mid \fm}R_{K,n}
\]
is an isomorphism. 
Hence $H^{1}_{\cF_{\rm str}^{\fm}}(k, T_{K,n})^{*}$ is generated by the set $\{\varphi_{\fq}\}_{\fq \mid \fm}$ and, for any ideal $\fm' \mid \fm$ and system $\epsilon \in {\rm SS}_{0}(T_{K,n},\cF_{\rm str})$, the ideal $\im(\epsilon_{\fm'})$ is generated by the set 
\[
\{\epsilon_{\fm'}(\wedge_{\fq \mid \fn} \varphi_{\fq}) \mid \nu(\fn) = \nu(\fm'), \,  \fn \mid \fm\}. 
\]

We also note that, applying Theorem~\ref{pt} with $\cF_{1} = \cF_{\rm str}(\fm)$ and $\cF_{2} = \cF_{\bL}(\fm)$, we see that $H^{1}_{\cF_{\bL}(\fm)}(k,T_{K,n})$ is a free $R_{K,n}$-module of rank $1$ and 
$H^{1}_{\cF_{\bL}(\fm)^{*}}(k,T_{K,n}^{\vee}(1))$ vanishes. 
\end{remark}

\begin{definition}[{\cite[Definition~3.1.3]{MRkoly} and \cite[Definition~4.1]{sbA}}]
Let $\cF$ be a Selmer structure on $T_{K, n}$ and $r' > 0$ an integer. 
Let  $\fn \in \cN_{n}$ be an ideal and $\fq$ a prime divisor of $\fn$. 
Then we have two cartesian diagrams
\[
\xymatrix{
H^{1}_{\cF_{\fq}(\fn)}(k,T_{K, n}) \ar[d] \ar@{^{(}->}[r] &  H^{1}_{\cF(\fn)}(k,T_{K, n}) \ar[d]
\\
0 \ar[r] & H^{1}_{\rm tr}(G_{k_{\fq}}, T_{K,n}) = R_{K,n}, 
}
\]
\[
\xymatrix{
H^{1}_{\cF_{\fq}(\fn)}(k,T_{K, n}) \ar[d] \ar@{^{(}->}[r] &  H^{1}_{\cF(\fn/\fq)}(k,T_{K, n}) \ar[d]
\\
0 \ar[r] & H^{1}_{f}(G_{k_{\fq}}, T_{K,n}) = R_{K,n}. 
}
\]
Here we denote by $H^{1}_{\cF_{\fq}(\fn)}(k,T_{K, n})$ the kernel of the map 
$H^{1}_{\cF(\fn)}(k,T_{K, n}) \longrightarrow  H^{1}_{\rm tr}(G_{k_{\fq}}, T_{K,n})$. 
Hence by Definition~\ref{def:map}, we obtain maps 
\[
\varphi_{\fq} \colon {\bigcap}^{r'}_{R_{K,n}}H^{1}_{\cF(\fn)}(k,T_{K, n}) \longrightarrow {\bigcap}^{r'-1}_{R_{K,n}}H^{1}_{\cF_{\fq}(\fn)}(k,T_{K, n})
\]
and 
\[
{\rm div}_{\fq} \colon {\bigcap}^{r'}_{R_{K,n}}H^{1}_{\cF(\fn/\fq)}(k,T_{K, n}) \longrightarrow {\bigcap}^{r'-1}_{R_{K,n}}H^{1}_{\cF_{\fq}(\fn)}(k,T_{K, n}). 
\]

We say that a family of elements $\{\kappa_{\fn}\}_{\fn \in \cN_{n}} \in \prod_{\fn \in \cN_{n}}{\bigcap}^{r'}_{R_{K,n}}H^{1}_{\cF(\fn)}(k,T_{K, n})$ is a Kolyvagin system of rank $r'$ if $\{\kappa_{\fn}\}_{\fn \in \cN_{n}}$ satisfies 
\[
\varphi_{\fq}(\kappa_{\fn}) = {\rm div}_{\fq}(\kappa_{\fn/\fq})
\]
for any ideal $\fn \in \cN_{n}$ and prime divisor $\fq$ of $\fn$. 
We denote by ${\rm KS}_{r'}(T_{K,n}, \cF)$ the module of Kolyvagin systems of rank $r'$. 
\end{definition}

Let $\cF$ be a Selmer structure on $T_{K,n}$ and $r' \geq 0$ an integer. 
By definition, we have  the following cartesian diagram
\[
\xymatrix{
H^{1}_{\cF(\fn)}(k, T_{K,n}) \ar@{^{(}->}[r] \ar[d] & H^{1}_{\cF^{\fn}}(k, T_{K,n})  \ar[d]^-{\bigoplus_{\fq \mid \fn}\varphi_{\fq}}
\\
0 \ar@{^{(}->}[r] &  \bigoplus_{\fq \mid \fn}H^{1}_{/{\rm tr}}(G_{k_{\fq}}, T_{K,n}) = R_{K,n}^{\nu(\fn)}.     
}
\]
%\[
%\xymatrix{
%H^{1}_{\cF_{\rm str}(\fn)}(k, T_{K,n}) \ar@{^{(}->}[r] \ar[d] & H^{1}_{\cF_{\rm str}^{\fn}}(k, T_{K,n})  \ar[d]
%\\
%0 \ar@{^{(}->}[r] &  \bigoplus_{\fq \mid \fn}H^{1}_{/{\rm tr}}(G_{k_{\fq}}, T_{K,n}) = R_{K,n}^{\nu(\fn)}. 
%}
%\]
Hence by applying Definition~\ref{def:map} to the above cartesian diagram, 
we see that 
the homomorphisms $\{\varphi_{\fq}\}_{\fq \mid \fn}$ induce the following homomorphism 
defined by $\Phi \mapsto (-1)^{\nu(\fn)}\Phi(\wedge_{\fq \mid \fn} \varphi_{\fq})$: 
\begin{align*}
&\Reg_{\fn} \colon {\bigcap}^{r'+\nu(\fn)}_{R_{K,n}}H^{1}_{\cF^{\fn}}(k,T_{K,n}) \longrightarrow {\bigcap}^{r'}_{R_{K,n}}H^{1}_{\cF(\fn)}(k,T_{K,n}). 
%\\
%&\Reg_{\fn} \colon {\bigcap}^{\nu(\fn)}_{R_{K,n}}H^{1}_{\cF_{\rm str}^{\fn}}(k,T_{K,n}) \longrightarrow  R_{K,n}.  
\end{align*}
The regulator homomorphism ${\rm Reg}_{r'}$ is defined by $(\epsilon_{\fn})_{\fn \in \cN_{n}} \mapsto (\Reg_{\fn}(\epsilon_{\fn}))_{\fn \in \cN_{n}}$:  
\begin{align*}
{\rm Reg}_{r'} \colon {\rm SS}_{r'}(T_{K,n},\cF) \longrightarrow \prod_{\fn \in \cN_{n}}{\bigcap}^{r'}_{R_{K,n}}H^{1}_{\cF(\fn)}(k, T_{K,n}). 
%\\
%&{\rm Reg}_{0} \colon {\rm SS}_{0}(T_{K,n},\cF_{\rm str}) \longrightarrow \prod_{\fn \in \cN_{n}}R_{K,n}. 
\end{align*}

\begin{proposition}[{\cite[Proposition~4.3]{sbA}}]
Let $\cF$ be a Selmer structure. 
For an integer $r' > 0$, the image of the regulator homomorphism ${\rm Reg}_{r'} \colon {\rm SS}_{r'}(T_{K,n},\cF) \longrightarrow \prod_{\fn \in \cN_{n}}{\bigcap}^{r'}_{R_{K,n}}H^{1}_{\cF(\fn)}(k, T_{K,n})$ is contained in  ${\rm KS}_{r'}(T_{K,n},\cF)$. 
\end{proposition}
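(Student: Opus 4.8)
The plan is to check, for a Stark system $\epsilon = (\epsilon_{\fn})_{\fn \in \cN_{n}} \in {\rm SS}_{r'}(T_{K,n},\cF)$, that $\kappa := {\rm Reg}_{r'}(\epsilon) = (\Reg_{\fn}(\epsilon_{\fn}))_{\fn \in \cN_{n}}$ satisfies the defining relation of a Kolyvagin system, namely $\varphi_{\fq}(\kappa_{\fn}) = {\rm div}_{\fq}(\kappa_{\fn/\fq})$ for every $\fn \in \cN_{n}$ and every prime $\fq \mid \fn$; this is the argument of \cite[Proposition~4.3]{sbA}, which carries over to our setting once the ambient hypotheses are recorded. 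Fix such a pair $(\fn,\fq)$ and put $\fm := \fn/\fq$. Both $\varphi_{\fq}(\kappa_{\fn})$ and ${\rm div}_{\fq}(\kappa_{\fn/\fq})$ lie in ${\bigcap}^{r'-1}_{R_{K,n}}H^{1}_{\cF_{\fq}(\fn)}(k,T_{K,n})$, and by the Stark system relation $\Phi_{\fn,\fm}(\epsilon_{\fn}) = \epsilon_{\fm}$ each is produced from the single element $\epsilon_{\fn} \in {\bigcap}^{r'+\nu(\fn)}_{R_{K,n}}H^{1}_{\cF^{\fn}}(k,T_{K,n})$ by a composite of maps of the type furnished by Definition~\ref{def:map} from a cartesian square: the left-hand side is $\varphi_{\fq}\circ\Reg_{\fn}$, and the right-hand side is ${\rm div}_{\fq}\circ\Reg_{\fm}\circ\Phi_{\fn,\fm}$.

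First I would record, for $\fq \in \cP_{n}$, the splitting $H^{1}(G_{k_{\fq}},T_{K,n}) = H^{1}_{f}(G_{k_{\fq}},T_{K,n})\oplus H^{1}_{\rm tr}(G_{k_{\fq}},T_{K,n})$ together with Lemma~\ref{lem:tr}, which identifies $H^{1}_{/{\rm tr}}(G_{k_{\fq}},T_{K,n})$ with the $H^{1}_{f}$-component and $H^{1}_{/f}(G_{k_{\fq}},T_{K,n})$ with the $H^{1}_{\rm tr}$-component, and shows that the two associated projections span $H^{1}(G_{k_{\fq}},T_{K,n})^{*}$. Then, using the stacking compatibility of maps on exterior bi-duals (Proposition~\ref{prop:homlem}), I would rewrite each of the two composites as the single homomorphism induced by one cartesian square with top row $H^{1}_{\cF_{\fq}(\fn)}(k,T_{K,n}) \hookrightarrow H^{1}_{\cF^{\fn}}(k,T_{K,n})$ and bottom row $0 \hookrightarrow H^{1}(G_{k_{\fq}},T_{K,n})\oplus\bigoplus_{\fq'\mid\fm}H^{1}_{/{\rm tr}}(G_{k_{\fq'}},T_{K,n})$, the vertical maps being the localizations. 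For the left-hand composite the local functionals used at $\fq$ are $(\varphi_{\fq}^{/{\rm tr}},\varphi_{\fq}^{\rm tr})$ (coming respectively from the $\varphi_{\fq}$ inside $\Reg_{\fn}$ and from the outer $\varphi_{\fq}$), while for the right-hand composite they are $(\varphi_{\fq}^{/f},\varphi_{\fq}^{f})$ (coming from $\Phi_{\fn,\fm}$ and from ${\rm div}_{\fq}$); at the primes $\fq'\mid\fm$ both composites use the same functionals $\varphi_{\fq'}$. By the identifications above, $\varphi_{\fq}^{/{\rm tr}}$ agrees with $\varphi_{\fq}^{f}$ and $\varphi_{\fq}^{\rm tr}$ with $\varphi_{\fq}^{/f}$ up to canonical isomorphism, so the two assembled maps agree up to the transposition of the two local factors at $\fq$. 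I would also check here that all the squares being stacked are cartesian — this follows from the definitions of $\cF^{\fn}$, $\cF(\fn)$, $\cF(\fm)$, $\cF_{\fq}(\fn)$, from $H^{1}_{\cF^{\fn}}(G_{k_{\fq}},T_{K,n}) = H^{1}(G_{k_{\fq}},T_{K,n})$, and from Lemma~\ref{lem:tr} — and that the freeness hypotheses needed to apply Proposition~\ref{prop:homlem} hold, which is immediate since $R_{K,n}$ is a zero-dimensional Gorenstein local ring and all the local cohomology groups in sight are free over it.

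The delicate part, and the step I expect to be the main obstacle, is the bookkeeping of signs. The maps $\Reg_{\fn}$, $\Reg_{\fm}$, $\varphi_{\fq}$, ${\rm div}_{\fq}$ and $\Phi_{\fn,\fm}$ each carry a sign coming from the orientation of the wedge factors in Definition~\ref{def:map} (for instance $\Reg_{\fn}$ has the explicit factor $(-1)^{\nu(\fn)}$), and the transposition of the two local factors at $\fq$ contributes one further sign; I expect these to combine so that the left-hand composite equals $(-1)^{\nu(\fn)}$ times, and the right-hand composite equals $(-1)^{\nu(\fm)+1}$ times, the common underlying map, whence $\varphi_{\fq}(\kappa_{\fn}) = {\rm div}_{\fq}(\kappa_{\fn/\fq})$ exactly. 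Once this identity is established for all $\fn$ and all prime divisors $\fq$ of $\fn$, we conclude $\kappa \in {\rm KS}_{r'}(T_{K,n},\cF)$, which is the assertion.
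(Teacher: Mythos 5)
The paper offers no proof of this proposition: it is stated with a bare citation to Burns--Sano, \cite[Proposition~4.3]{sbA}, so there is no in-paper argument to compare against. Your outline is the natural (and, I believe, the intended) route. The strategy is correct: for fixed $\fn$ and $\fq\mid\fn$, write $\varphi_{\fq}\circ\Reg_{\fn}$ and ${\rm div}_{\fq}\circ\Reg_{\fn/\fq}\circ\Phi_{\fn,\fn/\fq}$ each as a composite of the maps of Definition~\ref{def:map} attached to cartesian squares, use the splitting $H^{1}(G_{k_{\fq}},T_{K,n})=H^{1}_{f}\oplus H^{1}_{\rm tr}$ furnished by Lemma~\ref{lem:tr} to observe that the two composites use the same pair of local functionals at $\fq$ in the opposite order, and collapse the stacked squares via Proposition~\ref{prop:homlem}. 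Your identification of the relevant cartesian square with top row $H^{1}_{\cF_{\fq}(\fn)}(k,T_{K,n})\hookrightarrow H^{1}_{\cF^{\fn}}(k,T_{K,n})$ and bottom row $0\hookrightarrow H^{1}(G_{k_{\fq}},T_{K,n})\oplus\bigoplus_{\fq'\mid\fn/\fq}H^{1}_{/{\rm tr}}(G_{k_{\fq'}},T_{K,n})$ is right, and the freeness needed for Proposition~\ref{prop:homlem} does hold since $R_{K,n}$ is a zero-dimensional Gorenstein local ring and the modules in the bottom rows are free of the stated ranks.

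The one place you stop short of a proof is the sign verification, which you explicitly label as an ``expectation.'' That expectation is correct — $\Reg_{\fn}$ carries the explicit $(-1)^{\nu(\fn)}$, $\Reg_{\fn/\fq}$ carries $(-1)^{\nu(\fn)-1}$, and the transposition of the two local factors at $\fq$ supplies the remaining factor $-1$, so $(-1)^{\nu(\fn)}=(-1)^{(\nu(\fn)-1)+1}$ and the two composites agree on the nose. But to make the argument self-contained you would need to actually unwind the normalizations of $\varphi_{\fq}$, ${\rm div}_{\fq}$ and $\Phi_{\fn,\fn/\fq}$ coming from the choice of determinant bases in Definition~\ref{def:map}, rather than infer them from the expected conclusion. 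As it stands the proposal is a correct plan with one computational step deferred, not a gap in the idea.
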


\begin{lemma}\label{lem:inj}\ 
The regulator homomorphism
\[
{\rm Reg}_{0} \colon   {\rm SS}_{0}(T_{K,n},\cF_{\rm str}) \longrightarrow \prod_{\fn \in \cN_{n}}R_{K,n} 
\]
is injective.  
\end{lemma}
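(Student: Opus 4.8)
The plan is to reduce the statement to the structure results already established for Stark systems, namely Proposition~\ref{prop:stark-str} together with the description of generators of $H^{1}_{\cF_{\rm str}^{\fm}}(k,T_{K,n})^{*}$ given in Remark~\ref{rem:fs map}. Suppose $\epsilon = (\epsilon_{\fn})_{\fn \in \cN_{n}} \in {\rm SS}_{0}(T_{K,n},\cF_{\rm str})$ lies in the kernel of $\mathrm{Reg}_{0}$, so that $\Reg_{\fn}(\epsilon_{\fn}) = 0$ for every $\fn \in \cN_{n}$. First I would unwind the definition of $\Reg_{\fn}$ in the rank $r'=0$ case: here $\epsilon_{\fn} \in {\bigcap}^{\nu(\fn)}_{R_{K,n}}H^{1}_{\cF_{\rm str}^{\fn}}(k,T_{K,n}) = \left({\bigwedge}^{\nu(\fn)}_{R_{K,n}}H^{1}_{\cF_{\rm str}^{\fn}}(k,T_{K,n})^{*}\right)^{*}$ is an $R_{K,n}$-homomorphism, and $\Reg_{\fn}(\epsilon_{\fn}) = (-1)^{\nu(\fn)}\epsilon_{\fn}(\wedge_{\fq \mid \fn}\varphi_{\fq}) \in {\bigcap}^{0}_{R_{K,n}}H^{1}_{\cF_{\rm str}(\fn)}(k,T_{K,n}) = R_{K,n}$. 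Thus the hypothesis $\Reg_{0}(\epsilon) = 0$ says precisely that $\epsilon_{\fn}(\wedge_{\fq\mid\fn}\varphi_{\fq}) = 0$ for all $\fn \in \cN_{n}$.

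Next I would invoke Proposition~\ref{prop:stark-str} and Remark~\ref{rem:core}: for any fixed $\fn_{0} \in \cN_{n}$ one can find, using Remark~\ref{rem:fs map}, an ideal $\fm \in \cN_{n}$ divisible by $\fn_{0}$ such that $H^{1}_{(\cF_{\rm str}^{\fm})^{*}}(k,T_{K,n}^{\vee}(1)) = 0$, whence $H^{1}_{\cF_{\rm str}^{\fm}}(k,T_{K,n})$ is free of rank $\nu(\fm)$, the projection ${\rm SS}_{0}(T_{K,n},\cF_{\rm str}) \to {\bigcap}^{\nu(\fm)}_{R_{K,n}}H^{1}_{\cF_{\rm str}^{\fm}}(k,T_{K,n})$ is an isomorphism, and by Remark~\ref{rem:fs map} the dual module $H^{1}_{\cF_{\rm str}^{\fm}}(k,T_{K,n})^{*}$ is free with basis $\{\varphi_{\fq}\}_{\fq\mid\fm}$. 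Consequently ${\bigwedge}^{\nu(\fm)}_{R_{K,n}}H^{1}_{\cF_{\rm str}^{\fm}}(k,T_{K,n})^{*}$ is free of rank $1$ with basis $\wedge_{\fq\mid\fm}\varphi_{\fq}$, and since $H^{1}_{\cF_{\rm str}^{\fm}}(k,T_{K,n})$ is free the canonical map ${\bigwedge}^{\nu(\fm)}_{R_{K,n}}H^{1}_{\cF_{\rm str}^{\fm}}(k,T_{K,n}) \to {\bigcap}^{\nu(\fm)}_{R_{K,n}}H^{1}_{\cF_{\rm str}^{\fm}}(k,T_{K,n})$ is an isomorphism; therefore an element $\epsilon_{\fm}$ of the bidual is determined by its value on the single generator $\wedge_{\fq\mid\fm}\varphi_{\fq}$. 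Since $\epsilon_{\fm}(\wedge_{\fq\mid\fm}\varphi_{\fq}) = (-1)^{\nu(\fm)}\Reg_{\fm}(\epsilon_{\fm}) = 0$, we get $\epsilon_{\fm} = 0$, and then by the isomorphism ${\rm SS}_{0}(T_{K,n},\cF_{\rm str}) \xrightarrow{\sim} {\bigcap}^{\nu(\fm)}_{R_{K,n}}H^{1}_{\cF_{\rm str}^{\fm}}(k,T_{K,n})$ we conclude $\epsilon = 0$. This proves injectivity.

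The main obstacle, and the step to write carefully, is the identification of $\Reg_{\fn}(\epsilon_{\fn})$ in the rank-zero case with the scalar $\epsilon_{\fn}(\wedge_{\fq\mid\fn}\varphi_{\fq})$ and, more importantly, the verification that under the freeness provided by Proposition~\ref{prop:stark-str} the wedge $\wedge_{\fq\mid\fm}\varphi_{\fq}$ really generates the relevant exterior power of the dual — i.e. that the generators $\{\varphi_{\fq}\}_{\fq\mid\fm}$ of $H^{1}_{\cF_{\rm str}^{\fm}}(k,T_{K,n})^{*}$ furnished by Remark~\ref{rem:fs map} are a \emph{basis} (not merely a generating set) when $H^{1}_{\cF_{\rm str}^{\fm}}(k,T_{K,n})$ is free of rank $\nu(\fm)$. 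This follows since a surjection $R_{K,n}^{\nu(\fm)} \to R_{K,n}^{\nu(\fm)}$ of finitely generated modules over the noetherian (indeed zero-dimensional local Gorenstein) ring $R_{K,n}$ is an isomorphism; once this is in place the argument is essentially formal. One should also note that it suffices to check $\epsilon = 0$ after projecting to one well-chosen $\fm$, because that projection is an isomorphism — this is what lets a single cofinal $\fm$ do the work rather than requiring a compatible choice over all $\fn$.
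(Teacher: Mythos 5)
Your proposal is correct and takes essentially the same approach as the paper: both choose a single ideal $\fm \in \cN_{n}$ via Proposition~\ref{prop:stark-str} and Remark~\ref{rem:fs map} so that the projection ${\rm SS}_{0}(T_{K,n},\cF_{\rm str}) \to {\bigcap}^{\nu(\fm)}_{R_{K,n}}H^{1}_{\cF_{\rm str}^{\fm}}(k, T_{K,n})$ and the localization map $\bigoplus_{\fq\mid\fm}\varphi_{\fq}$ are both isomorphisms, whence $\Reg_{\fm}$ is an isomorphism and injectivity of $\Reg_{0}$ follows. The only (cosmetic) difference is that the paper packages this in a commutative diagram rather than chasing elements, and Remark~\ref{rem:fs map} already establishes $\bigoplus_{\fq\mid\fm}\varphi_{\fq}$ as an isomorphism directly from global duality, so the ``surjection of free modules of equal rank'' step you single out as the main obstacle is already supplied.
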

\begin{proof}
By Proposition~\ref{prop:stark-str} and Remark~\ref{rem:fs map}, one can take an ideal $\fm \in \cN_{n}$ such that 
\begin{itemize}
\item the projection ${\rm SS}_{0}(T_{K,n},\cF_{\rm str})  \longrightarrow {\bigcap}^{\nu(\fm)}_{R_{K,n}}H^{1}_{\cF_{\rm str}^{\fm}}(k, T_{K,n})$ is an isomorphism, and
\item  the homomorphism 
\[
H^{1}_{\cF_{\rm str}^{\fm}}(k, T_{K,n}) \xrightarrow{\bigoplus_{\fq \mid \fm} \varphi_{\fq}} \bigoplus_{\fq \mid \fm}R_{K,n}
\] 
is an isomorphism. 
\end{itemize}
Then we have a commutative diagram
\[
\xymatrix{
{\rm SS}_{0}(T_{K,n},\cF_{\rm str})  \ar[r]^-{\cong} \ar[d]^-{{\rm Reg}_{0}} & {\bigcap}^{\nu(\fm)}_{R_{K,n}}H^{1}_{\cF_{\rm str}^{\fm}}(k, T_{K,n}) \ar[d]^-{{\rm Reg}_{\fm}} 
\\
\prod_{\fn \in \cN_{n}}R_{K,n}  \ar[r] & R_{K,n}. 
}
\]
Since $\bigoplus_{\fq \mid \fm} \varphi_{\fq} \colon H^{1}_{\cF_{\rm str}^{\fm}}(k, T_{K,n}) \longrightarrow \bigoplus_{\fq \mid \fm}R_{K,n}$ is an isomorphism, 
the homomorphism ${\rm Reg}_{\fm}$ is an isomorphism, and hence ${\rm Reg}_{0}$ is injective. 
\end{proof}

%where the right vertical arrow is induced by 
%the fixed isomorphism $H^{1}_{\Sigma}(T_{K,n})\stackrel{\sim}{\longrightarrow} 
%R_{K,n}^{r}$ (we do not multiply the sign). 

%{\color{red}The following proposition is proved by Buyukboduk in \cite{}}. 

\begin{theorem}[{\cite[Theorem~5.2~(i)]{bss}}]\label{thm:isom-koly-stark}
The regulator homomorphism
\[
{\rm Reg}_{r} \colon   {\rm SS}_{0}(T_{K,n},\cF_{\rm can}) \longrightarrow {\rm KS}_{r}(T_{K,n},\cF_{\rm can})
\]
is an isomorphism.  
\end{theorem}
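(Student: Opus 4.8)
The plan is to fix a ``core vertex'' $\fm \in \cN_{n}$ and reduce both the injectivity and the surjectivity of ${\rm Reg}_{r}$ to a single statement: that evaluation at $\fm$ is injective on the module of Kolyvagin systems. First I would use Lemma~\ref{lem:che}, in the form of Remark~\ref{rem:core} and Remark~\ref{rem:fs map}, to choose a squarefree product $\fm \in \cN_{n}$ of enough Kolyvagin primes so that $H^{1}_{(\cF_{\rm can}^{\fm})^{*}}(k,T_{K,n}^{\vee}(1)) = 0$ and each localization $\varphi_{\fq}$ $(\fq \mid \fm)$ is surjective. By Proposition~\ref{prop:free-stark} the projection $\epsilon \mapsto \epsilon_{\fm}$ is then an isomorphism ${\rm SS}_{r}(T_{K,n},\cF_{\rm can}) \xrightarrow{\sim} {\bigcap}^{r+\nu(\fm)}_{R_{K,n}}H^{1}_{\cF_{\rm can}^{\fm}}(k,T_{K,n})$, and applying Theorem~\ref{pt} to the pair $(\cF_{\rm can}(\fm),\cF_{\rm can}^{\fm})$ shows that $H^{1}_{\cF_{\rm can}(\fm)}(k,T_{K,n})$ is a free $R_{K,n}$-module of rank $r$ and that $\Reg_{\fm}$ is an isomorphism ${\bigcap}^{r+\nu(\fm)}_{R_{K,n}}H^{1}_{\cF_{\rm can}^{\fm}}(k,T_{K,n}) \xrightarrow{\sim} {\bigcap}^{r}_{R_{K,n}}H^{1}_{\cF_{\rm can}(\fm)}(k,T_{K,n})$. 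Composing these, $\epsilon \mapsto \Reg_{\fm}(\epsilon_{\fm})$ --- which is precisely the $\fm$-component of ${\rm Reg}_{r}(\epsilon)$ --- will be an isomorphism ${\rm SS}_{r}(T_{K,n},\cF_{\rm can}) \xrightarrow{\sim} {\bigcap}^{r}_{R_{K,n}}H^{1}_{\cF_{\rm can}(\fm)}(k,T_{K,n})$.

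Granting this, the theorem will follow formally once I show that evaluation at $\fm$, namely ${\rm KS}_{r}(T_{K,n},\cF_{\rm can}) \to {\bigcap}^{r}_{R_{K,n}}H^{1}_{\cF_{\rm can}(\fm)}(k,T_{K,n})$, $\kappa \mapsto \kappa_{\fm}$, is injective. Indeed ${\rm Reg}_{r}$ takes values in ${\rm KS}_{r}(T_{K,n},\cF_{\rm can})$ by \cite[Proposition~4.3]{sbA}; if ${\rm Reg}_{r}(\epsilon) = 0$ then its $\fm$-component $\Reg_{\fm}(\epsilon_{\fm})$ vanishes, hence $\epsilon = 0$ by the isomorphism above, giving injectivity of ${\rm Reg}_{r}$. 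For surjectivity, given $\kappa \in {\rm KS}_{r}(T_{K,n},\cF_{\rm can})$ I would let $\epsilon \in {\rm SS}_{r}(T_{K,n},\cF_{\rm can})$ be the unique Stark system with $\Reg_{\fm}(\epsilon_{\fm}) = \kappa_{\fm}$; then $\kappa$ and ${\rm Reg}_{r}(\epsilon)$ are two Kolyvagin systems with the same $\fm$-component, so $\kappa = {\rm Reg}_{r}(\epsilon)$ by the injectivity of evaluation at $\fm$.

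Thus everything comes down to proving that $\kappa \mapsto \kappa_{\fm}$ is injective on ${\rm KS}_{r}(T_{K,n},\cF_{\rm can})$, and this is where I expect the real work to lie. I would argue by descending induction on $\nu(\fn)$: assuming $\kappa_{\fn} = 0$ for all $\fn \mid \fm$ with $\nu(\fn) > j$, then for $\fn \mid \fm$ with $\nu(\fn) = j$ and any prime $\fq \mid \fm/\fn$ the Kolyvagin-system relation $\varphi_{\fq}(\kappa_{\fn\fq}) = {\rm div}_{\fq}(\kappa_{\fn})$ forces ${\rm div}_{\fq}(\kappa_{\fn}) = 0$; at a core vertex the maps ${\rm div}_{\fq}$ of Definition~\ref{def:map} occurring here are injective (by Lemma~\ref{lem:tr} the local groups $H^{1}_{f}(G_{k_{\fq}},T_{K,n})$ and $H^{1}_{/f}(G_{k_{\fq}},T_{K,n})$ are free of rank $1$, and the ambient Selmer groups are free of the predicted ranks by Theorem~\ref{pt}), so $\kappa_{\fn} = 0$, and hence $\kappa_{\fn} = 0$ for every $\fn \mid \fm$. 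An arbitrary $\fn \in \cN_{n}$ is then handled by choosing, again via Lemma~\ref{lem:che}, a core vertex $\fm'$ divisible by both $\fn$ and $\fm$ and rerunning the argument with $\fm'$ in place of $\fm$. The hard part will be making the freeness statements at core vertices precise --- i.e.\ controlling all the Selmer and local cohomology groups that enter --- which relies on the running hypotheses on $T/\fm T$ fixed at the start of this section together with the Poitou--Tate/Greenberg--Wiles formalism underlying Theorem~\ref{pt}; this is exactly the content of \cite[Theorem~5.2]{bss}, whose rank-one prototype is \cite[Theorem~5.2.15]{MRkoly}.
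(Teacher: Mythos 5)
Your reduction — injectivity and surjectivity of ${\rm Reg}_{r}$ both follow once evaluation at a core vertex $\fm$ is shown to be injective on ${\rm KS}_{r}(T_{K,n},\cF_{\rm can})$ — is a clean way to organize the argument, and the core-vertex isomorphisms you set up via Proposition~\ref{prop:free-stark} and Theorem~\ref{pt} are correct. But the inductive step that carries the actual weight contains a real gap. When you descend from $\fm$ to a proper divisor $\fn$, you conclude that ${\rm div}_{\fq}(\kappa_{\fn})=0$ for all $\fq\mid\fm/\fn$ implies $\kappa_{\fn}=0$ because ``the ambient Selmer groups are free of the predicted ranks by Theorem~\ref{pt}''. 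That is not true at intermediate nodes: for $\fn\mid\fm$ with $\fn\neq\fm$, the dual Selmer group $H^{1}_{(\cF_{\rm can}(\fn))^{*}}(k,T_{K,n}^{\vee}(1))$ generally does not vanish, $H^{1}_{\cF_{\rm can}(\fn)}(k,T_{K,n})$ need not be free, and on non-free modules the maps ${\rm div}_{\fq}$ of Definition~\ref{def:map} on exterior biduals are not automatically injective. The claim that a Kolyvagin system is determined by its value at one core vertex is exactly the delicate path/core-vertex analysis that constitutes the proof of \cite[Theorem~5.2(i)]{bss} (rank-one prototype \cite[\S4.3--4.4]{MRkoly}), and it cannot be dispatched by a one-line freeness assertion.

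The paper does not attempt to reprove this. Its proof is a direct citation of \cite[Theorem~5.2(i)]{bss}, together with a verification that the running hypotheses of that theorem — Hypotheses~3.2, 3.3, and 4.2 of \cite{bss} — are met; this verification is exactly what was carried out in the proof of Proposition~\ref{prop:free-stark}. So the expected content of this particular proof is the hypothesis check, which your proposal never performs. If you want to keep your reconstruction-style argument, you would need to replace the injectivity assertion for ${\rm div}_{\fq}$ at intermediate $\fn$ with the genuine argument from \cite{bss}; otherwise, the correct (and much shorter) route is to cite \cite[Theorem~5.2(i)]{bss} and observe that Proposition~\ref{prop:free-stark} has already verified its hypotheses.
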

\begin{proof}
This theorem follows from \cite[Theorem~5.2~(i)]{bss}, and hence we only need to check that 
Hypotheses~3.2,~3.3~and~4.2 in \cite{bss} are satisfied. 
In the proof of Proposition~\ref{prop:free-stark}, we prove that Hypotheses~3.2~and~3.3 in \cite{bss} are satisfied. 
By Proposition~\ref{prop:free-stark}, Hypotheses~4.2 is satisfied. 
\end{proof}

When $r'=1$, the following theorem is proved by Mazur--Rubin in \cite[Theorem~3.2.4]{MRkoly}. 
When $r' > 1$, it is proved by Burns--Sano and the author in \cite[Theorem~6.12]{bss}. 

\begin{theorem}\label{thm:derivative}
For any integer $r'>0$, 
the homomorphism $\cD_{r', n}$ induces a homomorphism 
\[
\cD_{r', n} \colon {\rm ES}_{r}(T) \longrightarrow  {\rm KS}_{r'}(T_{K,n}, \cF_{\rm can}). 
\]
\end{theorem}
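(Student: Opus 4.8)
The plan is to prove that the Kolyvagin derivative homomorphism $\cD_{r',n}$, a priori valued in the product $\prod_{\fn \in \cN_{n}}{\bigcap}^{r'}_{R_{K,n}}H^{1}_{\cF_{\rm can}^{\fn}}(k,T_{K,n})$, actually lands in the submodule ${\rm KS}_{r'}(T_{K,n},\cF_{\rm can})$ of Kolyvagin systems of rank $r'$; that is, the family $\{\cD_{r',n}(c)_{\fn}\}_{\fn}$ satisfies the defining relation $\varphi_{\fq}(\kappa_{\fn}) = {\rm div}_{\fq}(\kappa_{\fn/\fq})$ for every ideal $\fn \in \cN_{n}$ and every prime $\fq \mid \fn$. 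Since both sides of this identity are computed prime-by-prime, the statement reduces to a local computation: one must match, at each bad prime $\fq$, the ``derivative in the $\fq$-direction'' of the class $\cD_{r',n}(c)_{\fn/\fq}$ (the term ${\rm div}_{\fq}$) against the $\varphi_{\fq}$-projection of $\cD_{r',n}(c)_{\fn}$.

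First I would recall the construction of $\cD_{r',n}$ from Proposition~\ref{prop:diag} and the references \cite[\S4.3]{sbA}, \cite[\S\S6.3--6.4]{bss}: one applies Kolyvagin's derivative operator $D_{\fn}$ to the Euler system class $c_{K(\fn)}$, passes to the $R_{K,n}$-coefficient quotient via Lemma~\ref{lem:reduction}, and uses the Galois-descent isomorphism $\bigl({\bigcap}^{r'}_{R_{K',n}}H^{1}(G_{k,S_{K'}},T_{K',n})\bigr)^{\Gal(K'/K)} \cong {\bigcap}^{r'}_{R_{K,n}}H^{1}(G_{k,S_{K'}},T_{K,n})$ from \cite[Proposition~A.4]{sbA}. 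The key input is the standard Euler-system-to-Kolyvagin-system mechanism: the Euler system norm relation at $\fq$, together with the congruence ${\rm Frob}_{\fq} \equiv 1$ modulo the relevant level (which holds because $\fq \in \cP_{n}$), forces $D_{\fq}c_{K(\fn)}$ to satisfy, after reduction, precisely the finite-singular comparison that defines the Kolyvagin system relation. Concretely, the term $\varphi_{\fq}$ extracts the image in $H^{1}_{/{\rm tr}}(G_{k_{\fq}},T_{K,n})$ — the "singular part" in the $\fq$-direction — while ${\rm div}_{\fq}$ records the image in $H^{1}_{f}(G_{k_{\fq}},T_{K,n})$ of the lower-level derivative; the Euler system relation $\psi_{K(\fn),K(\fn/\fq)}(c_{K(\fn)}) = P_{\fq}({\rm Frob}_{\fq}^{-1})\cdot c_{K(\fn/\fq)}$ intertwines these two via the congruence $D_{\fq}(\sigma_{\fq}-1) \equiv -N_{\fq} \equiv 0$ and the identification of Lemma~\ref{lem:tr}. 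This is exactly the argument of \cite[Theorem~3.2.4]{MRkoly} in rank one and \cite[Theorem~6.12]{bss} in higher rank, so I would invoke those, checking that our hypotheses (the vanishing statements at primes in $S_{p}(k)$ and $S_{\rm ram}(K/k)$, and the cartesian property of $\cF_{\rm can}$ established in Proposition~\ref{prop:free-stark}) are exactly what those theorems require.

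The main technical point — and the step I expect to require the most care — is the bookkeeping of signs and of the exterior-bidual functoriality: one must verify that the maps $\varphi_{\fq}$ and ${\rm div}_{\fq}$ on the bidual level (defined via Definition~\ref{def:map} applied to the two cartesian squares displayed just before this theorem) are compatible with the Galois descent in \cite[Proposition~A.4]{sbA} and with the reduction maps of Lemma~\ref{lem:reduction} across the tower in $n$ and across the fields $K(\fn) \in \Omega$. Because $\cD_{r',n}$ is built as an inverse-limit-then-descent construction rather than directly, one has to confirm the relation holds compatibly, which is where the "essentially the same" comparison with \cite[Definition~6.4]{bss} is used. Once the compatibility of these functorial maps is in place, the relation $\varphi_{\fq}(\kappa_{\fn}) = {\rm div}_{\fq}(\kappa_{\fn/\fq})$ for $\kappa = \cD_{r',n}(c)$ is a formal consequence of the rank-one case applied in the $\fq$-direction after projecting the bidual classes appropriately, i.e. it follows from \cite[Theorem~6.12]{bss} verbatim. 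I would therefore structure the proof as: (1) recall that $\cD_{r',n}$ is well-defined into the product (already done above in the excerpt); (2) reduce the Kolyvagin-system relation to the prime-by-prime local identity; (3) cite \cite[Theorem~3.2.4]{MRkoly} and \cite[Theorem~6.12]{bss} for that identity, noting that the requisite hypotheses were verified in the proof of Proposition~\ref{prop:free-stark} and in Remark~\ref{rem:vanish}.
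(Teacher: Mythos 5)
Your proposal is correct and takes essentially the same route as the paper: the paper's argument is precisely to invoke \cite[Theorem~3.2.4]{MRkoly} in rank one and \cite[Theorem~6.12]{bss} in higher rank, with the needed hypotheses already checked in the proof of Proposition~\ref{prop:free-stark}. Your additional discussion of the finite-singular mechanism, the bidual functoriality, and Galois descent is accurate background but not part of the paper's (one-line) proof.
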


%\begin{remark}
%In \cite{bss}, Burns--Sano and the author proved that $\im(\cD_{r,n}) \subseteq \im({\rm Reg}_{r})$ without such technical assumptions. 
%\end{remark}

%\begin{proof}
%Let $c = \{c_{K}\}_{K \in \Omega} \in {\rm ES}_{r}(T)$ be an Euler system of rank $r$ and 
%let $c_{\bL}$ denote the image of $c$ under the map ${\rm ES}_{r}(T) \longrightarrow {\rm ES}_{1}(T)$. 
%
%By \cite[Theorem~3.2.a4]{MRkoly}, we have  $\cD_{1,n}(c_{\bL})_{\fn} \in {\rm KS}_{1}(T_{K,n}, \cF_{\rm can})$. 
%Furthermore, the same argument as the proof of \cite[Proposition~3.27]{Bu} implies 
%\[
%\cD_{1,n}(c_{\bL}) \in {\rm KS}_{1}(T_{K,n}, \cF_{\bL}). 
%\]
%Hence this proposition follows from Lemma~\ref{lem:inj}(ii). 
%\end{proof}

By Proposition~\ref{prop:diag}, we have the following commuative diagram: 
\begin{align}\label{diagram2}
\begin{split}
\xymatrix{
{\rm ES}_{r}(T) \ar[rr]^-{\cD_{r,n}} \ar@{^{(}->}[d] && 
{\rm KS}_{r}(T_{K,n}, \cF_{\rm can}) \ar[d] &&
\ar[ll]^-{{\rm Reg}_{r}}_{\cong} \ar[d]^{\cong}  {\rm SS}_{r}(T_{K,n}, \cF_{\rm can})
\\
\cE(T) \ar[rr]^-{\cD_{0,n}} && \prod_{\fn \in \cN_{n}}R_{K,n} && \ar@{_{(}->}[ll]_-{{\rm Reg}_{0}} {\rm SS}_{0}(T_{K,n}, \cF_{\rm str}). 
}
\end{split}
\end{align}

To simplify the notation, we put $\cL_{p}^{\chi} := \{L_{p,K'}^{\chi}\}_{K' \in \Omega} \in \cE(T)$. 

\begin{proposition}\label{prop:stark}
Suppose that the same assumptions as in Theorem~\ref{thm:main2} hold. 
Then there is a unique system 
\[
\epsilon_{K}^{\rm DR} = (\epsilon_{K,n}^{\rm DR})_{n > 0} \in {\rm SS}_{0}(\bT_{K},\cF_{\rm str}) = \varprojlim_{n>0}{\rm SS}_{0}(T_{K,n},\cF_{\rm str})
\]
such that, for any integer $n>0$, 
\[
{\rm Reg}_{0}(\epsilon_{K,n}^{\rm DR}) =  \cD_{0,n}(\cL_{p}^{\chi}),
\] 
that is, 
\begin{align}\label{rel1}
\epsilon_{K,n,\fn}^{\rm DR}(\wedge_{\fq \mid \fn}\varphi_{\fq}) = (-1)^{\nu(\fn)} \tilde{\kappa}(\cL_{p}^{\chi})_{K,n,\fn}
\end{align}
for any ideal $\fn \in \cN_{n}$. 
In particular, we have  ${\rm pr}_{K}(\epsilon_{K}^{\rm DR}) = L_{p,K}^{\chi}$. 
\end{proposition}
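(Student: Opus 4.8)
The plan is to produce $\epsilon_K^{\mathrm{DR}}$ by applying the Kolyvagin derivative map to the higher rank Euler system $c^{\mathrm{DR}} \in {\rm ES}_r(T)$ of Theorem~\ref{thm:main2}, and then to transport the resulting Kolyvagin system back to a Stark system via the regulator isomorphism of Theorem~\ref{thm:isom-koly-stark}. Concretely, for each $n > 0$, I would set $\epsilon_{K,n}^{\mathrm{DR}} := {\rm Reg}_r^{-1}\bigl(\cD_{r,n}(c^{\mathrm{DR}})\bigr) \in {\rm SS}_r(T_{K,n},\cF_{\rm can})$, using that $\cD_{r,n}(c^{\mathrm{DR}}) \in {\rm KS}_r(T_{K,n},\cF_{\rm can})$ by Theorem~\ref{thm:derivative} and that ${\rm Reg}_r$ is an isomorphism onto ${\rm KS}_r(T_{K,n},\cF_{\rm can})$. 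Then I would use the isomorphism ${\rm SS}_r(T_{K,n},\cF_{\rm can}) \xrightarrow{\sim} {\rm SS}_0(T_{K,n},\cF_{\rm str})$ of Proposition~\ref{prop:stark-str} to view $\epsilon_{K,n}^{\mathrm{DR}}$ as an element of ${\rm SS}_0(T_{K,n},\cF_{\rm str})$.

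The core identity to verify is the chase around diagram~\eqref{diagram2}. Since $c^{\mathrm{DR}}$ maps to $\cL_p^{\chi}$ under ${\rm ES}_r(T) \hooklongrightarrow \cE(T)$ by the conclusion of Theorem~\ref{thm:main2}, commutativity of the left square in~\eqref{diagram2} gives that the image of $\cD_{r,n}(c^{\mathrm{DR}})$ under the middle vertical map equals $\cD_{0,n}(\cL_p^{\chi})$. Commutativity of the right square (which is the statement that ${\rm Reg}_0$ on ${\rm SS}_0(T_{K,n},\cF_{\rm str})$ is compatible with ${\rm Reg}_r$ on ${\rm SS}_r(T_{K,n},\cF_{\rm can})$ under the isomorphism of Proposition~\ref{prop:stark-str}) then yields ${\rm Reg}_0(\epsilon_{K,n}^{\mathrm{DR}}) = \cD_{0,n}(\cL_p^{\chi})$, which unwinds to~\eqref{rel1} by the definitions of ${\rm Reg}_0$ (component $\fn$ is $\Phi \mapsto (-1)^{\nu(\fn)}\Phi(\wedge_{\fq\mid\fn}\varphi_{\fq})$) and of $\cD_{0,n}$ (component $\fn$ is $\tilde\kappa(\cL_p^{\chi})_{K,n,\fn}$).

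For uniqueness, I would invoke Lemma~\ref{lem:inj}: the map ${\rm Reg}_0$ on ${\rm SS}_0(T_{K,n},\cF_{\rm str})$ is injective, so $\epsilon_{K,n}^{\mathrm{DR}}$ is the unique system with the prescribed image. For the compatibility in $n$ (so that the $\epsilon_{K,n}^{\mathrm{DR}}$ assemble into an element of $\varprojlim_n {\rm SS}_0(T_{K,n},\cF_{\rm str})$), I would note that $\cD_{0,n}(\cL_p^{\chi})$ is compatible under the transition maps $\prod_{\fn}R_{K,n+1} \to \prod_{\fn}R_{K,n}$ — this follows from the norm-compatibility built into $\cL_p^{\chi}$ as a rank-$0$ Euler system together with the definition of the Kolyvagin derivative — and then injectivity of ${\rm Reg}_0$ (compatibly in $n$, which holds since the identifications in Lemma~\ref{lem:tr} are compatible with $n$) forces the $\epsilon_{K,n}^{\mathrm{DR}}$ to be compatible. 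Finally, the assertion ${\rm pr}_K(\epsilon_K^{\mathrm{DR}}) = L_{p,K}^{\chi}$ is the $\fn = 1$ (i.e. $\nu(\fn) = 0$) case of~\eqref{rel1}, since $\tilde\kappa(\cL_p^{\chi})_{K,n,(1)} = \kappa(\cL_p^{\chi})_{K,n,(1)} = \pi_{K,n}(L_{p,K}^{\chi})$ and ${\rm pr}_K$ is the inverse limit of the $\fn = 1$ projections.

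The main obstacle I anticipate is justifying the commutativity of the right-hand square of~\eqref{diagram2} at the level of regulator homomorphisms — i.e. that the square-root / sign conventions in ${\rm Reg}_r$, ${\rm Reg}_0$, the $l_{\fn}$ maps of Proposition~\ref{prop:stark-str}, and the localization-at-$p$ map all match up with the factor $(-1)^{r\nu(\fn)}$ appearing in Proposition~\ref{prop:stark-str} and the $(-1)^{\nu(\fn)}$ in~\eqref{rel1}. This is essentially bookkeeping about exterior bi-dual functoriality (Definition~\ref{def:map}, Proposition~\ref{prop:homlem}), but it is where the argument could go wrong if a sign is dropped; I would handle it by carefully tracking the cartesian diagrams relating $\cF_{\rm str}^{\fn}$, $\cF_{\rm can}^{\fn}$, $\cF_{\rm str}(\fn)$ and the local conditions at primes dividing $\fn$ and at $p$, exactly as in the proof of the lemma preceding Proposition~\ref{prop:free-stark}.
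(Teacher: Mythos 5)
Your proposal is correct and follows essentially the same route as the paper's proof: apply $\cD_{r,n}$ to $c^{\rm DR}$, use the regulator isomorphism of Theorem~\ref{thm:isom-koly-stark} to lift to ${\rm SS}_{r}(T_{K,n},\cF_{\rm can})$, transport to ${\rm SS}_{0}(T_{K,n},\cF_{\rm str})$ via Proposition~\ref{prop:stark-str}, deduce ${\rm Reg}_{0}(\epsilon_{K,n}^{\rm DR}) = \cD_{0,n}(\cL_{p}^{\chi})$ from diagram~\eqref{diagram2}, get uniqueness from Lemma~\ref{lem:inj}, and obtain $n$-compatibility from the compatibility of ${\rm Reg}_{0}$ and $\cD_{0,n}$ with the transition maps. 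The sign bookkeeping you flag as the main risk is exactly what is packaged into the paper's assertion that diagram~\eqref{diagram2} commutes (which the paper states rather than re-derives), so your caution is well-placed but does not mark a deviation from the paper's argument.
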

\begin{proof}
Let $c^{\rm DR}$ denote the Euler system of rank $r$ constructed in Theorem~\ref{thm:main2} and $n > 0$ an integer. 
By Theorems~\ref{thm:isom-koly-stark}~and~\ref{thm:derivative}, 
there exists a unique system 
\[
\epsilon_{{\rm can}, n} \in {\rm SS}_{r}(T_{K,n},\cF_{\rm can})  
\]
such that the image of $\epsilon_{{\rm can}, n}$ under  the regulator map 
${\rm Reg}_{r} \colon {\rm SS}_{r}(T_{K,n},\cF_{\rm can}) \longrightarrow {\rm KS}_{r}(T_{K,n},\cF_{\rm can})$ is $\cD_{r,n}(c^{\rm DR})$. 
We define a system $\epsilon_{K,n}^{\rm DR} \in {\rm SS}_{0}(T_{K,n},\cF_{\rm str})$ to be the image of $\epsilon_{{\rm can}, n}$ under the isomorphism ${\rm SS}_{r}(T_{K,n},\cF_{\rm can}) \stackrel{\sim}{\longrightarrow} {\rm SS}_{0}(T_{K,n},\cF_{\rm str})$. 
Then the relation ${\rm Reg}_{0}(\epsilon_{K,n}^{\rm DR}) = \cD_{0,n}(\cL_{p}^{\chi})$ follows from the commutative diagram~\eqref{diagram2}.

Recall that the identification $H^{1}_{/{\rm tr}}(G_{k_{\fq}},T_{K,n}) = R_{K,n}$  
is compatible with $n$. Hence the regulator map ${\rm Reg}_{0} \colon {\rm SS}_{0}(T_{K,n}, \cF_{\rm str}) \longrightarrow \prod_{\fn \in \cN_{n}}R_{K,n}$ is also compatible with $n$. 
Thus, by the definition of $\cD_{0,n}$, it is easy to see that the set $\{\epsilon_{K,n}^{\rm DR}\}_{n>0}$ is an inverse system. Hence $\epsilon_{K}^{\rm DR} := (\epsilon_{K,n}^{\rm DR})_{n > 0}$ is an element of 
$\varprojlim_{n>0}{\rm SS}_{0}(T_{K,n},\cF_{\rm str}) = {\rm SS}_{0}(\bT_{K},\cF_{\rm str})$. 
\end{proof}

\begin{proposition}\label{prop:main3}
Suppose that the same assumptions as in Theorem~\ref{thm:main2} hold. 
%Let us fix an isomorphism $\varprojlim_{K \in \Omega}H^{1}_{\Sigma}(\bT_{K}) \stackrel{\sim}{\longrightarrow} 
%\Lambda[[\Gal(\cK/k)]]^{r}$. 
\begin{itemize}
\item[(i)] The system $\epsilon^{\rm DR}_{K}$ constructed in Proposition~\ref{prop:stark}  is a basis of ${\rm SS}_{0}(\bT_{K},\cF_{\rm str})$. 
\item[(ii)] For each integer $i \geq 0$, we have  
\[
I_{i}(\epsilon^{\rm DR}_{K}) = {\rm Fitt}_{\Lambda_{K}}^{i}(X_{K}^{\chi}). 
\]
\end{itemize}
\end{proposition}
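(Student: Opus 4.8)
The plan is to reduce everything to part (i): once $\epsilon_{K}^{\rm DR}$ is known to be a $\Lambda_{K}$-basis of ${\rm SS}_{0}(\bT_{K},\cF_{\rm str})$, part (ii) is immediate from Theorem~\ref{thm:stark}(ii). So the whole content is showing that $\epsilon_{K}^{\rm DR}$ generates ${\rm SS}_{0}(\bT_{K},\cF_{\rm str})$.

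First I would invoke Theorem~\ref{thm:stark}(i): ${\rm SS}_{0}(\bT_{K},\cF_{\rm str})$ is free of rank $1$ over the (local) ring $\Lambda_{K}$. Fix a basis $\epsilon_{0}$ and write $\epsilon_{K}^{\rm DR} = \lambda\epsilon_{0}$ with $\lambda \in \Lambda_{K}$; since $\Lambda_{K}$ is local it suffices to show $\lambda \in \Lambda_{K}^{\times}$. Applying the $\Lambda_{K}$-linear map ${\rm pr}_{K}$ and using Proposition~\ref{prop:stark} (${\rm pr}_{K}(\epsilon_{K}^{\rm DR}) = L_{p,K}^{\chi}$) gives $L_{p,K}^{\chi} = \lambda\cdot{\rm pr}_{K}(\epsilon_{0})$. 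On the other hand $I_{0}(\epsilon_{0}) = {\rm pr}_{K}(\epsilon_{0})\Lambda_{K}$ by definition of $I_{0}$, and since $\epsilon_{0}$ is a basis, Theorem~\ref{thm:stark}(ii) identifies this with ${\rm Fitt}^{0}_{\Lambda_{K}}(X_{K}^{\chi})$. Now under the standing hypothesis $H^{2}(G_{k_{\fq}},T/\fm T)=0$ for $\fq\in S_{\rm ram}(K/k)$ we have $X_{K}^{\chi}=X_{K,S}^{\chi}$ by Remark~\ref{rem:vanish}, whose $\mu$-invariant vanishes by Proposition~\ref{prop:mu=0} (as that of $X_{k}^{\chi}$ does, by hypothesis), so Kurihara's formula used in the proof of Corollary~\ref{cor:kuri03} yields ${\rm Fitt}^{0}_{\Lambda_{K}}(X_{K}^{\chi}) = L_{p,K}^{\chi}\Lambda_{K}$. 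Hence ${\rm pr}_{K}(\epsilon_{0}) = w\,L_{p,K}^{\chi}$ for some $w\in\Lambda_{K}$, and combining with $L_{p,K}^{\chi} = \lambda\cdot{\rm pr}_{K}(\epsilon_{0})$ gives $(1-\lambda w)L_{p,K}^{\chi}=0$.

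To finish, suppose $\lambda$ is not a unit; then $\lambda\in\mathfrak{m}_{\Lambda_{K}}$, so $1-\lambda w$ is a unit, forcing $L_{p,K}^{\chi}=0$. But $L_{p,K}^{\chi}\neq 0$: it generates ${\rm Fitt}^{0}_{\Lambda_{K}}(X_{K}^{\chi})$, and this ideal is nonzero because $X_{K}^{\chi}$ is a finitely generated \emph{torsion} $\Lambda_{K}$-module (a classical fact of Iwasawa theory, cf.\ \cite{Iwa73b}, \cite{NSW}), so that a finite $\Lambda_{K}$-presentation of $X_{K}^{\chi}$ becomes surjective after $-\otimes_{\Lambda}{\rm Frac}(\Lambda)$ and hence has some nonzero maximal minor. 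This contradiction proves $\lambda\in\Lambda_{K}^{\times}$, i.e.\ (i). Part (ii) then follows at once: $\epsilon_{K}^{\rm DR}$ is a basis of ${\rm SS}_{0}(\bT_{K},\cF_{\rm str})$, so $I_{i}(\epsilon_{K}^{\rm DR}) = {\rm Fitt}^{i}_{\Lambda_{K}}(X_{K}^{\chi})$ for every $i\geq 0$ by Theorem~\ref{thm:stark}(ii).

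I expect the only subtle point to be the non-vanishing $L_{p,K}^{\chi}\neq 0$ (equivalently ${\rm Fitt}^{0}_{\Lambda_{K}}(X_{K}^{\chi})\neq 0$); note we do \emph{not} need $L_{p,K}^{\chi}$ to be a non-zero-divisor. Everything else is formal manipulation of free rank-one modules over the local ring $\Lambda_{K}$ together with bookkeeping of the identifications ${\rm pr}_{K}$, $I_{0}$, and Kurihara's formula. If one prefers, the non-vanishing can instead be quoted directly from the classical non-vanishing of the Deligne--Ribet $p$-adic $L$-function (recall $L_{p,K}^{\chi}$ differs from $\tilde{L}_{p,K}^{\chi}$ by a unit).
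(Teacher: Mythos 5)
Your proposal is correct and follows essentially the same route as the paper: fix a basis $\epsilon_{K}$ of ${\rm SS}_{0}(\bT_{K},\cF_{\rm str})$ (free of rank $1$ by Theorem~\ref{thm:stark}(i)), write $\epsilon_{K}^{\rm DR}=a\epsilon_{K}$, use Theorem~\ref{thm:stark}(ii) plus Proposition~\ref{prop:stark} plus Kurihara's formula ${\rm Fitt}^{0}_{\Lambda_{K}}(X_{K}^{\chi})=L_{p,K}^{\chi}\Lambda_{K}$ to get $aL_{p,K}^{\chi}\Lambda_{K}=L_{p,K}^{\chi}\Lambda_{K}$, and conclude by locality of $\Lambda_{K}$ that $a$ is a unit. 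You are simply a bit more explicit than the paper on the final Nakayama-type step (making precise that only $L_{p,K}^{\chi}\neq 0$, not regularity, is needed) and on why ${\rm Fitt}^{0}_{\Lambda_{K}}(X_{K}^{\chi})\neq 0$; the paper leaves both implicit.
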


%\begin{remark} 
%The system $\epsilon^{\rm DR}_{K} \in {\rm SS}_{0}(\bT_{K},\cF_{\rm str})$ depends on the choices of the Frobenius elements and the fixed isomorphism 
%\[
%\varprojlim_{K \in \Omega}H^{1}_{\Sigma}(\bT_{K}) \stackrel{\sim}{\longrightarrow} 
%\Lambda[[\Gal(\cK/k)]]^{r}. 
%\]
%Hence the ideals $I_{i}(\epsilon^{\rm DR}_{K})$ dose not depend on these choices. 
%\end{remark}

\begin{proof}
Theorem~\ref{thm:stark}(i) shows that the $\Lambda_{K}$-module ${\rm SS}_{0}(\bT_{K},\cF_{\rm str})$ is free of rank $1$. 
Let $\epsilon_{K} \in {\rm SS}_{0}(\bT_{K},\cF_{\rm str})$ be a basis. 
Then there is an element $a \in \Lambda_{K}$ such that $\epsilon_{K}^{\rm DR} = a \cdot \epsilon_{K}$. 
%In order to prove this theorem, by Theorem~\ref{thm:stark}(ii), it suffices to show that $a$ is a unit. 
Theorem~\ref{thm:stark}(ii) and Proposition~\ref{prop:stark} imply 
\[
a \cdot {\rm Fitt}_{\Lambda_{K}}^{0}(X_{K}^{\chi}) 
= I_{0}(\epsilon^{\rm DR}_{K}) 
= {\rm pr}_{K}(\epsilon^{\rm DR}_{K}) \cdot \Lambda_{K} 
=  L_{p,K}^{\chi} \cdot \Lambda_{K}. 
\]
Since ${\rm Fitt}_{\Lambda_{K}}^{0}(X_{K}^{\chi}) = L_{p,K}^{\chi} \cdot \Lambda_{K}$ by \cite[Proposition~2.1]{Kur03} and \eqref{keyeq}, we have  
\[ 
aL_{p,K}^{\chi} \cdot \Lambda_{K} = L_{p,K}^{\chi} \cdot \Lambda_{K}.
\] 
Hence we conclude that $a$ is a unit, and $\epsilon_{K}^{\rm DR}$ is a basis. 
%Hence Theorem~\ref{thm:kuri03} shows $a \cdot {\rm Fitt}_{\Lambda_{K}}^{0}(X_{K}^{\chi}) = {\rm Fitt}_{\Lambda_{K}}^{0}(X_{K}^{\chi})$. 
%Since the ideal ${\rm Fitt}_{\Lambda_{K}}^{0}(X_{K}^{\chi})$ is non-trivial and $\Lambda_{K}$ is a local ring, 
%Nakayama's lemma implies that  $a$ is a unit, and $\epsilon_{K}^{\rm DR}$ is a basis. 
Claims~(ii) follows from Claim~(i) and Theorem~\ref{thm:stark}(ii). 
\end{proof}

\section{On the Fitting ideals of $X_{K}^{\chi}$}\label{sec:fitt}

In this section, by using the system $\epsilon_{K}^{\rm DR}$ constructed in Proposition~\ref{prop:main3}, 
we prove that all higher Fitting ideals of $X_{K}^{\chi}$ are described by analytic invariants canonically associated with Stickelberger elements. 

Throughout this section, we use the same notations as in \S\S\ref{sec:euler G_{m}} and \ref{sec:stark}. 
Suppose that the odd prime $p$ is coprime to the class number of $k$ and the order of $\chi$. 
Fix a field $K \in \Omega$. 

\begin{definition}\label{def:theta}
For each integer $i \geq 0$, we define an ideal $\Theta_{K,n}^{i}$ by 
\[
\Theta_{K,n}^{i} := \langle \tilde{\kappa}(\cL_{p}^{\chi})_{K, n,\fn} \mid \nu(\fn) \leq i, \fn \in \cN_{n} \rangle = 
\langle \kappa(\cL_{p}^{\chi})_{K, n,\fn} \mid \nu(\fn) \leq i, \fn \in \cN_{n} \rangle \subseteq R_{K,n}. 
\]
Since $\cN_{n+1} \subseteq \cN_{n}$, one can define an ideal 
\[
\Theta_{K,\infty}^{i} := \varprojlim_{n>0}\Theta_{K,n}^{i} \subseteq \Lambda_{K}. 
\]
\end{definition}

\begin{remark}\label{rem:fitt}\
\begin{itemize}
\item[(i)] By Remark~\ref{rem:leading}, the ideal $\Theta_{K, n}^{i}$ 
is generated by the leading coefficients of elements $\{\pi_{K(\fn),n}(L_{p,K(\fn)}^{\chi})\}_{\fn \in \cN_{n}}$, where 
\[
\pi_{K(\fn),n} \colon \Lambda_{K(\fn)} \longrightarrow R_{K,n}[\Gal(K_{n}(\fn)/K_{n})]
\] 
is the canonical projection map. 
\item[(ii)] We have  $\Theta_{K,\infty}^{0} = L_{p,K}^{\chi} \cdot \Lambda_{K} = {\rm Fitt}_{\Lambda_{K}}^{0}(X_{K}^{\chi})$ by \cite[Proposition~2.1]{Kur03}. 
%\item[(iii)] Kurihara defined a similar ideal $\Theta^{(\delta)}_{i,K}$ called by the small $i$-th Stickelberger ideal (see \cite[\S8.1]{Kur12} for example). 
\end{itemize}
\end{remark}

%\begin{proposition}
%Assume that 
%\begin{itemize}
%\item the odd prime $p$ is coprime to the class number of $k$, 
%\item $H^{0}(G_{k_{\fp}},T/\fm T) = H^{2}(G_{k_{\fp}},T/\fm T) = 0$
%for each prime $\fp \in S_{p}(k)$, and 
%\item $H^{2}(G_{k_{\fq}},T/\fm T) = 0$ for each prime $\fq \in S_{\rm ram}(K/k)$. 
%\end{itemize}
%Then, for a non-negative integer $i$, we have  
%\[
%\Theta_{K,\infty}^{i} \subseteq {\rm Fitt}^{i}_{\Lambda_{K}}(X_{K}^{\chi}).
%\] 
%\end{proposition}
%\begin{proof}
%This proposition follows from Proposition~\ref{prop:stark}, the definition of the ideals $I_{i}(\epsilon_{K,n}^{\rm DR})$, and Remark~\ref{rem:ideal}. 
%\end{proof}

We give a proof of the following theorem in the next subsection. 
As we mentioned in Remark~\ref{rem:kurihara}, the following theorem is an equivariant generalization of the main theorem in \cite{Kur12}. 

\begin{theorem}\label{thm:main4}
Assume that 
\begin{itemize}
\item $H^{0}(G_{k_{\fp}},T/\fm T) = H^{2}(G_{k_{\fp}},T/\fm T) = 0$
for each prime $\fp \in S_{p}(k)$, and 
\item $H^{2}(G_{k_{\fq}},T/\fm T) = 0$ for each prime $\fq \in S_{\rm ram}(K/k)$, 
\item the $\mu$-invariant of $X_{k}^{\chi}$ vanishes. 
\end{itemize}
Then, for an integer $i \geq 0$, we have  
\[
\Theta_{K,\infty}^{i} = {\rm Fitt}^{i}_{\Lambda_{K}}(X_{K}^{\chi}).
\] 
\end{theorem}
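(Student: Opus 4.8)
The plan is to combine the two main inputs that have already been assembled: the identity $I_i(\epsilon_K^{\mathrm{DR}}) = \mathrm{Fitt}^i_{\Lambda_K}(X_K^\chi)$ from Proposition~\ref{prop:main3}(ii), and the compatibility of the regulator homomorphism with the Kolyvagin derivative of $\cL_p^\chi$ recorded in Proposition~\ref{prop:stark}, namely $\epsilon_{K,n,\fn}^{\mathrm{DR}}(\wedge_{\fq\mid\fn}\varphi_\fq) = (-1)^{\nu(\fn)}\tilde\kappa(\cL_p^\chi)_{K,n,\fn}$. Granting these, the theorem reduces to the purely algebraic statement that, for each $n$ and each $i$,
\[
I_i(\epsilon_{K,n}^{\mathrm{DR}}) = \Theta_{K,n}^i,
\]
after which one passes to the inverse limit over $n>0$ using $\cN_{n+1}\subseteq\cN_n$ and the definitions $\Theta_{K,\infty}^i = \varprojlim_n \Theta_{K,n}^i$, $I_i(\epsilon_K^{\mathrm{DR}}) = \varprojlim_n I_i(\epsilon_{K,n}^{\mathrm{DR}})$. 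Note also that Theorem~\ref{thm:main2} applies: its hypotheses are exactly the first and third bullets here, so the Euler system $c^{\mathrm{DR}}$ and the Stark system $\epsilon_K^{\mathrm{DR}}$ exist and Proposition~\ref{prop:main3} is available.

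For the finite-level identity, first I would unwind $I_i(\epsilon_{K,n}^{\mathrm{DR}})$. By Remark~\ref{rem:ideal}(i), $I_i(\epsilon_{K,n}^{\mathrm{DR}}) = \sum_{\nu(\fn)\le i}\im(\epsilon_{K,n,\fn}^{\mathrm{DR}})$, so it suffices to compute $\im(\epsilon_{K,n,\fn}^{\mathrm{DR}})$ for a fixed $\fn$ with $\nu(\fn)\le i$. Choose (via Remark~\ref{rem:fs map}, enlarging $\fn$ to some $\fm$ with $\fn\mid\fm$, $\fm\in\cN_n$) an ideal $\fm$ such that $H^1_{\cF_{\mathrm{str}}^{\fm}}(k,T_{K,n})$ is free of rank $\nu(\fm)$ with $H^1_{\cF_{\mathrm{str}}^{\fm}}(k,T_{K,n})^*$ generated by $\{\varphi_\fq\}_{\fq\mid\fm}$; by Remark~\ref{rem:ideal}(ii) this does not change $I_i$, and by Remark~\ref{rem:fs map} the image $\im(\epsilon_{K,n,\fn'}^{\mathrm{DR}})$ for $\fn'\mid\fm$ is generated by the values $\epsilon_{K,n,\fn'}^{\mathrm{DR}}(\wedge_{\fq\mid\fn'}\varphi_\fq)$. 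Feeding in Proposition~\ref{prop:stark}, each such value equals $(-1)^{\nu(\fn')}\tilde\kappa(\cL_p^\chi)_{K,n,\fn'}$, so $I_i(\epsilon_{K,n}^{\mathrm{DR}})$ is precisely the ideal generated by $\{\tilde\kappa(\cL_p^\chi)_{K,n,\fn'} : \nu(\fn')\le i,\ \fn'\in\cN_n\}$, which is $\Theta_{K,n}^i$ by Definition~\ref{def:theta}. The equality of the two presentations of $\Theta_{K,n}^i$ (in terms of $\tilde\kappa$ versus $\kappa$) is already asserted in Definition~\ref{def:theta}, and follows by the usual triangular-change-of-basis argument relating $\{\tilde\kappa_{K,n,\fn}\}$ and $\{\kappa_{K,n,\fn}\}$ through the matrix with entries $a_{\tau,\fq}$.

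The main obstacle is the bookkeeping needed to make the "enlarge $\fn$ to $\fm$" step uniform in $n$ and compatible with the transition maps ${\rm SS}_0(T_{K,n+1},\cF_{\mathrm{str}})\to{\rm SS}_0(T_{K,n},\cF_{\mathrm{str}})$, so that the finite-level equalities $I_i(\epsilon_{K,n}^{\mathrm{DR}})=\Theta_{K,n}^i$ genuinely patch to an equality of inverse limits rather than merely holding level by level; here one uses that $\cN_{n+1}\subseteq\cN_n$ and that the identification $H^1_{/\mathrm{tr}}(G_{k_\fq},T_{K,n})=R_{K,n}$ is compatible with $n$ (as noted before Proposition~\ref{prop:stark}), together with Theorem~\ref{thm:stark}(i)--(ii) to know that both $\varprojlim_n I_i(\epsilon_{K,n}^{\mathrm{DR}})$ and $\varprojlim_n\mathrm{Fitt}^i_{\Lambda_K}(X_K^\chi/\mathrm{(level\ }n))$ behave well. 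A clean alternative, which I would actually prefer, is to avoid the limit issue entirely: invoke Proposition~\ref{prop:main3}(i) that $\epsilon_K^{\mathrm{DR}}$ is a $\Lambda_K$-basis of ${\rm SS}_0(\bT_K,\cF_{\mathrm{str}})$, then apply Theorem~\ref{thm:stark}(ii) directly at the Iwasawa level to get $I_i(\epsilon_K^{\mathrm{DR}})=\mathrm{Fitt}^i_{\Lambda_K}(X_K^\chi)$, and separately identify $I_i(\epsilon_K^{\mathrm{DR}})=\Theta_{K,\infty}^i$ by the finite-level computation above combined with $I_i(\epsilon_K^{\mathrm{DR}})=\varprojlim_n I_i(\epsilon_{K,n}^{\mathrm{DR}})$ and $\Theta_{K,\infty}^i=\varprojlim_n\Theta_{K,n}^i$. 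Combining the two gives $\Theta_{K,\infty}^i=\mathrm{Fitt}^i_{\Lambda_K}(X_K^\chi)$, as desired.
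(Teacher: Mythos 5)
There is a genuine gap, and it sits exactly at the step you treat as a formality.

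You claim that ``by Remark~\ref{rem:fs map} the image $\im(\epsilon_{K,n,\fn'}^{\mathrm{DR}})$ for $\fn'\mid\fm$ is generated by the values $\epsilon_{K,n,\fn'}^{\mathrm{DR}}(\wedge_{\fq\mid\fn'}\varphi_\fq)$.'' Read Remark~\ref{rem:fs map} again: it says $\im(\epsilon_{\fm'})$ is generated by $\{\epsilon_{\fm'}(\wedge_{\fq\mid\fn}\varphi_\fq)\mid \nu(\fn)=\nu(\fm'),\ \fn\mid\fm\}$, where the ideal $\fn$ indexing the wedge of $\varphi_\fq$'s ranges over \emph{all} divisors of $\fm$ of the right size, not just $\fn=\fm'$. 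Indeed $H^1_{\cF_{\rm str}^{\fm'}}(k,T_{K,n})^*$ is a quotient of $H^1_{\cF_{\rm str}^{\fm}}(k,T_{K,n})^*$, which is generated by the $\varphi_\fq$ for $\fq\mid\fm$, and $\varphi_\fq$ does \emph{not} kill classes that are merely in the finite local condition at $\fq$ (Lemma~\ref{lem:tr}(i) says the restriction of $\varphi_\fq$ to $H^1_f(G_{k_\fq},T_{K,n})$ is an isomorphism). So to compute $\im(\epsilon^{\rm DR}_{K,n,\fn'})$ you must evaluate $\epsilon^{\rm DR}_{K,n,\fn'}$ on off-diagonal wedges $\wedge_{\fq\mid\fn}\varphi_\fq$ with $\fn\neq\fn'$. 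Proposition~\ref{prop:stark} only identifies the diagonal value $\epsilon^{\rm DR}_{K,n,\fn'}(\wedge_{\fq\mid\fn'}\varphi_\fq)=(-1)^{\nu(\fn')}\tilde\kappa(\cL_p^\chi)_{K,n,\fn'}$. The off-diagonal values are the elements $\delta^\sigma_{n,\fn}$ of the paper for nontrivial $\sigma\in{\rm End}(\cP_{n_i})$, and nothing you have cited relates them to Kolyvagin derivatives of the $p$-adic $L$-function.

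Consequently your argument proves only the easy containment $\Theta^i_{K,n}\subseteq I_i(\epsilon^{\rm DR}_{K,n})$. The hard direction is $I_i(\epsilon^{\rm DR}_{K,n})\subseteq\Theta^i_{K,n}+I_{i-1}(\epsilon^{\rm DR}_{K,n})$, which is precisely where the paper invests its effort: it constructs the cohomology classes $\kappa^\sigma_{n,\fn,\fq}$ and the quantities $\delta^\sigma_{n,\fn}$, establishes Proposition~\ref{prop:coh rel}, identifies $I_i(\epsilon^{\rm DR}_{K,n})$ with the ideal generated by all $\delta^\sigma_{n,\fn}$ (Lemma~\ref{lem:key0}), and then uses Lemmas~\ref{lem:key1}--\ref{lem:key3} and Corollary~\ref{cor:key} to run a ``swap $\fq$ for $\fr$ and adjust $\sigma$'' induction on $d:=i-\#\{\fq\mid\fn : \sigma(\fq)=\fq\}$ that expresses each $\delta^\sigma_{n,\fn}$ modulo $I_{i-1}$ as a unit multiple of a diagonal one. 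The conclusion $\Theta^i_{K,\infty}=\Fitt^i_{\Lambda_K}(X_K^\chi)$ is then obtained recursively using $\Fitt^0_{\Lambda_K}(X_K^\chi)\subseteq\Theta^1_{K,\infty}$ (Remark~\ref{rem:fitt}(ii)). Your proposal elides all of this, so it does not establish the finite-level equality $I_i(\epsilon^{\rm DR}_{K,n})=\Theta^i_{K,n}$, and the rest of the argument collapses with it.

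Your framing of the reduction via Proposition~\ref{prop:main3}(ii) and the passage to the inverse limit is fine and matches the paper's last paragraph, but that is the trivial part.
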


\subsection{Proof of Theorem~\ref{thm:main4}}
In this subsection, suppose that the same assumptions as in Theorem~\ref{thm:main4} hold. 
We fix an integer $n > 0$. 
Since 
\[
\varprojlim_{n'>0}H^{1}_{\cF_{\rm str}}(k, T_{K,n'}) = \ker\left(H^{1}(G_{k,S_{K}},\bT_{K}) \longrightarrow H^{1}_{\Sigma}(\bT_{K})\right) = 0
\] 
by Proposition~\ref{prop:inj} 
and $H^{1}_{\cF_{\rm str}}(k, T_{K,n'})$ is finite for each integer $n' > 0$, 
one can take a sequence $\{n_{i}\}_{i=0}^{\infty}$ of positive integers such that 
$n = n_{0} < n_{1} < n_{2} < \cdots$ and a canonical map $H^{1}_{\cF_{\rm str}}(k, T_{K,n_{i+1}}) \longrightarrow H^{1}_{\cF_{\rm str}}(k,T_{K,n_{i}})$ is zero. 

Let ${\rm End}(\cP_{n_{i}})$ denote the set of maps from  $\cP_{n_{i}}$ to $\cP_{n_{i}}$. 
Fix an integer $i>0$ and take an ideal $\fn \in \cN_{n_{i}}$ and a prime $\fq \in \cP_{n_{i}}$ with $\fq \nmid \fn$. 
%Take an ideal $\fn \in \cN_{n_{i}}$ such that $i \leq \nu(\fn)$ and the homomorphism 
%\[
%H^{1}_{\rm str}(G_{k,S_{K(\fn)}},T_{K,n_{j}}) \xrightarrow{\oplus_{\fq \mid \fn} \varphi_{\fq}} \bigoplus_{\fq \mid \fn}R_{K,n_{j}}
%\]
%is an isomorphism for any integer $0 \leq j \leq i$ (see Remark~\ref{rem:fs map}(ii)). 
%Hence $\Hom_{R_{K,n}}(H^{1}_{\rm str}(G_{k,S_{K,\fm}},T_{K,n}),R_{K,n})$ is generated by $\{\varphi_{\fq}\}_{\fq \mid \fm}$ and, for any ideal $\fn \mid \fm$ and system $\epsilon \in {\rm SS}_{0}(T_{K,n},\cF_{\rm str})$, the ideal $\im(\epsilon_{\fn})$ is generated by the set \[
%\{\epsilon_{\fn}(\wedge_{\fq \mid \fd}\varphi_{\fq}) \mid \nu(\fd) = \nu(\fn), \fd \mid \fm\}. 
%\]
For a map $\sigma \in {\rm End}(\cP_{n_{i}})$, 
%For each prime $\fq \in \cP_{n_{i}}$, we take an $R_{K,n_{i}}$-homomorphism 
%\[
%f_{\fq} \colon H^{1}(G_{k},T_{K,n_{i}}) \longrightarrow R_{K,n_{i}}. 
%\]
we set 
\[
\varphi_{\fn}^{\sigma} := \wedge_{\fr \mid \fn}\varphi_{\sigma(\fr)} \colon {\bigwedge}^{\nu(\fn)}_{R_{K,n}}H^{1}(G_{k},T_{K,n}) \longrightarrow R_{K,n},  
\] 
where the homomorphism $\varphi_{\fr}$ is defined in Definition~\ref{def:varphi} and we fix an order on the set of prime divisors of $\fn$. 

Let 
\[
\epsilon_{K,n_{i}}^{\rm DR} = (\epsilon_{K,n_{i},\fn}^{\rm DR})_{\fn \in \cN_{n_{i}}} \in {\rm SS}_{0}(T_{K,n_{i}},\cF_{\rm str})
\] 
be the system constructed in Proposition~\ref{prop:stark}. 
Let 
\[
{\rm div}_{\fq} \colon  H^{1}_{/f}(G_{k_{\fq}},T_{K,n_{j}}) \stackrel{\sim}{\longrightarrow}  R_{K,n_{j}}
\]
denote the fixed isomorphism (see the paragraph after Lemma~\ref{lem:tr}). 
%the order map at the fixed prime $\fq_{K,n_{j}}$ of $KL(\mu_{p^{n_{j}}})$ (see Definition~\ref{def:order}). 
We also denote by ${\rm div}_{\fq}$ the composite map  
\[
H^{1}(G_{k},T_{K,n_{j}}) \longrightarrow H^{1}_{/f}(G_{k_{\fq}},T_{K,n_{j}}) \stackrel{\sim}{\longrightarrow}  R_{K,n_{j}}. 
\]
Recall that 
\[
\epsilon_{K,n_{i},\fn}^{\rm DR} \in {\bigcap}^{\nu(\fn)}_{R_{K,n}}H^{1}_{\cF_{\rm str}^{\fn}}(k,T_{K,n_{i}}) \otimes \det(W_{\fn}). 
\]
Here $W_{\fn} := \bigoplus_{\fq \mid \fn} H^{1}_{/f}(G_{k_{\fq}},T_{K,n_{i}})^{*}$. 

\begin{definition}
We define an element $\overline{\epsilon}_{K,n_{i},\fn, \fq} \in {\bigcap}^{\nu(\fn\fq)}_{R_{K,n}}H^{1}_{\cF_{\rm str}^{\fn\fq}}(k,T_{K,n_{i}})$ by  
\[
\epsilon_{K,n_{i},\fn\fq}^{\rm DR} = \overline{\epsilon}_{K,n_{i},\fn, \fq}  \otimes (\wedge_{\fr \mid \fn}{\rm div}_{\fr} \wedge {\rm div}_{\fq}),  
\]
where, in order to define $\wedge_{\fr \mid \fn}{\rm div}_{\fr}$, we use the same order 
on the set of prime divisors of $\fn$ as above. 
We also define an element $\overline{\epsilon}_{K,n_{i},\fn, 1} \in {\bigcap}^{\nu(\fn)}_{R_{K,n}}H^{1}_{\cF_{\rm str}^{\fn}}(k,T_{K,n_{i}})$ by  
\[
\epsilon_{K,n_{i},\fn}^{\rm DR} = \overline{\epsilon}_{K,n_{i},\fn, 1}  \otimes \wedge_{\fr \mid \fn}{\rm div}_{\fr}.   
\]

We set
\[
\kappa_{n_{i},\fn,\fq}^{\sigma} := (-1)^{\nu(\fn)}\overline{\epsilon}_{K,n_{i},\fn, \fq}(\varphi_{\fn}^{\sigma}) \in H^{1}_{\cF_{\rm str}^{\fn\fq}}(k,T_{K,n_{i}}). 
\]
Note that $\kappa_{n_{i},\fn,\fq}^{\sigma}$ is independent of the choice of the fixed order on the set of prime divisors of $\fn$. 
We denote by $\kappa_{n_{j},\fn,\fq}^{\sigma}$ the image of $\kappa_{n_{i},\fn,\fq}^{\sigma}$ in 
$H^{1}_{\cF_{\rm str}^{\fn\fq}}(k, T_{K,n_{j}})$ for $0 \leq j \leq i$. 
\end{definition}

\begin{definition}
We set 
\[
\delta_{n_{i},\fn}^{\sigma} := (-1)^{\nu(\fn)}\overline{\epsilon}_{K,n_{i},\fn, 1}(\varphi^{\sigma}_{\fn}) \in R_{K,n_{i}}. 
\]
%Note that, for any  prime divisor $\fq$ of $\fn$, we have  
%\[
%\delta_{n_{i},\fn}^{\sigma} = -\varphi_{\sigma(\fq)}(\kappa_{n_{i},\fn/\fq,\fq}). 
%\]
\end{definition}

The following proposition follows from the construction of cohomology classes $\{\kappa_{n_{j},\fn,\fq}^{\sigma}\}$. 

\begin{proposition}\label{prop:coh rel}
The elements $\{\kappa_{n_{j},\fn,\fq}^{\sigma} \mid \fn \in \cN_{n_{i}}, \fq \in \cP_{n_{i}}, \fq \nmid \fn, \sigma \in {\rm End}(\cP_{n_{i}}) \}$ satisfies the following relations: 
\begin{itemize}
\item[(i)] ${\rm div}_{\fr}(\kappa_{n_{j},\fn,\fq}^{\sigma}) = \varphi_{\sigma(\fr)}(\kappa_{n_{j},\fn/\fr,\fq}^{\sigma})$ for each prime $\fr \mid \fn$, 
\item[(ii)] ${\rm div}_{\fq}(\kappa_{n_{j},\fn,\fq}^{\sigma}) = \delta_{n_{j},\fn}^{\sigma}$, 
\item[(iii)]  $\varphi_{\sigma(\fr)}(\kappa_{n_{j},\fn,\fq}^{\sigma}) = 0$ for each prime $\fr \mid \fn$, 
\item[(iv)] $\varphi_{\sigma(\fq)}(\kappa_{n_{j},\fn,\fq}^{\sigma}) = -\delta_{n_{j},\fn\fq}^{\sigma}$.  
\end{itemize}
\end{proposition}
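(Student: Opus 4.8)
The plan is to trace through the definitions of the classes $\kappa_{n_{i},\fn,\fq}^{\sigma}$ and $\delta_{n_{i},\fn}^{\sigma}$ and reduce each of the four relations to a compatibility statement among the maps $\varphi$ and $\operatorname{div}$ applied to the Stark system $\epsilon_{K}^{\rm DR}$, exactly as in the standard Kolyvagin-system calculus of Mazur--Rubin and Burns--Sano. The starting point is the defining identities
\[
\epsilon_{K,n_{i},\fn\fq}^{\rm DR} = \overline{\epsilon}_{K,n_{i},\fn,\fq}\otimes(\wedge_{\fr\mid\fn}\operatorname{div}_{\fr}\wedge\operatorname{div}_{\fq}),\qquad
\epsilon_{K,n_{i},\fn}^{\rm DR} = \overline{\epsilon}_{K,n_{i},\fn,1}\otimes\wedge_{\fr\mid\fn}\operatorname{div}_{\fr},
\]
together with $\kappa_{n_{i},\fn,\fq}^{\sigma} = (-1)^{\nu(\fn)}\overline{\epsilon}_{K,n_{i},\fn,\fq}(\varphi_{\fn}^{\sigma})$ and $\delta_{n_{i},\fn}^{\sigma} = (-1)^{\nu(\fn)}\overline{\epsilon}_{K,n_{i},\fn,1}(\varphi_{\fn}^{\sigma})$. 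First I would record that $\kappa_{n_{i},\fn,\fq}^{\sigma}$ lands in $H^{1}_{\cF_{\rm str}^{\fn\fq}}(k,T_{K,n_{i}})$, so it makes sense to apply $\operatorname{div}_{\fr}$ (for $\fr\mid\fn\fq$) and $\varphi_{\sigma(\fr)}$ to it; the source of $\varphi_{\sigma(\fr)}$ is $H^{1}_{\cF_{\rm can}^{\fn}}(k,T_{K,n_{i}})$, and one checks it restricts sensibly to $H^{1}_{\cF_{\rm str}^{\fn\fq}}$.

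For relations (iii) and (iv), the key input is that $\epsilon_{K}^{\rm DR}$ is a Stark system for $\cF_{\rm str}$: by construction the local component of $\epsilon_{K,n_{i},\fn\fq}^{\rm DR}$ at any $\fr\mid\fn\fq$ is "strict", i.e.\ the class $\kappa_{n_{i},\fn,\fq}^{\sigma}$ obtained by contracting against $\wedge\varphi$ at the other primes lies in $H^{1}_{\cF_{\rm str}}(G_{k_{\fr}},T_{K,n_{i}})$, whence $\varphi_{\sigma(\fr)}(\kappa_{n_{i},\fn,\fq}^{\sigma})=0$ for $\fr\mid\fn$. For $\fr=\fq$, the same strictness gives $\varphi_{\sigma(\fq)}(\kappa_{n_{i},\fn,\fq}^{\sigma})=0$ if one contracts using all of $\fn\fq$; but here we contract using only the primes dividing $\fn$ and then evaluate $\varphi_{\sigma(\fq)}$, so a sign $(-1)$ coming from reordering the wedge factor $\operatorname{div}_{\fq}$ past $\wedge_{\fr\mid\fn}\operatorname{div}_{\fr}$ produces $-\delta_{n_{i},\fn\fq}^{\sigma}$ rather than $+\delta_{n_{i},\fn\fq}^{\sigma}$ — this is the content of (iv), and tracking that sign carefully against the $(-1)^{\nu(\fn)}$ normalizations is where I would be most careful. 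Relation (ii) is essentially the definition unwound: $\operatorname{div}_{\fq}$ applied to $\kappa_{n_{i},\fn,\fq}^{\sigma}=(-1)^{\nu(\fn)}\overline{\epsilon}_{K,n_{i},\fn,\fq}(\varphi_{\fn}^{\sigma})$ contracts the remaining tensor factor $\operatorname{div}_{\fq}$ and produces exactly $(-1)^{\nu(\fn)}\overline{\epsilon}_{K,n_{i},\fn,1}(\varphi_{\fn}^{\sigma})=\delta_{n_{i},\fn}^{\sigma}$ after matching $\overline{\epsilon}_{K,n_{i},\fn,\fq}$ with $\overline{\epsilon}_{K,n_{i},\fn,1}$ via the Stark-system transition map $\Phi_{\fn\fq,\fn}$.

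Relation (i) is the genuine Kolyvagin-derivative / "div $=$ $\varphi$" compatibility: it says that replacing a prime $\fr\mid\fn$ by $\fr$ in the $W$-slot (applying $\operatorname{div}_{\fr}$) equals applying $\varphi_{\sigma(\fr)}$ after passing from $\fn$ to $\fn/\fr$. I would deduce this from the cartesian-diagram functoriality of the maps $\Phi_{\fm,\fn}$ and $\Reg_{\fn}$ recorded in \S\ref{sec:stark} (Proposition~\ref{prop:homlem} and Definition~\ref{def:map}), applied to the Stark-system identity $\Phi_{\fn\fq,(\fn/\fr)\fq}(\epsilon_{K,n_{i},\fn\fq}^{\rm DR})=\epsilon_{K,n_{i},(\fn/\fr)\fq}^{\rm DR}$; unpacking both sides against the chosen wedge of $\varphi$'s and $\operatorname{div}$'s gives the asserted equality. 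Finally, the statement for $n_{j}$ with $j\le i$ follows because all the identifications $H^{1}_{/\mathrm{tr}}(G_{k_{\fq}},T_{K,n})=R_{K,n}$, the maps $\varphi_{\fq}$, $\operatorname{div}_{\fq}$, and the transition maps on Stark systems are compatible with reduction mod $p^{n_{j}}$, so one simply reduces the relations already proved at level $n_{i}$. The main obstacle I anticipate is purely bookkeeping: pinning down all the signs $(-1)^{\nu(\fn)}$, $(-1)^{r\nu(\fn)}$, and the reordering signs in the various $\det(W_{\fn})$ and wedge identifications so that (iii) and (iv) come out with exactly the signs stated, rather than a conceptual difficulty.
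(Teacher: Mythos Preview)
Your outline for (i) and (ii) is correct and matches the paper: both follow by unwinding the Stark-system transition maps $\Phi_{\fn\fq,\fn\fq/\fr}$ and $\Phi_{\fn\fq,\fn}$ on $\epsilon^{\rm DR}$ and tracking the resulting identities on $\overline{\epsilon}$.

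Your argument for (iii), however, has a genuine gap. You claim that $\kappa_{n_i,\fn,\fq}^\sigma$ has ``strict'' local behaviour at $\fr$ and that this forces $\varphi_{\sigma(\fr)}(\kappa)=0$. Neither step works. First, $\kappa_{n_i,\fn,\fq}^\sigma$ lies in $H^1_{\cF_{\rm str}^{\fn\fq}}(k,T_{K,n_i})$, and the Selmer structure $\cF_{\rm str}^{\fn\fq}$ imposes the \emph{relaxed} condition at each $\fr\mid\fn\fq$, not a strict one; the word ``strict'' in $\cF_{\rm str}$ refers only to the condition at primes above $p$. Second, and more seriously, $\varphi_{\sigma(\fr)}$ is the projection to $H^1_{/\rm tr}$ at the prime $\sigma(\fr)$, not at $\fr$; since $\sigma$ is an arbitrary self-map of $\cP_{n_i}$, the prime $\sigma(\fr)$ need not divide $\fn\fq$ at all, so no local-condition argument at $\fr$ can say anything about $\varphi_{\sigma(\fr)}(\kappa)$.

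The correct argument is purely formal and much shorter: by definition
\[
\varphi_{\sigma(\fr)}(\kappa_{n_j,\fn,\fq}^\sigma)=\pm\,\overline{\epsilon}_{K,n_j,\fn,\fq}\bigl(\varphi_\fn^\sigma\wedge\varphi_{\sigma(\fr)}\bigr),
\]
and since $\fr\mid\fn$, the map $\varphi_{\sigma(\fr)}$ already occurs among the factors of $\varphi_\fn^\sigma=\wedge_{\fr'\mid\fn}\varphi_{\sigma(\fr')}$, so the wedge $\varphi_\fn^\sigma\wedge\varphi_{\sigma(\fr)}$ vanishes. This is exactly what the paper does. Your sign-tracking idea for (iv) is on target and agrees with the paper's proof, but the preliminary appeal there to ``the same strictness'' should be dropped for the same reason: (iv) is again a pure wedge-ordering computation comparing $\overline{\epsilon}_{K,n_j,\fn,\fq}(\varphi_\fn^\sigma\wedge\varphi_{\sigma(\fq)})$ with $\overline{\epsilon}_{K,n_j,\fn\fq,1}(\varphi_{\fn\fq}^\sigma)$, with the factor $(-1)^{\nu(\fn)}\cdot(-1)^{\nu(\fn\fq)}=-1$ supplying the minus sign.
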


\begin{remark}
The key of the proof of \cite[Theorem~2.1]{Kur12} is the construction of certain cohomology classes which satisfy the same properties (i)--(iv) in Proposition~\ref{prop:coh rel} (see \cite[\S1]{Kur12}). 
However, Kurihara's construction is different from ours. 
We use a higher rank Euler system and a Stark system in order to construct cohomology classes $\{\kappa_{n_{j},\fn,\fq}^{\sigma}\}$. 
\end{remark}

\begin{proof}
Let us show Claim~(i). 
Since $\kappa_{n_{j},\fn,\fq}^{\sigma}$ and $\kappa_{n_{j},\fn/\fr,\fq}^{\sigma}$ are independent of the choice of the fixed order on the set of prime divisors of $\fn$ and of $\fn/\fr$, respectively, 
we may assume that $\varphi^{\sigma}_{\fn} = \varphi_{\sigma(\fr)} \wedge \varphi^{\sigma}_{\fn/\fr}$. 
%The cartesian diagram~\eqref{} for $\cF=\cF_{\rm str}$, $\fm = \fn\fq$, and $\fn = \fn\fq/\fr$ incudes a homomorphism 
%\[
%{\bigcap}^{\nu(\fn\fq)}_{R_{K,n}}H^{1}_{\cF_{\rm str}^{\fn\fq}}(k,T_{K,n_{i}}) \otimes \det(W_{\fn\fq}) \longrightarrow {\bigcap}^{\nu(\fn\fq/\fr)}_{R_{K,n}}H^{1}_{\cF_{\rm str}^{\fn\fq/\fr}}(k,T_{K,n_{i}}) \otimes \det(W_{\fn\fq/\fr}).  
%\]
%We also denote by ${\rm div}_{\fr}$ this homomorphism.  
%
By the definition of Stark systems, we have  $\Phi_{\fn\fq, \fn\fq/\fr}(\epsilon^{\rm DR}_{K,n_{j},\fn\fq}) = \epsilon^{\rm DR}_{K,n_{j},\fn\fq/\fr}$ (see the paragraph after Definition~\ref{def:det} for the definition of $\Phi_{\fn\fq, \fn\fq/\fr}$). 
Hence by Definition~\ref{def:map}, we have  
\[
{\rm div}_{\fr}(\overline{\epsilon}_{K,n_{j},\fn,\fq}) = \overline{\epsilon}_{K,n_{j},\fn/\fr,\fq}, 
\]
which implies 
\begin{align*}
{\rm div}_{\fr}(\kappa_{n_{j},\fn,\fq}^{\sigma}) 
&= (-1)^{\nu(\fn)}\overline{\epsilon}_{K,n_{j},\fn,\fq}(\varphi^{\sigma}_{\fn} \wedge {\rm div}_{\fr}) 
\\
&= {\rm div}_{\fr}(\overline{\epsilon}_{K,n_{j},\fn,\fq})(\varphi_{\sigma(\fr)} \wedge \varphi^{\sigma}_{\fn/\fr})
\\
&= \overline{\epsilon}_{K,n_{j},\fn/\fr,\fq}(\varphi_{\sigma(\fr)} \wedge \varphi^{\sigma}_{\fn/\fr}) 
\\
&= (-1)^{\nu(\fn/\fr)} \overline{\epsilon}_{K,n_{j},\fn/\fr,\fq}( \varphi^{\sigma}_{\fn/\fr} \wedge \varphi_{\sigma(\fr)}) 
\\
&= \varphi_{\sigma(\fr)}(\kappa_{n_{j},\fn/\fr,\fq}^{\sigma}). 
\end{align*}

Next, we will show Claim~(ii). 
By the definition of Stark systems, we have  ${\rm div}_{\fq}(\epsilon^{\rm DR}_{K,n_{j},\fn\fq}) = \epsilon^{\rm DR}_{K,n_{j},\fn}$. 
Hence by Definition~\ref{def:map}, we have  
\[
{\rm div}_{\fq}(\overline{\epsilon}_{K,n_{j},\fn,\fq}) 
%\otimes \wedge_{\fr \mid \fn} {\rm div}_{\fr} 
=  (-1)^{\nu(\fn)}\overline{\epsilon}_{K,n_{j},\fn, 1},  
%\otimes \wedge_{\fr \mid \fn} {\rm div}_{\fr},  
\]
which implies 
\begin{align*}
{\rm div}_{\fq}(\kappa_{n_{j},\fn,\fq}^{\sigma}) 
&= (-1)^{\nu(\fn)}\overline{\epsilon}_{K,n_{j},\fn,\fq}( \varphi_{\fn}^{\sigma}  \wedge {\rm div}_{\fq}) 
\\
&= {\rm div}_{\fq}(\overline{\epsilon}_{K,n_{j},\fn\fq})(\varphi_{\fn}^{\sigma}) 
\\
&= (-1)^{\nu(\fn)}\overline{\epsilon}_{K,n_{j},\fn, 1}(\varphi_{\fn}^{\sigma})
\\
&= \delta_{n_{j},\fn}^{\sigma}. 
\end{align*}

Claim~(iii) follows from the fact that $\varphi_{\sigma(\fr)} \wedge \varphi_{\fn}^{\sigma}$ vanishes for each prime divisor $\fr \mid \fn$.

By definition, 
the sign difference between $\overline{\epsilon}_{K,n_{j},\fn\fq,1}$ and $\overline{\epsilon}_{K,n_{j},\fn,\fq}$ is the same as the one between $\varphi_{\fn\fq}^{\sigma}$ and $\varphi^{\sigma}_{\fn} \wedge \varphi_{\sigma(\fq)}$. 
Since $(-1)^{\nu(\fn)} \times (-1)^{\nu(\fn\fq)} = -1$, this fact implies Claim~(iv). 
\end{proof}

When the map $\sigma$ is the identity map, put $\delta_{n_{i},\fn} := \delta_{n_{i},\fn}^{\sigma}$ for simplicity.

\begin{lemma}\label{lem:key0}
We have  
\begin{align*}
I_{i}(\epsilon_{K,n}^{\rm DR}) 
&= \langle \delta_{n,\fn}^{\sigma} \mid \fn \in \cN_{n_{i}}, \nu(\fn) = i, \sigma \in {\rm End}(\cP_{n_{i}}) \rangle. 
%\\
%%&= \langle \varphi_{\sigma(\fq)}(\kappa_{n,\fn,\fq}^{\sigma}) \mid \fn \in \cN_{n_{i}}, \fq \in \cP_{n_{i}}, \fq \nmid \fn, \nu(\fn) = i-1,\sigma \in \fS(\fn\fq) \rangle, 
%%\\
%\Theta^{i}_{K,n} &\supseteq \langle \delta_{n,\fn} \mid \fn \in \cN_{n_{i}},  \nu(\fn) = i \rangle. 
\end{align*}
\end{lemma}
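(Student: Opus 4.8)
We must show that the Stark-system ideal $I_i(\epsilon_{K,n}^{\rm DR})$ — which by definition is $\sum_{\fn \in \cN_n,\,\nu(\fn)=i} \im(\epsilon_{K,n,\fn}^{\rm DR})$ — coincides with the ideal generated by the finitely many scalars $\delta_{n,\fn}^{\sigma}$ as $\fn$ ranges over ideals in $\cN_{n_i}$ with $\nu(\fn)=i$ and $\sigma$ ranges over ${\rm End}(\cP_{n_i})$.

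**Plan.** The proof is a compatibility/localization argument; the two key external inputs are Remark~\ref{rem:fitt} together with Proposition~\ref{prop:stark-str}, Remark~\ref{rem:fs map} and Remark~\ref{rem:ideal}. First I would invoke Remark~\ref{rem:fs map}: by choosing $m = n_i$ there is an ideal $\fM \in \cN_{n_i}$ such that $H^{1}_{\cF_{\rm str}^{\fM}}(k,T_{K,n})$ is free of rank $\nu(\fM)$, the projection ${\rm SS}_{0}(T_{K,n},\cF_{\rm str}) \to {\bigcap}^{\nu(\fM)}_{R_{K,n}}H^{1}_{\cF_{\rm str}^{\fM}}(k,T_{K,n})$ is an isomorphism, and $\bigoplus_{\fq \mid \fM}\varphi_{\fq}$ is an isomorphism onto $R_{K,n}^{\nu(\fM)}$; in particular $H^{1}_{\cF_{\rm str}^{\fM}}(k,T_{K,n})^{*}$ is free with basis $\{\varphi_{\fq}\}_{\fq \mid \fM}$. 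After possibly enlarging $\fM$ I may also assume $\fM \in \cN_{n_i}$ so that every such $\fq$ lies in $\cP_{n_i}$. By Remark~\ref{rem:ideal}(ii), for this $\fM$ we have $I_i(\epsilon_{K,n}^{\rm DR}) = \sum_{\fn \mid \fM,\,\nu(\fn)=i}\im(\epsilon_{K,n,\fn}^{\rm DR})$, and, using the $\varphi_{\fq}$-basis of the dual, $\im(\epsilon_{K,n,\fn}^{\rm DR})$ is generated by the elements $\epsilon_{K,n,\fn}^{\rm DR}(\wedge_{\fq \mid \fn}\varphi_{\fq})$ with $\nu(\fn)=i$ and $\fn \mid \fM$.

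**Identifying the two generating sets.** The next step is to match $\epsilon_{K,n,\fn}^{\rm DR}(\wedge_{\fq\mid\fn}\varphi_{\fq})$ with $\delta_{n,\fn}^{\sigma}$. Unwinding the definition of $\delta_{n_i,\fn}^{\sigma}$ and then reducing mod $p^n$ (via the transition map ${\rm SS}_{0}(T_{K,n_i},\cF_{\rm str}) \to {\rm SS}_{0}(T_{K,n},\cF_{\rm str})$ and the compatibility of the identifications $H^1_{/{\rm tr}}(G_{k_\fq},T_{K,n})=R_{K,n}$ with $n$), one gets $\delta_{n,\fn}^{\sigma} = (-1)^{\nu(\fn)}\overline{\epsilon}_{K,n,\fn,1}(\varphi_{\fn}^{\sigma}) = (-1)^{\nu(\fn)}\overline{\epsilon}_{K,n,\fn,1}(\wedge_{\fr\mid\fn}\varphi_{\sigma(\fr)})$; since $\epsilon_{K,n,\fn}^{\rm DR} = \overline{\epsilon}_{K,n,\fn,1}\otimes\wedge_{\fr\mid\fn}{\rm div}_{\fr}$ and we have normalized $\det(W_\fn)\cong R_{K,n}$ so that $\wedge_{\fr\mid\fn}{\rm div}_{\fr}$ is the chosen generator, this reads $\delta_{n,\fn}^{\sigma} = \pm\,\epsilon_{K,n,\fn}^{\rm DR}(\wedge_{\fr\mid\fn}\varphi_{\sigma(\fr)})$. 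Thus the inclusion "$\supseteq$" in the displayed equation is immediate: each $\delta_{n,\fn}^{\sigma}$ with $\nu(\fn)=i$ is, up to sign, a value of $\epsilon_{K,n,\fn}^{\rm DR}$ on some element $\wedge_{\fr\mid\fn}\varphi_{\sigma(\fr)}$ of $H^1_{\cF_{\rm str}^{\fn}}(k,T_{K,n})^*$, hence lies in $\im(\epsilon_{K,n,\fn}^{\rm DR})\subseteq I_i(\epsilon_{K,n}^{\rm DR})$. For the reverse inclusion "$\subseteq$", take a generator $\epsilon_{K,n,\fn}^{\rm DR}(\wedge_{\fq\mid\fn}\varphi_{\fq})$ with $\fn\mid\fM$ and $\nu(\fn)=i$; since each prime divisor $\fq$ of $\fn$ (resp.\ of $\fM$) lies in $\cP_{n_i}$, one can choose a map $\sigma\in{\rm End}(\cP_{n_i})$ sending the chosen ordered prime divisors of $\fn$ to the corresponding prime divisors of $\fM$ (relabelled appropriately), so that for a suitable $\fn'\mid\fM$ with $\nu(\fn')=i$ we have $\wedge_{\fr\mid\fn'}\varphi_{\sigma(\fr)} = \pm\wedge_{\fq\mid\fn}\varphi_{\fq}$ — in fact taking $\sigma$ to be the inclusion $\cN_{n_i}\hookrightarrow\cP_{n_i}$-compatible identity on primes dividing $\fM$ already suffices, giving $\delta_{n,\fn}^{\,{\rm id}} = \pm\epsilon_{K,n,\fn}^{\rm DR}(\wedge_{\fq\mid\fn}\varphi_{\fq})$. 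Hence every generator of $I_i(\epsilon_{K,n}^{\rm DR})$ equals some $\pm\delta_{n,\fn}^{\sigma}$ with $\nu(\fn)=i$.

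**Main obstacle.** The routine parts are the sign bookkeeping and checking that ${\rm End}(\cP_{n_i})$ is flexible enough to realize all needed reorderings; these are harmless. The genuine point requiring care is the interplay of levels: $\epsilon_{K,n,\fn}^{\rm DR}$ is the mod-$p^n$ reduction of the system at level $n_i$, and the $\varphi_{\fq}$, ${\rm div}_{\fq}$ used to define $\overline{\epsilon}_{K,n,\fn,1}$ and $\delta_{n,\fn}^{\sigma}$ must be exactly the reductions of those at level $n_i$; I would make explicit that the identifications $H^1_*(G_{k_\fq},T_{K,n_i})=R_{K,n_i}$ are compatible with reduction mod $p^n$ (this is recorded after Lemma~\ref{lem:tr}) and that ${\rm SS}_0(T_{K,n_i},\cF_{\rm str})\to{\rm SS}_0(T_{K,n},\cF_{\rm str})$ is compatible with $\Phi_{\fm,\fn}$, so that $\overline{\epsilon}_{K,n,\fn,1}$ is literally the reduction of $\overline{\epsilon}_{K,n_i,\fn,1}$. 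Once this compatibility is in place, the identification of generators is a direct computation and the lemma follows.
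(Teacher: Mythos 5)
Your overall strategy — choose a suitable $\fM \in \cN_{n_i}$ via Remark~\ref{rem:fs map}, reduce $I_i(\epsilon_{K,n}^{\rm DR})$ to a finite sum via Remark~\ref{rem:ideal}(ii), and then identify the generating evaluations with the $\delta_{n,\fn}^{\sigma}$ — is the correct route and matches the paper's (very terse) argument.

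However, there is a gap in the middle of your argument. You write that ``$\im(\epsilon_{K,n,\fn}^{\rm DR})$ is generated by the elements $\epsilon_{K,n,\fn}^{\rm DR}(\wedge_{\fq \mid \fn}\varphi_{\fq})$,'' and in the inclusion $I_i \subseteq \langle\delta\rangle$ you only take generators of this diagonal form and conclude that $\sigma = {\rm id}$ suffices. This is not what Remark~\ref{rem:fs map} gives you. For a fixed $\fm'\mid\fM$ with $\nu(\fm')=i$, the module $H^1_{\cF_{\rm str}^{\fm'}}(k,T_{K,n})^*$ is a quotient of $H^1_{\cF_{\rm str}^{\fM}}(k,T_{K,n})^*$, whose basis is $\{\varphi_{\fq}\}_{\fq\mid\fM}$ — not just $\{\varphi_{\fq}\}_{\fq\mid\fm'}$. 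So $\im(\epsilon_{K,n,\fm'}^{\rm DR})$ is generated by the \emph{cross-term} evaluations $\epsilon_{K,n,\fm'}^{\rm DR}(\wedge_{\fq\mid\fn'}\varphi_{\fq})$ with $\fn'\mid\fM$, $\nu(\fn')=i$, and in general $\fn'\neq\fm'$. These cross terms are exactly the $\delta_{n,\fm'}^{\sigma}$ for $\sigma$ a bijection carrying the prime divisors of $\fm'$ onto those of $\fn'$; that is the \emph{raison d'\^etre} of allowing $\sigma$ to be an arbitrary element of ${\rm End}(\cP_{n_i})$ in the statement. If the diagonal evaluations alone generated $I_i(\epsilon_{K,n}^{\rm DR})$, then by \eqref{rel1} this lemma would already give $I_i(\epsilon_{K,n}^{\rm DR}) = \Theta_{K,n}^i$, which is the content of Theorem~\ref{thm:main4} — and that requires the considerably more involved argument of Lemmas~\ref{lem:key1}--\ref{lem:key3}. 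So the non-diagonal case is not a cosmetic omission: it is where the content of Lemma~\ref{lem:key0} actually lives, and your proof must state explicitly that each cross-term $\epsilon_{K,n,\fm'}^{\rm DR}(\wedge_{\fq\mid\fn'}\varphi_{\fq})$ equals $\pm\delta_{n,\fm'}^{\sigma}$ for a suitable $\sigma$, and conversely that each $\delta_{n,\fm'}^{\sigma}$ with $\sigma|_{\fm'}$ injective arises as such a cross term (while $\delta_{n,\fm'}^{\sigma}=0$ if $\sigma|_{\fm'}$ is not injective, since then $\varphi_{\fm'}^{\sigma}=0$).

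Your remarks on level compatibility (the reduction from $n_i$ to $n$ via the identifications after Lemma~\ref{lem:tr}) are correct and worth recording, as is your observation that $\delta_{n,\fn}^{\sigma}$ reads as $\pm\epsilon_{K,n,\fn}^{\rm DR}(\wedge_{\fr\mid\fn}\varphi_{\sigma(\fr)})$ after the $\det(W_{\fn})$ trivialization.
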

\begin{proof}
This lemma follows from Remark~\ref{rem:fs map} and \eqref{rel1}. 
\end{proof}

\begin{lemma}[{\cite[Lemma~6.9]{Kur14a}}]\label{lem:key1}
Let $0 < j \leq i$ and  $c \in H^{1}_{\cF_{\rm str}^{\fn}}(k, T_{K,n_{j}})$. 
If ${\rm div}_{\fr}(c) = 0$ for any $\fr \mid \fn$, then $c = 0$ in 
$H^{1}_{\cF_{\rm str}^{\fn}}(k, T_{K,n_{j-1}})$. 
\end{lemma}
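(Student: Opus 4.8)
The plan is to deduce the statement from the defining property of the sequence $n = n_{0} < n_{1} < n_{2} < \cdots$, namely that the canonical map $H^{1}_{\cF_{\rm str}}(k, T_{K,n_{j}}) \longrightarrow H^{1}_{\cF_{\rm str}}(k, T_{K,n_{j-1}})$ is the zero map for every $j>0$ (this is exactly how the $n_{i}$ were chosen, using that $\varprojlim_{n'>0}H^{1}_{\cF_{\rm str}}(k, T_{K,n'}) = 0$ by Proposition~\ref{prop:inj} together with finiteness of each $H^{1}_{\cF_{\rm str}}(k, T_{K,n'})$).

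First I would unwind the hypothesis. By definition ${\rm div}_{\fr}$ is the composite $H^{1}(G_{k}, T_{K,n_{j}}) \longrightarrow H^{1}_{/f}(G_{k_{\fr}},T_{K,n_{j}}) \xrightarrow{\sim} R_{K,n_{j}}$, so the condition ${\rm div}_{\fr}(c) = 0$ for all $\fr \mid \fn$ says precisely that the localization of $c$ at each $\fr \mid \fn$ lies in the unramified subspace $H^{1}_{f}(G_{k_{\fr}},T_{K,n_{j}})$. Since the primes dividing $\fn$ lie in $\cP_{n_{i}}$, hence outside $S_{K}$, the Selmer structure $\cF_{\rm str}$ imposes exactly the unramified local condition at each $\fr \mid \fn$; and $\cF_{\rm str}$ agrees with $\cF_{\rm str}^{\fn}$ at every prime not dividing $\fn$. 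Therefore $c$ already lies in the smaller subgroup $H^{1}_{\cF_{\rm str}}(k, T_{K,n_{j}}) \subseteq H^{1}_{\cF_{\rm str}^{\fn}}(k, T_{K,n_{j}})$.

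Next I would invoke the choice of the $n_{i}$: because $j>0$, the transition map sends $c$ to $0$ in $H^{1}_{\cF_{\rm str}}(k, T_{K,n_{j-1}})$. Finally, the natural commutative square relating the inclusions $H^{1}_{\cF_{\rm str}}(k,-) \hookrightarrow H^{1}_{\cF_{\rm str}^{\fn}}(k,-)$ (which relax the local condition at the primes dividing $\fn$) with the transition maps induced by $T_{K,n_{j}} \to T_{K,n_{j-1}}$ shows that the image of $c$ in $H^{1}_{\cF_{\rm str}^{\fn}}(k, T_{K,n_{j-1}})$ equals the image of $0 \in H^{1}_{\cF_{\rm str}}(k, T_{K,n_{j-1}})$, hence is $0$. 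I do not expect a genuine obstacle here; the only point needing a little care is the first step, i.e. checking that ``${\rm div}_{\fr}(c)=0$'' is equivalent to ``$c$ satisfies the $\cF_{\rm str}$ local condition at $\fr$'', which just amounts to recalling that ${\rm div}_{\fr}$ is, after the fixed trivialization, the projection onto $H^{1}_{/f}(G_{k_{\fr}},T_{K,n_{j}})$ and that $\cF_{\rm str}$ cuts out the unramified classes at primes of $\cP_{n_{i}}$; everything else is formal diagram-chasing.
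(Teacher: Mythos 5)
Your proof is correct and follows essentially the same route as the paper: the paper's argument states outright the exact sequence $0 \to H^{1}_{\cF_{\rm str}}(k, T_{K,n_{j}}) \to H^{1}_{\cF_{\rm str}^{\fn}}(k, T_{K,n_{j}}) \xrightarrow{\oplus_{\fq \mid \fn}{\rm div}_{\fq}} \bigoplus_{\fq \mid \fn}R_{K,n_{j}}$ to conclude $c \in H^{1}_{\cF_{\rm str}}(k, T_{K,n_{j}})$, which is precisely the content of your first two steps spelled out from the local-condition definitions, and then invokes the defining property of the sequence $n_{0} < n_{1} < \cdots$ exactly as you do.
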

\begin{proof}
We have  an exact sequence 
\[
0 \longrightarrow H^{1}_{\cF_{\rm str}}(k, T_{K,n_{j}}) \longrightarrow 
H^{1}_{\cF_{\rm str}^{\fn}}(k, T_{K,n_{j}}) 
\xrightarrow{\oplus_{\fq \mid \fn}{\rm div}_{\fq}} \bigoplus_{\fq \mid \fn}R_{K,n_{j}}. 
\]
Hence $c \in H^{1}_{\cF_{\rm str}}(k,T_{K,n_{j}})$. Since 
$H^{1}_{\cF_{\rm str}}(k, T_{K,n_{j}}) \longrightarrow H^{1}_{\cF_{\rm str}}(k,T_{K,n_{j-1}})$ is zero, we have  $c = 0$ in 
$H^{1}_{\cF_{\rm str}^{\fn}}(k, T_{K,n_{j-1}})$. 
\end{proof}

\begin{lemma}[{\cite[Proposition~7.14]{Kur14a}}]\label{lem:key2}
Let $\fr \in \cP_{n_{i}}$ be a prime with $\fr \nmid \fn\fq$ and $\sigma \in {\rm End}(\cP_{n_{i}})$. 
Take a homomorphism $f \colon H^{1}(G_{k},T_{K,n}) \longrightarrow R_{K,n}$. 
Suppose that there is an element $z \in H^{1}_{\cF_{\rm str}^{\fq\fr}}(k, T_{K,n_{i}})$ with ${\rm div}_{\fq}(z) = 1$. 
%and ${\rm div}_{\fr}(z) \in R_{K,n}^{\times}$. 
If $\nu(\fn) = i-1$, then we have  
\[
f(\kappa_{n,\fn,\fq}^{\sigma}) \equiv -{\rm div}_{\fr}(z) f(\kappa_{n,\fn,\fr}^{\sigma}) \pmod{ I_{i-1}(\epsilon^{\rm DR}_{K,n})}. 
\]
In particular, we have  
\[
\delta_{n,\fn\fq}^{\sigma} \equiv  {\rm div}_{\fr}(z) \varphi_{\sigma(\fq)}(\kappa_{n,\fn,\fr}^{\sigma}) \pmod{ I_{i-1}(\epsilon^{\rm DR}_{K,n})}. 
\]
\end{lemma}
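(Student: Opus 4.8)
The statement to be proved is Lemma~\ref{lem:key2}, a congruence of the form $f(\kappa_{n,\fn,\fq}^{\sigma}) \equiv -{\rm div}_{\fr}(z) f(\kappa_{n,\fn,\fr}^{\sigma}) \pmod{I_{i-1}(\epsilon^{\rm DR}_{K,n})}$ together with its consequence for $\delta_{n,\fn\fq}^{\sigma}$. The guiding principle is exactly the same as in Kurihara's \cite[Proposition~7.14]{Kur14a}: one plays off the two relations recorded in Proposition~\ref{prop:coh rel} against the Selmer-theoretic constraint that $H^{1}_{\cF_{\rm str}^{\fn\fq\fr}}(k,T_{K,n_{i}})$ is (in the relevant quotient) controlled by the divisor maps $\{{\rm div}_{\fs}\}_{\fs \mid \fn\fq\fr}$, so that the element $z$ with ${\rm div}_{\fq}(z)=1$ acts as a "splitting" that lets us move information from the prime $\fq$ to the prime $\fr$. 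The hypothesis $\nu(\fn)=i-1$ is what makes $\nu(\fn\fq)=\nu(\fn\fr)=i$, so that $\delta^{\sigma}_{n,\fn\fq}$ and the classes $\kappa_{n,\fn,\fq}^{\sigma},\kappa_{n,\fn,\fr}^{\sigma}$ all live "at level $i$", while $I_{i-1}(\epsilon^{\rm DR}_{K,n})$ is generated by the $\delta^{\sigma}_{n,\fm}$ with $\nu(\fm)=i-1$ by Lemma~\ref{lem:key0}.

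\textbf{Key steps, in order.} First I would apply $f$ to the class $\kappa_{n_{i},\fn,\fq}^{\sigma} - ({\rm div}_{\fr}(z))\,\kappa_{n_{i},\fn,\fr}^{\sigma} \in H^{1}_{\cF_{\rm str}^{\fn\fq\fr}}(k,T_{K,n_{i}})$, wait — rather, I would first reduce to analyzing a single cohomology class whose divisor at $\fr$ vanishes. Using Proposition~\ref{prop:coh rel}(i) we have ${\rm div}_{\fr}(\kappa_{n,\fn,\fq}^{\sigma}) = \varphi_{\sigma(\fr)}(\kappa_{n,\fn/\fr,\fq}^{\sigma})$, but since $\fr \nmid \fn$ this is interpreted via enlarging $\fn$; the correct move is to consider the class $\kappa_{n,\fn,\fr}^{\sigma}$ which by construction lies in $H^{1}_{\cF_{\rm str}^{\fn\fr}}$ and has ${\rm div}_{\fq}$ well-defined on it. The plan is: (1) form the class $w := \kappa_{n_{i},\fn,\fr}^{\sigma} - ({\rm div}_{\fr}(\kappa_{n_{i},\fn,\fr}^{\sigma}))\cdot z' $ for a suitable auxiliary choice, arranging that ${\rm div}_{\fr}(w)=0$; (2) invoke Lemma~\ref{lem:key1} (the "$n_{j}$ to $n_{j-1}$" vanishing) to conclude $w \equiv 0$ after reducing the coefficient ring from $n_{i}$ to $n$, which kills $f(w)$ modulo the image of $H^{1}_{\cF_{\rm str}}$; (3) identify that image with something contained in $I_{i-1}(\epsilon^{\rm DR}_{K,n})$ via Lemma~\ref{lem:key0}, using that $\delta^{\sigma}_{n,\fm}$ for $\nu(\fm)=i-1$ exhausts the generators; (4) track the contributions of ${\rm div}_{\fq}$ and $\varphi_{\sigma(\fq)}$ through Proposition~\ref{prop:coh rel}(ii) and (iv) to get the sign $-1$ and to convert the first congruence into the stated congruence for $\delta^{\sigma}_{n,\fn\fq}$; here I would use ${\rm div}_{\fq}(\kappa_{n,\fn,\fq}^{\sigma}) = \delta^{\sigma}_{n,\fn}$ and $\varphi_{\sigma(\fq)}(\kappa_{n,\fn,\fr}^{\sigma}) = -\delta^{\sigma}_{n,\fn\fq}$ after relabelling, being careful that the roles of $\fq$ and $\fr$ in (iv) are symmetric enough.

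\textbf{Where the real work is.} The delicate point is the bookkeeping of which level ($n_{j}$ versus $n_{i}$, and which quotient $R_{K,n_{j}}$) each identity is valid at, and the verification that the "error" produced when applying $f$ and passing from $n_{i}$ down to $n=n_{0}$ actually lands inside $I_{i-1}(\epsilon^{\rm DR}_{K,n})$ rather than some a priori larger ideal. This is where the choice of the sequence $n_{0}<n_{1}<\cdots$ with the vanishing transition maps on $H^{1}_{\cF_{\rm str}}$ is used, exactly as in Lemma~\ref{lem:key1}: one needs the $\fr$-divisor-free class to become zero after one drop in level, so that $f$ of it is zero, and then the "correction terms" are precisely combinations of $\delta^{\sigma}_{n,\fm}$ with $\nu(\fm)\le i-1$. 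I expect the argument to require choosing the element $z$ (whose existence is hypothesized) and possibly one more auxiliary class $z'\in H^{1}_{\cF_{\rm str}^{\fr}}$ or $H^{1}_{\cF_{\rm str}^{\fq}}$ with prescribed divisor, obtained from Remark~\ref{rem:fs map} together with surjectivity of the divisor maps; producing these and checking compatibility with the $\sigma$-twisted maps $\varphi_{\sigma(-)}$ (which differ from the untwisted $\varphi_{-}$ only by a relabelling of primes, so impose no new constraint) is the routine-but-careful core. Once these local splittings are in hand, the congruence is a two-line linear-algebra manipulation with the four relations of Proposition~\ref{prop:coh rel}, and the "in particular" clause follows by substituting $f = \varphi_{\sigma(\fq)}$ and using relations (ii) and (iv).
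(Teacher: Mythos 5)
There is a genuine gap, and it is located precisely where you flagged the "real work." Your plan forms $w := \kappa_{n_{i},\fn,\fr}^{\sigma} - ({\rm div}_{\fr}(\kappa_{n_{i},\fn,\fr}^{\sigma}))\cdot z'$ so that only ${\rm div}_{\fr}(w)=0$, and then tries to invoke Lemma~\ref{lem:key1}. But Lemma~\ref{lem:key1} requires \emph{all} divisor maps $\{{\rm div}_{\fs}\}_{\fs\mid\fn\fq\fr}$ to vanish on the class, not a single one; and each application of that lemma only drops the coefficient level by one step $n_{j}\to n_{j-1}$, not all the way from $n_{i}$ down to $n=n_{0}$. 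Killing only the $\fr$-divisor gives you nothing, because ${\rm div}_{\fs}(\kappa_{n_{i},\fn,\fr}^{\sigma})=\varphi_{\sigma(\fs)}(\kappa_{n_{i},\fn/\fs,\fr}^{\sigma})$ and the analogous divisors of $z'$ will not cancel it; the element $z'$ does not even appear in the paper's argument.

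What the paper actually does, and what your proposal is missing, is an explicit auxiliary class chosen so that \emph{every} divisor matches: for each $\fm\mid\fn$ one sets
\[
\tilde{\kappa}_{n_{i}, \fm, \fq}^{\sigma} := -{\rm div}_{\fr}(z)\,\kappa_{n_{i},\fm,\fr}^{\sigma} + \delta_{n_{i},\fm}^{\sigma}\,z + \sum_{\fs \mid \fm}\varphi_{\sigma(\fs)}(z)\,\kappa_{n_{i},\fm/\fs,\fs}^{\sigma},
\]
and proves by induction on $\nu(\fm)$ that $\kappa^{\sigma}_{n_{i-1-\nu(\fm)},\fm,\fq} = \tilde{\kappa}^{\sigma}_{n_{i-1-\nu(\fm)},\fm,\fq}$. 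The base case uses the matching of ${\rm div}_{\fq}$ and ${\rm div}_{\fr}$ (Proposition~\ref{prop:coh rel}(ii), together with the cancellation ${\rm div}_{\fr}(\tilde\kappa)=-{\rm div}_{\fr}(z)\delta^{\sigma}_{n_{i},\fm}+{\rm div}_{\fr}(z)\delta^{\sigma}_{n_{i},\fm}=0$); the inductive step verifies ${\rm div}_{\fs}(\tilde{\kappa}_{n_{i},\fm,\fq}^{\sigma}) = \varphi_{\sigma(\fs)}(\tilde{\kappa}_{n_{i},\fm/\fs,\fq}^{\sigma})$ via Proposition~\ref{prop:coh rel}(i)--(ii), then compares with $\kappa$ using the induction hypothesis and drops one more level by Lemma~\ref{lem:key1}. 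This is why the descending chain $n_{0}<n_{1}<\cdots$ of length greater than one is needed: you pay one level drop per prime divisor of $\fn$. Only at the very end does one apply $f$ and discard the terms $\delta^{\sigma}_{n,\fn}f(z)$ and $\varphi_{\sigma(\fs)}(z)f(\kappa^{\sigma}_{n,\fn/\fs,\fs})$, which lie in $I_{i-1}(\epsilon^{\rm DR}_{K,n})$ by definition since $\nu(\fn)=\nu(\fn/\fs)+1=i-1$. Your "in particular" derivation (set $f=\varphi_{\sigma(\fq)}$ and use Proposition~\ref{prop:coh rel}(iv)) is correct, but it depends on the main congruence whose proof your plan does not actually produce.
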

\begin{proof}
The proof of this lemma is based on that of \cite[Proposition~7.14]{Kur14a}. 
For an ideal $\fm \mid \fn$, we set $j := \nu(\fm)$ and 
\[
\tilde{\kappa}_{n_{i}, \fm, \fq}^{\sigma} := -{\rm div}_{\fr}(z)
\kappa_{n_{i},\fm,\fr}^{\sigma} + \delta_{n_{i},\fm}^{\sigma}z + \sum_{\fs \mid \fm}\varphi_{\sigma(\fs)}(z)\kappa_{n_{i},\fm/\fs,\fs}^{\sigma}, 
\]
where $\fs$ runs over the set of prime divisors of $\fm$. 
Let us show 
\[
\kappa_{n_{i-1-j}, \fm,\fq}^{\sigma} = \tilde{\kappa}_{n_{i-1-j}, \fm,\fq}^{\sigma}
\]
 by induction on $j \leq i-1$. 
Since ${\rm div}_{\fq}(z) = 1$, Proposition~\ref{prop:coh rel}~(ii) shows that   
\begin{align*}
&{\rm div}_{\fq}(\tilde{\kappa}_{n_{i}, \fm,\fq}^{\sigma}) = \delta_{n_{i},\fm}^{\sigma} = 
{\rm div}_{\fq}(\kappa_{n_{i}, \fm,\fq}^{\sigma}), 
\\
&{\rm div}_{\fr}(\tilde{\kappa}_{n_{i}, \fm,\fq}^{\sigma}) 
= -{\rm div}_{\fr}(z)\delta_{n_{i},\fm}^{\sigma} + {\rm div}_{\fr}(z) \delta_{n_{i},\fm}^{\sigma}
= 0 
= {\rm div}_{\fr}(\kappa_{n_{i}, \fm,\fq}^{\sigma}). 
\end{align*}
Hence if $j = 0$, then $\fm = 1$, and  this claim follows from Lemma~\ref{lem:key1}. 

Let us suppose that $j>0$. Take a prime divisor $\fs$ of $\fm$. 
Then Proposition~\ref{prop:coh rel}~(i)~and~(ii) implies  
\begin{align*}
{\rm div}_{\fs}(\tilde{\kappa}_{n_{i}, \fm,\fq}^{\sigma}) 
&= -{\rm div}_{\fr}(z){\rm div}_{\fs}(\kappa_{n_{i},\fm,\fr}^{\sigma}) 
+ \varphi_{\sigma(\fs)}(z){\rm div}_{\fs}(\kappa_{n_{i},\fm/\fs,\fs}^{\sigma}) 
+ \sum_{\fs' \mid \fm/\fs}\varphi_{\sigma(\fs')}(z){\rm div}_{\fs}(\kappa_{n_{i},\fm/\fs',\fs'}^{\sigma})  \\
&= \varphi_{\sigma(\fs)}(-{\rm div}_{\fr}(z)\kappa_{n_{i},\fm/\fs,\fr}^{\sigma}) 
+ \delta_{n_{i},\fm/\fs}^{\sigma}\varphi_{\sigma(\fs)}(z) 
+ \sum_{\fs' \mid \fm/\fs}\varphi_{\sigma(\fs')}(z)\varphi_{\sigma(\fs)}(\kappa_{n_{i},\fm/\fs\fs',\fs'}^{\sigma})
\\
&=   \varphi_{\sigma(\fs)}(\tilde{\kappa}_{n_{i}, \fm/\fs, \fq}). 
\end{align*}
Hence, the induction hypothesis and Proposition~\ref{prop:coh rel}~(i) show that  
\[
{\rm div}_{\fs}(\tilde{\kappa}_{n_{i-j}, \fm,\fq}^{\sigma}) 
= \varphi_{\sigma(\fs)}(\tilde{\kappa}_{n_{i-j}, \fm/\fs,\fq}^{\sigma})
= \varphi_{\sigma(\fs)}(\kappa_{n_{i-j}, \fm/\fs,\fq}^{\sigma}) 
= {\rm div}_{\fs}(\kappa_{n_{i-j}, \fm,\fq}^{\sigma}). 
\]
Thus, by Lemma~\ref{lem:key1}, we conclude that $\kappa_{n_{i-1-j}, \fm,\fq}^{\sigma} = \tilde{\kappa}_{n_{i-1-j}, \fm,\fq}^{\sigma}$. 

Take a homomorphism $f \colon H^{1}(G_{k},T_{K,n}) \longrightarrow R_{K,n}$. 
Since $\delta_{n,\fn}^{\sigma}$ and 
$f(\kappa_{n,\fn/\fs,\fs}^{\sigma})$ are contained in $I_{i-1}(\epsilon^{\rm DR}_{K,n})$ by definition, we have  
\begin{align*}
f(\kappa_{n, \fn,\fq}^{\sigma}) &= f(\tilde{\kappa}_{n, \fn, \fq}^{\sigma}) 
\\
&= - {\rm div}_{\fr}(z) f(\kappa_{n,\fn,\fr}^{\sigma}) 
+ \delta_{n,\fn}^{\sigma}f(z) 
- \sum_{\fs \mid \fn}\varphi_{\sigma(\fs)}(z)f(\kappa_{n,\fn/\fs,\fs}^{\sigma}) 
\\
&\equiv - {\rm div}_{\fr}(z) f(\kappa_{n,\fn,\fr}^{\sigma}) \pmod{ I_{i-1}(\epsilon^{\rm DR}_{K,n})}. 
\end{align*}
%Since $R_{K,n}$ is a zero-dimensional Gorenstein local ring, Matlis duality implies 
%\[
%\kappa_{n,\fn,\fq}^{\sigma} \equiv \kappa_{n,\fn,\fr}^{\sigma} \pmod{ I_{i-1}(\epsilon^{\rm DR}_{K,n})}.
%\] 
\end{proof}

%We give a proof of the following lemma in the next subsection. 
The following lemma is proved by Kurihara in \cite{Kur14a}. 

\begin{lemma}[{\cite[Proposition~9.3]{Kur14a}}]\label{lem:key3}
Let $\fm \in \cN_{n}$ be an ideal, 
$x \in H^{1}_{\cF_{\rm str}^{\fm}}(k,T_{K,n})$, and $\fq \mid \fm$. 
Then there are infinitely many primes $\fr \in \cP_{n}$ with $\fr \nmid \fm$ such that 
\begin{itemize}
\item $\varphi_{\sigma(\fq)}(x) \in R_{K,n} \cdot \varphi_{\fr}(x)$, and 
\item there is an element $z \in H^{1}_{\cF_{\rm str}^{\fq\fr}}(k,T_{K,n})$ with ${\rm div}_{\fq}(z) = 1$ and ${\rm div}_{\fr}(z) \in R_{K,n}^{\times}$. 
\end{itemize}
\end{lemma}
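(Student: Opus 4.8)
The plan is to produce the prime $\fr$ by a Chebotarev argument applied to a carefully chosen finite extension of $k$, exactly in the spirit of Lemma~\ref{lem:che} (which is \cite[Proposition~3.6.1]{MRkoly}). First I would fix the data: the ideal $\fm\in\cN_n$, the class $x\in H^{1}_{\cF_{\rm str}^{\fm}}(k,T_{K,n})$, and the prime $\fq\mid\fm$. The two conditions to be arranged for $\fr$ are (a) that the localization $\varphi_{\fr}(x)$ generates the same cyclic $R_{K,n}$-submodule of $R_{K,n}$ as $\varphi_{\sigma(\fq)}(x)$, equivalently that $\varphi_{\fr}(x)$ and $\varphi_{\sigma(\fq)}(x)$ are associates in the zero-dimensional Gorenstein local ring $R_{K,n}$, and (b) the existence of $z\in H^{1}_{\cF_{\rm str}^{\fq\fr}}(k,T_{K,n})$ whose images under ${\rm div}_{\fq}$ and ${\rm div}_{\fr}$ are respectively $1$ and a unit. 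I would handle (a) and (b) separately and then intersect the two (positive-density, hence infinite) sets of admissible $\fr$.

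For condition (a), the key is that the map $\fr\mapsto \varphi_{\fr}(x)$ can hit any prescribed value, up to the ambiguity coming from the Kummer-theoretic description of $H^1_{/f}$. Concretely, using Lemma~\ref{lem:tr}(iii) the homomorphism $\varphi_{\fr}$ evaluated on $x$ is computed from the image of $x$ in $H^1(G_{k_\fr},T_{K,n})/H^1_f$, which by the usual argument (e.g.\ \cite[Lemma~3.5]{MRkoly} or the localization computations in \cite{Kur14a}) is governed by evaluating a certain Galois-cohomology class on $\Frob_{\fr}$ inside an auxiliary $p^{n}$-ray-class-type extension adjoined to $\cH_{\infty}KL$. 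Chebotarev then lets me prescribe $\Frob_\fr$ on the relevant finite layer so that $\varphi_{\fr}(x)$ equals a unit multiple of $\varphi_{\sigma(\fq)}(x)$; here I use that $\varphi_{\sigma(\fq)}(x)$ lies in the principal ideal it generates and that $R_{K,n}$ is Gorenstein local so "same ideal" is the same as "unit multiple". This is precisely the mechanism Kurihara uses in \cite[Proposition~9.3]{Kur14a}, and I would cite that proof and adapt the notation.

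For condition (b), I need a class $z$ in the relaxed Selmer group $H^{1}_{\cF_{\rm str}^{\fq\fr}}(k,T_{K,n})$ with ${\rm div}_{\fq}(z)=1$ and ${\rm div}_{\fr}(z)\in R_{K,n}^{\times}$. The existence of \emph{some} $z$ with ${\rm div}_{\fq}(z)=1$ follows from Remark~\ref{rem:fs map} together with the surjectivity of $\bigoplus_{\fq\mid\fm}\varphi_{\fq}$ on a suitable $H^{1}_{\cF_{\rm str}^{\fm}}$ (freeness of rank $\nu$), after choosing $\fr$ so that $\fq\fr$ can be completed to an ideal $\fm'$ at which $H^1_{\cF_{\rm str}^{\fm'}}$ is free; then the unit condition on ${\rm div}_{\fr}(z)$ is again arranged by a Chebotarev choice of $\Frob_\fr$, using Lemma~\ref{lem:che}-type nonvanishing applied to the class $z$ (or rather to the class it is built from). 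The dual Selmer control — ensuring $H^{1}_{\cF_{\rm str}(\fm')^{*}}(k,(T/\fm T)^{\vee}(1))=0$ so that everything is free of the expected rank — comes from \cite[Proposition~14.3]{MRselmer} as invoked in Remark~\ref{rem:fs map}.

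The main obstacle I expect is the bookkeeping needed to make conditions (a) and (b) \emph{simultaneously} satisfiable by a single $\fr$: the Chebotarev conditions defining the admissible $\fr$ for (a) and for (b) must be shown to be compatible, i.e.\ they cut out a nonempty (hence positive-density) union of Frobenius conjugacy classes in the relevant Galois group. This requires checking that the finite extension used in (a) and the one used in (b) are linearly disjoint over the common base after removing the obstruction subfields, which is exactly the delicate linear-disjointness point and uses that $\chi\neq 1,\omega$ (so that $\omega\chi^{-1}$ is nontrivial), established in the proof of Proposition~\ref{prop:free-stark}. Modulo that compatibility check, which I would carry out following \cite[Proposition~9.3]{Kur14a} verbatim in structure, the lemma follows.
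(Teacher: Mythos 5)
The paper does not give a proof of this lemma: it is attributed to Kurihara and cited directly as \cite[Proposition~9.3]{Kur14a}, so there is no internal argument against which to measure your sketch. Your Chebotarev-density strategy is consistent with how such statements are established, and you correctly identify the supporting ingredients (the density argument in the style of Lemma~\ref{lem:che}, and the freeness plus dual-Selmer vanishing recorded in Remark~\ref{rem:fs map}). The one substantive point to flag is your treatment of condition (a): you assert that Chebotarev lets you choose ${\rm Frob}_{\fr}$ so that $\varphi_{\fr}(x)$ is a \emph{unit multiple} of $\varphi_{\sigma(\fq)}(x)$, but that is not generally achievable, since $\varphi_{\sigma(\fq)}(x)$ is one particular localized value and need not generate the $R_{K,n}$-submodule of achievable values of the function $\fr\mapsto\varphi_{\fr}(x)$. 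What the Chebotarev argument actually produces is a prime $\fr$ for which $\varphi_{\fr}(x)$ generates the ideal $I\subseteq R_{K,n}$ given by the image of the relevant restriction of $x$; since every $\varphi_{\fq'}(x)$ with $\fq'\in\cP_n$ lies in $I$, one gets $\varphi_{\sigma(\fq)}(x)\in I=R_{K,n}\cdot\varphi_{\fr}(x)$, which is exactly the stated containment --- no unit relation between $\varphi_{\fr}(x)$ and $\varphi_{\sigma(\fq)}(x)$ is needed or generally available. Your handling of condition (b) and of the simultaneous compatibility of the two Chebotarev conditions is left at the level of pointers to Kurihara's proof, which is defensible given that the paper itself only cites, rather than reproves, the result.
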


The following corollary follows from Lemmas~\ref{lem:key2} and \ref{lem:key3}. 

\begin{corollary}\label{cor:key}
Let $\sigma \in {\rm End}(\cP_{n_{i}})$ be a map and $\fn \in \cN_{n_{i}}$ an ideal with $\nu(\fn) = i$. 
Then there are infinitely many primes $\fr \in \cP_{n_{i}}$ such that 
\[
\delta^{\sigma}_{n,\fn} \in R_{K,n} \cdot \varphi_{\fr}(\kappa_{n,\fn/\fq,\fr}^{\sigma}) + I_{i-1}(\epsilon_{K,n}^{\rm DR}). 
\]
\end{corollary}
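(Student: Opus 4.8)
The plan is to combine the two input lemmas in the obvious way, with the identity~\eqref{rel1} relating $\delta^\sigma_{n,\fn}$ to the Stark system used to bridge the gap. First I would fix $\sigma \in {\rm End}(\cP_{n_i})$ and $\fn \in \cN_{n_i}$ with $\nu(\fn)=i$, and pick a prime divisor $\fq \mid \fn$. Set $\fm := \fn$ and take $x := \kappa^{\sigma}_{n_i,\fn/\fq,\fq} \in H^1_{\cF_{\rm str}^{\fn}}(k,T_{K,n_i})$, viewed (via reduction) in $H^1_{\cF_{\rm str}^{\fn}}(k,T_{K,n})$; note $\fq \mid \fn$, so this class lives in the $\fn$-relaxed Selmer group and Lemma~\ref{lem:key3} applies to it with the chosen~$\fq$. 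Lemma~\ref{lem:key3} then produces infinitely many primes $\fr \in \cP_{n_i}$, $\fr \nmid \fn$, such that $\varphi_{\sigma(\fq)}(x) \in R_{K,n}\cdot \varphi_{\fr}(x)$ and such that there is $z \in H^1_{\cF_{\rm str}^{\fq\fr}}(k,T_{K,n_i})$ with ${\rm div}_{\fq}(z)=1$ and ${\rm div}_{\fr}(z) \in R_{K,n}^{\times}$.

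Next I would invoke Lemma~\ref{lem:key2} with this $\fr$, $z$, and the ideal $\fn/\fq$ (which has $\nu(\fn/\fq)=i-1$, matching the hypothesis of that lemma), taking $f$ to be any homomorphism $H^1(G_k,T_{K,n})\to R_{K,n}$; concretely I will use $f = \varphi_{\sigma(\fq)}$. The ``in particular'' clause of Lemma~\ref{lem:key2} gives
\[
\delta^{\sigma}_{n,\fn} = \delta^{\sigma}_{n,(\fn/\fq)\fq} \equiv {\rm div}_{\fr}(z)\,\varphi_{\sigma(\fq)}(\kappa^{\sigma}_{n,\fn/\fq,\fr}) \pmod{I_{i-1}(\epsilon^{\rm DR}_{K,n})}.
\]
Since ${\rm div}_{\fr}(z) \in R_{K,n}^{\times}$, the right-hand side lies in $R_{K,n}\cdot \varphi_{\fr}(\kappa^{\sigma}_{n,\fn/\fq,\fr})$, and hence $\delta^{\sigma}_{n,\fn} \in R_{K,n}\cdot\varphi_{\fr}(\kappa^{\sigma}_{n,\fn/\fq,\fr}) + I_{i-1}(\epsilon^{\rm DR}_{K,n})$, which is exactly the claimed containment. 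Because Lemma~\ref{lem:key3} yields infinitely many such $\fr$, the corollary follows.

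The main obstacle I anticipate is purely bookkeeping rather than conceptual: making sure the level reductions in $n$ are consistent. Lemma~\ref{lem:key2} is stated for a homomorphism out of $H^1(G_k,T_{K,n})$ (level $n = n_0$), while the classes $\kappa^{\sigma}_{n_i,-,-}$ and the element $z$ are constructed at level $n_i$; one has to push everything down to level $n$ via the compatible identifications ${\rm div}_{\fq}$ and $\varphi_{\fq}$ (noted after Lemma~\ref{lem:tr} to be compatible with $n$) and check that the hypothesis ``${\rm div}_{\fq}(z)=1$ at level $n_i$'' descends to level $n$, which it does since the reduction maps $R_{K,n_i}\to R_{K,n}$ send $1$ to $1$ and units to units. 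Once this compatibility is in place, the two lemmas slot together directly and no further computation is needed.
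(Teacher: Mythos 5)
There is a genuine gap in the final step. After invoking the ``in particular'' clause of Lemma~\ref{lem:key2} you have
\[
\delta^{\sigma}_{n,\fn} \equiv {\rm div}_{\fr}(z)\,\varphi_{\sigma(\fq)}(\kappa^{\sigma}_{n,\fn/\fq,\fr}) \pmod{I_{i-1}(\epsilon^{\rm DR}_{K,n})},
\]
and then you assert that, because ${\rm div}_{\fr}(z)\in R_{K,n}^{\times}$, the right-hand side lies in $R_{K,n}\cdot\varphi_{\fr}(\kappa^{\sigma}_{n,\fn/\fq,\fr})$. But the unit ${\rm div}_{\fr}(z)$ only absorbs a scalar; it does not convert $\varphi_{\sigma(\fq)}$ into $\varphi_{\fr}$. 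What you would actually need at this point is $\varphi_{\sigma(\fq)}(\kappa^{\sigma}_{n,\fn/\fq,\fr}) \in R_{K,n}\cdot\varphi_{\fr}(\kappa^{\sigma}_{n,\fn/\fq,\fr})$, i.e.\ the first bullet of Lemma~\ref{lem:key3} applied to the class $x = \kappa^{\sigma}_{n,\fn/\fq,\fr}$ — but this class depends on $\fr$, which Lemma~\ref{lem:key3} is in the process of choosing, so that application is circular. Notice also that you set up the first bullet of Lemma~\ref{lem:key3} with $x = \kappa^{\sigma}_{n,\fn/\fq,\fq}$ but then never actually use it; only the second bullet (the existence of $z$) enters your chain of implications.

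The repair is to choose $f$ differently in Lemma~\ref{lem:key2}: apply it with $f = \varphi_{\fr}$ rather than $f = \varphi_{\sigma(\fq)}$. With the ideal $\fn/\fq$ this gives
\[
\varphi_{\fr}(\kappa^{\sigma}_{n,\fn/\fq,\fq}) \equiv -{\rm div}_{\fr}(z)\,\varphi_{\fr}(\kappa^{\sigma}_{n,\fn/\fq,\fr}) \pmod{I_{i-1}(\epsilon^{\rm DR}_{K,n})}.
\]
Now the first bullet of Lemma~\ref{lem:key3} for $x = \kappa^{\sigma}_{n,\fn/\fq,\fq}$, combined with Proposition~\ref{prop:coh rel}~(iv) (which gives $\varphi_{\sigma(\fq)}(\kappa^{\sigma}_{n,\fn/\fq,\fq}) = -\delta^{\sigma}_{n,\fn}$), yields $\delta^{\sigma}_{n,\fn} \in R_{K,n}\cdot\varphi_{\fr}(\kappa^{\sigma}_{n,\fn/\fq,\fq})$. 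Multiplying the displayed congruence by the appropriate element of $R_{K,n}$ and using that $I_{i-1}(\epsilon^{\rm DR}_{K,n})$ is an ideal then gives $\delta^{\sigma}_{n,\fn} \in R_{K,n}\cdot\varphi_{\fr}(\kappa^{\sigma}_{n,\fn/\fq,\fr}) + I_{i-1}(\epsilon^{\rm DR}_{K,n})$, as required. In short: you must use the general form of Lemma~\ref{lem:key2} with $f = \varphi_{\fr}$ together with \emph{both} bullets of Lemma~\ref{lem:key3}, not just the ``in particular'' clause of Lemma~\ref{lem:key2} and the existence of~$z$.
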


\begin{proof}[Proof of Theorem~\ref{thm:main4}]
%Let us show $I_{i}(\epsilon^{\rm DR}_{K,n}) = \Theta_{K,n}^{i} + I_{i-1}(\epsilon_{K,n}^{\rm DR})$. 
%By Remark~\ref{rem:fs map}(ii) and \eqref{rel1}, it suffices to show that 
Let $\fn \in \cN_{n_{i}}$ be an ideal satisfying $\nu(\fn) = i$. 
Take a map $\sigma \in {\rm End}(\cP_{n_{i}})$. 
Let us prove 
\[
\delta_{n,\fn}^{\sigma} \in \Theta_{K,n}^{i} + I_{i-1}(\epsilon_{K,n}^{\rm DR})
\]
by induction on $d := i - \#\{ \fq \mid  \textrm{$\fq$ is a prime divisor of $\fn$ and $\sigma(\fq) = \fq$} \}$.

If $d=0$, then the claim follows from Lemma~\ref{lem:key0}. 
Suppose that $d>0$. 
Take a prime divisor $\fq$ of $\fn$ satisfying $\sigma(\fq) \neq \fq$. 
By Corollary~\ref{cor:key}, there is a prime $\fr \in \cP_{n_{i}}$ such that 
\[
\delta_{n,\fn}^{\sigma} \in R_{K,n} \cdot \varphi_{\fr}(\kappa_{n,\fn/\fq,\fr}^{\sigma}) + I_{i-1}(\epsilon_{K,n}^{\rm DR}).  
\]
We define a map $\sigma'  \in {\rm End}(\cP_{n_{i}})$ to be  
$\sigma'(\fr) = \fr$ and $\sigma'(\fs) = \sigma(\fs)$ for $\fs \in \cP_{n_{i}} \setminus \{\fr\}$.  
Then  Proposition~\ref{prop:coh rel}~(iv) shows 
\[
\delta_{n,\fn}^{\sigma} \in R_{K,n} \cdot \delta_{n,\fn\fr/\fq}^{\sigma'} + I_{i-1}(\epsilon_{K,n}^{\rm DR}).  
\]
Since $\sigma(\fq) \neq \fq$ and $\sigma'(\fr) = \fr$, we obtain 
\[
i - \#\{ \fs \mid  \textrm{$\fs$ is a prime divisor of $\fn\fr/\fq$ and $\sigma'(\fs) = \fs$} \} 
= d-1. 
\]
Hence applying the induction hypothesis for $\sigma'$ and $\fn\fr/\fq$, 
we have  $\delta_{n,\fn\fr/\fq}^{\sigma'} \in \Theta_{K,n}^{i} + I_{i-1}(\epsilon_{K,n}^{\rm DR})$ and hence 
$\delta_{n,\fn}^{\sigma} \in \Theta_{K,n}^{i} + I_{i-1}(\epsilon_{K,n}^{\rm DR})$. 

In particular, Lemma~\ref{lem:key0} shows $I_{i}(\epsilon^{\rm DR}_{K,n}) = \Theta_{K,n}^{i} + I_{i-1}(\epsilon_{K,n}^{\rm DR})$ for any integer $i>0$.  
Hence Proposition~\ref{prop:main3} implies  
\[
\Fitt_{\Lambda_{K}}^{i}(X_{K}^{\chi}) = I_{i}(\epsilon^{\rm DR}_{K}) = \Theta_{K,\infty}^{i} + I_{i-1}(\epsilon_{K}^{\rm DR}) = 
\Theta_{K,\infty}^{i} + \Fitt_{\Lambda_{K}}^{i-1}(X_{K}^{\chi}). 
\]
Since $\Fitt_{\Lambda_{K}}^{0}(X_{K}^{\chi}) \subseteq \Theta_{K,\infty}^{1}$ by Remark~\ref{rem:fitt}(ii), 
we conclude that $\Fitt_{\Lambda_{K}}^{i}(X_{K}^{\chi})  = \Theta_{K,\infty}^{i}$ for any  integer $i>0$. 
\end{proof}

\begin{appendix}
%\appendixpage
%\noappendicestocpagenum
%\addappheadtotoc

\section{Notes on Galois cohomology}

\subsection{Remarks on continuous cochain complexes}

The contents of this subsection are based on \cite[Appendix~A]{BS}. 

Let $R$ be a complete noetherian local ring  with maximal ideal $\fm$, and $k$ a number field. 
Fix a finite set  $S$ of places of $k$ satisfying $S_{\infty}(k) \cup S_{p}(k) \subseteq S$. 
Let $T$ be a free $R$-module of finite rank on which $G_{k,S}$ acts continuously; namely, the composite map  
\[
G_{k,S} \longrightarrow R[G_{k,S}] \longrightarrow {\rm End}_{R}(T)
\]
is continuous. 
Let $G$ be one of the profinite groups in $\{G_{k_{\fp}} \mid \fp \in S_{p}(k)\} \cup \{G_{k,S}\}$. 
Note that the pair $(T,G)$ satisfies \cite[Hypothesis~A]{Pot13}. 

\begin{definition}
Let $A$ be a noetherian ring and $M^{\bullet}$ a complex of $A$-modules. 
\begin{itemize}
\item[(i)] We say that $M^{\bullet}$ is a perfect complex if $M^{\bullet}$ is quasi-isomorphic to a bounded complex $P^{\bullet}$ of projective $A$-modules of finite type. 
Furthermore, we say that $M^{\bullet}$ has perfect amplitude contained in $[a,b]$ if $P^{i} = 0$ for every $i < a$ and $i > b$.  
\item[(ii)] We write $D_{\rm perf}^{[a,b]}(A)$ for the full subcategory consisting of perfect complexes having perfect amplitude contained in $[a,b]$ in the derived category of the abelian category of $A$-modules. 
\end{itemize}
\end{definition}

\begin{theorem}[{\cite[Theorem~1.4~(1) and Corollary~1.2]{Pot13}}]\label{thm:pot}\ 
\begin{itemize}
\item[(i)] For an ideal $I$ of $R$, we have the canonical isomorphism 
\[
{\bf R}\Gamma(G,T) \otimes^{\bL}_{R}R/I \stackrel{\sim}{\longrightarrow} {\bf R}\Gamma(G,T/IT). 
\]
\item[(ii)] We have  ${\bf R}\Gamma(G,T) \in D_{\rm perf}^{[0,2]}(R)$. 
\end{itemize}
\end{theorem}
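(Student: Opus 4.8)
The plan is to prove the base-change statement~(i) first and to deduce the perfectness statement~(ii) from it, following the logical order of \cite{Pot13}. Write $C^{\bullet}(G,-)$ for the complex of continuous inhomogeneous cochains and $\bF:=R/\fm$. A point used repeatedly is that $R$ has \emph{finite} residue field, so that $R=\varprojlim_{n}R/\fm^{n}$ is a profinite ring and every finitely generated $R$-module is profinite (hence compact) for its $\fm$-adic topology.

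First I would pass to finite coefficients. Since $T\cong\varprojlim_{n}T/\fm^{n}T$ and $G$ is profinite, the natural map $C^{\bullet}(G,T)\to\varprojlim_{n}C^{\bullet}(G,T/\fm^{n}T)$ is an isomorphism of complexes, and the transition maps on cochain groups are surjective (a continuous cochain into the finite module $T/\fm^{n}T$ factors through a finite quotient of $G$, hence lifts through $T/\fm^{n+1}T\to T/\fm^{n}T$). Therefore the relevant $\varprojlim^{1}$ terms vanish and ${\bf R}\Gamma(G,T)\cong\mathrm{R}\varprojlim_{n}{\bf R}\Gamma(G,T/\fm^{n}T)$; in particular ${\bf R}\Gamma(G,T)$ is derived $\fm$-complete, being an $\mathrm{R}\varprojlim$ of complexes annihilated by powers of $\fm$. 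Each ${\bf R}\Gamma(G,T/\fm^{n}T)$ has finite cohomology concentrated in degrees $0,1,2$, by the classical finiteness of Galois cohomology of finite modules of $p$-power order together with the bound $\mathrm{cd}_{p}(G)\le 2$ (valid since $p$ is odd and $G$ is either $G_{k,S}$ with $S\supseteq S_{p}(k)\cup S_{\infty}(k)$ or a local Galois group $G_{k_{\fp}}$); this finiteness is part of the running \cite[Hypothesis~A]{Pot13}.

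For~(i), fix an ideal $I\subseteq R$ and choose a resolution $F_{\bullet}\to R/I$ by finite free $R$-modules. Because $\mathrm{Map}_{\mathrm{cont}}(G^{j},-)$ commutes with finite direct sums, $C^{\bullet}(G,T)\otimes_{R}F_{\bullet}=C^{\bullet}(G,T\otimes_{R}F_{\bullet})$ as double complexes, and its totalisation represents ${\bf R}\Gamma(G,T)\otimes^{\bL}_{R}R/I$. Since $T$ is $R$-free, $T\otimes_{R}F_{\bullet}\to T\otimes_{R}R/I=T/IT$ is a quasi-isomorphism of bounded-above complexes of profinite $R[G]$-modules; applying $C^{\bullet}(G,-)$, which is exact on the strict short exact sequences of such modules that arise, identifies the totalisation with $C^{\bullet}(G,T/IT)={\bf R}\Gamma(G,T/IT)$, and a check of functoriality shows the comparison is the canonical one. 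Specialising to $I=\fm$ gives ${\bf R}\Gamma(G,T)\otimes^{\bL}_{R}\bF\simeq{\bf R}\Gamma(G,T/\fm T)$, whose cohomology is finite-dimensional over $\bF$ and concentrated in degrees $0,1,2$ by the previous paragraph. Combined with derived $\fm$-completeness, the derived Nakayama lemma over the Noetherian local ring $R$ then forces ${\bf R}\Gamma(G,T)$ to be a perfect complex with amplitude contained in $[0,2]$, i.e.\ an object of $D_{\rm perf}^{[0,2]}(R)$; this is~(ii).

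The formal part of the argument — the $\varprojlim^{1}$-vanishing, the manipulation with the resolution $F_{\bullet}$, and the appeal to derived Nakayama — is routine. The real obstacle is topological: everything must be carried out for continuous cochains of the profinite group $G$ valued in the \emph{non-discrete}, $\fm$-adically topologised modules $T$, $T/IT$ and $T\otimes_{R}F_{j}$, and the exactness of $\mathrm{Map}_{\mathrm{cont}}(G^{j},-)$ invoked above rests on two facts specific to this setting: a surjection of finitely generated $R$-modules admits a continuous set-theoretic section (it is an inverse limit of surjections of finite modules, and these can be chosen compatibly by an inductive lifting argument, using that $R$ is profinite), and the subspace topology on a submodule coincides with its $\fm$-adic topology (Artin--Rees). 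Formalising the appropriate category of topological $R[G]$-modules in which all of this is legitimate — and checking that $T$, $T/IT$, $T\otimes_{R}F_{\bullet}$ and their subquotients belong to it — is the technical heart of the matter; I would import it from \cite[Appendix~A]{BS}, on which the excerpt's appendix is explicitly modelled, rather than reprove it.
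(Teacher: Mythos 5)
The paper does not actually supply a proof of Theorem~\ref{thm:pot}: it is quoted directly from \cite[Theorem~1.4(1) and Corollary~1.2]{Pot13}, and the only remark made in the paper is that the pair $(T,G)$ satisfies Pottharst's running Hypothesis~A. So there is nothing in this paper for your argument to diverge from.

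That said, your sketch tracks the cited proof faithfully in outline: reduce to finite coefficients via $T=\varprojlim_n T/\fm^n T$ together with surjectivity of the cochain-level transition maps (so $\varprojlim^1$ collapses); establish the derived base-change in~(i) by tensoring the continuous-cochain complex with a finite free resolution of $R/I$ and using strict exactness of $\mathrm{Map}_{\mathrm{cont}}(G^j,-)$ on the relevant sequences, which is where the continuous-section and Artin--Rees points live and which you rightly flag as the technical core to be imported rather than re-proved; deduce~(ii) from derived $\fm$-completeness of ${\bf R}\Gamma(G,T)$ together with boundedness and finiteness of ${\bf R}\Gamma(G,T/\fm T)$, via derived Nakayama. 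Two minor remarks. Surjectivity of $\mathrm{Map}_{\mathrm{cont}}(G^j,T/\fm^{n+1}T)\to\mathrm{Map}_{\mathrm{cont}}(G^j,T/\fm^n T)$ is immediate from compactness of $G^j$ and discreteness of the finite targets: a continuous map to $T/\fm^n T$ factors through a finite quotient of $G^j$, and one lifts by any set-theoretic section of the finite surjection of modules; the phrase ``inductive lifting, using that $R$ is profinite'' makes this sound harder than it is. And the passage from derived completeness plus perfectness of the special fibre to perfectness of ${\bf R}\Gamma(G,T)$ is standard but deserves a precise citation; an equally common route, which avoids derived completeness, is to show directly that the $H^i(G,T)$ are finitely generated $R$-modules vanishing outside $[0,2]$ (using $\varprojlim$ over finite layers and the Mittag--Leffler vanishing) and then conclude perfect amplitude $[0,2]$ from boundedness of the fibre ${\bf R}\Gamma(G,T)\otimes^{\bL}_R R/\fm$. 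Either formalisation is legitimate and consistent with \cite{Pot13}.
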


\begin{corollary}[{\cite[Corollary~A.5]{BS}}]\label{cor:amp}
If $H^{0}(G,T/\fm T)=0$, then we have   ${\bf R}\Gamma(G,T) \in D_{\rm perf}^{[1,2]}(R)$. 
\end{corollary}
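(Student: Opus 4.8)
The plan is to start from the conclusion of Theorem~\ref{thm:pot}~(ii), which already gives ${\bf R}\Gamma(G,T)\in D_{\rm perf}^{[0,2]}(R)$, and to kill the degree-$0$ term using the hypothesis $H^{0}(G,T/\fm T)=0$. First I would fix a representative $P^{\bullet}=[P^{0}\xrightarrow{d^{0}}P^{1}\xrightarrow{d^{1}}P^{2}]$ of ${\bf R}\Gamma(G,T)$ consisting of finitely generated projective — hence, since $R$ is local, finitely generated free — $R$-modules placed in degrees $0,1,2$.

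Next, I would apply Theorem~\ref{thm:pot}~(i) with the ideal $I=\fm$. Since $P^{\bullet}$ is a bounded complex of free modules, $P^{\bullet}\otimes^{\bL}_{R}R/\fm=P^{\bullet}\otimes_{R}R/\fm$, and the theorem identifies this with ${\bf R}\Gamma(G,T/\fm T)$. Taking $H^{0}$ and using that $P^{\bullet}$ is concentrated in degrees $\geq 0$, this yields $\ker\!\big(d^{0}\otimes R/\fm\colon P^{0}/\fm P^{0}\to P^{1}/\fm P^{1}\big)=H^{0}(G,T/\fm T)=0$; so $d^{0}$ is injective modulo $\fm$.

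The key step is then to upgrade this to the statement that $d^{0}\colon P^{0}\to P^{1}$ is a split monomorphism with finitely generated free cokernel. For this I would pick a basis $e_{1},\dots,e_{n}$ of $P^{0}$; by the previous step the classes of $d^{0}(e_{1}),\dots,d^{0}(e_{n})$ are linearly independent in the $R/\fm$-vector space $P^{1}/\fm P^{1}$, so they extend to a basis of it, and lifting the extra basis vectors arbitrarily produces a family of elements of $P^{1}$, of cardinality the rank of $P^{1}$, which generates $P^{1}$ modulo $\fm$. By Nakayama such a family generates $P^{1}$, and having cardinality equal to the rank it is in fact an $R$-basis (its transition matrix relative to a fixed basis reduces to an invertible matrix over $R/\fm$, hence has unit determinant). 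Consequently $P^{1}=d^{0}(P^{0})\oplus P'$ with $P'$ free and $d^{0}\colon P^{0}\xrightarrow{\ \sim\ }d^{0}(P^{0})$.

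Finally I would observe that $D^{\bullet}:=[P^{0}\xrightarrow{d^{0}}d^{0}(P^{0})\to 0]$, placed in degrees $0,1,2$, is an acyclic subcomplex of $P^{\bullet}$, so the quotient map $P^{\bullet}\to P^{\bullet}/D^{\bullet}$ is a quasi-isomorphism. But $P^{\bullet}/D^{\bullet}=[\,0\to P^{1}/d^{0}(P^{0})\xrightarrow{\overline{d^{1}}}P^{2}\,]$ is a complex of finitely generated free modules in degrees $[1,2]$, using that $P^{1}/d^{0}(P^{0})\cong P'$ is free. Therefore ${\bf R}\Gamma(G,T)\cong P^{\bullet}/D^{\bullet}\in D_{\rm perf}^{[1,2]}(R)$. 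I expect the only point requiring any care to be the splitting argument in the third paragraph — the standard fact that a map between finitely generated free modules over a noetherian local ring that is injective modulo the maximal ideal is a split monomorphism with free cokernel; the rest is formal manipulation of complexes.
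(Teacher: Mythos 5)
Your proof is correct. It starts from the same two ingredients as the paper (Theorem~\ref{thm:pot}~(ii) for a representative $P^{\bullet}$ in degrees $[0,2]$, and Theorem~\ref{thm:pot}~(i) with $I=\fm$ to identify $H^{0}$ of the reduction mod $\fm$ with $H^{0}(G,T/\fm T)=0$), but the way you finish is different. The paper first proves that $d^{0}$ is injective over $R$ itself, via a separate argument: from $H^{0}(G,T/\fm T)=0$ it deduces $(\fm^{i}/\fm^{i+1}\otimes_{R}T)^{G}=0$, runs an induction along $0\to\fm^{i}/\fm^{i+1}\to R/\fm^{i+1}\to R/\fm^{i}\to 0$ to get $(T/\fm^{i}T)^{G}=0$ for all $i$, and passes to the inverse limit to get $T^{G}=0$; it then shows the cokernel $X=\operatorname{coker}(d^{0})$ is projective by checking $\operatorname{Tor}_{1}^{R}(X,R/\fm)=0$ (the local criterion for flatness), which again comes down to $H^{0}(G,T/\fm T)=0$. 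You instead invoke the elementary local-ring fact that a map of finitely generated free modules which is injective modulo $\fm$ is a split monomorphism with free cokernel, which simultaneously gives the injectivity of $d^{0}$ and the freeness of the cokernel in one stroke and makes the filtration/inverse-limit step unnecessary. So your argument is a somewhat more streamlined version of the same idea: the paper's two steps (injectivity via a filtration, projectivity via $\operatorname{Tor}_{1}$) are collapsed into your single splitting lemma, with no loss of rigor.
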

\begin{proof}
By Theorem~\ref{thm:pot}(ii), there is a complex 
\[
P^{\bullet} = [ \ \cdots \longrightarrow 0 \longrightarrow P^{0} \longrightarrow P^{1} \longrightarrow P^{2} \longrightarrow 0 \longrightarrow \cdots \ ]
\] 
of finitely generated free $R$-modules such that $P^{\bullet}$ is quasi-isomorphic to ${\bf R}\Gamma(G,T)$. 
The vanishing of $H^{0}(G,T/\fm T)$ implies $(\fm^{i}/\fm^{i+1} \otimes_{R} T)^{G} = 0$. 
Hence the exact sequence 
\[
0 \longrightarrow \fm^{i}/\fm^{i+1} \longrightarrow R/\fm^{i+1} \longrightarrow R/\fm^{i} \longrightarrow 0
\]
shows $(T/\fm^{i} T)^{G} = 0$ for any integer $i \geq 1$ and 
hence $H^{0}(P^{\bullet}) = T^{G} = 0$, which means that  the map $P^{0} \longrightarrow P^{1}$ is injective. 
Thus it suffices to show that the cokernel 
\[
X := {\rm coker}\left(P^{0} \longrightarrow P^{1}\right)
\]
 is a projective $R$-module. 
Since $R$ is a noetherian local ring, we only need to check 
\[
{\rm Tor}_{1}^{R}(X,R/\fm) = 0
\] 
by the local criterion for flatness. 
Since $P^{1}$ is a flat $R$-module, we have 
\begin{align*}
{\rm Tor}_{1}^{R}(X,R/\fm) &= \ker \left(P^{0}\otimes_{R} R/\fm \longrightarrow P^{1}\otimes_{R} R/\fm\right) 
\\
&=  H^{0}({\bf R}\Gamma(G,T) \otimes^{\bL}_{R}R/\fm). 
\end{align*}
By Theorem~\ref{thm:pot}(i), 
we conclude 
\[
{\rm Tor}_{1}^{R}(X,R/\fm) = H^{0}({\bf R}\Gamma(G,T/\fm T)) = (T/\fm T)^{G} = 0. 
\]
\end{proof}

\begin{corollary}[{\cite[Corollary~A.7]{BS}}]\label{cor:H2vanish}
If  $H^{0}(G,T/\fm T) = H^{2}(G,T/\fm T) = 0$, 
then the $R$-module $H^{1}(G,T)$ is free and, 
for any ideal $J$ of $R$, the canonical homomorphism 
\[
H^{1}(G,T) \otimes_{R} R/J \longrightarrow H^{1}(G,T/JT)
\]
 is an isomorphism.  
\end{corollary}
\begin{proof}
Since $H^{2}(G,T/\fm T) = 0$, by Theorem~\ref{thm:pot}(ii), the module $H^{2}(G,T/JT)$ also vanishes for any ideal $J$ of $R$, and hence 
${\bf R}\Gamma(G,T/JT) \in D_{\rm parf}^{[1,1]}(R/J)$ by Corollary~\ref{cor:amp}.   
In particular, $H^{1}(G,T)$ is free. The second assertion follows from Theorem~\ref{thm:pot}(i). 
\end{proof}

\subsection{Selmer structures}\label{subsec:selmer-str}

In this subsection, we recall the definition of Selmer structures. The contents of this subsection are based on \cite{MRkoly}. 
In this subsection, we assume  that the residue field of $R$ is finite. 

\begin{definition}\label{selmer structure}
A Selmer structure $\cF$ on $T$ is a collection of the following data:
\begin{itemize}
\item a finite set $S(\cF)$ of places of $k$, including the set $S$, 
\item a choice of $R$-submodule $H^{1}_{\cF}(G_{k_{\fq}}, T) \subseteq H^{1}(G_{k_{\fq}}, T)$ for each $\fq \in S(\cF)$. 
\end{itemize}
Put 
\[
H^{1}_{\cF}(G_{k_{\fq}}, T) := H^{1}_{f}(G_{k_{\fq}}, T) := \ker \left( H^{1}(G_{k_{\fq}}, T) \longrightarrow H^{1}(\cI_\fq, T) \right) 
\] 
for each prime $\fq \not\in S(\cF)$. 
Here $k_{\fq}^{\rm ur}$ denotes the maximal unramified extension of $k_{\fq}$ and $\cI_{\fq} := \Gal(\overline{k}_{\fq}/k_{\fq}^{\rm ur})$. 
We call $H^{1}_{\cF}(G_{k_{\fq}}, T)$ the local condition of $\cF$ at a prime $\fq$ of $k$. 
\end{definition}

\begin{remark}
Since $p$ is an odd prime, we have $H^1(k_v, T) = 0$ for any infinite place $v$ of $k$. 
Thus we ignore local condition at any infinite place of $k$.  
\end{remark}

\begin{example}\label{exa:selmer-str}
Suppose that $R$ is $p$-torsion-free. 
\begin{itemize}
\item[(i)] We define a canonical Selmer structure $\cF_{\rm can}$ on $T$ by the following data:
\begin{itemize}
\item $S(\cF_{\rm can}) := S$,  
\item we define a local condition at a prime $\fq \nmid p$ by  
\begin{align*}
H^{1}_{\cF_{\rm can}}(G_{k_{\fq}}, T) := \ker \left( H^{1}(G_{k_{\fq}}, T) \longrightarrow H^{1}(\cI_\fq, T \otimes_{\bZ_p} \bQ_p) \right),  
\end{align*}
\item we define a local condition at a prime $\fp \mid p$ by  
\[
H^{1}_{\cF_{\rm can}}(G_{k_{\fq}}, T) := H^{1}(G_{k_{\fq}}, T).
\] 
\end{itemize}

\item[(ii)] We define a strict Selmer structure $\cF_{\rm str}$ on $T$ by the following data:
\begin{itemize}
\item $S(\cF_{\rm str}) := S$,  
\item we define a local condition at a prime $\fq \nmid p$ by  
\begin{align*}
H^{1}_{\cF_{\rm str}}(G_{k_{\fq}}, T) := \ker \left( H^{1}(G_{k_{\fq}}, T) \longrightarrow H^{1}(\cI_\fq, T \otimes_{\bZ_p} \bQ_p) \right),  
\end{align*}
\item we define a local condition at a prime $\fp \mid p$ by  
\[
H^{1}_{\cF_{\rm str}}(G_{k_{\fq}}, T) := 0. 
\] 
\end{itemize}

\item[(iii)] We define a Selmer structure $\cF_{S}$ on $T$ by the following data:
\begin{itemize}
\item $S(\cF_{S}) := S$,  
\item we define a local condition at a prime $\fq \in S$ by  
\begin{align*}
H^{1}_{\cF_{S}}(G_{k_{\fq}}, T) := 0. 
\end{align*}
\end{itemize}

\end{itemize}
\end{example}

\begin{remark}
Let $\cF$ be a Selmer structure on $T$ and $R \longrightarrow R'$ a surjective ring homomorphism. 
Then $\cF$ induces a Selmer structure on the $R'$-module $T \otimes_R R'$, that we will denote by $\cF_{R'}$. 
If there is no risk of confusion, then we write $\cF$ instead of $\cF_{R'}$. 
\end{remark}

\begin{definition}
Let $\cF$ be a Selmer structure on $T$. 
Set
\begin{align*}
H^1_{/\cF}(G_{k_{\fq}}, T) := H^1(G_{k_{\fq}}, T)/H^1_{\cF}(G_{k_{\fq}}, T)
\end{align*} 
for any prime $\fq$ of $k$. 
We define the Selmer group $H_{\cF}^{1}(k, T) \subseteq H^{1}(G_{k,S(\cF)}, T)$ associated with $\cF$ 
to be the kernel of the direct sum of localization maps: 
\begin{align*}
H_{\cF}^{1}(k,T) := \ker \left( H^{1}(G_{k, S(\cF)},T) \longrightarrow \bigoplus_{\fq \in S(\cF)} 
H^{1}_{/\cF}(G_{k_{\fq}}, T) \right), 
\end{align*}
where $\fq$ runs through all the primes of $k$. 
\end{definition}

For a topological $\bZ_{p}$-module $M$, let $M^{\vee} := \Hom_{\rm cont}(M, \bQ_{p}/\bZ_{p})$ denote the Pontryagin duality of $M$. 
For any integer $n > 0$, let $\mu_{p^{n}}$ denote the group of all $p^{n}$-th roots of unity. 
The Cartier dual of $T$ is defined by 
\[
T^{\vee}(1) := T^{\vee} \otimes_{\bZ_{p}}\varprojlim_{n}\mu_{p^{n}} = \Hom_{\rm cont}(T, \bQ_{p}/\bZ_{p}) \otimes_{\bZ_{p}}\varprojlim_{n}\mu_{p^{n}}. 
\] 
Then we have the local Tate paring
\begin{align*}
\langle \ ,\  \rangle_{\fq} \colon H^{1}(G_{k_{\fq}}, T) \times H^{1}(G_{k_{\fq}}, T^{\vee}(1)) \longrightarrow \bQ_{p}/\bZ_{p} 
\end{align*}
for each prime $\fq$ of $k$. 

\begin{definition}\label{def:dual-sel}
Let $\cF$ be a Selmer structure on $T$. 
Put 
\begin{align*}
H^{1}_{\cF^{*}}(G_{k_{\fq}},  T^{\vee}(1)) 
:= \{ x \in H^{1}(G_{k_{\fq}}, T^{\vee}(1)) \mid \langle y ,x  \rangle_{\fq} = 0 
\text{ for any } y \in H^{1}_{\cF}(G_{k_{\fq}}, T)\}. 
\end{align*}
In this manner, the Selmer structure $\cF$ on $T$ gives rise to a Selmer structure $\cF^*$ on $T^{\vee}(1)$. 
Hence we obtain the dual Selmer group $H^{1}_{\cF^{*}}(k, T^{\vee}(1))$ associated with $\cF$; 
\begin{align*}
H^{1}_{\cF^{*}}(k, T^{\vee}(1)) := \ker \left( H^{1}(G_{k,S(\cF)}, T^{\vee}(1)) 
\longrightarrow \bigoplus_{\fq \in S(\cF)} H^{1}_{/\cF^*}(G_{k_{\fq}}, T^{\vee}(1)) \right).  
\end{align*}
\end{definition}

In this paper, we refer to the following theorem as global duality. 

\begin{theorem}[{\cite[Theorem~2.3.4]{MRkoly}}]\label{pt}
Let $\cF_{1}$ and $\cF_2$ be Selmer structures on $T$ satisfying 
$H^{1}_{\cF_{1}}(k_{\fq}, T) \subseteq H^{1}_{\cF_{2}}(k_{\fq}, T)$ for any prime $\fq$ of $k$, 

Then we have  the following exact sequence: 
\begin{align*}
0 \longrightarrow H^{1}_{\cF_{1}}(k, T) \longrightarrow H^{1}_{\cF_{2}}(k, T) 
\longrightarrow \bigoplus_{\fq} H^{1}_{\cF_{2}}(G_{k_{\fq}}, T)/H^{1}_{\cF_{1}}(G_{k_{\fq}}, T) 
\\
\longrightarrow H^{1}_{\cF_{1}^{*}}(k, T^{\vee}(1))^\vee \longrightarrow H^{1}_{\cF_{2}^{*}}(k, T^{\vee}(1))^\vee \longrightarrow 0, 
\end{align*}
where $\fq$ runs through all the primes of $k$ which satisfies 
$H_{\cF_{1}}^{1}(G_{k_{\fq}}, T) \neq H^{1}_{\cF_{2}}(G_{k_{\fq}}, T)$. 
\end{theorem}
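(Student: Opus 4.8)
The plan is to deduce this from the nine-term Poitou--Tate exact sequence together with local Tate duality, by a chase in the category of profinite (and discrete torsion) abelian groups; this is the argument of Mazur--Rubin. First I would reduce to the case $S(\cF_{1}) = S(\cF_{2}) = S$: for $\fq \notin S(\cF_{i})$ the local condition of $\cF_{i}$ at $\fq$ is by definition the unramified one on $T$, and that of $\cF_{i}^{*}$ is the unramified one on $T^{\vee}(1)$, so enlarging $S$ to $S(\cF_{1}) \cup S(\cF_{2})$ changes none of the four (dual) Selmer groups in the statement. Next I would fix notation: set $L := \bigoplus_{\fq \in S} H^{1}(G_{k_{\fq}}, T)$ and $L^{*} := \bigoplus_{\fq \in S} H^{1}(G_{k_{\fq}}, T^{\vee}(1))$; the sum of the local Tate pairings is a perfect pairing $L \times L^{*} \to \bQ_{p}/\bZ_{p}$ under which $V_{i} := \bigoplus_{\fq \in S} H^{1}_{\cF_{i}}(G_{k_{\fq}}, T)$ and $V_{i}^{*} := \bigoplus_{\fq \in S} H^{1}_{\cF_{i}^{*}}(G_{k_{\fq}}, T^{\vee}(1))$ are exact orthogonal complements, with $V_{1} \subseteq V_{2}$ and hence $V_{2}^{*} \subseteq V_{1}^{*}$. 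Writing $G := \im(H^{1}(G_{k,S}, T) \to L)$ and $G^{*} := \im(H^{1}(G_{k,S}, T^{\vee}(1)) \to L^{*})$ and $\ell$ for the localization maps, the Poitou--Tate sequence (\cite[Ch.~VIII]{NSW}) supplies the self-duality statement that $G$ and $G^{*}$ are exact orthogonal complements in $L$ and $L^{*}$. Finally I note that the kernels $\ker(\ell\colon H^{1}(G_{k,S}, T) \to L)$ and $\ker(\ell\colon H^{1}(G_{k,S}, T^{\vee}(1)) \to L^{*})$ are contained in $H^{1}_{\cF_{i}}(k, T)$ and $H^{1}_{\cF_{i}^{*}}(k, T^{\vee}(1))$ for $i = 1, 2$, so that these Selmer groups are exactly the preimages of $G \cap V_{i}$ and $G^{*} \cap V_{i}^{*}$ under $\ell$, with $i$-independent kernels.

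With this in hand I would exhibit the four arrows. The first is the inclusion $H^{1}_{\cF_{1}}(k, T) \hooklongrightarrow H^{1}_{\cF_{2}}(k, T)$. The second sends $c$ to the class of $\ell(c) \in V_{2}$ modulo $V_{1}$; its kernel is visibly $H^{1}_{\cF_{1}}(k, T)$, giving exactness at the first two spots. The third sends $v + V_{1}$ to the functional $d \mapsto \langle v, \ell(d) \rangle$ on $H^{1}_{\cF_{1}^{*}}(k, T^{\vee}(1))$; this is well defined because $\ell(d) \in V_{1}^{*}$ is orthogonal to $V_{1}$, and it kills $H^{1}_{\cF_{2}^{*}}(k, T^{\vee}(1))$ since there each $\ell_{\fq}(d)$ is orthogonal to $H^{1}_{\cF_{2}}(G_{k_{\fq}}, T) \ni v_{\fq}$. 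The last arrow is the transpose of the inclusion $H^{1}_{\cF_{2}^{*}}(k, T^{\vee}(1)) \hooklongrightarrow H^{1}_{\cF_{1}^{*}}(k, T^{\vee}(1))$, hence surjective. Exactness at $V_{2}/V_{1}$ then reduces, using $\ell(H^{1}_{\cF_{1}^{*}}(k, T^{\vee}(1))) = G^{*} \cap V_{1}^{*}$ and the perfect-pairing identity $(G^{*} \cap V_{1}^{*})^{\perp} = (G^{*})^{\perp} + (V_{1}^{*})^{\perp} = G + V_{1}$, to the set equality $V_{2} \cap (G + V_{1}) = (G \cap V_{2}) + V_{1}$, which holds because $V_{1} \subseteq V_{2}$.

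The one delicate point — the step I expect to be the main obstacle — is exactness at $H^{1}_{\cF_{1}^{*}}(k, T^{\vee}(1))^{\vee}$, since it uses Poitou--Tate self-duality in both directions rather than just the containment $G \subseteq (G^{*})^{\perp}$. Given a functional $\phi$ on $H^{1}_{\cF_{1}^{*}}(k, T^{\vee}(1))$ that vanishes on $H^{1}_{\cF_{2}^{*}}(k, T^{\vee}(1))$, I would first observe that $\ker(\ell\colon H^{1}(G_{k,S}, T^{\vee}(1)) \to L^{*})$ lies inside $H^{1}_{\cF_{2}^{*}}(k, T^{\vee}(1))$, so $\phi$ descends to a functional $\bar{\phi}$ on $G^{*} \cap V_{1}^{*}$ vanishing on $G^{*} \cap V_{2}^{*}$. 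Extending $\bar{\phi}$ to $L^{*}$ (injectivity of $\bQ_{p}/\bZ_{p}$) and representing it as $\langle v, - \rangle$ for a unique $v \in L$ (perfectness), the vanishing on $G^{*} \cap V_{2}^{*}$ forces $v \in (G^{*} \cap V_{2}^{*})^{\perp} = G + V_{2}$; since every element of $G$ pairs trivially with $G^{*} \supseteq G^{*} \cap V_{1}^{*}$, I may subtract the $G$-component and take $v \in V_{2}$, and then $v + V_{1}$ maps to $\phi$, finishing the verification. One last remark: all the orthogonality manipulations above use only that the relevant local and global pairings are perfect pairings of finite (or profinite against discrete torsion) groups; when $T$ is not finite, for instance $T = \bT_{K}$, one either invokes the continuous Poitou--Tate formalism of \cite[Ch.~VIII]{NSW} directly or reduces to the finite quotients $T_{K,n}$ and passes to the limit, which introduces no extra difficulty.
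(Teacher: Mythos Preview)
Your argument is correct and is precisely the Mazur--Rubin proof; the paper itself does not supply a proof of this theorem but simply cites \cite[Theorem~2.3.4]{MRkoly}, so there is nothing to compare against beyond noting that you have reproduced the cited argument faithfully. One small clarification worth recording for the exactness step at $H^{1}_{\cF_{1}^{*}}(k, T^{\vee}(1))^{\vee}$: the reason the arbitrary extension of $\bar{\phi}$ to $L^{*}$ still vanishes on $G^{*} \cap V_{2}^{*}$ is that $V_{2}^{*} \subseteq V_{1}^{*}$ forces $G^{*} \cap V_{2}^{*} \subseteq G^{*} \cap V_{1}^{*}$, where $\bar{\phi}$ is already defined and already zero; this is implicit in what you wrote but could be stated to preempt confusion.
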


\section{Exterior bi-dual}\label{sec:bi-dual}

We fix a  noetherian commutative ring $R$. For each $R$-module $M$ we set
\[ 
M^{*} :=\Hom_R(M,R). 
\]

\subsection{Definition of the exterior bi-dual} 

Let $M$ and $N$ be $R$-modules, and $r \geq 0$ an integer.  
For a map $\varphi \colon M \longrightarrow N$, there exists a unique homomorphism of $R$-modules
\[
{{\bigwedge}}_R^{r+1} M \longrightarrow N \otimes_{R} {{\bigwedge}}_R^{r} M
\]
with the property that
\[
m_1\wedge\cdots\wedge m_{r+1} \mapsto \sum_{i=1}^{r+1} (-1)^{i+1} \varphi(m_i) \otimes  m_1\wedge\cdots\wedge m_{i-1}\wedge m_{i+1} \wedge \cdots \wedge m_{r+1}
\]
for each subset $\{m_i\}_{1\le i\le r+1}$ of $M$. 
We will also denote this map by $\varphi$. 
In particular, for integers $0 \leq r \leq s$, we have  the canonical homomorphism
\[
{{\bigwedge}}_R^{r} M^* \longrightarrow \Hom_R\left({{\bigwedge}}_R^s M, {{\bigwedge}}_R^{s-r} M\right);  \ \varphi_1\wedge \cdots \wedge \varphi_r \mapsto \varphi_r \circ \cdots \circ \varphi_1.
\]

\begin{definition}\label{def exterior bidual}
We define the $r$-th exterior bi-dual ${{\bigcap}}_R^r M$ of $M$ by  
\[
{{\bigcap}}_R^r M :=\left({{\bigwedge}}_R^r M^\ast \right)^\ast.
\]
\end{definition}

Note that there is a canonical homomorphism
\[
\xi_M^r: {{\bigwedge}}_R^r M \longrightarrow {{\bigcap}}_R^r M; \ m \mapsto (\Phi \mapsto \Phi(m)),
\]
which is neither injective nor surjective in general.
However, if $M$ is a finitely generated projective $R$-module, then $\xi_M^r$ is an isomorphism.

\begin{definition}
For integers $0 \leq r \leq s$ and $\Phi \in {{\bigwedge}}_R^r M^\ast$, define an $R$-homomorphism
\begin{eqnarray} \label{map bidual}
{{\bigcap}}_R^s M \longrightarrow {{\bigcap}}_R^{s-r} M
\end{eqnarray}
as the $R$-dual of the $R$-homomorphism 
\[
{{\bigwedge}}_R^{s-r} M^\ast \longrightarrow {{\bigwedge}}_R^{s} M^\ast; \ \Psi \mapsto \Phi\wedge \Psi.\]
We denote the map (\ref{map bidual}) also by $\Phi$. 
\end{definition}
One can check that the diagram 
\[
\xymatrix{
{{\bigwedge}}_R^s M \ar[r]^-{\Phi} \ar[d]_{\xi_M^s} & {{\bigwedge}}_R^{s-r}M \ar[d]^{\xi_M^{s-r}}
\\
{{\bigcap}}_R^s M \ar[r]^-{\Phi} & {{\bigcap}}_R^{s-r} M
}
\]
is commutative. 

\subsection{Basic properties}
In this subsection, suppose that 
\begin{itemize}
\item $R$ is a zero dimensional Gorenstein local ring. 
\end{itemize}
Hence all free $R$-modules are injective by the definition of $R$.  
In particular, the functor $(-)^* = \Hom_R(-, R)$ is exact. 
Furthermore, Matlis duality tells us that 
the canonical map 
\[
\xi_{M}^{1} \colon M \longrightarrow {\bigcap}^1_R M
\] 
is an isomorphism for any finitely generated $R$-module $M$.

\begin{lemma}[{\cite[Lemma~2.1]{sakamoto}}]\label{lemma:base}
Let $F \stackrel{h}{\longrightarrow} M \stackrel{g}{\longrightarrow} N \longrightarrow 0$ 
be an exact sequence of $R$-modules and $r \geq 0$ an integer. 
If $F$ is a free $R$-module of rank $s \leq r$, then there is a unique $R$-homomorphism 
\begin{align*}
\varphi \colon{\bigwedge}^{r-s}_R N \otimes_{R}  \det(F)  \longrightarrow {\bigwedge}^{r}_R M
\end{align*}
such that $\varphi\left(\wedge^{r-s}g(b) \otimes a \right) = \left( \wedge^{s}h (a) \right) \wedge b$ for any $a \in \det(F)$ and $b \in {\bigwedge}^{r-s}_R M$. 
\end{lemma}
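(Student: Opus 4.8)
\textbf{Proof plan for Lemma~\ref{lemma:base}.}
The plan is to build the map $\varphi$ explicitly on generators and then check well-definedness. Since $F$ is free of rank $s$, fix a basis $f_{1},\dots,f_{s}$ of $F$; this determines the generator $f_{1}\wedge\cdots\wedge f_{s}$ of $\det(F)$. First I would define, for $b = m_{1}\wedge\cdots\wedge m_{r-s}\in{\bigwedge}^{r-s}_{R}M$ and the basis generator $a_{0}:=f_{1}\wedge\cdots\wedge f_{s}$ of $\det(F)$, the element
\[
\varphi\bigl((\wedge^{r-s}g)(\wedge_{i}m_{i})\otimes a_{0}\bigr) := h(f_{1})\wedge\cdots\wedge h(f_{s})\wedge m_{1}\wedge\cdots\wedge m_{r-s}\in{\bigwedge}^{r}_{R}M,
\]
and extend $R$-linearly in the $\det(F)$-variable. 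The content of the lemma is that this is forced and consistent: the right-hand side only depends on the image $(\wedge^{r-s}g)(\wedge_{i}m_{i})\in{\bigwedge}^{r-s}_{R}N$, not on the chosen lifts $m_{i}$, and the stated formula $\varphi(\wedge^{r-s}g(b)\otimes a)=(\wedge^{s}h(a))\wedge b$ then holds for all $a\in\det(F)$ and all $b\in{\bigwedge}^{r-s}_{R}M$ by $R$-linearity.

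The key steps, in order, are as follows. (1) Reduce to checking well-definedness of the map $\psi\colon{\bigwedge}^{r-s}_{R}M\to{\bigwedge}^{r}_{R}M$, $b\mapsto h(f_{1})\wedge\cdots\wedge h(f_{s})\wedge b$, factors through $\wedge^{r-s}g\colon{\bigwedge}^{r-s}_{R}M\to{\bigwedge}^{r-s}_{R}N$; equivalently, $\psi$ kills $\ker(\wedge^{r-s}g)$. (2) Use right-exactness of exterior powers: from $F\xrightarrow{h}M\xrightarrow{g}N\to0$ one gets that $\ker(\wedge^{r-s}g)$ is generated by elements of the form $h(x)\wedge m_{1}\wedge\cdots\wedge m_{r-s-1}$ with $x\in F$, $m_{j}\in M$ (this is the standard description of the kernel of $\wedge^{k}$ applied to a surjection, applied here to $g$; note $\operatorname{im}(h)=\ker(g)$). (3) Observe that for such a generator, $\psi\bigl(h(x)\wedge m_{1}\wedge\cdots\wedge m_{r-s-1}\bigr) = h(f_{1})\wedge\cdots\wedge h(f_{s})\wedge h(x)\wedge m_{1}\wedge\cdots\wedge m_{r-s-1}$; since $x$ is an $R$-linear combination of the $f_{i}$, the wedge $h(f_{1})\wedge\cdots\wedge h(f_{s})\wedge h(x)$ vanishes (a wedge of $s+1$ elements lying in the $R$-span of the $s$ elements $h(f_{1}),\dots,h(f_{s})$), so $\psi$ annihilates the generator. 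Hence $\psi$ descends to the desired $\varphi$ on $\operatorname{im}(\wedge^{r-s}g)\otimes R\,a_{0}$, and extending over $\det(F)$ by $R$-linearity gives $\varphi$; uniqueness is immediate since the formula pins down $\varphi$ on a generating set.

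The main obstacle I expect is step (2): giving a clean proof that $\ker(\wedge^{r-s}g)$ is exactly generated by the claimed elements, i.e. the functorial description of the kernel of exterior powers of a surjection. One efficient route is to note that it suffices to work with $g$ itself (not $h$): if $g\colon M\to N$ is surjective with kernel $K=\operatorname{im}(h)$, then ${\bigwedge}^{r-s}_{R}M\to{\bigwedge}^{r-s}_{R}N$ is surjective with kernel $K\cdot{\bigwedge}^{r-s-1}_{R}M$ (the image of the multiplication map $K\otimes_{R}{\bigwedge}^{r-s-1}_{R}M\to{\bigwedge}^{r-s}_{R}M$); this is a standard fact about exterior algebras over any commutative ring and needs no Gorenstein hypothesis. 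Alternatively, since \cite[Lemma~2.1]{sakamoto} is the cited source, I would simply invoke it. Once step (2) is in hand, steps (1) and (3) are routine multilinear-algebra verifications, and the Gorenstein hypothesis on $R$ is not actually needed for this particular lemma (it is in force only because the surrounding subsection assumes it).
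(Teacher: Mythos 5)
Your proof is correct. The paper does not supply its own argument for this lemma---it is stated with only a citation to \cite[Lemma~2.1]{sakamoto}---so there is no in-paper proof to compare against, but your argument is the natural direct one. The essential content, as you identify, is the exactness of $K\otimes_{R}{\bigwedge}^{k-1}_{R}M\to{\bigwedge}^{k}_{R}M\to{\bigwedge}^{k}_{R}N\to 0$ for any surjection $g\colon M\to N$ with kernel $K$, a standard fact that holds over an arbitrary commutative ring; once that is in hand, $\psi(b)=h(f_{1})\wedge\cdots\wedge h(f_{s})\wedge b$ visibly kills the generators $h(x)\wedge m_{1}\wedge\cdots\wedge m_{r-s-1}$ of $\ker(\wedge^{r-s}g)$ because $h(x)$ lies in the $R$-span of $h(f_{1}),\dots,h(f_{s})$, and well-definedness, the stated formula, and uniqueness all follow. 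You are also right that the standing Gorenstein hypothesis in this subsection is irrelevant to this particular lemma.
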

%\begin{proof} 
%Since $g$ is surjective and ${\bigwedge}^{s+1}_R F = 0$, the map $\phi$ is well-defined and characterized by the relation $\phi\left( a \otimes \wedge^{r-s}g(b) \right) = \left( \wedge^{s}h (a) \right) \wedge b$. 
%\end{proof}

%\begin{lemma}\label{commlem}
%Let $r \geq 0$ be an integer. 
%Suppose that we have the following commutative diagram of $R$-modules:
%\begin{align*}
%\xymatrix{ 
%F \ar[r]^{h} \ar[d]^{\alpha} & X \ar[r]^{g} \ar[d]^{\beta} & Y \ar[r] \ar[d]^{\gamma} & 0
%\\
%F' \ar[r]^{h'} & X' \ar[r]^{g'} & Y' \ar[r] & 0, 
%}
%\end{align*}
%where the horizontal rows are exact, and both $F$ and $F'$ are free $R$-modules of rank $s \leq r$. 
%Then the following diagram is commutative: 
%\begin{align*}
%\xymatrix{
%\det(F) \otimes_{R} {\bigwedge}^{r-s}_R N \ar[r]^-{\phi} \ar[d]^{\wedge^{s} \alpha \otimes \wedge^{r-s}\gamma} 
%& {\bigwedge}^{r}_R M \ar[d]^{\wedge^r \beta} 
%\\
%\det(F') \otimes_{R} {\bigwedge}^{r-s}_R N'  \ar[r]^-{\phi'} & {\bigwedge}^{r}_R M',  
%}
%\end{align*}
%where $\phi$ and $\phi'$ are the maps defined in Lemma~\ref{base}. 
%\end{lemma}
%\begin{proof}
%Let $a \in \det(F)$ and $b \in {\bigwedge}^{r-s}_R M$. 
%By Lemma~\ref{base}, we have 
%\begin{align*}
%(\phi' \circ (\wedge^{s}\alpha \otimes \wedge^{r-s} \gamma))(a \otimes \wedge^{r-s}g(b)) 
%&= \phi'( \wedge^{s} \alpha(a) \otimes \wedge^{r-s} (g' \circ \beta)(b))
%\\
%&=  (\wedge^{s} (h' \circ \alpha)(a)) \wedge (\wedge^{r-s} \beta(b))
%\\
%&= \wedge^{r}\beta ((\wedge^s h(a)) \wedge b)
%\\
%&= (\wedge^{r}\beta \circ \phi)(a \otimes \wedge^{r-s}g(b)). 
%\end{align*}
%This completes the proof. 
%\end{proof}

For $i \in \{1, 2\}$, let $M_{i}$ be an $R$-module and $F_{i}$ a free $R$-module of rank $s_{i}$. 
Suppose that we have  the following cartesian diagram:
\begin{align*}
\xymatrix{
M_{1} \ar@{^{(}->}[r] \ar[d] & M_{2} \ar[d] 
\\
F_{1} \ar@{^{(}->}[r] &  F_{2}, 
}
\end{align*}
where the horizontal arrows are injective. 
The $R$-module $F_2/F_1$ is free of rank $s_2 - s_1$ since $F_{1}$ is an injective $R$-module. 
Applying Lemma~\ref{lemma:base} to the exact sequence 
$(F_{2}/F_1)^* \longrightarrow M_2^* \longrightarrow M_1^* \longrightarrow 0$, we obtain an $R$-homomorphism 
\begin{align*}
\widetilde{\Phi} \colon   {\bigcap}^r_R M_2 \otimes_{R} \det((F_2/F_1)^*) \longrightarrow {\bigcap}^{r-s_2+s_1}_R M_1
\end{align*}
for any integer $r \geq s_2 - s_1$. 
%Therefore we obtain the following map which play an important role in the theories of Kolyvagin and Stark systems. 

\begin{definition}[{\cite[Definition~2.3]{sakamoto}}]\label{def:map}
For any cartesian diagram as above and an integer $r \geq s_2 - s_1$, we have  an $R$-homomorphism 
\begin{align*}
\Phi \colon   {\bigcap}^r_R M_2 \otimes_{R} \det(F_2^*) 
&\stackrel{\sim}{\longrightarrow} {\bigcap}^r_R M_2  \otimes_{R}  \det((F_2/F_1)^*) \otimes_R \det(F_1^*) 
\\
&\longrightarrow  {\bigcap}^{r-s_2+s_1}_R M_1 \otimes_{R} \det(F_1^*), 
\end{align*}
where the first map is induced by the isomorphism 
\[
\det((F_2/F_1)^*) \otimes_R \det(F_1^*) \xrightarrow{\sim} \det(F_2^*); a \otimes b \longrightarrow a \wedge \widetilde{b}, 
\] 
where $\widetilde{b}$ is a lift of $b$ in ${\bigwedge}^{s_1}_R F_2^*$ and the second map is induced by $\widetilde{\Phi}$. 
\end{definition}

%The map $\Phi$ is a generalization of the map defined in \cite[Proposition~A.1]{MRkoly}. 
%Furthermore, we have the following proposition which is a generalization of \cite[Proposition~A.2]{MRkoly} 
%to zero dimensional Gorenstein local rings. 

\begin{proposition}[{\cite[Proposition~2.4]{sakamoto}}]\label{prop:homlem}
Suppose that we have  the following commutative diagram of $R$-modules
\begin{align*}
\xymatrix{
M_{1} \ar@{^{(}->}[r] \ar[d] & M_{2} \ar[d] \ar@{^{(}->}[r] & M_3 \ar[d] 
\\
F_{1} \ar@{^{(}->}[r] &  F_{2} \ar@{^{(}->}[r] & F_3
}
\end{align*}
such that $F_1$, $F_2$, and $F_3$ are free of finite rank and the two squares are cartesian. 
Let $s_i = {\rm rank}_R (F_i)$ and  $r \geq s_3 - s_1$.  
For $i,j \in \{1,2,3\}$ with $j < i$, we denote by
\begin{align*}
\Phi_{ij} \colon {\bigcap}^{r-s_3 +s_i}_R M_{i} \otimes_{R}  {\rm det}(F_{i}^*) \longrightarrow 
 {\bigcap}^{r-s_3 + s_j}_R M_{j} \otimes_{R} {\rm det}(F_{j}^*)
\end{align*}
the map given by Definition~\ref{def:map}. Then we have  
\begin{align*}
\Phi_{31} = \Phi_{21} \circ \Phi_{32}. 
\end{align*}
\end{proposition}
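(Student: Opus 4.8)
The plan is to unwind Definition~\ref{def:map} for each of the three maps and reduce the asserted equality to a compatibility statement between the homomorphisms produced by Lemma~\ref{lemma:base}. First I would record the preliminary reductions. The outer rectangle $M_{1}\hookrightarrow M_{3}$ over $F_{1}\hookrightarrow F_{3}$ is again cartesian (a composite of pullback squares is a pullback), so $\Phi_{31}$ is defined, and $s_{1}\le s_{2}\le s_{3}$. Since $R$ is zero-dimensional Gorenstein the functor $(-)^{*}=\Hom_{R}(-,R)$ is exact, so dualizing the two cartesian squares yields exact sequences $(F_{i}/F_{j})^{*}\xrightarrow{h_{ij}}M_{i}^{*}\xrightarrow{g_{ij}}M_{j}^{*}\to 0$ for $(i,j)\in\{(2,1),(3,2),(3,1)\}$, together with a short exact sequence of free modules $0\to (F_{3}/F_{2})^{*}\to (F_{3}/F_{1})^{*}\to (F_{2}/F_{1})^{*}\to 0$. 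The latter splits, giving canonical identifications $\det((F_{3}/F_{1})^{*})\cong\det((F_{3}/F_{2})^{*})\otimes_{R}\det((F_{2}/F_{1})^{*})$ and $\det(F_{3}^{*})\cong\det((F_{3}/F_{2})^{*})\otimes_{R}\det((F_{2}/F_{1})^{*})\otimes_{R}\det(F_{1}^{*})$ which, by associativity of the wedge product, are compatible with the two one-step identifications used in Definition~\ref{def:map}.

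Next I would give an intrinsic description of the auxiliary map $\widetilde{\Phi}$ of Definition~\ref{def:map}: for $x\in{\bigcap}_{R}^{\rho}M_{i}$, viewed as a functional on ${\bigwedge}_{R}^{\rho}M_{i}^{*}$, and $a\in\det((F_{i}/F_{j})^{*})$, the element $\widetilde{\Phi}(x\otimes a)\in{\bigcap}_{R}^{\rho-s_{i}+s_{j}}M_{j}$ is the functional $\Psi\mapsto x\bigl(({\wedge}^{s_{i}-s_{j}}h_{ij}(a))\wedge\widetilde{\Psi}\bigr)$, where $\widetilde{\Psi}\in{\bigwedge}_{R}^{\rho-s_{i}+s_{j}}M_{i}^{*}$ is any lift of $\Psi$. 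This is well-defined by (the proof of) Lemma~\ref{lemma:base}: the element ${\wedge}^{s_{i}-s_{j}}h_{ij}(a)$ is the image of $a$ under $\det((F_{i}/F_{j})^{*})\to{\bigwedge}_{R}^{s_{i}-s_{j}}M_{i}^{*}$, hence wedging it with anything in the image of $h_{ij}$—equivalently with $\ker({\bigwedge}_{R}^{\bullet}M_{i}^{*}\to{\bigwedge}_{R}^{\bullet}M_{j}^{*})$—gives zero, the exterior-power image of an element of ${\bigwedge}^{s_{i}-s_{j}+1}(F_{i}/F_{j})^{*}=0$.

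With this in hand I would compute both $\Phi_{21}\circ\Phi_{32}$ and $\Phi_{31}$ on a decomposable element $x\otimes a$, writing $a=a_{32}\otimes a_{21}\otimes a_{1}$ under the determinant identification above, and check that both land on $\widetilde{\Phi}_{31}(x\otimes(a_{32}\otimes a_{21}))\otimes a_{1}$ up to sign. The heart of the matter is two facts obtained by dualizing the evident commutative diagrams among the $F$'s and $M$'s: (a) $h_{31}$ restricted to $(F_{3}/F_{2})^{*}\subseteq(F_{3}/F_{1})^{*}$ equals $h_{32}$; and (b) if $\widetilde{a}_{21}\in\det((F_{3}/F_{1})^{*})$-factor is the image of $a_{21}$ under the splitting section $(F_{2}/F_{1})^{*}\hookrightarrow(F_{3}/F_{1})^{*}$, then ${\wedge}^{s_{2}-s_{1}}h_{31}(\widetilde{a}_{21})\in{\bigwedge}_{R}^{s_{2}-s_{1}}M_{3}^{*}$ is a lift along $g_{32}$ of ${\wedge}^{s_{2}-s_{1}}h_{21}(a_{21})$ (both computations are just "dualize $M_{2}\hookrightarrow M_{3}\to F_{3}/F_{1}$ and compose with the section"). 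Granting (a) and (b), associativity of the wedge product gives $({\wedge}^{s_{3}-s_{2}}h_{32}(a_{32}))\wedge({\wedge}^{s_{2}-s_{1}}h_{31}(\widetilde{a}_{21}))={\wedge}^{s_{3}-s_{1}}h_{31}(a_{32}\wedge\widetilde{a}_{21})$, which is exactly the element ${\wedge}^{s_{3}-s_{1}}h_{31}$ of the class corresponding to $a_{32}\otimes a_{21}$, so the two computations agree.

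The step I expect to be the main obstacle is the sign bookkeeping. In Definition~\ref{def:map} the "$h(a)$" part is placed in front of the lifted argument, and both the determinant identifications and the reordering of wedge factors introduce signs $(-1)^{\,\cdot}$ depending on $s_{1},s_{2},s_{3}$ and $\rho$; one must verify that all of these cancel when comparing the two-step composition with the one-step map. Everything else is routine diagram-dualization and multilinear algebra, using only that $R$ is zero-dimensional Gorenstein so that $(-)^{*}$ is exact, free modules are injective, and the relevant sequences of free modules split.
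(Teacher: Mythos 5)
The paper does not reproduce a proof of this proposition (it cites the external reference for it), so there is no in-paper argument to compare against; your proposal must be judged on its own. It is correct and is the natural unwinding of Definition~\ref{def:map}. The outer rectangle is cartesian, dualizing via the exact functor $(-)^{*}$ yields the exact sequences $(F_i/F_j)^{*}\stackrel{h_{ij}}{\longrightarrow}M_i^{*}\longrightarrow M_j^{*}\longrightarrow 0$ together with the split short exact sequence $0\to (F_3/F_2)^{*}\to (F_3/F_1)^{*}\to (F_2/F_1)^{*}\to 0$, your intrinsic description of $\widetilde{\Phi}$ is exactly the dual of the formula in Lemma~\ref{lemma:base}, and your facts (a) and (b) are precisely what one reads off by dualizing the commuting squares relating $(F_3/F_2)^{*}$, $(F_3/F_1)^{*}$, $(F_2/F_1)^{*}$ to $M_3^{*}$, $M_2^{*}$.

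You flag the sign bookkeeping as a potential obstacle but leave it unverified; closing it completes the argument, and the verification is short because there is in fact nothing to cancel. Both the identification $\det((F_i/F_j)^{*})\otimes_{R}\det(F_j^{*})\stackrel{\sim}{\longrightarrow}\det(F_i^{*})$, $a\otimes b\mapsto a\wedge\widetilde{b}$, and the defining formula $\varphi(\wedge^{r-s}g(b)\otimes a)=(\wedge^{s}h(a))\wedge b$ of Lemma~\ref{lemma:base} place the quotient factor to the \emph{left} of everything else. Tracking $x\otimes (a_{32}\wedge\widetilde{a}_{21}\wedge\widetilde{a}_1)$ through $\Phi_{21}\circ\Phi_{32}$ therefore produces the functional $\Psi\mapsto x\bigl((\wedge^{s_3-s_2}h_{32}(a_{32}))\wedge(\wedge^{s_2-s_1}h_{31}(\widetilde{a}_{21}))\wedge\widetilde{\Psi}\bigr)$, with $\widetilde{\Psi}$ a lift to ${\bigwedge}^{\bullet}_{R}M_3^{*}$, while the direct computation of $\Phi_{31}$ via $a_{31}=a_{32}\wedge\widetilde{a}_{21}$ and your fact (a) gives literally the same expression, with the factors already in the same left-to-right order and with $h_{31}(\widetilde{a}_{21})$ a legitimate choice of lift of $h_{21}(a_{21})$ by your fact (b). No reordering of wedge factors is ever required, so no sign appears; with this observation your proof goes through without change.
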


\subsection{Base change}
In this subsection, we slightly generalize the results proved in \cite[Appendix~B]{BS}. 
Recall that $R$ is a noetherian commutative ring. Let $M$ be a finitely generated $R$-module.  

\begin{definition}\ 
\begin{itemize}
\item[(i)] The ring $R$ is said to satisfy the condition (G$_{1}$) if the local ring $R_{\fr}$ is Gorenstein for any  prime $\fr$ of $R$ with ${\rm ht}(\fr) \leq 1$. 
\item[(ii)] For an  integer $n \geq 0$, we say that the $R$-module $M$ satisfies the Serre's condition (S$_{n}$) if the inequality ${\rm depth}(M_{\fr}) \geq \min\{n, \dim{\rm Supp}(M_{\fr})\}$ holds for all prime ideal $\fr$
 of $R$. Here $M_{\fr} := R_{\fr} \otimes_{R} M$. 
 \item[(iii)] The $R$-module $M$ is said to be $2$-syzygy if there exists an exact sequence $0 \longrightarrow M \longrightarrow  P^{0} \longrightarrow P^{1}$ of $R$-modules with $P^{i}$ projective. 
\end{itemize}
\end{definition}

\begin{remark}\label{rem:equiv}
A noetherian ring satisfying the conditions (G$_{1}$) and (S$_{2}$) has the following characterization (proved by Evans--Griffith and Matsui--Takahashi--Tsuchiya in \cite{MTT17}). 

The noetherian ring $R$ satisfies (G$_{1}$) and (S$_{2}$) if and only if, for any finitely generated $R$-module $M$, the following assertions are equivalent: 
\begin{itemize}
\item[(i)] $M$ satisfies (S$_{2}$);  
\item[(ii)]  $M$ is $2$-syzygy;  
\item[(iii)] $M$ is reflexive i.e., $M \longrightarrow M^{**}$ is an isomorphism. 
\end{itemize}
\end{remark}

\begin{remark}
Normal rings and Gorenstein rings satisfy (G$_{1}$) and (S$_{2}$). 
\end{remark}

\begin{remark}\label{rem:base change}
If $R'$ is a flat $R$-algebra, then we have the canonical isomorphism 
\[
R' \otimes_{R} {\bigcap}^{r}_{R}M \stackrel{\sim}{\longrightarrow} {\bigcap}^{r}_{R'}(R' \otimes_{R} M) 
\]
for any integer $r \geq 0$. 
However, in general, there dose not exist a canonical homomorphism from 
$R' \otimes_{R} {\bigcap}^{r}_{R}M$ to ${\bigcap}^{r}_{R'}(R' \otimes_{R} M)$. 
We will show the existence of such a homomorphism if $R$ and $R'$ satisfy (G$_{1}$) and (S$_{2}$) (see Lemma~\ref{lem:reduction}). 
\end{remark}

\begin{definition}
%Suppose that the Krull dimension $\dim(R)$ of $R$ is at least $1$. 
We say that the finitely generated $R$-module $M$ is pseudo-null if 
$M_{\fr}$ vanishes for any prime $\fr$ of $R$ with ${\rm ht}(\fr) \leq 1$. 
When $\dim(R) \leq 1$,  the module $M$ is pseudo-null if and only if $M = 0$. 
\end{definition}

%\begin{lemma}\ 
%\begin{itemize}
%\item[(i)] 
%There is a finitely generated $R$-module $\mathrm{Tr}(M)$ such that  
%\[
%\coker\left(M \longrightarrow M^{**} \right) = \Ext_{R}^{2}(\mathrm{Tr}(M),R). 
%\]
%\item[(ii)] If $R$ satisfies (G$_{0}$) and (S$_{1}$), then we have  
%\[
%\ker\left(M \longrightarrow M^{**} \right) = \ker\left(M \longrightarrow Q \otimes_{R} M \right). 
%\]
%\end{itemize}
%\end{lemma}
%\begin{proof}
%Let us show claim~(i). Take an exact sequence $R^{a} \longrightarrow R^{b} \longrightarrow M \longrightarrow 0$ and set $\mathrm{Tr}(M) := \coker\left((R^{b})^{*} \longrightarrow (R^{a})^{*}\right)$. 
%Since $\ker\left((R^{b})^{*} \longrightarrow (R^{a})^{*}\right) = M^{*}$, we get an exact sequence 
%\[
%R^{b} \longrightarrow M^{**} \longrightarrow \Ext_{R}^{2}(\mathrm{Tr}(M),R) \longrightarrow 0, 
%\]
%which implies claim~(i). 
%
%If $R$ satisfies (G$_{0}$) and (S$_{1}$), the total quotient ring $Q$ of $R$ is a zero dimensional Gorenstein ring. 
%Hence Matlis duality shows that $Q \otimes_{R} M \longrightarrow Q \otimes_{R} M^{**}$ is an isomorphism. 
%Since $M^{**} \longrightarrow Q \otimes_{R} M^{**}$ is injective, we have  
%\[
%\ker\left(M \longrightarrow M^{**} \right) = 
%\ker\left(M \longrightarrow Q \otimes_{R} M^{**} \right)
%= \ker\left(M \longrightarrow Q \otimes_{R} M \right). 
%\]
%\end{proof}

\begin{lemma}\label{lem:ext vanish}
If $R$ satisfies (S$_{2}$) and $M$ is a pseudo-null $R$-module, then we have  
\[
\Hom_{R}(M,R) = \Ext_{R}^{1}(M,R) = 0.
\] 
\end{lemma}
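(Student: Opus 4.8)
The plan is to prove the two vanishing statements $\Hom_R(M,R) = 0$ and $\Ext^1_R(M,R) = 0$ by a local-to-global argument, reducing to the case of a local ring via the fact that both $\Hom$ and $\Ext^1$ (for finitely generated modules over a noetherian ring) commute with localization. So fix a prime $\fr$ of $R$; it suffices to show $\Hom_{R_\fr}(M_\fr, R_\fr) = \Ext^1_{R_\fr}(M_\fr, R_\fr) = 0$. I would split into two cases according to the height of $\fr$.

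First, if $\operatorname{ht}(\fr) \leq 1$, then $M_\fr = 0$ by the definition of pseudo-null, and both modules vanish trivially. Second, suppose $\operatorname{ht}(\fr) \geq 2$. Here I would invoke the Serre condition (S$_2$): since $R$ satisfies (S$_2$), we have $\operatorname{depth}(R_\fr) \geq \min\{2, \dim R_\fr\} \geq \min\{2, \operatorname{ht}(\fr)\} = 2$, so $R_\fr$ has depth at least $2$ as a module over itself. The key homological input is the standard fact that if $\operatorname{depth}_I(R_\fr) \geq n+1$ for an ideal $I$ and $M_\fr$ is supported on $V(I)$, then $\Ext^i_{R_\fr}(M_\fr, R_\fr) = 0$ for $i \leq n$. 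I would apply this with $n = 1$: because $M$ is pseudo-null, $M_\fr$ is supported on the closed subset of $\operatorname{Spec} R_\fr$ cut out by primes of height $\geq 2$, i.e.\ on $V(I)$ where $I$ is an ideal with $\operatorname{ht}(I) \geq 2$, hence (using that $R_\fr$ satisfies (S$_2$), or more directly that $\operatorname{grade}(I, R_\fr) = \operatorname{ht}(I)$ when $R_\fr$ is Cohen--Macaulay in low dimension — but here one only needs the depth bound) one gets $\operatorname{depth}_I(R_\fr) \geq 2$, and therefore $\Hom_{R_\fr}(M_\fr, R_\fr) = \Ext^1_{R_\fr}(M_\fr,R_\fr) = 0$.

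The main obstacle is making the depth/grade bookkeeping precise: one needs to know that pseudo-nullity of $M$ forces $\operatorname{Ann}_{R_\fr}(M_\fr)$ to have grade (depth) at least $2$ on $R_\fr$, and this is where the hypothesis (S$_2$) on $R$ enters essentially — without it $R_\fr$ could have depth $1$ even when $\dim R_\fr \geq 2$, and the vanishing would fail. I would phrase the grade computation via $\operatorname{depth}_{I}(R_\fr) = \inf\{i : \Ext^i_{R_\fr}(R_\fr/I, R_\fr) \neq 0\}$ and the fact that $V(I) = \operatorname{Supp}(M_\fr)$ consists only of primes of height $\geq 2$ in $R_\fr$; combined with (S$_2$) this yields the bound. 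An alternative, cleaner route that I would consider is to use the characterization in Remark~\ref{rem:equiv}: but since that requires (G$_1$) as well, which is not assumed here, I will stick with the direct depth argument. Once the local statement is established for all $\fr$, globalizing gives $\Hom_R(M,R) = \Ext^1_R(M,R) = 0$, completing the proof.
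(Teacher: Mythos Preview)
Your proof is correct and takes a genuinely different route from the paper's. You localize at each prime $\fr$ and invoke the grade--depth machinery: when $\operatorname{ht}(\fr) \geq 2$, the support of $M_\fr$ consists only of primes of height $\geq 2$, and combining this with $(\mathrm{S}_2)$ gives $\operatorname{grade}(\operatorname{Ann}(M_\fr), R_\fr) \geq 2$, whence $\Ext^i_{R_\fr}(M_\fr, R_\fr) = 0$ for $i \leq 1$. The paper instead filters $M$ to reduce to $M = R/\fr$ with $\operatorname{ht}(\fr) \geq 2$, then computes directly from the exact sequence $0 \to \Hom_R(R/\fr, R) \to R \to \Hom_R(\fr, R) \to \Ext^1_R(R/\fr, R) \to 0$: it identifies $\Hom_R(\fr, R)$ with $\{f \in Q : f\fr \subseteq R\}$ inside the total ring of fractions and uses a primary decomposition argument (this is where $(\mathrm{S}_2)$ enters, via the absence of embedded primes in $R/xR$ for regular $x$) to show this equals $R$. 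Your approach is more conceptual and shorter, but it leans on the formula $\operatorname{grade}(I, R) = \inf\{\operatorname{depth}(R_\fp) : \fp \in V(I)\}$ (e.g.\ Bruns--Herzog, Proposition~1.2.10), which is precisely what converts ``$\operatorname{ht}(I) \geq 2$ and $(\mathrm{S}_2)$'' into the grade bound --- you should cite it explicitly, since your sketch only names the characterization $\operatorname{grade}(I,R) = \inf\{i : \Ext^i_R(R/I,R) \neq 0\}$, which by itself does not give the inequality. The paper's argument is more elementary and self-contained, at the cost of being longer.
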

\begin{proof}
We may assume $\dim(R) > 1$. 
Since $R$ is a noetherian ring, there is a sequence of submodules $0 = M_{0} \subseteq M_{1} \subseteq \cdots \subseteq M_{n} = M$ such that $M_{i+1}/M_{i}$ is isomorphic to $R/\fr$ for some prime ideal $\fr$ of $R$ for each integer $0 \leq i < n$. 
Hence we may assume that $M = R/\fr$ for some prime ideal $\fr$ of $R$. 
Since $R/\fr$ is pseudo-null, we have  ${\rm ht}(\fr) \geq 2$. 

There is an exact sequence 
\[
0 \longrightarrow \Hom_{R}(R/\fr,R) \longrightarrow R \longrightarrow \Hom_{R}(\fr,R) \longrightarrow \Ext_{R}^{1}(R/\fr,R) \longrightarrow 0. 
\]
The ring $R$ has no embedded primes since $R$ satisfies (S$_{1}$). 
Let $Q$ denote the total ring of fractions of $R$. 
we have  $\fr \cdot Q = Q$ since ${\rm ht}(\fr) \geq 2$ and $R$ has no embedded primes. 
Hence we conclude that $\Hom_{R}(R/\fr,R)$ vanishes and there is a canonical identification 
\[
\Hom_{R}(\fr,R) = \{f \in Q \mid f \cdot \fr \subseteq R \}. 
\]
It suffices to show that 
\[
R = \{f \in Q \mid f \cdot \fr \subseteq R \}. 
\]
Let $f \in Q$ satisfying $f \cdot \fr \subseteq R$. 
There is a regular element $x \in R$ with $y := xf \in R$. 
Let us show $y \in xR$. 
Let $xR = \bigcap_{i=1}^{n}I_{i}$ be a minimal primary decomposition of $xR$. 
Since $x$ is a regular element and $R$ satisfies (S$_{2}$), the ring $R/xR$ satisfies (S$_{1}$) and hence $R/xR$ has no embedded primes. 
This fact implies the prime ideal $\sqrt{I_{i}}$ has height one. 
Since ${\rm ht}(\fr) \geq 2$, we have  
\[
y \in y R_{\sqrt{I_{i}}} = y \fr R_{\sqrt{I_{i}}} = xR_{\sqrt{I_{i}}} \subseteq I_{i}R_{\sqrt{I_{i}}}. 
\]
As $I_{i}$ is a $\sqrt{I_{i}}$-primary ideal, we see $y \in I_{i}$. 
Thus we conclude that $y \in \bigcap_{i=1}^{n}I_{i} = xR$. 
\end{proof}

\begin{lemma}\label{lem:expre}
Let $0 \longrightarrow M \longrightarrow P^{1} \stackrel{\alpha}{\longrightarrow} P^{2}$ 
be an exact sequence of $R$-modules, where $P^{1}$ and $P^2$ are free $R$-modules of finite rank. 
Put $I = {\rm im}(P^{1} \longrightarrow P^{2})$. 
If $R$ satisfies (G$_{1}$) and (S$_{2}$), then the canonical map 
\[
{\bigcap}^{r}_{R}M \longrightarrow {\bigcap}^{r}_{R}P^{1} \stackrel{\sim}{\longleftarrow} {\bigwedge}^{r}_{R}P^{1}
\]
induces an isomorphism 
\[
{\bigcap}^{r}_{R}M \stackrel{\sim}{\longrightarrow} \ker\left({\bigwedge}^{r}_{R}P^{1} \stackrel{\alpha}{\longrightarrow} I \otimes_{R} {\bigwedge}^{r-1}_{R}P^{1}\right). 
\]
for any integer $r \geq 1$. 
\end{lemma}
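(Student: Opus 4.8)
The plan is to compare the two sides by dualizing twice and using that, under the hypotheses $(\mathrm{G}_1)$ and $(\mathrm{S}_2)$, reflexive modules are exactly the $2$-syzygies (Remark~\ref{rem:equiv}). Write $\alpha^* \colon (P^2)^* \longrightarrow (P^1)^*$ for the $R$-dual of $\alpha$ and let $N := \operatorname{coker}(\alpha^*)$. Dualizing the exact sequence $0 \longrightarrow M \longrightarrow P^1 \stackrel{\alpha}{\longrightarrow} P^2$ (or rather its truncation $P^1 \stackrel{\alpha}{\longrightarrow} I \longrightarrow 0$ together with $I \hookrightarrow P^2$) gives an exact sequence $(P^2)^* \stackrel{\alpha^*}{\longrightarrow} (P^1)^* \longrightarrow M^* \longrightarrow 0$, so $M^* \cong N$; here I use that $\operatorname{Ext}^1_R(P^2/I, R)$ need not vanish, but the point is only right-exactness of $\operatorname{Hom}_R(-,R)$ applied to $P^1 \twoheadrightarrow I$, combined with the inclusion $I \hookrightarrow P^2$ which identifies $(P^1)^* \to M^*$ with the map whose image computation we want. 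Actually the cleanest route is: apply $\operatorname{Hom}_R(-,R)$ to $0 \to M \to P^1 \to I \to 0$ to get $0 \to I^* \to (P^1)^* \to M^* \to \operatorname{Ext}^1_R(I,R)$, and separately note $I \subseteq P^2$ with $P^2/I$ a submodule of $P^2$ hence torsion-free, so $\operatorname{ht}$ considerations via $(\mathrm{S}_2)$ control the relevant $\operatorname{Ext}$.

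First I would take the $r$-th exterior power of the surjection $(P^1)^* \twoheadrightarrow M^*$, obtaining a surjection ${\bigwedge}^r_R (P^1)^* \twoheadrightarrow {\bigwedge}^r_R M^*$, and then dualize once more. Since $P^1$ is finitely generated free, ${\bigwedge}^r_R (P^1)^*$ is finitely generated free, so its double dual is itself and ${\bigcap}^r_R M = \left({\bigwedge}^r_R M^*\right)^* \hookrightarrow \left({\bigwedge}^r_R (P^1)^*\right)^* = {\bigwedge}^r_R P^1$, with image equal to the annihilator-orthogonal of $\ker\!\left({\bigwedge}^r_R (P^1)^* \twoheadrightarrow {\bigwedge}^r_R M^*\right)$. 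The kernel of ${\bigwedge}^r_R (P^1)^* \to {\bigwedge}^r_R M^*$ is generated by elements $\beta \wedge \Psi$ with $\beta \in \operatorname{im}(\alpha^* \colon (P^2)^* \to (P^1)^*)$ and $\Psi \in {\bigwedge}^{r-1}_R (P^1)^*$; taking orthogonals turns "$\wedge$ with $\operatorname{im}(\alpha^*)$" into the contraction map $\alpha$ appearing in the statement. So the image of ${\bigcap}^r_R M$ in ${\bigwedge}^r_R P^1$ is contained in $\ker\!\left({\bigwedge}^r_R P^1 \stackrel{\alpha}{\longrightarrow} I \otimes_R {\bigwedge}^{r-1}_R P^1\right)$, and equality will follow once I check that taking $\operatorname{Hom}_R(-,R)$ is exact enough on the relevant pieces.

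The main obstacle is precisely this exactness: $\operatorname{Hom}_R(-,R)$ is not exact for general noetherian $R$, so I must use $(\mathrm{G}_1)$ and $(\mathrm{S}_2)$ to kill the obstructing $\operatorname{Ext}$-modules. Concretely, I would argue that the cokernel of $\ker\!\left({\bigwedge}^r_R (P^1)^* \to {\bigwedge}^r_R M^*\right) \hookrightarrow {\bigwedge}^r_R(P^1)^*$ — equivalently ${\bigwedge}^r_R M^*$ — and the relevant quotient $I \otimes_R {\bigwedge}^{r-1}_R P^1 \big/ \operatorname{im}(\alpha)$ differ from torsion-free/reflexive modules only by a pseudo-null module, and then invoke Lemma~\ref{lem:ext vanish}, which says that pseudo-null modules over an $(\mathrm{S}_2)$ ring have vanishing $\operatorname{Hom}$ and $\operatorname{Ext}^1$ into $R$. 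Here is where the height-one localization comes in: at every prime $\fr$ with $\operatorname{ht}(\fr) \le 1$ the ring $R_\fr$ is Gorenstein of dimension $\le 1$, hence $\operatorname{Hom}_{R_\fr}(-,R_\fr)$ is exact on modules of projective dimension $\le 1$, which forces the displayed map to be an isomorphism after localizing at all such $\fr$; its kernel and cokernel are therefore pseudo-null, and being submodules/quotients of reflexive modules they must actually vanish. I would carry this out by first establishing the case $\dim R \le 1$ directly (where "pseudo-null" means "zero" and both sides are visibly computed by exact functors), then reducing the general case to it by the localization argument together with the reflexivity characterization of Remark~\ref{rem:equiv} applied to ${\bigcap}^r_R M$ and to $\ker\!\left({\bigwedge}^r_R P^1 \stackrel{\alpha}{\longrightarrow} I \otimes_R {\bigwedge}^{r-1}_R P^1\right)$, the latter being a kernel of a map between $2$-syzygies hence itself a $2$-syzygy hence reflexive.
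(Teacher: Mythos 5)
Your overall strategy — show both sides are reflexive and reduce to the height-$\le 1$ localizations, where $R_\fr$ is Gorenstein of dimension $\le 1$ — is a legitimate alternative to the paper's argument, which instead works directly with the submodule $Y := \im\bigl((P^1)^* \to M^*\bigr) \subseteq M^*$, shows $(\bigwedge^r_R Y)^* = \ker(\alpha)$, and then uses Lemma~\ref{lem:ext vanish} to identify $(\bigwedge^r_R Y)^*$ with $(\bigwedge^r_R M^*)^*$. However, as written your proposal has several concrete errors that are not merely cosmetic.

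First, $(P^1)^* \to M^*$ is \emph{not} surjective in general; its cokernel is the image of $M^* \to \Ext^1_R(I,R) \cong \Ext^2_R(P^2/I,R)$, which is only pseudo-null (this is exactly where $(\mathrm{G}_1)$ enters). Correspondingly, $\bigwedge^r_R(P^1)^* \to \bigwedge^r_R M^*$ is not surjective, and — more seriously — your description of its kernel as ``generated by $\beta \wedge \Psi$ with $\beta \in \im(\alpha^*)$'' is false: exterior powers do not preserve monomorphisms, so the kernel of $\bigwedge^r_R(P^1)^* \to \bigwedge^r_R M^*$ can be strictly larger than the image of $I^* \otimes \bigwedge^{r-1}_R(P^1)^*$. (This is precisely why the paper introduces $Y$ and carefully works with $\bigwedge^r_R Y$ rather than $\bigwedge^r_R M^*$.) Second, the remark that ``$P^2/I$ is a submodule of $P^2$ hence torsion-free'' is simply wrong: $P^2/I$ is a quotient of $P^2$, not a submodule, and it need not be torsion-free; what is true and what is needed is that $\Ext^1_R(I,R) \cong \Ext^2_R(P^2/I,R)$ is pseudo-null because $\Ext^2_{R_\fr}(\cdot, R_\fr)=0$ at primes of height $\le 1$. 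Third, your final step asserts that a pseudo-null kernel or cokernel, ``being a submodule/quotient of a reflexive module,'' must vanish. For the kernel this works: a reflexive module is a $2$-syzygy, hence embeds in a free module, and a pseudo-null submodule of a free module over an $(\mathrm{S}_1)$ ring is zero. But a pseudo-null \emph{quotient} of a reflexive module need not vanish (consider $R/\fm^n$ as a quotient of $R$ over a regular local ring of dimension $\ge 2$); to kill the cokernel you must instead dualize twice, as in the argument establishing Lemma~\ref{lem:reflexible}, or invoke Lemma~\ref{lem:ext vanish} on the short exact sequence. Finally, the claim that $\ker\bigl(\bigwedge^r_R P^1 \xrightarrow{\alpha} I \otimes_R \bigwedge^{r-1}_R P^1\bigr)$ is a kernel of a map ``between $2$-syzygies'' is off, since $I$ itself is only a $1$-syzygy; the fix is to compose with the inclusion $I \hookrightarrow P^2$, noting this does not change the kernel, so the kernel is a second syzygy after all. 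Each of these points has a repair, but cumulatively they mean the proof does not go through as stated.
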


\begin{remark}
When $R$ is Gorenstein, this lemma has already been proved in \cite[Lemma~B.2]{BS}. 
\end{remark}

\begin{proof}
Since $P^{2}$ is a projective $R$-module, we have an isomorphism 
\[
{\rm Ext}^{1}_{R}(I,R) \stackrel{\sim}{\longrightarrow} {\rm Ext}^{2}_{R}(P^{2}/I,R). 
\]
Hence, since $P^{1}$ is a projective $R$-module, by taking $R$-duals to the exact sequence $0 \longrightarrow M \longrightarrow P^{1} \longrightarrow I \longrightarrow 0$, we obtain  an exact sequence of $R$-modules
\[
0 \to I^{*} \longrightarrow (P^{1})^{*} \longrightarrow M^{*} \longrightarrow 
{\rm Ext}^{2}_{R}(P^{2}/I,R) \longrightarrow  0. 
\] 
Put $Y := \im\left((P^{1})^{*} \longrightarrow M^{*}\right)$. 
This exact sequence gives rise to the exact sequence 
\[
I^{*} \otimes_{R} {\bigwedge}^{r-1}_{R} (P^{1})^{*} \longrightarrow {\bigwedge}^{r}_{R}(P^{1})^{*} \longrightarrow {\bigwedge}^{r}_{R}Y \longrightarrow 0. 
\]
Passing to $R$-duals once again and using the canonical identification ${\bigwedge}^{r}_{R}P^{1} = {\bigcap}^{r}_{R}P^{1}$, we get an exact sequence 
\[
0 \longrightarrow \left({\bigwedge}^{r}_{R}Y \right)^{*} \longrightarrow {\bigwedge}^{r}_{R}P^{1} \longrightarrow \left(I^{*} \otimes_{R} {\bigwedge}^{r-1}_{R}(P^{1})^{*}\right)^{*}. 
\]
Since $I$ is a submodule of $P^{2}$, the canonical map $I \longrightarrow I^{**}$ is injective, 
and hence we have the commutative diagram 
\[
\xymatrix{
{\bigwedge}^{r}_{R}P^{1} \ar[r]^-{\alpha} \ar[d] &  I \otimes_{R} {\bigwedge}^{r-1}_{R}P^{1} \ar@{^{(}->}[d]
\\
\left(I^{*} \otimes_{R} {\bigwedge}^{r-1}_{R}(P^{1})^{*}\right)^{*}  &  I^{**} \otimes_{R} {\bigcap}^{r-1}_{R}P^{1} \ar[l]_-{\cong}, 
}
\]
where the bottom arrow is the composite map  
\begin{align*}
I^{**} \otimes_{R} {\bigcap}_{R}^{r-1}P^{1} &= \Hom_{R}(I^{*}, R) \otimes_{R} {\bigcap}_{R}^{r-1}P^{1} 
\\ 
&\stackrel{\sim}{\longrightarrow} \Hom_{R}\left(I^{*},{\bigcap}_{R}^{r-1}P^{1}\right) 
\\
&= \left(I^{*} \otimes_{R} {\bigwedge}^{r-1}_{R}(P^{1})^{*}\right)^{*}. 
\end{align*}
Thus we conclude
\[
\left({\bigwedge}^{r}_{R}Y \right)^{*} = \ker\left({\bigwedge}^{r}_{R}P^{1} \longrightarrow I \otimes_{R} {\bigwedge}^{r-1}_{R}P^{1}\right). 
\]
%To conclude with the proof of this lemma, it suffices to show that 
%a canonical map 
%\[
%{\bigcap}^{r}_{R}H^{1}(G,T) %\stackrel{\sim}{\longrightarrow}  
%\longrightarrow \left({\bigwedge}^{r}_{R}Y\right)^{*}
%\]
%is an isomorphism. 
We remark that ${\rm Ext}^{2}_{R}(P^{2}/I,R)$ is pseudo-null since $R$ satisfies (G$_{1}$).   Hence the kernel and cokernel of the homomorphism 
\[
{\bigwedge}^{r}_{R}Y \longrightarrow {\bigwedge}^{r}_{R}M^{*}
\]
are also pseudo-null.  
By taking $R$-duals to the homomorphism ${\bigwedge}^{r}_{R}Y \longrightarrow {\bigwedge}^{r}_{R}M^{*}$, Lemma~\ref{lem:ext vanish} shows that the map 
\[
{\bigcap}^{r}_{R}M \stackrel{\sim}{\longrightarrow}  \left({\bigwedge}^{r}_{R}Y\right)^{*}
\]
is an isomorphism, which completes the proof. 
\end{proof}

\begin{lemma}\label{lem:reduction}
Let $R \longrightarrow R'$ be a ring homomorphism of noetherian rings satisfying (G$_{1}$) and (S$_{2}$). 
Let $C \in \bigcup_{n>0}D^{[1,n]}_{\rm perf}(R)$. 
Then for any integer $r \geq 0$, there is a canonical homomorphism 
\[
{\bigcap}^{r}_{R}H^{1}(C) \longrightarrow {\bigcap}^{r}_{R'}H^{1}(C \otimes^{\mathbb{L}}_{R} R')
\]
\end{lemma}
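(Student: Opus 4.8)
The plan is to reduce everything to the explicit description of exterior bi-duals provided by Lemma~\ref{lem:expre}. Since $C\in D^{[1,n]}_{\rm perf}(R)$, I would choose a bounded complex $P^{\bullet}=[\,0\to P^{1}\to P^{2}\to\cdots\,]$ of finitely generated projective $R$-modules, concentrated in degrees $\geq 1$, that represents $C$; the proof of Lemma~\ref{lem:expre} uses only that $P^{1}$ and $P^{2}$ are finitely generated projective, so replacing ``free'' by ``projective'' there is harmless. Put $M:=H^{1}(C)=\ker(P^{1}\xrightarrow{\alpha}P^{2})$, so that $0\to M\to P^{1}\xrightarrow{\alpha}P^{2}$ is exact. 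Because $-\otimes^{\bL}_{R}R'$ is computed by $P^{\bullet}\otimes_{R}R'$, which is again a bounded complex of finitely generated projectives concentrated in degrees $\geq 1$, we get $H^{1}(C\otimes^{\bL}_{R}R')=\ker(P^{1}\otimes_{R}R'\xrightarrow{\alpha'}P^{2}\otimes_{R}R')$. Since both $R$ and $R'$ satisfy (G$_{1}$) and (S$_{2}$), for $r\geq 1$ Lemma~\ref{lem:expre} yields canonical isomorphisms ${\bigcap}^{r}_{R}H^{1}(C)\xrightarrow{\sim}\ker\bigl({\bigwedge}^{r}_{R}P^{1}\xrightarrow{\alpha}I\otimes_{R}{\bigwedge}^{r-1}_{R}P^{1}\bigr)$ and ${\bigcap}^{r}_{R'}H^{1}(C\otimes^{\bL}_{R}R')\xrightarrow{\sim}\ker\bigl({\bigwedge}^{r}_{R'}(P^{1}\otimes R')\xrightarrow{\alpha'}I'\otimes_{R'}{\bigwedge}^{r-1}_{R'}(P^{1}\otimes R')\bigr)$, where $I=\im(\alpha)$ and $I'=\im(\alpha')$.

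Next I would build the map itself. Base change along $R\to R'$ gives the natural arrow ${\bigwedge}^{r}_{R}P^{1}\to{\bigwedge}^{r}_{R}P^{1}\otimes_{R}R'={\bigwedge}^{r}_{R'}(P^{1}\otimes_{R}R')$. The canonical surjection $I\otimes_{R}R'\twoheadrightarrow I'$ sits in a commutative square identifying $\alpha'$ with the composite of $\alpha\otimes_{R}R'$ and the induced map $I\otimes_{R}{\bigwedge}^{r-1}_{R}P^{1}\otimes_{R}R'\to I'\otimes_{R'}{\bigwedge}^{r-1}_{R'}(P^{1}\otimes R')$; hence the base-change arrow carries $\ker(\alpha)$ into $\ker(\alpha')$. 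Composing, the desired homomorphism is defined as
\[
{\bigcap}^{r}_{R}H^{1}(C)\xrightarrow{\sim}\ker(\alpha)\hookrightarrow{\bigwedge}^{r}_{R}P^{1}\to{\bigwedge}^{r}_{R'}(P^{1}\otimes R')\supseteq\ker(\alpha')\xrightarrow{\sim}{\bigcap}^{r}_{R'}H^{1}(C\otimes^{\bL}_{R}R').
\]
For $r=0$ one has ${\bigcap}^{0}_{R}H^{1}(C)=R$ and ${\bigcap}^{0}_{R'}H^{1}(C\otimes^{\bL}_{R}R')=R'$, and the structure map $R\to R'$ serves.

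It remains to check that this homomorphism does not depend on the chosen representative $P^{\bullet}$, and this bookkeeping is the step I expect to cost the most. Given a second such representative $Q^{\bullet}$, I would pick a chain map $f\colon P^{\bullet}\to Q^{\bullet}$ lifting $\id_{C}$; as both complexes are bounded-below complexes of projectives, $f$ is a homotopy equivalence, it restricts to a commutative ladder $0\to M\to P^{1}\to P^{2}$, $0\to M\to Q^{1}\to Q^{2}$ whose map on $M=H^{1}(C)$ is the identity, and $f^{2}$ carries $I_{P}$ into $I_{Q}$. A direct computation from the construction of the maps $\alpha$ gives $\alpha_{Q}\circ{\bigwedge}^{r}f^{1}=(f^{2}\otimes{\bigwedge}^{r-1}f^{1})\circ\alpha_{P}$, so ${\bigwedge}^{r}f^{1}$ sends $\ker(\alpha_{P})$ to $\ker(\alpha_{Q})$ and, through the isomorphisms of Lemma~\ref{lem:expre} (which are induced by $M\hookrightarrow P^{1}$ and $M\hookrightarrow Q^{1}$ together with the functoriality of $M\mapsto{\bigcap}^{r}_{R}M$), restricts to the identity of ${\bigcap}^{r}_{R}H^{1}(C)$. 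Applying the same reasoning to $f\otimes_{R}R'$ over $R'$ and using that $({\bigwedge}^{r}f^{1})\otimes_{R}R'={\bigwedge}^{r}_{R'}(f^{1}\otimes_{R}R')$ is compatible with the base-change arrow, one sees that the composites constructed from $P^{\bullet}$ and from $Q^{\bullet}$ agree, which establishes canonicity and finishes the proof.
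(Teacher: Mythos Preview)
Your proposal is correct and follows exactly the route the paper intends: the paper's own proof is the single sentence ``This lemma follows from Lemma~\ref{lem:expre}'', and what you have written is a careful unpacking of that sentence, including the base-change square and the independence-of-representative check that the paper leaves implicit.
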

\begin{proof}
This lemma follows from Lemma~\ref{lem:expre}. 
\end{proof}

\begin{lemma}\label{lem:inv-bidual}
Let $J$ be a directed poset and $\{R_{j}\}_{j \in J}$ an inverse system of noetherian rings satisfying (G$_{1}$) and (S$_{2}$). 
Set $R := \varprojlim_{j \in J}R_{j}$ and let 
\[
C = [ \ \cdots \longrightarrow 0 \longrightarrow P^{1} \longrightarrow P^{2} \longrightarrow P^{3} \longrightarrow \cdots \ ] \in \bigcup_{n>0}D^{[1,n]}_{\rm perf}(R)
\] 
such that $P^{i}$ is a finitely generated projective $R$-module. 
If $R$ is a noetherian ring satisfying (G$_{1}$) and (S$_{2}$),  
then, for any integer $r \geq 0$, 
the canonical maps 
\[
{\bigcap}^{r}_{R}H^{1}(C) \longrightarrow {\bigcap}^{r}_{R_{j}}H^{1}(C \otimes^{\mathbb{L}}_{R} R_{j})
\] 
for $j \in J$ induce an isomorphism 
\[
{\bigcap}^{r}_{R}H^{1}(C) \stackrel{\sim}{\longrightarrow} 
\varprojlim_{j \in J}{\bigcap}^{r}_{R_{j}}H^{1}(C \otimes^{\mathbb{L}}_{R} R_{j}). 
%\stackrel{\sim}{\longrightarrow} \ker\left({\bigwedge}^{r}_{R}P^{1} \longrightarrow I \otimes_{R} {\bigwedge}^{r-1}_{R}P^{1}\right).
\]
\end{lemma}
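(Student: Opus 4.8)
The plan is to combine Lemma~\ref{lem:expre} with the left-exactness of inverse limits, after rewriting both ${\bigcap}^{r}_{R}H^{1}(C)$ and each ${\bigcap}^{r}_{R_{j}}H^{1}(C \otimes^{\mathbb{L}}_{R}R_{j})$ as the first cohomology of a two-term complex of finitely generated projective modules. For $r = 0$ both exterior bi-duals are just the base ring, and the assertion reduces to $R = \varprojlim_{j}R_{j}$; so assume $r \geq 1$.

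First I would unwind $H^{1}(C)$. Since $C = [\,0 \to P^{1} \xrightarrow{\alpha} P^{2} \to \cdots \to P^{n} \to 0\,]$ with the $P^{i}$ finitely generated projective and placed in degrees $1,\dots,n$, we have $H^{1}(C) = \ker\alpha$, so there is an exact sequence $0 \to H^{1}(C) \to P^{1} \xrightarrow{\alpha} P^{2}$. Although Lemma~\ref{lem:expre} is stated for free $P^{1},P^{2}$, its proof uses only their projectivity (together with ${\bigwedge}^{r}_{R}P^{1} = {\bigcap}^{r}_{R}P^{1}$ for a finitely generated projective), so it applies here and gives ${\bigcap}^{r}_{R}H^{1}(C) \cong \ker({\bigwedge}^{r}_{R}P^{1} \xrightarrow{\alpha} I \otimes_{R}{\bigwedge}^{r-1}_{R}P^{1})$ with $I := \im\alpha \subseteq P^{2}$. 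Since ${\bigwedge}^{r-1}_{R}P^{1}$ is flat and $I \hookrightarrow P^{2}$, the inclusion $I \otimes_{R}{\bigwedge}^{r-1}_{R}P^{1} \hookrightarrow P^{2}\otimes_{R}{\bigwedge}^{r-1}_{R}P^{1}$ lets me replace $I$ by $P^{2}$ without changing the kernel, so that
\[
{\bigcap}^{r}_{R}H^{1}(C) \;\cong\; H^{1}(D), \qquad D := \bigl[\,{\bigwedge}^{r}_{R}P^{1} \longrightarrow P^{2}\otimes_{R}{\bigwedge}^{r-1}_{R}P^{1}\,\bigr],
\]
a complex of finitely generated projective $R$-modules in degrees $1$ and $2$, i.e.\ $D \in D^{[1,2]}_{\rm perf}(R)$. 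Running exactly the same argument over each $R_{j}$ — using that $C \otimes^{\mathbb{L}}_{R}R_{j} = C \otimes_{R}R_{j}$ is again a bounded complex of finitely generated projectives in degrees $[1,n]$, and that exterior powers of projectives and tensor products commute with base change — yields ${\bigcap}^{r}_{R_{j}}H^{1}(C\otimes^{\mathbb{L}}_{R}R_{j}) \cong H^{1}(D\otimes^{\mathbb{L}}_{R}R_{j})$. Moreover, because the canonical maps of Lemma~\ref{lem:reduction} are constructed out of precisely these identifications, under the isomorphisms above they go over to the honest base-change maps $H^{1}(D) \to H^{1}(D\otimes^{\mathbb{L}}_{R}R_{j})$.

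It then remains to show that for $D = [\,Q^{1}\xrightarrow{\beta}Q^{2}\,]$ with $Q^{1},Q^{2}$ finitely generated projective over $R = \varprojlim_{j}R_{j}$, the natural maps induce an isomorphism $\ker\beta \xrightarrow{\sim} \varprojlim_{j}\ker(\beta\otimes_{R}R_{j})$. As inverse limits over the directed poset $J$ are left exact, the right-hand side equals $\ker(\varprojlim_{j}(R_{j}\otimes_{R}Q^{1}) \to \varprojlim_{j}(R_{j}\otimes_{R}Q^{2}))$, so it is enough to observe that for a finitely generated projective $R$-module $Q$ the natural map $Q \to \varprojlim_{j}(R_{j}\otimes_{R}Q)$ is an isomorphism; this is clear for $Q$ free, since finite direct sums commute with limits, hence for any direct summand of a free module. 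Chaining this with the identifications of the previous step finishes the proof.

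The step requiring genuine care is the compatibility claim in the second paragraph: that the isomorphisms furnished by Lemma~\ref{lem:expre} over $R$ and over each $R_{j}$ really do intertwine the canonical homomorphisms of Lemma~\ref{lem:reduction} with the plain base-change maps on $H^{1}(D)$. This amounts to tracing through the constructions in the proofs of Lemmas~\ref{lem:expre} and~\ref{lem:reduction}, and it is the one point where some bookkeeping cannot be avoided; everything else is formal, using only flatness of exterior powers of projectives, the hypothesis that $R$ and all $R_{j}$ satisfy (G$_{1}$) and (S$_{2}$) (so that Lemma~\ref{lem:expre} is applicable), and left-exactness of $\varprojlim$.
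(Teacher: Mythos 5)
Your argument is correct and follows essentially the same route as the paper: both use Lemma~\ref{lem:expre} to identify the exterior bi-duals with kernels of explicit maps between finitely generated projective modules and then invoke left-exactness of $\varprojlim$ together with $Q\cong\varprojlim_j (R_j\otimes_R Q)$ for finitely generated projective $Q$. The only presentational difference is that you pass from $I\otimes_R{\bigwedge}^{r-1}_R P^1$ to $P^2\otimes_R{\bigwedge}^{r-1}_R P^1$ up front (so that everything lives in a genuine two-term complex of projectives), whereas the paper keeps the images $I_j$ and finishes by noting that $I\hookrightarrow\varprojlim_j I_j$; the paper also leaves implicit the compatibility with the canonical maps of Lemma~\ref{lem:reduction} that you rightly flag as needing a short check.
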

\begin{proof}
For $j \in J$ and $i \in \{1,2\}$, put $P^{i}_{j} := P^{i} \otimes_{R} R_{j}$ and  $I_{j} := {\rm im}(P^{1}_{j} \longrightarrow P^{2}_{j})$. 
We also set  $I := {\rm im}(P^{1} \longrightarrow P^{2})$. 
Since $P^{1}_{j}$ is a free $R_{j}$-module, by Lemma~\ref{lem:expre}, we have  
\begin{align*}
\varprojlim_{j \in J}{\bigcap}^{r}_{R}H^{1}(C \otimes^{\mathbb{L}}_{R} R_{j}) 
&=  \varprojlim_{j \in J}\ker\left({\bigwedge}^{r}_{R_{j}}P^{1}_{j} \longrightarrow I_{j} \otimes_{R_{j}} {\bigwedge}^{r-1}_{R_{j}}P^{1}_{j}\right)
\\
%\varprojlim_{i \in I}\ker\left({\bigwedge}^{r}_{R_{i}}P^{1}_{i} \longrightarrow I_{i} \otimes_{R_{i}} {\bigwedge}^{r-1}_{R_{i}}P^{1}_{i}\right) 
&= 
\ker\left({\bigwedge}^{r}_{R}P^{1} \longrightarrow \left(\varprojlim_{j \in J}I_{j} \right)\otimes_{R} {\bigwedge}^{r-1}_{R}P^{1}\right)
\end{align*}
Since $I_{j}$ is a submodule of $P^{2}_{j}$, the canonical map $I \longrightarrow \varprojlim_{j\in J}I_{j}$ is injective. This shows that
\[
\ker\left({\bigwedge}^{r}_{R}P^{1} \longrightarrow I \otimes_{R} {\bigwedge}^{r-1}_{R}P^{1}\right)
= \ker\left({\bigwedge}^{r}_{R}P^{1} \longrightarrow \left(\varprojlim_{j \in J}I_{j} \right)\otimes_{R} {\bigwedge}^{r-1}_{R}P^{1}\right)
\]
and completes the proof  by Lemma~\ref{lem:expre}. 
\end{proof}

\section{Characteristic ideal}\label{sec:char}

Let $R$ be a noetherian ring and $M$ a finitely generated $R$-module. 
We write $Q$ for the total ring of fractions of $R$.

\begin{lemma}\label{lem:sub}
Suppose that $R$ satisfies (G$_{0}$) and (S$_{1}$). 
For an $R$-submodule $N$ of $M$ and an integer $r>0$, the canonical homomorphism 
\[
{\bigcap}^{r}_{R}N \longrightarrow {\bigcap}^{r}_{R}M
\]
is injective. 
\end{lemma}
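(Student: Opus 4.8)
The statement asserts that for a noetherian ring $R$ satisfying (G$_0$) and (S$_1$) (so, in particular, $R$ has no embedded primes and $R_\fp$ is a field for every minimal prime $\fp$, i.e. the total ring of fractions $Q$ is a finite product of fields), and for any $R$-submodule $N \subseteq M$, the induced map ${\bigcap}^r_R N \to {\bigcap}^r_R M$ is injective. The plan is to reduce the statement to a localization/support argument: an $R$-module homomorphism $\varphi$ is injective precisely when $\ker(\varphi)$ has empty support, and since $\ker(\varphi)$ is finitely generated it suffices to check that $\ker(\varphi)_\fp = 0$ for all \emph{associated primes} $\fp$ of $R$ — equivalently, because of (S$_1$), for all \emph{minimal} primes $\fp$ of $R$. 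So I would first observe that ${\bigcap}^r_R(-) = \bigl({\bigwedge}^r_R(-)^*\bigr)^*$ and that the map in question is the double $R$-dual of the map ${\bigwedge}^r_R M^* \to {\bigwedge}^r_R N^*$ (which is the exterior power of the restriction map $M^* \to N^*$; note this restriction map need not be surjective, which is exactly why injectivity of the bidual map is not formal).

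Next I would localize at a minimal prime $\fp$ of $R$. Since $R_\fp$ is a field (by (G$_0$)), every $R_\fp$-module is free, the functors ${\bigwedge}^r_{R_\fp}$, $(-)^{*_{R_\fp}}$ and ${\bigcap}^r_{R_\fp}$ all commute with this (flat) localization by Remark~\ref{rem:base change}, and over a field the natural map $\xi^r_V \colon {\bigwedge}^r_{R_\fp} V \to {\bigcap}^r_{R_\fp} V$ is an isomorphism for every $R_\fp$-module $V$. Thus after localizing, the map ${\bigcap}^r_{R_\fp} N_\fp \to {\bigcap}^r_{R_\fp} M_\fp$ is identified with ${\bigwedge}^r_{R_\fp} N_\fp \to {\bigwedge}^r_{R_\fp} M_\fp$, the exterior power of the injection $N_\fp \hookrightarrow M_\fp$ of vector spaces over the field $R_\fp$; this is injective because exterior powers of injective maps of vector spaces are injective (a subspace admits a complement, so the inclusion is split). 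Hence the kernel $K$ of ${\bigcap}^r_R N \to {\bigcap}^r_R M$ satisfies $K_\fp = 0$ for all minimal primes $\fp$.

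Finally I would close the argument using (S$_1$): the condition (S$_1$) on $R$ means $R$ has no embedded associated primes, so $\operatorname{Ass}_R(R)$ consists entirely of minimal primes. The module $K$ is a submodule of ${\bigcap}^r_R N = \bigl({\bigwedge}^r_R N^*\bigr)^*$, and any $R$-dual $\operatorname{Hom}_R(-,R)$ has $\operatorname{Ass}_R(\operatorname{Hom}_R(-,R)) \subseteq \operatorname{Ass}_R(R)$; hence $\operatorname{Ass}_R(K) \subseteq \operatorname{Ass}_R(R)$ consists of minimal primes only. But $K_\fp = 0$ for every minimal prime $\fp$, so $\operatorname{Ass}_R(K) = \emptyset$, which for a finitely generated module (here $K$ is finitely generated since $R$ is noetherian and ${\bigcap}^r_R N$ is finitely generated) forces $K = 0$. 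Therefore the map is injective.

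\textbf{Main obstacle.} The one point requiring care is the passage from ``$K_\fp=0$ at all minimal primes'' to ``$K=0$'': this genuinely uses that $K$ embeds in a reflexive-type module (an $R$-dual) whose associated primes are controlled by $\operatorname{Ass}_R(R)$, together with (S$_1$) to ensure $\operatorname{Ass}_R(R)$ has no embedded primes. Without hypotheses like (S$_1$) the dual $\bigl({\bigwedge}^r_R N^*\bigr)^*$ could in principle carry sections supported on non-minimal primes and the conclusion would fail; so the crux is to phrase the argument via $\operatorname{Ass}_R(\operatorname{Hom}_R(-,R)) \subseteq \operatorname{Ass}_R(R)$ and invoke (S$_1$) correctly. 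Everything else (commutation of $\bigcap$ with flat base change, fields making $\xi^r$ an isomorphism, exactness of exterior powers on injections of vector spaces) is routine and already available in the paper's Appendix~\ref{sec:bi-dual}.
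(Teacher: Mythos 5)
Your overall plan---reduce to minimal primes via (S$_1$) together with the observation that ${\bigcap}^{r}_{R}N$ is an $R$-dual (so its associated primes are controlled by those of $R$), then verify injectivity after localizing---is the same strategy the paper uses: the paper tensors with the total ring of fractions $Q = \prod_{\fp}R_{\fp}$ over minimal primes and uses torsion-freeness of ${\bigcap}^{r}_{R}N$, which amounts to the same thing. However, there is a concrete gap in your localized step. You assert that (G$_0$) forces $R_{\fp}$ to be a field at every minimal prime $\fp$, so that $Q$ is a finite product of fields. That is false. By analogy with the paper's definition of (G$_1$), the condition (G$_0$) only says that $R_{\fp}$ is a \emph{zero-dimensional Gorenstein local ring}, which need not be reduced: the ring $R = k[x]/(x^2)$ satisfies both (G$_0$) and (S$_1$) but $R_{\fp} = R$ is not a field. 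This matters, because your injectivity argument at $\fp$ rests on two facts that hold over a field but fail over a non-reduced $R_{\fp}$, namely that $\xi^{r}_{V}$ is an isomorphism and that ${\bigwedge}^{r}$ of an injection is injective. For instance, over $R = k[x]/(x^2)$ with $M = R^2$ and $N = xM$, the module ${\bigwedge}^{2}_{R}N$ is isomorphic to $k$, yet the map ${\bigwedge}^{2}_{R}N \to {\bigwedge}^{2}_{R}M$ sends the generator $(xe_1)\wedge(xe_2) = x^2(e_1\wedge e_2)$ to $0$, and one checks $\xi^{2}_{N}$ is the zero map on that nonzero module.

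The repair is to drop the appeal to fields and argue as the paper does: a zero-dimensional Gorenstein local ring $R_{\fp}$ is injective over itself, so $\Hom_{R_{\fp}}(-,R_{\fp})$ is exact and $\Ext^{1}_{R_{\fp}}(-,R_{\fp}) = 0$; hence the inclusion $N_{\fp}\hookrightarrow M_{\fp}$ yields a \emph{surjection} $M_{\fp}^{*}\twoheadrightarrow N_{\fp}^{*}$, then, by right-exactness of ${\bigwedge}^{r}$, a surjection ${\bigwedge}^{r}M_{\fp}^{*}\twoheadrightarrow {\bigwedge}^{r}N_{\fp}^{*}$, and dualizing once more gives the injection ${\bigcap}^{r}_{R_{\fp}}N_{\fp}\hookrightarrow {\bigcap}^{r}_{R_{\fp}}M_{\fp}$. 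With that replacement your proof goes through and is, in substance, the paper's proof rephrased via associated primes rather than torsion-freeness.
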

\begin{proof}
By taking $R$-duals to the tautological exact sequence $0 \to N \longrightarrow M \longrightarrow M/N \longrightarrow 0$, we obtain an exact sequence of $R$-modules 
\[
0 \to (M/N)^{*} \longrightarrow M^{*} \longrightarrow N^{*} \longrightarrow \Ext_{R}^{1}(M/N,R). 
\]
Since $R$ satisfies (G$_{0}$) and (S$_{1}$), the ring $Q$ is Gorenstein with $\dim(Q) = 0$. 
Hence the module $Q \otimes_{R} \Ext_{R}^{1}(M/N,R) = \Ext_{Q}^{1}(Q \otimes_{R} M/N,Q)$ vanishes, and  we have a commutative diagram 
\[
\xymatrix{
{\bigcap}^{r}_{R}N \ar[r] \ar[d] &  {\bigcap}^{r}_{R}M \ar[d] 
\\
Q \otimes_{R} {\bigcap}^{r}_{R}N \ar@{^{(}->}[r] & Q \otimes_{R} {\bigcap}^{r}_{R}M.  
}
\]
As ${\bigcap}^{r}_{R}N = \left({\bigwedge}^{r}_{R}N^{*}\right)^{*}$, any $R$-regular element is ${\bigcap}^{r}_{R}N$-regular. Hence the left vertical arrow is injective, which completes the proof. 
\end{proof}

\begin{remark}
When $R$ satisfies (G$_{0}$) and (S$_{1}$), for an ideal $I$ of $R$, the canonical homomorphism $I^{**} \longrightarrow R^{**} = R$ is injective by Lemma~\ref{lem:sub}. 
Therefore, in this case, one can regard $I^{**}$ as an ideal of $R$ and $I$ as a submodule of $I^{**}$. 
\end{remark}

\begin{definition}\label{def:char}
Take an integer $r>0$ and an exact sequence of $R$-modules 
\[
0 \longrightarrow N \longrightarrow R^{r} \longrightarrow M \longrightarrow 0. 
\]
We then define the characteristic ideal ${\rm char}_{R}(M)$ of $M$ by  
\[
{\rm char}_{R}(M) := \im\left({\bigcap}^{r}_{R}N \longrightarrow {\bigcap}^{r}_{R}R^{r} = R \right). 
\]
\end{definition}

\begin{remark}\label{rem:flat}
If $R'$ is a flat $R$-algebra, then we have  ${\rm char}_{R}(M)R' = {\rm char}_{R'}(R' \otimes_{R}M)$. 
\end{remark}

\begin{remark}\label{rem:indep-char}
%The uniqueness of minimal free resolutions of finitely generated modules over local noetherian rings implies that 
%the ideal ${\rm char}_{R}(M)$ is independent of the choice of the exact sequence 
%$0 \longrightarrow N \longrightarrow R^{r} \longrightarrow M \longrightarrow 0$. 
The ideal ${\rm char}_{R}(M)$ is independent of the choice of the exact sequence
$0 \longrightarrow N \longrightarrow R^{r} \longrightarrow M \longrightarrow 0$. 

In fact, by Remark~\ref{rem:flat}, we may assume that $R$ is a local noetherian ring. 
Then any free resolution of $M$ is isomorphic to the direct sum of the minimal free resolution of $M$ and a trivial complex. 
Hence it suffices to show that 
\[
\im\left({\bigcap}^{r}_{R}N \longrightarrow {\bigcap}^{r}_{R}R^{r} = R \right) = 
\im\left({\bigcap}^{r+s}_{R}(N \oplus R^{s}) \longrightarrow {\bigcap}^{r+s}_{R}R^{r+s} = R \right).
\]
The homomorphism $(R^{r+s})^{*} = (R^{r})^{*} \oplus  (R^{s})^{*} \longrightarrow N^{*} \oplus (R^{s})^{*} = (N \oplus R^{s})^{*}$ induces a commutative diagram
\[
\xymatrix{
{\bigwedge}^{r}_{R}(R^{r})^{*} \otimes  \bigwedge^{s}_{R}(R^{s})^{*} \ar[r] \ar[d]^-{=} & 
{\bigwedge}^{r}_{R}(N)^{*} \otimes  {\bigwedge}^{s}_{R}(R^{s})^{*}  \ar@{^{(}->}[d] 
\\
 {\bigwedge}^{r+s}_{R}(R^{r+s})^{*}  \ar[r] & {\bigwedge}^{r+s}_{R}(N \oplus R^{s})^{*}.   
}
\]
By taking $R$-duals to this commutative diagram, we obtain the following commutative diagram: 
\[
\xymatrix{
{\bigcap}^{r+s}_{R}(N \oplus R^{s}) \ar[r] \ar@{->>}[d] & {\bigcap}^{r+s}_{R}R^{r+s} \ar[d]^-{=}
\\
{\bigcap}^{r}_{R}N \otimes {\bigcap}^{s}_{R}R^{s} \ar[r] & {\bigcap}^{r}_{R}R^{r} \otimes {\bigcap}^{s}_{R}R^{s}.   
}
\]
Hence  $\im\left({\bigcap}^{r}_{R}N \longrightarrow {\bigcap}^{r}_{R}R^{r} = R \right) = \im\left({\bigcap}^{r+s}_{R}(N \oplus R^{s}) \longrightarrow {\bigcap}^{r+s}_{R}R^{r+s} = R \right)$. 
\end{remark}

\begin{example}  
For an ideal $I$ of $R$, we have ${\rm char}_{R}(R/I) = {\rm im}\left(I^{**} \longrightarrow R^{**} = R\right)$. 
\end{example}

\begin{proposition}\label{prop:fitt-char}\
\begin{itemize}
\item[(i)] We have  ${\rm Fitt}_{R}^{0}(M) \subseteq {\rm char}_{R}(M)$. 
\item[(ii)] If the projective dimension of $M$ is at most $1$, then ${\rm Fitt}_{R}^{0}(M) = {\rm char}_{R}(M)$. 
\end{itemize}
\end{proposition}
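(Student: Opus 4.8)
The plan is to read off both ideals directly from the presentation that defines $\mathrm{char}_R(M)$. Fix an exact sequence $0\to N\to R^r\to M\to 0$ with $r>0$; by Remark~\ref{rem:indep-char} the ideal $\mathrm{char}_R(M)$ does not depend on this choice, and by definition $\mathrm{char}_R(M)=\mathrm{im}\bigl(\bigcap^r_R N\to\bigcap^r_R R^r=R\bigr)$, where the map is the $R$-dual of the restriction map $\bigwedge^r_R(R^r)^\ast\to\bigwedge^r_R N^\ast$. Since $R$ is noetherian, $N$ is finitely generated, so choosing generators $n_1,\dots,n_a$ of $N$ (viewed inside $R^r$) gives a finite presentation $R^a\to R^r\to M\to 0$ whose matrix has columns $n_1,\dots,n_a$; thus $\mathrm{Fitt}^0_R(M)$ is, by definition, the ideal generated by the $r\times r$ minors of this matrix. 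Because $\bigwedge^r_R(-)$ is right exact, $\bigwedge^r_R R^a\to\bigwedge^r_R N$ is surjective, so $\mathrm{Fitt}^0_R(M)$ equals the image of the composite $\bigwedge^r_R N\to\bigwedge^r_R R^r=R$ induced by the inclusion $\iota\colon N\hookrightarrow R^r$.

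The key point, which I would isolate as a preliminary observation, is that this latter map factors as $\bigwedge^r_R N\xrightarrow{\ \xi_N^r\ }\bigcap^r_R N\to\bigcap^r_R R^r=R$. This is just the naturality square for the canonical map $\xi^r$ of \S\ref{sec:bi-dual}: unwinding the definitions, for $n_1,\dots,n_r\in N$ the element $\xi_N^r(n_1\wedge\cdots\wedge n_r)$ is sent by $\bigcap^r_R N\to R$ to $\det(\langle e_i^\ast,n_j\rangle)_{i,j}$, which is exactly the image of $n_1\wedge\cdots\wedge n_r$ under $\bigwedge^r_R\iota$. Granting this, the image of $\bigwedge^r_R N\to R$ is contained in the image of $\bigcap^r_R N\to R$, which gives $\mathrm{Fitt}^0_R(M)\subseteq\mathrm{char}_R(M)$, proving part~(i).

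For part~(ii), I would note that $\mathrm{pd}_R(M)\le 1$ forces the kernel $N$ of any surjection $R^r\twoheadrightarrow M$ to be projective (Schanuel's lemma), hence finitely generated projective. By the remark following Definition~\ref{def exterior bidual}, $\xi_N^r\colon\bigwedge^r_R N\to\bigcap^r_R N$ is then an isomorphism, so the two images appearing in the previous paragraph coincide, and therefore $\mathrm{Fitt}^0_R(M)=\mathrm{char}_R(M)$.

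The only mildly delicate step is the preliminary observation in the second paragraph — matching the abstract map $\bigcap^r_R N\to\bigcap^r_R R^r=R$ with the classical $r\times r$ minors — but this is a routine computation with the definitions of $\xi^r$ and of the induced map on bi-duals, with no genuine obstacle; all the rest is formal.
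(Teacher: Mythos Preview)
Your proof is correct and follows essentially the same route as the paper: both arguments compute $\mathrm{Fitt}^0_R(M)$ as the image of ${\bigwedge}^r_R R^a \to {\bigwedge}^r_R R^r = R$ (for a surjection $R^a \twoheadrightarrow N$) and use the naturality square for $\xi^r$ to factor this through ${\bigcap}^r_R N \to R$; for part~(ii) both observe that $N$ is projective so that $\xi_N^r$ is an isomorphism. Your version just makes the intermediate step through ${\bigwedge}^r_R N$ explicit, which is a cosmetic difference.
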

\begin{proof}
Take an integer $r>0$, an exact sequence of $R$-modules 
\[
0 \longrightarrow N \longrightarrow R^{r} \longrightarrow M \longrightarrow 0, 
\]
and a surjection $R^{s} \longrightarrow N$. Then we have  a commutative diagram 
\[
\xymatrix{
{\bigwedge}^{r}_{R}R^{s} \ar[r] \ar[d] & {\bigwedge}^{r}_{R}R^{r} = R \ar[d]^-{=} 
\\
{\bigcap}^{r}_{R}N \ar[r] & {\bigcap}^{r}_{R}R^{r} = R. 
}
\]
Hence we conclude 
\begin{align*}
{\rm Fitt}_{R}^{0}(M) &= \im\left({\bigwedge}^{r}_{R}R^{s} \longrightarrow {\bigwedge}^{r}_{R}R^{r} = R\right) 
\subseteq \im\left({\bigcap}^{r}_{R}N \longrightarrow {\bigcap}^{r}_{R}R^{r} = R \right) 
= {\rm char}_{R}(M). 
\end{align*}
Furthermore, if the projective dimension of $M$ is at most $1$, then $N$ is a projective $R$-module. In this case, the canonical map ${\bigwedge}^{r}_{R}R^{s} \longrightarrow 
{\bigwedge}^{r}_{R}N = {\bigcap}^{r}_{R}N$ is surjective, and hence 
${\rm Fitt}_{R}^{0}(M) = {\rm char}_{R}(M)$. 
\end{proof}

\begin{proposition}\label{prop:artin-char}
If $R$ is a zero dimensional Gorenstein ring, then we have  
\[
{\rm char}_{R}(M) = \Ann_{R}(M). 
\] 
\end{proposition}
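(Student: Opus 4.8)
The plan is to reduce to the case where $R$ is a zero-dimensional Gorenstein \emph{local} ring, and then to compute ${\rm char}_{R}(M)$ explicitly by choosing a suitable two-term complex and invoking Lemma~\ref{lem:expre}.

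First I would reduce to the local case. A noetherian zero-dimensional ring is artinian, hence a finite product $R = \prod_{i} R_{i}$ of artinian local rings, and the Gorenstein hypothesis forces each factor $R_{i}$ to be zero-dimensional Gorenstein local. Each projection $R \to R_{i}$ is flat (indeed $R_{i}$ is a direct summand of $R$), so Remark~\ref{rem:flat} gives ${\rm char}_{R}(M)R_{i} = {\rm char}_{R_{i}}(R_{i}\otimes_{R}M)$, and likewise $\Ann_{R}(M)R_{i} = \Ann_{R_{i}}(R_{i}\otimes_{R}M)$; since an ideal of $\prod_{i}R_{i}$ is determined by its images in the factors, it suffices to prove the statement for each $R_{i}$. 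So from now on assume $R$ is a zero-dimensional Gorenstein local ring. In this setting, as recalled at the start of the ``Basic properties'' part of \S\ref{sec:bi-dual}, every free $R$-module is injective, the functor $(-)^{*}$ is exact, and $\xi^{1}_{M}\colon M \to {\bigcap}^{1}_{R}M = M^{**}$ is an isomorphism for every finitely generated $M$. Consequently $M$ is torsionless: picking a surjection $R^{m}\twoheadrightarrow M^{*}$ and dualizing yields an injection $M \cong M^{**}\hookrightarrow R^{m}$. Moreover $R$ is Gorenstein, hence satisfies conditions (G$_{1}$) and (S$_{2}$), so Lemma~\ref{lem:expre} is available.

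Next I would compute ${\rm char}_{R}(M)$. Fix an exact sequence $0 \to N \to R^{r}\xrightarrow{p} M \to 0$ with $r>0$; by Remark~\ref{rem:indep-char} the answer does not depend on this choice. Compose $p$ with a fixed embedding $\iota\colon M\hookrightarrow R^{m}$ to obtain $\alpha := \iota\circ p\colon R^{r}\to R^{m}$, which has kernel $N$ and image $I := \im(\alpha)\cong M$. Lemma~\ref{lem:expre} then identifies the canonical map ${\bigcap}^{r}_{R}N \to {\bigcap}^{r}_{R}R^{r} \cong {\bigwedge}^{r}_{R}R^{r} = R$ (whose image is, by definition, ${\rm char}_{R}(M)$) with the inclusion into $R$ of
\[
\ker\!\left( {\bigwedge}^{r}_{R}R^{r} \xrightarrow{\ \alpha\ } I \otimes_{R} {\bigwedge}^{r-1}_{R}R^{r} \right).
\]
Writing $\omega = e_{1}\wedge\cdots\wedge e_{r}$ for the canonical generator of ${\bigwedge}^{r}_{R}R^{r}$ and using the basis $\{\, e_{1}\wedge\cdots\wedge\widehat{e_{i}}\wedge\cdots\wedge e_{r}\,\}_{i=1}^{r}$ of ${\bigwedge}^{r-1}_{R}R^{r}$, the map $\alpha$ sends $c\,\omega$ to the tuple $\bigl( (-1)^{i+1} c\, p(e_{i}) \bigr)_{i=1}^{r}$ in $I^{\oplus r}\cong M^{\oplus r}$. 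Since $p(e_{1}),\dots,p(e_{r})$ generate $M$, this kernel equals $\{\, c\in R : cM = 0\,\}\cdot\omega$, which under the identification ${\bigwedge}^{r}_{R}R^{r} = R$ is precisely $\Ann_{R}(M)$. Therefore ${\rm char}_{R}(M) = \Ann_{R}(M)$.

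The only point requiring care is the passage to a complex $0 \to N \to R^{r}\to R^{m}$ of the shape needed by Lemma~\ref{lem:expre}: this rests on the torsionlessness of $M$, which is available precisely because $R$ is zero-dimensional Gorenstein (equivalently, quasi-Frobenius). Once this is in place the argument is a direct unwinding of the definition of ${\rm char}_{R}$ together with Lemma~\ref{lem:expre}, and I anticipate no further difficulty.
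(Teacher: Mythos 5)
Your proof is correct, but it takes a slightly different route from the paper's. The paper dualizes the chosen presentation $0 \to N \to R^{r} \xrightarrow{p} M \to 0$ directly: exactness of $(-)^{*}$ on $0\le$-dimensional Gorenstein rings gives
\[
M^{*} \otimes_{R} {\bigwedge}^{r-1}_{R}(R^{r})^{*} \longrightarrow {\bigwedge}^{r}_{R}(R^{r})^{*} \longrightarrow {\bigwedge}^{r}_{R}N^{*} \longrightarrow 0,
\]
then dualizing again and using $M \cong M^{**}$ produces $0 \to {\bigcap}^{r}_{R}N \to R \to M^{r}$, where $R \to M^{r}$ is read off as $c \mapsto (c\,p(e_{i}))_{i}$; the kernel is $\Ann_{R}(M)$ immediately. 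You instead first embed $M$ into a free module $R^{m}$ (via $M \cong M^{**} \hookrightarrow R^{m}$), form the two-term complex $0 \to N \to R^{r} \to R^{m}$ so as to invoke Lemma~\ref{lem:expre}, and then perform essentially the same basis computation. The extra embedding step and the appeal to Lemma~\ref{lem:expre} are both valid but add machinery the paper's direct double-dualization avoids. Your preliminary reduction to the local case is also unnecessary: Matlis duality ($M \cong M^{**}$) and exactness of $(-)^{*}$ hold for any zero-dimensional Gorenstein (i.e.\ quasi-Frobenius) ring, not only local ones, and this is all the paper uses; but the reduction is correct and harmless. Both routes buy the same thing; the paper's is shorter and more self-contained, while yours illustrates how Lemma~\ref{lem:expre} can be applied in degenerate dimension.
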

\begin{proof}
Take an integer $r > 0$ and an exact sequence of $R$-modules 
\[
0 \longrightarrow N \longrightarrow R^{r} \longrightarrow M \longrightarrow 0. 
\]
Since the functor $(-)^{*}$ is exact, we have  an exact sequence 
\[
M^{*} \otimes_{R} {\bigwedge}^{r-1}_{R}(R^{r})^{*} \longrightarrow {\bigwedge}^{r}_{R}(R^{r})^{*} 
\longrightarrow {\bigwedge}^{r}_{R}N^{*} \longrightarrow 0. 
\]
Note that $M = M^{**}$ by Matlis duality. 
By taking $R$-duals to this exact sequence, we obtain an exact sequence 
\[
0 \longrightarrow {\bigcap}^{r}_{R}N \longrightarrow R \longrightarrow 
\left(M^{*} \otimes_{R} {\bigwedge}^{r-1}_{R}(R^{r})^{*} \right)^{*} = 
M^{r}. 
\]
The homomorphism $R \longrightarrow M^{r}$ corresponds to the homomorphism $R^{r} \longrightarrow M$ 
under the canonical isomorphism $\Hom_{R}(R,M^{r}) \stackrel{\sim}{\longrightarrow} \Hom_{R}(R^{r},M)$. 
Hence we conclude 
\[
\mathrm{char}_{R}(M) = \Ann_{R}\left(\im(R \longrightarrow M^{r})\right) = \Ann_{R}(M). 
\]
\end{proof}

\begin{remark}
In general, the characteristic ideal is not additive on the (split) short exact sequence of finitely generated $R$-modules. 
We give two counter-examples. 

\begin{itemize}
\item[(i)] Suppose that $R$ is a zero dimensional Gorenstein ring. We then have 
\[
{\rm char}_{R}(R/I \oplus R/J) = I \cap J  
\]
for any ideals $I$ and $J$ of $R$ by Proposition~\ref{prop:artin-char}. 
Hence if $IJ \neq I \cap J$, we have 
\[
{\rm char}_{R}(R/I \oplus R/J) \neq {\rm char}_{R}(R/I){\rm char}_{R}(R/J). 
\]

\item[(ii)] Suppose that $R$ is a noetherian local ring. 
Take a regular element $r \in R$ and an ideal $I$ of $R$ with $r \in I$. 
We then have an exact sequence of $R$-modules
\[
0 \longrightarrow I/rR \longrightarrow R/rR \longrightarrow R/I \longrightarrow 0. 
\]
We note that ${\rm char}_{R}(R/rR) = rR \cong R$ since $r$ is a regular element. 
Therefore, if we have 
\[
{\rm char}_{R}(R/I){\rm char}_{R}(I/rR) = {\rm char}_{R}(R/rR) \cong R, 
\]
the ideal 
${\rm char}_{R}(R/I)$ is invertible. 
Since an invertible ideal is projective, the ideal  ${\rm char}_{R}(R/I)$ is principal. 
Hence we conclude that if ${\rm char}_{R}(R/I)$ is not principal, then we have 
\[
{\rm char}_{R}(R/rR) \neq {\rm char}_{R}(R/I){\rm char}_{R}(I/rR). 
\]
\end{itemize}
\end{remark}

We devote the rest of this appendix to proving the following important property of characteristic ideals.

\begin{theorem}\label{thm:inequality}
Let $R$ be a noetherian ring satisfying (G$_{1}$) and (S$_{2}$). 
Let $M$ be a finitely generated $R$-module. 
Then for any $R$-submodule $N$ of $M$, we have  
\[
{\rm char}_{R}(M) \subseteq {\rm char}_{R}(N). 
\]
\end{theorem}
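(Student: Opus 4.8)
The plan is to reduce the statement to a comparison of two "Rubin-lattice type" kernels, exactly as in the proof of Lemma~\ref{lem:expre}, and then to produce a natural inclusion between them. Concretely, I would proceed as follows.

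\textbf{Step 1: Set up the syzygies.} Choose an integer $r>0$ and a surjection $R^{r}\twoheadrightarrow M$ with kernel $M'$, giving $0\to M'\to R^{r}\to M\to 0$. Since $R$ satisfies (G$_{1}$) and (S$_{2}$), any finitely generated $R$-module admits a presentation $0\to(\text{torsion-free part})\to P^{1}\xrightarrow{\alpha}P^{2}$ with $P^{1},P^{2}$ free of finite rank (take $P^{1}\to M'$ a surjection, then present the image of $M'$ in $R^{r}$ together with further relations; more cleanly, use the $2$-syzygy characterisation of Remark~\ref{rem:equiv} applied to the reflexive hull, or simply pick a surjection $R^{a}\to M'$ and set $P^{1}=R^{a}$, noting that the composite $R^{a}\to R^{r}$ has image $M'$). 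Applying Lemma~\ref{lem:expre} to $0\to M'\to R^{r}\xrightarrow{\pi}\,\text{(something)}$ — more precisely to the exact sequence $0\to M'\to R^{r}\to M^{\flat\flat}$ is not quite available, so instead I present $M'$ itself via $0\to {\bigcap}^{r}_{R}M'\to{\bigwedge}^{r}_{R}R^{a}\to\cdots$ as in that lemma. The upshot of Lemma~\ref{lem:expre} is a clean description of ${\bigcap}^{r}_{R}M'$ (and of ${\bigcap}^{r}_{R}N'$ for a syzygy $N'$ of $N$) as the kernel of an explicit map out of a top exterior power of a free module.

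\textbf{Step 2: Compare the two characteristic ideals.} By Remark~\ref{rem:indep-char}, ${\rm char}_{R}(M)$ and ${\rm char}_{R}(N)$ do not depend on the chosen presentations, so I may choose them compatibly: pick a surjection $R^{r}\twoheadrightarrow M$ and let $P$ be the preimage of $N$, so that $P\subseteq R^{r}$ is an $R$-submodule with $R^{r}/P\cong M/N$ and $P/M'\cong N$, where $M'=\ker(R^{r}\to M)\subseteq P$. Thus I have a short exact sequence $0\to M'\to P\to N\to 0$ together with $M'\subseteq P\subseteq R^{r}$. Now ${\rm char}_{R}(N)$ is computed from the presentation $0\to M'\to P\to N\to 0$ (choosing a surjection from a free module onto $P$ and composing — again justified by Remark~\ref{rem:indep-char}, since $P$ need not be free, one first resolves $P$), while ${\rm char}_{R}(M)$ comes from $0\to M'\to R^{r}\to M\to 0$. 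The inclusion $P\hookrightarrow R^{r}$ induces, via functoriality of ${\bigwedge}^{r}_{R}(-)^{*}$ followed by $R$-dualising, a map on exterior bi-duals fitting into a commutative square with the two maps "$\,{\bigcap}^{r}_{R}M'\to{\bigcap}^{r}_{R}(\,\cdot\,)=R$", and I must read off that the image for $M'$ inside $R$ computed "through $P$" contains the one computed "through $R^{r}$". The key point is that both images are images of the same module ${\bigcap}^{r}_{R}M'$ under two maps to $R$ that differ by the extra quotient $M/N$; using Lemma~\ref{lem:expre} to identify ${\bigcap}^{r}_{R}M'$, ${\bigcap}^{r}_{R}P$ with explicit kernels, and using that passing to a smaller free ambient module only enlarges the relevant kernel, the inclusion ${\rm char}_{R}(M)\subseteq{\rm char}_{R}(N)$ drops out. (When $R$ is a zero-dimensional Gorenstein ring this is just the evident inclusion $\Ann_{R}(M)\subseteq\Ann_{R}(N)$ from Proposition~\ref{prop:artin-char}, and the general argument should degenerate to exactly this.)

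\textbf{Step 3: Reduce to the local, and then to the reduced, case if convenient.} By Remark~\ref{rem:flat} one may localise at a prime; and since (G$_{1}$)+(S$_{2}$) pass to localisations, one may assume $R$ is local. One can also try to reduce modulo nilpotents or to base-change to the total ring of fractions $Q$ (where $Q$ is zero-dimensional Gorenstein by (G$_{0}$)+(S$_{1}$), as used in Lemma~\ref{lem:sub}) to handle the "generic" part via Proposition~\ref{prop:artin-char}, and then track the discrepancy supported in codimension $\geq 1$, where Lemma~\ref{lem:ext vanish} kills the relevant $\Ext$'s. \textbf{The main obstacle} I anticipate is precisely the bookkeeping in Step~2: matching up, without choosing bases, the two computations of ${\rm char}_{R}$ from non-free syzygies $M'\subseteq P\subseteq R^{r}$, and verifying that the induced map ${\bigcap}^{r}_{R}M'\to R$ "factors through" the computation over $P$ in the way that forces the inclusion of images — this is where the exactness of $(-)^{*}$ over $Q$, the pseudo-nullity of the relevant $\Ext^{1},\Ext^{2}$ terms, and Lemma~\ref{lem:expre}'s identification of exterior bi-duals with concrete kernels all have to be combined carefully.
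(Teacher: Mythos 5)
Your proposal correctly identifies the broad shape of the problem --- compare two "Rubin-lattice" type kernels arising from compatible presentations, reduce to a local situation where the relevant $\Ext$ groups vanish --- but there are several concrete gaps, and the route you sketch in Step~2 diverges from the paper's actual strategy in a way that I do not believe can be easily patched.

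First, Step~1 as written does not apply Lemma~\ref{lem:expre} to the right module. To use Lemma~\ref{lem:expre} to describe ${\bigcap}^{r}_{R}M'$ for $M' = \ker(R^{r}\twoheadrightarrow M)$, you would need an exact sequence $0\to M'\to P^{1}\to P^{2}$ with $P^{1},P^{2}$ free. You have $P^{1}=R^{r}$, but then $P^{2}$ would have to contain $M=R^{r}/M'$, which is impossible when $M$ has torsion. Your workaround --- choose a surjection $R^{a}\twoheadrightarrow M'$ and apply the lemma to $R^{a}\to R^{r}$ --- identifies ${\bigcap}^{r}_{R}\ker(R^{a}\to M')$, not ${\bigcap}^{r}_{R}M'$, so it describes the wrong object.

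Second, and more seriously, the decomposition in Step~2 ($M'\subseteq P\subseteq R^{r}$ with $P$ the preimage of $N$) does not line up with how ${\rm char}_{R}(N)$ is computed. Any free presentation $0\to Y\to R^{a}\to N\to 0$ obtained by resolving $P$ gives a map $R^{a}\to R^{r}$ with image $P$, and this map is neither injective nor a split inclusion; its kernel $K$ enters via an extension $0\to K\to Y\to M'\to 0$, and there is no obvious functorial map relating ${\bigcap}^{a}_{R}Y$ to ${\bigcap}^{r}_{R}M'$ compatibly with the two maps to $R$. Note also that $\im({\bigcap}^{r}_{R}P\to R)$ is ${\rm char}_{R}(M/N)$, not ${\rm char}_{R}(N)$, so the naive picture of "the computation factoring through $P$" points at the wrong ideal. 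The paper avoids all of this by invoking the horseshoe lemma to produce compatible free resolutions $0\to Y\to R^{r}\to N\to 0$ and $0\to X\to R^{r+s}\to M\to 0$ in which $R^{r}\hookrightarrow R^{r+s}$ is a split injection; the comparison of ${\bigcap}^{r+s}_{R}X$ and ${\bigcap}^{r}_{R}Y$ is then carried out by contracting against a fixed generator $\omega$ of $\det((R^{*})^{s})$. This split structure is what makes the diagram chase tractable, and your setup does not have it.

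Third, Step~3's reduction is incomplete. Merely localizing at an arbitrary prime (justified by Remark~\ref{rem:flat}) does not reduce the Krull dimension. What the paper actually does is: both ${\rm char}_{R}(M)$ and ${\rm char}_{R}(N)$ are reflexive ideals by Proposition~\ref{prop:reflexible}, and by Lemma~\ref{lem:inequality} an inclusion of reflexive ideals follows from the inclusion after localizing at height $\leq 1$ primes; by (G$_{1}$) those localizations are Gorenstein of dimension $\leq 1$, and this is where the crucial vanishing $\Ext^{2}_{R}(M/N,R)=0$ becomes available. You gesture at this ("track the discrepancy supported in codimension $\geq 1$") but never invoke Proposition~\ref{prop:reflexible} or Lemma~\ref{lem:inequality}, which is the actual mechanism. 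Finally, I would caution against "reducing modulo nilpotents": there is no base-change lemma for ${\rm char}_{R}$ along non-flat surjections, and (S$_{2}$) already forces $R$ to be reduced generically, so this is unlikely to help.
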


\begin{lemma}\label{lem:reflexible}
Let $R$ be a noetherian ring satisfying (G$_{0}$) and (S$_{1}$).  
Let $M$ be a finitely generated $R$-module. 
Then the homomorphisms 
\[
\xi^{1}_{M^{*}} \colon M^{*} \longrightarrow M^{***} \,\, \textrm{ and } \,\,  (\xi^{1}_{M})^{*} \colon  M^{***} \longrightarrow M^{*} 
\]
are isomorphisms. 
\end{lemma}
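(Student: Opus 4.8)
The plan is to prove that for any finitely generated module $M$ over a noetherian ring $R$ satisfying (G$_0$) and (S$_1$), the natural maps $\xi^1_{M^*}\colon M^*\to M^{***}$ and $(\xi^1_M)^*\colon M^{***}\to M^*$ are isomorphisms. The crucial observation is the elementary functorial identity
\[
(\xi^1_M)^*\circ\xi^1_{M^*}=\id_{M^*},
\]
which holds for any commutative ring: unwinding the definitions, $\xi^1_{M^*}$ sends $\varphi\in M^*$ to evaluation-at-$\varphi$, and $(\xi^1_M)^*$ precomposes with $\xi^1_M$, so the composite sends $\varphi$ to the functional $m\mapsto (\xi^1_M(m))(\varphi)=\varphi(m)$, i.e. back to $\varphi$. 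Hence it suffices to prove that $\xi^1_{M^*}$ is an isomorphism (equivalently that $(\xi^1_M)^*$ is), after which this identity forces both to be mutually inverse isomorphisms. This reduces the statement to a single fact: the $R$-dual $M^*$ of any finitely generated $R$-module is reflexive when $R$ satisfies (G$_0$) and (S$_1$).

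First I would record that $M^*$ is a second syzygy: dualizing a finite presentation $R^a\to R^b\to M\to 0$ gives an exact sequence $0\to M^*\to R^b\to R^a$, exhibiting $M^*$ as a submodule of a free module that embeds (via the dual of $R^a\to R^a{}^{**}$) into a finite free module, hence $M^*$ is $2$-syzygy in the sense of Appendix~\ref{sec:bi-dual}. Next I would pass to the total ring of fractions $Q$ of $R$: since $R$ satisfies (G$_0$) and (S$_1$), the ring $Q$ is a finite product of zero-dimensional Gorenstein local rings, so every finitely generated $Q$-module is reflexive (this is Matlis duality over each factor, exactly as used in Appendix~\ref{sec:bi-dual}). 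In particular $Q\otimes_R\xi^1_{M^*}$ is an isomorphism, so both the kernel and cokernel of $\xi^1_{M^*}$ are $R$-torsion, i.e. killed by a regular element. I would then observe that the kernel of $\xi^1_{M^*}$ is zero: an element of $M^*$ killed by $\xi^1_{M^*}$ lies in the kernel of $M^*\to M^{***}$, but since $M^*\hookrightarrow R^b$ (again from the dual presentation) and $R^b$ is reflexive with a functorial, hence compatible, map $\xi^1$, injectivity of $\xi^1_{M^*}$ follows from injectivity of $\xi^1_{R^b}$ together with naturality. For surjectivity, given $\psi\in M^{***}$, its image in $Q\otimes_R M^{***}=Q\otimes_R M^*$ is represented by some $\varphi/r$ with $\varphi\in M^*$, $r\in R$ regular; I would argue that $\varphi$ is already divisible by $r$ in $M^*$ using (S$_1$): $M^*$, being a submodule of the free module $R^b$ over a ring with no embedded primes, has no embedded primes either, and the equation $r\mid\varphi$ holds after localizing at every height-zero prime (where it is the $Q$-statement) hence holds in $M^*$. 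This yields $\psi=\xi^1_{M^*}(\varphi/r\cdot\text{(integer)})$ in $M^*$, proving surjectivity.

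Finally, having shown $\xi^1_{M^*}$ is an isomorphism, the identity $(\xi^1_M)^*\circ\xi^1_{M^*}=\id_{M^*}$ shows $(\xi^1_M)^*$ is a left inverse of an isomorphism, hence itself an isomorphism, completing the proof. The main obstacle I anticipate is the divisibility argument for surjectivity: one must leverage (S$_1$) to move from a statement that holds at all minimal primes (where $R$ looks like the Gorenstein artinian ring $Q$) to an honest statement in $M^*$, and one has to be careful that $M^*$ genuinely inherits the "no embedded primes" property from $R$ — this is where the hypothesis that $R$ satisfies (S$_1$), not merely (G$_0$), is essential, and it is the reason the lemma is stated with exactly these hypotheses. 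Everything else is formal manipulation with the canonical maps $\xi^r$ already set up in Appendix~\ref{sec:bi-dual}.
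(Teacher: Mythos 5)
Your opening move is exactly the paper's: you observe the retraction identity $(\xi^1_M)^*\circ\xi^1_{M^*}=\id_{M^*}$, after which it suffices to establish \emph{either} that $\xi^1_{M^*}$ is surjective \emph{or} that $(\xi^1_M)^*$ is injective. The paper takes the second route: dualizing a presentation $R^a\to R^b\to M\to 0$ and writing $C=\coker(M^*\hookrightarrow (R^b)^*)$, it extracts a four-term exact sequence $M\to M^{**}\to\Ext^1_R(C,R)\to 0$, notes that $Q\otimes_R\Ext^1_R(C,R)=\Ext^1_Q(Q\otimes_R C,Q)=0$ by (G$_0$) so that $\Ext^1_R(C,R)^*=0$ (using (S$_1$), since a torsion module has zero dual when $R\hookrightarrow Q$), and then dualizes to get $0\to\Ext^1_R(C,R)^*\to M^{***}\xrightarrow{(\xi^1_M)^*} M^*$. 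Your injectivity argument for $\xi^1_{M^*}$ is fine but redundant (it already follows from the retraction), and it plays no logical role.

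The actual gap is in your surjectivity argument. The claim that ``$r\mid\varphi$ holds after localizing at every height-zero prime, hence holds in $M^*$'' is false. Take $R=M^*=\bZ$, which satisfies (G$_0$) and (S$_1$) (it is a domain, so $\Ass(\bZ)=\{(0)\}$ and there are no embedded primes), with $r=2$ and $\varphi=1$: at the unique height-zero prime $(0)$ one has $\bZ_{(0)}=\bQ$ and $2\mid 1$ there, yet $2\nmid 1$ in $\bZ$. Having no embedded primes controls torsion, not divisibility — to lift a divisibility statement you would have to check it at the associated primes of $M^*/rM^*$, which are typically of height one, not zero. In fact the divisibility claim is essentially equivalent to the very statement you are trying to prove, so the argument is circular. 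The cleanest repair stays within your framework: the retraction gives a splitting $M^{***}=\im(\xi^1_{M^*})\oplus\ker((\xi^1_M)^*)$; since $Q\otimes_R\xi^1_{M^*}$ is an isomorphism, the summand $\ker((\xi^1_M)^*)$ is $R$-torsion; but $M^{***}$ is a dual module over a ring with no embedded primes (so $R\hookrightarrow Q$), hence torsion-free; therefore $\ker((\xi^1_M)^*)=0$, and both maps are isomorphisms. This repaired argument is genuinely different from (and arguably more elementary than) the paper's $\Ext^1$ computation.
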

\begin{proof}
The composite map  
\[
M^{*} \xrightarrow{\xi^{1}_{M^{*}}} M^{***}  \xrightarrow{(\xi^{1}_{M})^{*}}  M^{*}
\] 
is the identity map.
Hence it suffices to prove that the homomorphism $ (\xi^{1}_{M})^{*} \colon M^{***} \longrightarrow M^{*}$ is injective. 
To see this, we take an exact sequence $R^{a} \longrightarrow R^{b} \longrightarrow M \longrightarrow 0$ and 
set $C := {\rm coker}\left(M^{*} \hooklongrightarrow (R^{b})^{*}\right)$. 
We then have  the commutative diagram with exact rows: 
\[
\xymatrix{
& R^a \ar[r] \ar[d] & R^b \ar[r] \ar[d]^{\cong} &M \ar[r]\ar[d] & 0 & 
\\
0 \ar[r] & C^* \ar[r] & (R^b)^{**} \ar[r] & M^{**} \ar[r] & \Ext_R^1(C, R) \ar[r] & 0,  
}
\]
and it induces an exact sequence of $R$-modules 
\[
M \longrightarrow M^{**} \longrightarrow \Ext_{R}^{1}(C, R) \longrightarrow 0. 
\]
Since $Q \otimes_{R} \Ext_{R}^{1}(C, R) = \Ext_{Q}^{1}(Q \otimes_{R} C, Q) = 0$, 
the module $\Ext_{R}^{1}(C,R)^{*}$ also vanishes. In particular, the homomorphism $ (\xi^{1}_{M})^{*} \colon M^{***} \longrightarrow M^{*}$  is injective. 
\end{proof}

\begin{proposition}\label{prop:reflexible}
Let $R$ be a noetherian ring satisfying (G$_{0}$) and (S$_{1}$).  
Let $M$ be a finitely generated $R$-module. 
Then the characteristic ideal  ${\rm char}_{R}(M)$ is reflexive, that is, ${\rm char}_{R}(M) = {\rm char}_{R}(M)^{**}$. 
\end{proposition}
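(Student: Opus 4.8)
The plan is to prove that $C := {\rm char}_{R}(M)$ is, as an abstract $R$-module, isomorphic to the exterior bi-dual ${\bigcap}^{r}_{R}N = ({\bigwedge}^{r}_{R}N^{*})^{*}$ occurring in Definition~\ref{def:char}. Since ${\bigcap}^{r}_{R}N$ is the $R$-dual of the finitely generated module ${\bigwedge}^{r}_{R}N^{*}$, Lemma~\ref{lem:reflexible} shows it is reflexive, so $C$ will be reflexive as a module. By the remark following Lemma~\ref{lem:sub}, the canonical map $C \to C^{**}$ is the composite of the inclusion $C \subseteq R$ with $\xi^{1}$ (which exists and is injective precisely under (G$_{0}$) and (S$_{1}$)), so reflexivity of $C$ as a module is exactly the assertion ${\rm char}_{R}(M) = {\rm char}_{R}(M)^{**}$.

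To produce this isomorphism I would fix an exact sequence $0 \to N \xrightarrow{\iota} R^{r} \to M \to 0$ with $r > 0$, so that ${\rm char}_{R}(M)$ is the image inside ${\bigcap}^{r}_{R}R^{r} = R$ of the $R$-dual of ${\bigwedge}^{r}_{R}\iota^{*}\colon {\bigwedge}^{r}_{R}(R^{r})^{*} \to {\bigwedge}^{r}_{R}N^{*}$. I would set $Z := \im({\bigwedge}^{r}_{R}\iota^{*}) \subseteq {\bigwedge}^{r}_{R}N^{*}$ and factor ${\bigwedge}^{r}_{R}\iota^{*}$ as ${\bigwedge}^{r}_{R}(R^{r})^{*} \xrightarrow{g} Z \xrightarrow{\beta} {\bigwedge}^{r}_{R}N^{*}$ with $g$ surjective and $\beta$ the inclusion. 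Dualizing, the defining map of $C$ becomes $({\bigwedge}^{r}_{R}N^{*})^{*} \xrightarrow{\beta^{*}} Z^{*} \xrightarrow{g^{*}} R$ with $g^{*}$ injective (dual of a surjection), so $C \cong \im(\beta^{*})$; and from $0 \to Z \xrightarrow{\beta} {\bigwedge}^{r}_{R}N^{*} \to Q' \to 0$ with $Q' := \coker(\beta)$, dualizing gives $\ker(\beta^{*}) = \Hom_{R}(Q', R)$, hence $\im(\beta^{*}) \cong ({\bigwedge}^{r}_{R}N^{*})^{*} = {\bigcap}^{r}_{R}N$ as soon as $\Hom_{R}(Q', R) = 0$.

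The remaining point — the one genuinely non-formal step — is to show $\Hom_{R}(Q', R) = 0$, for which it suffices to see that $Q'$ is a torsion $R$-module (any finitely generated torsion module is killed by a non-zerodivisor $s$, and then $s\cdot f(x) = f(sx) = 0$ forces $f(x) = 0$ for every $f \in \Hom_{R}(Q', R)$). To check $Q' \otimes_{R} Q = 0$, where $Q$ is the total ring of fractions, I would dualize $0 \to N \to R^{r} \to M \to 0$ to get $(R^{r})^{*} \xrightarrow{\iota^{*}} N^{*} \to \Ext^{1}_{R}(M, R) \to 0$, then base-change to $Q$ using flatness and that $\Hom$ commutes with flat base change for the finitely presented modules involved. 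As already observed in the proof of Lemma~\ref{lem:sub}, $Q$ is Gorenstein of dimension $0$ — this is where (G$_{0}$) and (S$_{1}$) enter — so $\Ext^{1}_{Q}(-, Q) = 0$; thus $\Ext^{1}_{R}(M,R)\otimes_{R}Q \cong \Ext^{1}_{Q}(M\otimes_{R}Q, Q) = 0$, so $\iota^{*}\otimes_{R}Q$ is surjective, and since the $r$-th exterior power is right exact we get $Z\otimes_{R}Q = {\bigwedge}^{r}_{R}N^{*}\otimes_{R}Q$, whence $Q'\otimes_{R}Q = 0$.

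Putting these together yields $C \cong {\bigcap}^{r}_{R}N$, and Lemma~\ref{lem:reflexible} (applied to $E := {\bigwedge}^{r}_{R}N^{*}$, whose dual $E^{*} = {\bigcap}^{r}_{R}N$ is reflexive) finishes the argument with no further work; the case ${\rm char}_{R}(M) = 0$ is covered automatically since the zero module is reflexive. I expect the only subtle point to be the bookkeeping that identifies $C$ with $\im(\beta^{*})$ and then with ${\bigcap}^{r}_{R}N$ via the vanishing $\Hom_{R}(Q', R) = 0$; the torsion-ness of $Q'$ is the arithmetic heart and rests squarely on the zero-dimensional Gorenstein property of $Q$, and everything else is formal diagram-chasing plus a citation of Lemma~\ref{lem:reflexible}.
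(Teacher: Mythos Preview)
Your proof is correct and follows essentially the same approach as the paper: show $\mathrm{char}_{R}(M) \cong {\bigcap}^{r}_{R}N$ as $R$-modules, then invoke Lemma~\ref{lem:reflexible} to conclude reflexivity. The only difference is that your elaborate argument for the injectivity of ${\bigcap}^{r}_{R}N \to {\bigcap}^{r}_{R}R^{r} = R$ (via the torsion-ness of $Q'$ and the vanishing of $\Hom_{R}(Q',R)$) is already packaged as Lemma~\ref{lem:sub}, which the paper cites directly; you have essentially re-derived that lemma in the special case $N \subseteq R^{r}$.
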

\begin{proof}
Take an exact sequence of $R$-modules 
\[
0 \longrightarrow N \longrightarrow R^{r} \longrightarrow M \longrightarrow 0
\]
with $r>0$. Then Lemmas~\ref{lem:sub} shows that we have the canonical isomorphism 
\[
{\bigcap}_{R}^{r}N \stackrel{\sim}{\longrightarrow} {\rm char}_{R}(M). 
\]
Hence this proposition follows from Lemma~\ref{lem:reflexible}.  
\end{proof}

\begin{lemma}\label{lem:inequality}
Let $R$ be a noetherian ring satisfying (G$_{0}$) and (S$_{2}$).  
Let $I$ and $J$ be ideals of $R$. 
If $I_{\fr} \subseteq J_{\fr}$ for any prime $\fr$ of $R$ with ${\rm ht}(\fr) \leq 1$, then we have  
$I^{**} \subseteq J^{**}$.  
\end{lemma}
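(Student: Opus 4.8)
The plan is to reduce the statement about ideals $I,J$ to a statement about their reflexive hulls and then use the fact that reflexive modules over a ring satisfying (G$_1$) and (S$_2$) are determined by their localizations in codimension $\leq 1$. First I would observe, using Lemma~\ref{lem:sub} (applicable since (G$_0$), (S$_1$) hold), that $I^{**}$ and $J^{**}$ can both be regarded as ideals of $R$ containing $I$ and $J$ respectively, and that the formation of the double dual commutes with localization at any prime $\fr$: since $R$ is noetherian and $I$ is finitely generated, $(I^{**})_{\fr} = (I_{\fr})^{**}$, where the second double dual is taken over $R_{\fr}$.

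Next I would use the hypothesis: for any prime $\fr$ with $\mathrm{ht}(\fr)\le 1$, we have $I_{\fr}\subseteq J_{\fr}$, and $R_{\fr}$ is Gorenstein of dimension $\leq 1$ by (G$_1$); in particular $R_{\fr}$ is a one-dimensional (or zero-dimensional) Gorenstein local ring, hence (after passing to the total ring of fractions issue) an ideal and its double dual agree up to the codimension-$\geq 2$ part, which is empty here. More precisely, $(I_{\fr})^{**}\subseteq (J_{\fr})^{**}$ because double-dualization is functorial for the inclusion $I_{\fr}\hookrightarrow J_{\fr}$ via the induced map $(I_{\fr})^{**}\to (J_{\fr})^{**}$, which is injective by Lemma~\ref{lem:sub} applied over $R_{\fr}$. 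So $(I^{**})_{\fr}\subseteq (J^{**})_{\fr}$ for all $\fr$ of height $\leq 1$.

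The key step is then to promote this codimension-$\leq 1$ containment to a global one. Here I would argue as follows: $J^{**}$ is a reflexive $R$-module, hence satisfies (S$_2$) by Remark~\ref{rem:equiv}; consider the image $W$ of the composite $I^{**}\to R \to R/J^{**}$ (both maps being the canonical ones, the first injective by Lemma~\ref{lem:sub}). Then $W_{\fr}=0$ for every $\fr$ with $\mathrm{ht}(\fr)\le 1$, i.e. $W$ is pseudo-null. But $W$ is a submodule of $R/J^{**}$, and since $R/J^{**}$ has no embedded primes — because $J^{**}$ is reflexive, hence (S$_2$), so $R/J^{**}$ is (S$_1$) away from... — the precise statement I want is that a pseudo-null submodule of a module with no associated primes of height $\leq 1$ is zero. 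Concretely, $\mathrm{Ass}_R(R/J^{**})$ consists of primes of height $\leq 1$: this is where (S$_2$) of the reflexive ideal $J^{**}$ enters, giving $\mathrm{depth}((R/J^{**})_{\fr})\geq 1$ whenever $\mathrm{ht}(\fr)\geq 2$, so no height-$\geq 2$ prime is associated. Since $W\subseteq R/J^{**}$ and $W_{\fr}=0$ for all $\fr$ of height $\leq 1$, every associated prime of $W$ has height $\geq 2$; combined with $\mathrm{Ass}(W)\subseteq\mathrm{Ass}(R/J^{**})\subseteq\{\mathrm{ht}\le 1\}$, this forces $\mathrm{Ass}(W)=\emptyset$, hence $W=0$, i.e. $I^{**}\subseteq J^{**}$.

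The main obstacle I anticipate is the bookkeeping around associated primes of $R/J^{**}$: one must be careful that $J^{**}$, though reflexive and therefore (S$_2$), is still an ideal of $R$ and that $R/J^{**}$ inherits the depth bound needed to rule out embedded primes in codimension $\geq 2$. This is exactly the content of the characterization in Remark~\ref{rem:equiv} together with the standard relation between (S$_2$) of $J^{**}$ and (S$_1$) of $R/J^{**}$, and it is the only place where both (G$_1$) and (S$_2$) of $R$ are genuinely used (beyond the weaker (G$_0$), (S$_1$) needed to invoke Lemma~\ref{lem:sub}). Everything else is localization and functoriality of double duals, which is routine.
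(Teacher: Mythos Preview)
Your proof repeatedly invokes (G$_1$), but the lemma's hypothesis is only (G$_0$) together with (S$_2$). You explicitly write that (G$_1$) is ``genuinely used'' when you appeal to Remark~\ref{rem:equiv} to conclude that the reflexive module $J^{**}$ satisfies (S$_2$); that equivalence, however, is stated under (G$_1$) and (S$_2$), which you do not have. The gap is repairable --- any dual $N^*$ is automatically $2$-syzygy (dualize a presentation of $N$), and over an (S$_2$) ring every $2$-syzygy module satisfies (S$_2$), so $J^{**}=(J^*)^*$ is (S$_2$) without appealing to (G$_1$) --- but that is not the argument you gave, and as written your proof establishes only the weaker statement with (G$_1$) in place of (G$_0$). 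Your invocation of (G$_1$) at height-$1$ localizations is likewise unjustified, though there you correctly note that Lemma~\ref{lem:sub} over $R_\fr$ already suffices.

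By contrast, the paper's proof is three lines and uses exactly the stated hypotheses. The quotient $(I+J)/J$ is pseudo-null by assumption; Lemma~\ref{lem:ext vanish} (which needs only (S$_2$)) then gives $((I+J)/J)^*=\Ext^1_R((I+J)/J,R)=0$, so dualizing $0\to J\to I+J\to (I+J)/J\to 0$ yields an isomorphism $(I+J)^*\cong J^*$. Hence $J^{**}=(I+J)^{**}$, and $I^{**}\subseteq (I+J)^{**}$ by Lemma~\ref{lem:sub} (which needs only (G$_0$) and (S$_1$)). This bypasses the entire analysis of associated primes of $R/J^{**}$.
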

\begin{proof}
By assumption, the $R$-module $(I+J)/J$ is pseudo-null. Hence $(I+J)^{*} \longrightarrow J^{*}$ is an isomorphism by Lemma~\ref{lem:ext vanish}. This implies $J^{**} = (I+J)^{**} \supseteq I^{**}$.
\end{proof}

\begin{remark}\label{rem:fitt-char}
Let $R$ be a noetherian ring satisfying (G$_{0}$) and (S$_{2}$). 
By Proposition~\ref{prop:reflexible} and Lemma~\ref{lem:inequality}, 
we see that the ideal ${\rm char}_{R}(M)$ is determined by ideals ${\rm char}_{R_{\fr}}(M_{\fr})$ for any primes $\fr$ of $R$ with ${\rm ht}(\fr) \leq 1$. 
Hence one can easily show the following properties of characteristic ideals: 
\begin{itemize}
\item[(i)] If there is a pseudo-isomorphism $M \longrightarrow N$ of finitely generated $R$-modules, then we have 
${\rm char}_{R}(M) = {\rm char}_{R}(N)$. 
\item[(ii)] When $R$ is a normal ring, we have ${\rm Fitt}_{R}^{0}(M)^{**} = {\rm char}_{R}(M)$ for any finitely generated $R$-module, and hence, in this case, the notion of the characteristic ideal coincides with the usual one.  
\end{itemize}
\end{remark}

\begin{proof}[Proof of Theorem~\ref{thm:inequality}]
By Proposition~\ref{prop:reflexible}, the characteristic ideals ${\rm char}_{R}(M)$ and ${\rm char}_{R}(N)$ are reflexive. Hence, by Lemma~\ref{lem:inequality} and the condition (G$_{1}$), 
we may assume that $R$ is a Gorenstein local ring with $\dim(R) \leq 1$. 
 
By the horseshoe lemma, 
a projective resolution of $M$ can be built up inductively with the $n$-th item in the resolution of $M$ equal to the direct sum of the $n$-th items in projective resolutions of $N$ and $M/N$. 
Hence one can take the following commutative diagram with exact lows: 
\[
\xymatrix{
0 \ar[r] & Y \ar[r]^-{\beta} \ar@{^{(}->}[d] & R^{r} \ar[r] \ar@{^{(}->}[d] & N \ar[r] \ar@{^{(}->}[d] & 0 
\\
0 \ar[r] & X \ar[r]^-{\alpha} \ar[d] & R^{r+s} \ar[r] \ar[d] & M \ar[r] \ar[d] & 0
\\
0 \ar[r] & X/Y \ar[r] & R^{s} \ar[r] & M/N \ar[r] & 0. 
}
\]
Here $r$ is a positive integer and the homomorphism $R^{r} \hooklongrightarrow R^{r+s}$ is a split injection. 

Since $R$ is Gorenstein and $\dim(R) \leq 1$, the module 
$\Ext_{R}^{1}(X/Y,R) \stackrel{\sim}{\longrightarrow} \Ext_{R}^{2}(M/N,R)$ vanishes. 
Hence by taking $R$-duals to the above commutative diagram, we get exact sequences of $R$-modules 
\[
\xymatrix{
0 \ar[r] & (R^{*})^{s} \ar[r] \ar[d] & (R^{*})^{r+s} \ar[r] \ar[d]^-{\alpha^{*}} & (R^{*})^{r} \ar[r] \ar[d]^-{\beta^{*}} & 0 
\\
0 \ar[r] & (X/Y)^{*} \ar[d] \ar[r]  & X^{*} \ar[d] \ar[r] & Y^{*}  \ar[d] \ar[r] & 0
\\
& \Ext_{R}^{1}(M/N,R) \ar[r] & \Ext_{R}^{1}(M,R) \ar[r] & \Ext_{R}^{1}(N,R) \ar[r] & 
0. 
}
\]
We fix a basis $\omega \in {\bigwedge}^{s}_{R}(R^{*})^{s}$. 
We also denote by $\omega$ the images of $\omega$ in ${\bigwedge}^{s}_{R}(R^{*})^{r+s}$ and ${\bigwedge}^{s}_{R}X^{*}$. 
Then the homomorphism 
\[
{\bigwedge}^{r}_{R}(R^{*})^{r+s} \longrightarrow {\bigwedge}^{r+s}_{R}(R^{*})^{r+s}; x \mapsto x \wedge \omega
\] 
induces an $R$-isomorphism 
\begin{align*}
{\bigwedge}^{r}_{R}(R^{*})^{r} \stackrel{\sim}{\longrightarrow} {\bigwedge}^{r+s}_{R}(R^{*})^{r+s}. 
\end{align*}
We also have the composite map  
\begin{align}\label{hom1}
{\bigwedge}^{r}_{R}X^{*} \xrightarrow{x \mapsto x \wedge \omega}
{\bigwedge}^{r+s}_{R}X^{*} \longrightarrow
 \left({\bigwedge}^{r+s}_{R}X^{*}\right)^{**}. 
\end{align}
The homomorphism $Q \otimes_{R} (R^{*})^{s} \longrightarrow Q \otimes_{R} (X/Y)^{*}$ is surjective since 
$Q \otimes_{R} \Ext_{R}^{1}(M/N,R)$ vanishes. 
Hence the composite map  
\[
(X/Y)^{*} \otimes_{R} {\bigwedge}^{r-1}_{R}X^{*} \longrightarrow 
{\bigwedge}^{r}_{R}X^{*} \longrightarrow Q \otimes_{R} {\bigwedge}^{r+s}_{R}X^{*}
\]
is zero. 
The canonical homomorphism $ \left({\bigwedge}^{r+s}_{R}X^{*}\right)^{**} \longrightarrow Q \otimes_{R} {\bigwedge}^{r+s}_{R}X^{*}$ is injective, and hence the composite map  
\[
(X/Y)^{*} \otimes_{R} {\bigwedge}^{r-1}_{R}X^{*} \longrightarrow {\bigwedge}^{r}_{R}X^{*} \longrightarrow \left({\bigwedge}^{r+s}_{R}X^{*}\right)^{**} 
\]
is also zero. 
Since the kernel of the surjection ${\bigwedge}^{r}_{R}X^{*} \longrightarrow {\bigwedge}^{r}_{R}Y^{*}$ coincides with the image of the homomorphism  $(X/Y)^{*} \otimes_{R} {\bigwedge}^{r-1}_{R}X^{*} \longrightarrow {\bigwedge}^{r}_{R}X^{*}$, the homomorphism~\eqref{hom1} induces an $R$-homomorphism 
\[
{\bigwedge}^{r}_{R}Y^{*} \longrightarrow \left({\bigwedge}^{r+s}_{R}X^{*}\right)^{**}. 
\]
Hence, we have the following commutative diagram: 
\[
\xymatrix{
{\bigwedge}^{r}_{R}(R^{r})^{*} \ar@/^30pt/[rrr]^-{\cong} \ar[d]_-{\wedge^{r}\beta^{*}} &  \ar@{->>}[l]  {\bigwedge}^{r}_{R}(R^{*})^{r+s} \ar[rr]^-{x \mapsto x \wedge \omega} \ar[d]_{\wedge^{r}\alpha^{*}} &&  {\bigwedge}^{r+s}_{R}(R^{*})^{r+s} \ar[d]_{\wedge^{r+s}\alpha^{*}} \ar[r]^-{\cong} & \left({\bigwedge}^{r+s}_{R}(R^{*})^{r+s}\right)^{**} \ar[d]_{(\wedge^{r+s}\alpha^{*})^{**}}
\\
{\bigwedge}^{r}_{R}Y^{*} \ar@/^-40pt/[rrrr]^{} & \ar@{->>}[l] {\bigwedge}^{r}_{R}X^{*} \ar[rr]^-{x \mapsto x \wedge \omega} && {\bigwedge}^{r+s}_{R}X^{*} \ar[r] & \left({\bigwedge}^{r+s}_{R}X^{*}\right)^{**}. 
}
\]
Therefore, we obtain a commutative diagram 
\[
\xymatrix{
{\bigcap}^{r+s}_{R}X  \ar[d]^-{\cap^{r+s}\alpha} & \ar[l]_-{(\xi^{1})^{*}} \left({\bigwedge}^{r+s}_{R}X^{*}\right)^{***} \ar[r] \ar[d]^-{(\wedge^{r+s}\alpha^{*})^{***}} & {\bigcap}^{r}_{R}Y \ar[d]^{{\cap^{r+s}\beta}} 
\\
{\bigcap}^{r+s}_{R}R^{r+s}  & \ar[l]_-{\cong} \left({\bigwedge}^{r+s}_{R}(R^{r+s})^{*}\right)^{***} \ar[r]^-{\cong}  &  {\bigcap}^{r}_{R}R^{r}. 
}
\]
By Lemma~\ref{lem:reflexible}, the homomorphism $\left({\bigwedge}^{r+s}_{R}X^{*}\right)^{***} \longrightarrow {\bigcap}^{r+s}_{R}X$ is an isomorphism, and hence this commutative diagram shows that 
\begin{align*}
{\rm char}_{R}(M) &= {\rm im}\left({\bigcap}^{r+s}_{R}X \longrightarrow  {\bigcap}^{r+s}_{R}R^{r+s} = R\right)
\\
&= {\rm im}\left(\left({\bigwedge}^{r+s}_{R}X^{*}\right)^{***} \longrightarrow  \left({\bigwedge}^{r+s}_{R}(R^{r+s})^{*}\right)^{***} = R\right)
\\
&\subseteq {\rm im}\left({\bigcap}^{r}_{R}Y  \longrightarrow  {\bigcap}^{r}_{R}R^{r} = R\right)
\\
&=  {\rm char}_{R}(N). 
\end{align*}
\end{proof}

\end{appendix}

\end{document}